\documentclass[reqno,12pt,twoside]{article}
\usepackage{amsmath,amssymb}
\usepackage{amsmath, amsfonts,amsthm,amssymb,amsbsy,upref,color,graphicx,amscd,hyperref,enumerate,wasysym,subfigure}
\usepackage[active]{srcltx}
\usepackage[utf8]{inputenc}
\usepackage{bbm,stmaryrd}
\usepackage{algorithm}
\usepackage{array}
\usepackage{algorithmicx}
\usepackage{algpseudocode}
\usepackage{graphicx}

\pagestyle{myheadings}
\markboth{B. Bogosel and
V. Bonnaillie-No\"el}{IFB template}

\usepackage{enumitem}
\usepackage{multirow}
\usepackage{spreadtab}
\graphicspath{{./all_pics_pdf/}}

\theoremstyle{definition}
\newtheorem{thm}{Theorem}[section]

\newtheorem{prop}[thm]{Proposition}
\newtheorem{conjecture}[thm]{Conjecture}
\newtheorem{defin}[thm]{Definition}
\newtheorem{rem}[thm]{Remark}

\numberwithin{equation}{section}

\frenchspacing

\textwidth=167mm
\textheight=23cm
\parindent=16pt
\oddsidemargin=-0.5cm
\evensidemargin=-0.5cm
\topmargin=-0.5cm

\newcommand{\subjclass}[1]{\bigskip\noindent\emph{2010 Mathematics Subject Classification:}\enspace#1}
\newcommand{\keywords}[1]{\noindent\emph{Keywords:}\enspace#1}


\makeatletter
\DeclareFontFamily{U}{tipa}{}
\DeclareFontShape{U}{tipa}{m}{n}{<->tipa10}{}
\newcommand{\arc@char}{{\usefont{U}{tipa}{m}{n}\symbol{62}}}%

\newcommand{\arc}[1]{\mathpalette\arc@arc{#1}}

\newcommand{\arc@arc}[2]{%
  \sbox0{$\m@th#1#2$}%
  \vbox{
    \hbox{\resizebox{\wd0}{\height}{\arc@char}}
    \nointerlineskip
    \box0
  }%
}
\makeatother

\def\R  {\mathbb R}

\def\cD{\mathcal D}
\def\fP{\mathfrak P}
\def\fL{\mathfrak L}
\def\sA{\mathsf A}
\def\sB{\mathsf B}
\def\sC{\mathsf C}
\def\sD{\mathsf D}
\def\sX{\mathsf X}
\def\sY{\mathsf Y}
\def\sO{\mathsf O}
\definecolor{mg}{rgb}   {0.85,  0.,    0.85}
\definecolor{gn}{rgb}   {0,  .39,    0}
\newcommand{\Mg}{\color{black}}
\newcommand{\Gn}{\color{black}}
\newcommand{\Rd}{\color{black}}
\newcommand{\Bl}{\color{black}}

\begin{document}

\title{Minimal partitions for $p$-norms of eigenvalues}

\author{Beniamin Bogosel, 
Virginie Bonnaillie-No\"el\\
D\'epartement de Math\'ematiques et Applications\\ \'Ecole normale sup\'erieure, CNRS, PSL Research University,\\ 45 rue d'Ulm, 75005 Paris\\
beniamin.bogosel@ens.fr, bonnaillie@math.cnrs.fr}

\date{}


\maketitle
\begin{abstract}
In this article we are interested in studying partitions of the square, the disk and the equilateral triangle which minimize a \emph{p}-norm of eigenvalues of the Dirichlet-Laplace operator. The extremal case of the infinity norm, where we minimize the largest fundamental eigenvalue of each cell, is one of our main interests. We propose three numerical algorithms which approximate the optimal configurations and we obtain tight upper bounds for the energy, which are better than the ones given by theoretical results. A thorough comparison of the results obtained by the three methods is given. We also investigate the behavior of the minimal partitions with respect to \emph{p}. This allows us to see when partitions minimizing the 1-norm and the infinity-norm are different.     

\subjclass{Primary 49Q10, 65N06; Secondary: 35J05, 65N25.}

\keywords{Minimal partitions; shape optimization; Dirichlet-Laplacian eigenvalues; numerical simulations.}
\end{abstract}

\section{Introduction}

\subsection{Motivation}
In this paper we are interested in determining minimal partitions for cost functionals involving the $p$-norm of some spectral quantities ($ p\geq 1$ or $p=\infty$). 

Let $\Omega$ be a bounded and connected domain in $\R^2$ with piecewise-$\mathcal C^1$ boundary and $k$ be a positive integer $k\geq1$.
For any domain $D\subset\Omega$, $(\lambda_{j}(D))_{j \geq 1}$ denotes the eigenvalues of the Laplace operator on $D$ with Dirichlet boundary conditions, arranged in non decreasing order and repeated with multiplicity. 

We denote by $\fP_{k}(\Omega)$ the set of $k$-partitions $\cD=(D_{1},\ldots, D_{k})$ such that 
\begin{itemize}
\item $(D_{j})_{1\leq j\leq k}$ are connected, open and mutually disjoint subsets of $\Omega$, 
\item ${\rm Int}(\bigcup_{1\leq j\leq k}\overline{D_{j}})\setminus \partial\Omega = \Omega$.
\end{itemize}
For any $k$-partition $\cD\in\fP_{k}(\Omega)$, we define the \emph{$p$-energy} by
\begin{equation}\label{eq.nrjkp}
\Lambda_{k,p}(\mathcal{D})=\left(\frac{1}{k}\sum_{i=1}^{k}\lambda_{1}(D_i)^p\right)^{1/p},\qquad\forall p\geq 1.
\end{equation}
By extension, if we consider the {\Bl infinity} norm, we define the \emph{energy} of $\cD$ by
\begin{equation}\label{eq.nrjk}
\Lambda_{k,\infty}(\mathcal{D})=\max_{1\leq i\leq k}\lambda_{1}(D_i).
\end{equation}
With a little abuse of notation, we notice that 
$$\Lambda_{k,p}(\mathcal{D})=\frac{1}{k^{1/p}}\Big\| \big(\lambda_{1}(D_{1}),\ldots,\lambda_{1}(D_{k})\big)\Big\|_{p}.$$
The index $\infty$ is omitted when there is no confusion. 
The optimization problem we consider is to determine the infimum of the $p$-energy ($1\leq p\leq \infty$) among the partitions of $\fP_{k}(\Omega)$:
\begin{equation}\label{eq.Lkp}
\fL_{k,p}(\Omega) = \inf_{\cD\in\fP_{k}(\Omega)}\Lambda_{k,p}(\cD),\qquad \forall 1\leq p\leq \infty,\quad \forall k\geq1.
\end{equation}
A partition $\cD^*$ such that $\Lambda_{k,p}(\cD^*)=\fL_{k,p}(\Omega)$ is called a $p$-minimal $k$-partition of $\Omega$.

This optimization problem has been a subject of great interest in the last twenty years. Two cases are especially studied: the sum which corresponds to $p=1$ and the max, corresponding to $p=\infty$. General aspects concerning existence results for optimal partitions problems are presented in \cite{BucButHen98,BucBut05}. Existence and regularity results for optimal partitioning problems regarding non-linear eigenvalue problems, containing as a particular case the Dirichlet eigenvalues, are considered in \cite{CoTeVe03}. In \cite{CafLin07} the authors consider the minimization of the partitions minimizing the sum of the Dirichlet-Laplace eigenvalues, stating the spectral honeycomb conjecture and initiating many theoretical and numerical works on the subject. In \cite{HelHofTer09} the authors consider the partitions minimizing the maximum of the fundamental eigenvalues and they provide results concerning connections between such optimal partitions and nodal partitions, for particular values of $k$. 
More recently, the link between these two optimization problems is taken into consideration in \cite{HHO10}. In particular, a criterion is established to assert that a $\infty$-minimal $k$-partition is not a $1$-minimal $k$-partition. This criterion is given in Proposition~\ref{l2norm} and applied in Section~\ref{ss.summax}.
 
There are few cases for which optimal partitions are known explicitly for the spectral quantities we consider here. This motivates the development of numerical algorithms which can find approximations of optimal partitions and suggest candidates as optimal partitions. The case $p=1$, corresponding to the sum of the eigenvalues, was considered in \cite{BouBucOud09}, where an algorithm based on a relaxation procedure was presented. The algorithm allowed the study of partitions made of several hundreds of cells and shows that it is likely that partitions made of hexagons are a good candidate to being minimal as $k \to \infty$. The numerical minimization of the largest eigenvalue has been considered in \cite{BHV,BH11,BonLen14,BonLen16}. 
 In \cite{BHV}, we exhibit some candidates for the 3-partition of the square and the disk by using a mixed Dirichlet-Neumann approach that will be used in Section~\ref{sec.DN} in a more systematic way. 
The nodal partition of a suitable Aharonov-Bohm operator can produce rather good candidates for the minimal partitions for the max and \cite{BH11,BonLen14} focus on the computation of the spectrum in the case of the square and angular sectors.
Then \cite{BonLen16} is a first adaptation of the algorithm of \cite{BouBucOud09} for the max but without analysis of the behavior according to the parameters and the $p$-norm. This article only gives candidates for a family of tori.

There are also other works dealing with optimal partitions for eigenvalues. Among these we mention \cite{OOWgraphs} where the authors use a rearrangement algorithm to find numerical minimizers for spectral graph partitions, \cite{ZOgraph} where authors present various results concerning graph and plane partitions. In \cite{Bozorgnia} algorithms for minimizing the sum and the maximum of the eigenvalues are provided, but with few explicit examples. In \cite{cybhol} the authors present a model of chemical reaction which leads to a segregation of phases and is in connection with the minimization of the sum of the eigenvalues. The analogue problem of minimizing the sum of the eigenvalues of the Laplace-Beltrami operator on surfaces was considered numerically in \cite{ElliottRanner2014Surfaces}. The algorithms we propose in the following are combining aspects from some of the works presented above. In particular, our iterative algorithms use the numerical relaxation for eigenvalue problems presented in \cite{BouBucOud09}, for different functionals, replacing the sum by a $p$-norm or adding a penalization of the difference of the eigenvalues. In some cases we can exploit the particular structure of the result obtained using the iterative algorithms, and try to express such partitions as nodal partitions corresponding to eigenvalue problems on domains with additional Dirichlet boundary conditions. When symmetry is available we may reduce the computational domain by considering mixed Dirichlet-Neumann boundary conditions.

We start by stating the following existence result {(see \cite{BucButHen98,HelHofTer09})}.
\begin{thm} \label{thm.ex}
For any $k\geq1$ and $p\in[1,+\infty]$, there exists a regular $p$-minimal $k$-partition.
\end{thm}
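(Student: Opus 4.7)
The plan is to apply the direct method of the calculus of variations, working at the level of first Dirichlet eigenfunctions rather than the sets themselves. First, I would fix a minimizing sequence $(\cD^n)_{n\geq1}$ with $\cD^n=(D_1^n,\dots,D_k^n)\in\fP_k(\Omega)$ and $\Lambda_{k,p}(\cD^n)\to\fL_{k,p}(\Omega)$. Let $u_i^n\in H^1_0(D_i^n)$ be the positive $L^2$-normalized first eigenfunction on $D_i^n$, extended by zero to $\Omega$, so that $\|\nabla u_i^n\|_{L^2(\Omega)}^2=\lambda_1(D_i^n)$. Since $\Lambda_{k,p}(\cD^n)$ is bounded, each $\lambda_1(D_i^n)$ is uniformly bounded in $n$ and $i$ (trivially when $p=\infty$, and via $\lambda_1(D_i^n)\leq k^{1/p}\Lambda_{k,p}(\cD^n)$ when $p<\infty$). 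Hence $(u_i^n)_n$ is bounded in $H^1_0(\Omega)$ for each $i$.

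Next, by the Rellich--Kondrachov theorem I would extract (along a common subsequence) weak limits $u_i^n\rightharpoonup u_i$ in $H^1_0(\Omega)$ with strong $L^2$-convergence, so $\|u_i\|_{L^2(\Omega)}=1$. The pointwise orthogonality $u_i^nu_j^n\equiv 0$ for $i\neq j$ passes to the limit to give $u_iu_j=0$ a.e. Setting $D_i^*:=\{u_i>0\}$ (refined to a quasi-open set using fine capacity representatives), we obtain mutually disjoint cells with $u_i\in H^1_0(D_i^*)$, hence
$$\lambda_1(D_i^*)\;\leq\;\|\nabla u_i\|_{L^2(\Omega)}^2\;\leq\;\liminf_{n\to\infty}\|\nabla u_i^n\|_{L^2(\Omega)}^2\;=\;\liminf_{n\to\infty}\lambda_1(D_i^n)$$
by the Rayleigh quotient characterization and the weak lower semicontinuity of the Dirichlet energy. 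Because $(t_1,\dots,t_k)\mapsto\bigl(\tfrac{1}{k}\sum t_i^p\bigr)^{1/p}$ and $(t_1,\dots,t_k)\mapsto\max_it_i$ are continuous and non-decreasing in each variable, this transfers to the $p$-energy and yields $\Lambda_{k,p}(\cD^*)\leq\fL_{k,p}(\Omega)$, so $\cD^*$ achieves the infimum.

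The main obstacles are twofold. The first is that the limiting cells $D_i^*$ are a priori only quasi-open and their closures need not cover $\Omega$, so $\cD^*$ may not lie in $\fP_k(\Omega)$. This is handled by relaxing the problem to the framework of quasi-open sets (or capacitary measures) as in \cite{BucButHen98}, proving existence there, and then observing that any residual portion of $\Omega$ can be absorbed by enlarging some $D_i^*$, which by monotonicity of $\lambda_1$ with respect to inclusion does not increase the energy. The second and genuinely harder obstacle is proving \emph{regularity} of the optimal partition --- that the cells are open with interfaces that are locally smooth away from an isolated singular set. This requires the extremality and monotonicity theory of \cite{CoTeVe03} (exploited in \cite{HelHofTer09}): an Alt--Caffarelli--Friedman-type monotonicity formula, together with blow-up analysis at interface points, shows that the free boundaries between cells are locally finite unions of $\mathcal{C}^{1,\alpha}$ arcs meeting at isolated nodes obeying an equal-angle condition. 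This regularity step is where almost all the technical work lies; the existence part above is comparatively soft.
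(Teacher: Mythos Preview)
The paper does not prove this theorem; it merely states it and cites \cite{BucButHen98,HelHofTer09}. Your outline is correct and follows precisely the route taken in those references: existence via the direct method at the level of first eigenfunctions and the relaxation to quasi-open sets (as in \cite{BucButHen98,CoTeVe03}), followed by the regularity theory based on monotonicity formulas and blow-up analysis (as in \cite{CoTeVe03,HelHofTer09}). Your identification of where the real work lies --- the regularity step --- is accurate.

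One small point worth making explicit: the class $\fP_k(\Omega)$ in this paper requires each cell to be \emph{connected}, and your limiting sets $D_i^*=\{u_i>0\}$ need not be connected a priori. You address the covering issue (strongness) but not this one. It is not a genuine gap --- connectedness is recovered as part of the regularity theory you defer to \cite{HelHofTer09}, or can be argued directly by noting that only the component supporting the first eigenfunction matters and the remaining components can be absorbed by neighbours without increasing the energy --- but it deserves a sentence when you write this out in full.
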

Let us recall that a $k$-partition $\cD$ is called \emph{regular} if its boundary, $N(\cD)= \cup_{1\leq i\leq k}\partial D_{i}$, is locally a regular curve, except at a finite number of singular points, where a finite number of half-curves meet with equal angles. We say that $\cD$ satisfies the \emph{equal angle meeting property}.

{In the case $k=1$, {since} $\Omega$ is connected, then the $p$-minimal $1$-partition is $\Omega$ itself, for any $p$. From now, we will consider $k\geq 2$.}
\begin{rem}
Note that if we relax the condition ${\rm Int}(\bigcup_{1\leq j\leq k}\overline{D_{j}})\setminus \partial\Omega = \Omega$ and consider the optimization problem among partitions such that we have only an inclusion 
\begin{equation}\label{eq.partiincl}
{\rm Int}(\bigcup_{1\leq j\leq k}\overline{D_{j}})\setminus \partial\Omega \subset \Omega,
\end{equation} 
Theorem~\ref{thm.ex} is still available and any $p$-minimal $k$-partitions is strong (this means we have equality in \eqref{eq.partiincl}).
\end{rem}

\subsection{Main results and organisation of the paper}
 The goal of the article is to study the minimization of the $p$-norm of the eigenvalues for $p$ large and $p=\infty$. We use our algorithms in a comparative way for three basic geometries: the square, the disk and the equilateral triangle, for a number of cells $k$ between $2$ and $10$. 
In Section \ref{iterative} we present  an iterative algorithm for the optimization of the $p$-norm based on the results of \cite{BouBucOud09}.  We use a relaxed framework for the computation of the eigenvalues and we adapt the expression of the functional and the gradients provided in \cite{BouBucOud09} in order to deal with $p$-norms. We observe that the implementation produces different results when we consider the minimization problem for $p=1$ or $p=\infty$, and therefore we have a new numerical confirmation that, in general, optimal partitions change between $p=1$ and $p=\infty$. This motivates us to look closer at the case $p=\infty$ and to seek algorithms which are adapted to this case. 

In Section \ref{theory} we recall some theoretical aspects needed in order to analyze our numerical results and also to propose more efficient algorithms. Among these results we underline the equipartition property concerning the case $p=\infty$ and a $L^2$-norm criterion which can indicate whether an optimal partition for $p=\infty$ is not optimal for $p=1$.

Next, in Section \ref{section.max} we concentrate our attention on the numerical study of the $\infty$-minimal partitions. We describe a new iterative method based on a penalization of the difference of the eigenvalues and the mixed Dirichlet-Neumann approach where we restrict ourselves to nodal partitions of a mixed problem. 
Here we compare the three methods and exhibit better upper bounds for $\fL_{k,\infty}(\Omega)$ for the three geometries considered: the square, the equilateral triangle and the disk. At the end of this section, we show that almost all of the candidates to be $\infty$-minimal $k$-partition can not be optimal for the sum, in coherence with theoretical results of \cite{HHO10}.

In Section \ref{section.pnorm} we analyze the behavior of the optimal partitions for the $p$-norm with respect to $p$ by looking at the evolution of the associated energies and the partitions. In the case of the square, the disk or the equilateral triangle we notice that the energy $\fL_{k,p}(\Omega)$ seems to be strictly increasing with $p$, except some particular cases where the energy and partitions do not vary with $p$, suggesting that in these cases we have the same optimal partitions for $p=1$ and $p=\infty$. We conclude in Section \ref{section.conclusion} presenting a summary of our numerical results and formulating some relevant conjectures in the further study of minimal spectral partitions.
\section{Numerical iterative algorithm}
\label{iterative}
\subsection{Numerical method for the sum}
The problem of minimizing numerically the sum of the first eigenvalues of the Dirichlet-Laplace operator corresponding to a partition of a planar domain $\Omega$ has been studied numerically by Bourdin, Bucur and Oudet in \cite{BouBucOud09}. 
In order to simplify the computation and the representation of the partition they represented each cell of the partition as a discrete function on a fixed finite differences grid. It is possible to compute the first eigenvalue of a subset $D$ of $\Omega$ by using a relaxed formulation of the problem based on \cite{dalmaso-mosco}. If $\varphi$ is a function which approximates $\chi_D$, the characteristic function of $D$, then we consider the problem
\begin{equation} 
\left\{
\begin{array}{cl}
-\Delta u + C(1-\varphi) u = \lambda_j(C,\varphi) u& \qquad\mbox{ in } \Omega,\\
\Mg u=0&\Mg\qquad\mbox{ on }\partial\Omega,
\end{array}
\right.
\label{relax}
\end{equation}
with $C \gg 1$. In the case where $\varphi = \chi_D$ it is proved that $\lambda_1(C,\varphi) \to \lambda_1(D)$ as $C\to\infty$. Moreover, in \cite{BoVe16} the following quantitative estimation of the rate of convergence is given: if $\varphi = \chi_D$ then
\begin{equation}
 \frac{|\lambda_1(D)-\lambda_1(C,\varphi)|}{\lambda_1(D)} = O(C^{-1/6}). 
 \label{quantitative}
 \end{equation}
The same estimate remains true for higher eigenvalues. 
As a consequence of the quantitative estimation given above, it is desirable to have a penalization constant $C$ as large as possible in our computations, in order to obtain a good approximation of the eigenvalues. The discretization of the problem \eqref{relax} is straightforward if we consider a finite differences grid. We consider a square bounding box containing the domain $\Omega$. On this box we construct a $N \times N$ uniform grid and we approximate the Laplacian of $u$ using centered finite differences. This allows us to write a discrete version of problem \eqref{relax} in the following matrix form
\begin{equation}
 (A+\text{diag}(C(1-\tilde \varphi))\tilde u = \lambda_1(C,\tilde\varphi) \tilde u, 
 \label{discrete}
 \end{equation}
where the matrix $A$ is the discrete Laplacian on the finite differences grid and  $\tilde u$ a column vector. The Dirichlet boundary condition on $\partial \Omega$ is implemented in \eqref{discrete} by imposing that the density functions $\tilde \varphi$ take zero values on nodes on $\partial \Omega$. The matrices involved in the discrete form of the problem \eqref{discrete} are sparse and thus the problem can be solved efficiently in Matlab using \texttt{eigs}. We note here that the domain $\Omega$ does not need to fill the whole bounding box and that imposing that the functions $\varphi$ are zero on the nodes outside $\Omega$ automatically adds a penalization factor on these nodes. In this way we can study various geometries, like the disk and the equilateral triangle, while still working on a finite-difference grid on a square bounding box. 

\begin{rem}
Finite element formulations are also possible and we refer to \cite{BoVe16} for a brief presentation. One drawback is that if we consider finite elements then the discrete problem analogue to \eqref{discrete} is a generalized eigenvalue problem. The computational cost in this case is higher and this prevents us from being able to work with fine discretizations. 
\end{rem}

In our numerical study of optimal partitioning problems in connection to spectral quantities we use the approach described above to represent the cells and to compute the eigenvalues. We replace each set $D_j$ by a discrete density function $\tilde \varphi_j : \Omega \to [0,1]$ and use the formulation \eqref{relax} and its discrete form \eqref{discrete} to compute an approximation of $\lambda_1(D_j)$. The condition that the sets $(D_j)_{1\leq j\leq k}$ form a partition of the domain $\Omega$ can be implemented by imposing that the densities $\tilde \varphi_j$ associated to $D_j$ have sum equal to one:
\[ \sum_{j=1}^k \tilde\varphi_j = 1.\]
In order to have an efficient optimization algorithm we use a gradient based approach. For this we compute, { for any $\tilde\varphi=\tilde\varphi_{j}$, $1\leq j\leq k$,} the gradient of $\lambda_1(C,\tilde\varphi)$ with respect to each node of the grid and, as in \cite{BouBucOud09}, we get
\[ \partial_{i} \lambda_1(C,\tilde \varphi)  = -C \tilde u_{i}^2,\qquad i = 1,\ldots,N.\] 

\subsection{Adaptation for the $p$-norm}
As we see in the introduction, we are not only interested in the optimization problem for the sum (see \eqref{eq.nrjkp} with $p=1$), but also for any $p$-norm and one of our objectives is to study numerically the minimizers of the quantity 
\begin{equation}
 \max_{1\leq j\leq k} \lambda_1(D_j).
 \label{max-form}
 \end{equation}
This functional is non-smooth and therefore we cannot minimize it directly. One way to approach minimizers of \eqref{max-form} has been proposed in \cite{BonLen16} and it consists in minimizing instead the $p$-norms $\Lambda_{k,p}(\mathcal{D})$ defined in \eqref{eq.nrjkp}, for large $p$:
It is clear that as $p \to \infty$ these $p$-norms $\Lambda_{k,p}(\mathcal{D})$ converge to the largest eigenvalue among $\{\lambda_1(D_j),\ 1\leq j\leq k\}$. In order to optimize $\Lambda_{k,p}(\mathcal{D})$ we modify the expression of the gradient in the algorithm presented in \cite{BouBucOud09} by adding a factor corresponding to the derivative of the $p$-norm
\[ \partial_i \Lambda_{k,p}(\mathcal{D})
=\left( \frac{1}{k}\sum_{j=1}^k \lambda_1(C,\tilde\varphi_j)^p \right)^{1/p-1} \times \left(\frac{1}{k}\sum_{j=1}^k  \lambda_1(C,\tilde\varphi_j)^{p-1} \partial_i \lambda_1(C,\tilde \varphi_j) \right). \]

\subsection{Grid restriction procedure}
We perform the optimization starting from random admissible densities on a $60\times 60$ grid on the square bounding box. In order to have a more precise description of the contours we perform a few successive refinements by doubling the number of discretization points in both horizontal and vertical directions, until we reach a $480\times 480$ grid. More precisely, given a grid size, we apply a gradient descent algorithm using the expression of the gradient of the eigenvalue given in the previous subsection. At each iteration, after the update of the functions $\tilde \varphi_j$ we project them on the constraint condition by replacing each function $\tilde \varphi_j$ by $|\tilde \varphi_j|/(\sum_{i=1}^k |\tilde \varphi_i|)$. This projection algorithm is the same as the one suggested in \cite{BouBucOud09}. We stop when the value of the $p$-norm does not decrease when considering a step length of at least $10^{-6}$. Once we obtain a numerical solution on a given grid we use an interpolation procedure to pass to a denser grid. Then we restart the gradient descent algorithm on this new grid starting from the interpolated partition. We stop when we reach a grid of the desired size, in our case $480\times 480$. We notice that on the $480\times 480$ grid we cannot use a penalization parameter $C$ which is greater than $10^4$, since the matrix $A+\text{diag}(C(1-\tilde \varphi))$ becomes ill conditioned. Indeed, we can see that a large part of the grid is not really used in the computation of the eigenvalue, since, in most cases, roughly $N^2/k$ of the points of {\Gn a $N \times N$} grid are covered by the support of $\tilde \varphi_{j}$ (which should converge to some subdomain $D_{j}$ of a minimal $k$-partition). In order to surpass this problem and to be able to increase the parameter $C$ we propose the following modification of the algorithm used in \cite{BouBucOud09}.

The initial densities are chosen randomly and projected onto the constraint like shown in \cite{BouBucOud09}. At each iteration of the gradient method, we look for the points of the grid which satisfy $\tilde \varphi_j > 0.01$ (represented with dark blue in Figure \ref{grid-refinement}) and then we compute the smallest rectangular region of the grid which contains these points (represented with red in Figure \ref{grid-refinement}). As you can see in Figure \ref{grid-refinement} the first two situations correspond to cases where the cell function $\tilde \varphi_{j}$ is not localized. On the other hand, from the moment when the cell is concentrated on only one part of the partitioned region $\Omega$ the rectangular neighborhood is much smaller and the amount of points where we need to impose the penalization is diminished. The points where the penalization is imposed are represented with cyan in Figure \ref{grid-refinement}. Note that in order to allow the cells to interact we extend the rectangular neighborhood with at least $5$ rows/columns (if contained in $\Omega$). In order to keep the advantage of working on a fixed computation grid, we set the cell's discrete values and gradient equal to zero on the points outside the local rectangular grid. This is natural, since cells which are far away do not have great impact on the dynamic of the current cell. Note that this procedure does not restrict the movement of the cells since these rectangular neighborhoods are dynamically computed at each iteration. Since the number of points on which we impose the penalization is significantly decreased the discrete problem remains well posed even for larger values of $C$ of order $10^7$. Figure~\ref{grid-refinement} represents the evolution of the set $\{\tilde \varphi_{7}>0.01\}$ and so of the local grid after 1, 10, 25, 45 and 85 iterations of the gradient method when we implement the algorithm with $k=10$ and $p=1$. Here, we have not yet done any {\Gn refinement} of the grid.
\begin{figure}[h!]
\centering 
\includegraphics[width = 0.19 \textwidth]{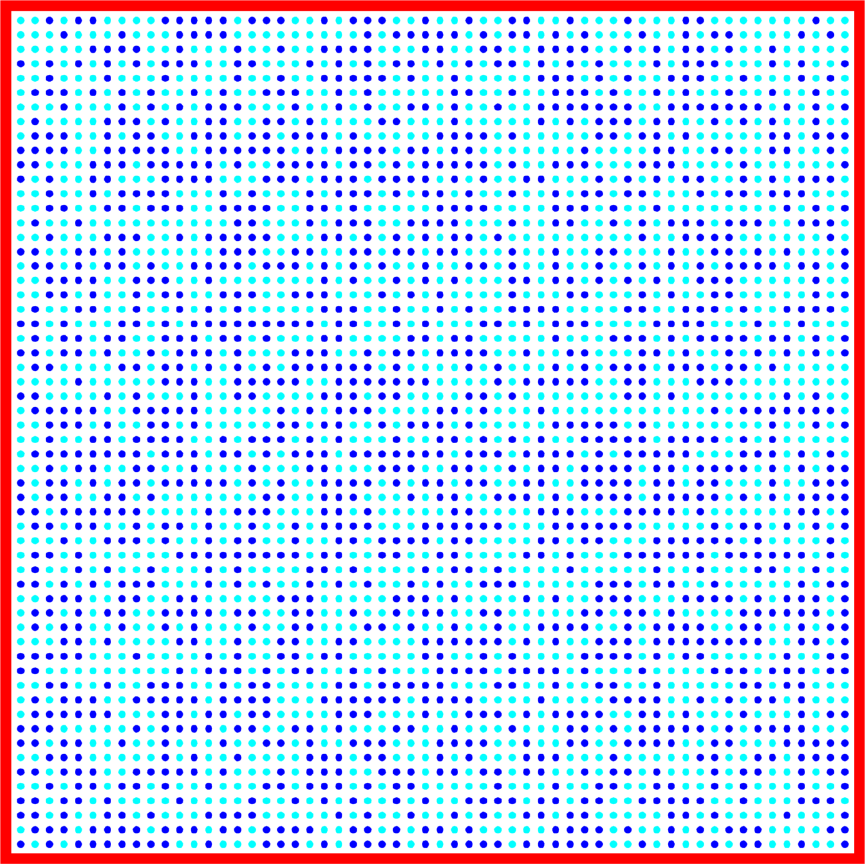}
\includegraphics[width = 0.19 \textwidth]{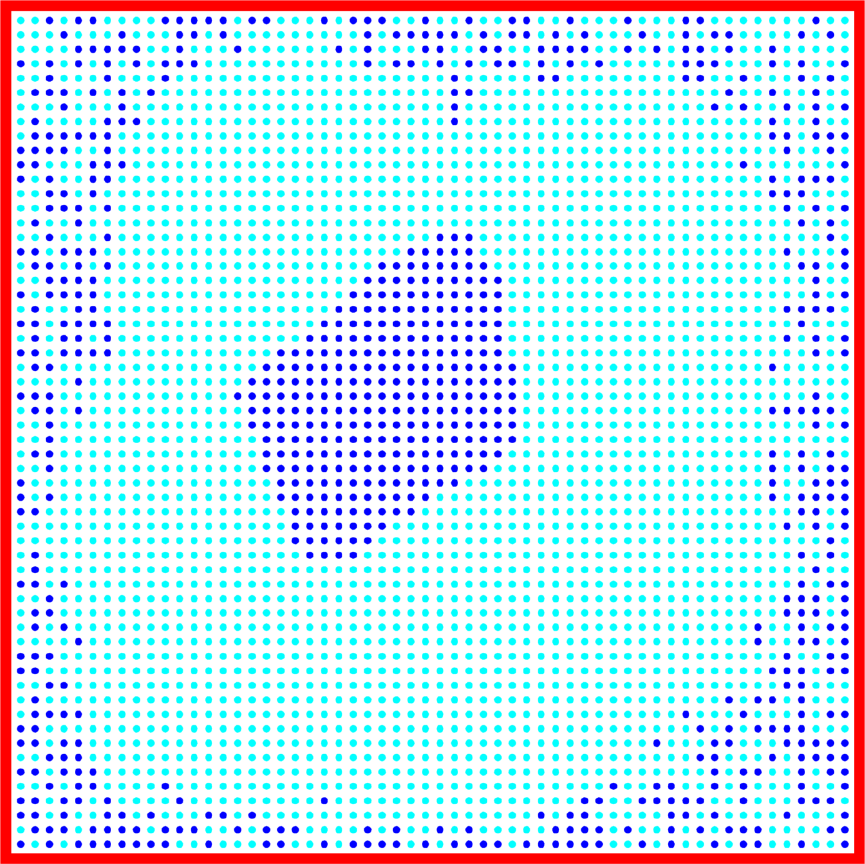}
\includegraphics[width = 0.19 \textwidth]{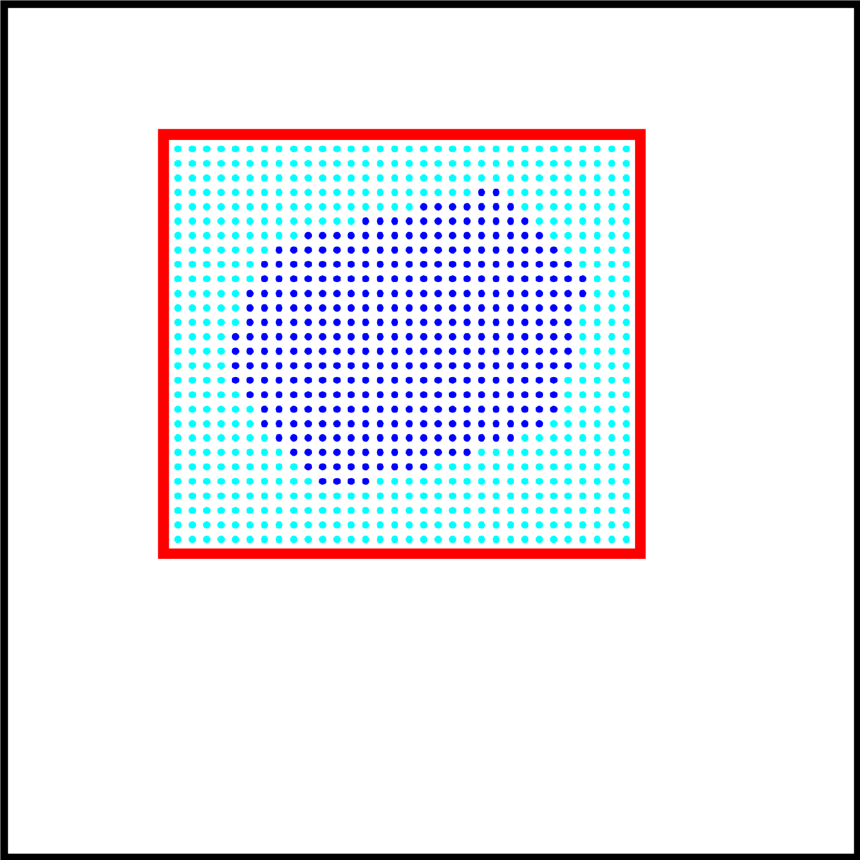}
\includegraphics[width = 0.19 \textwidth]{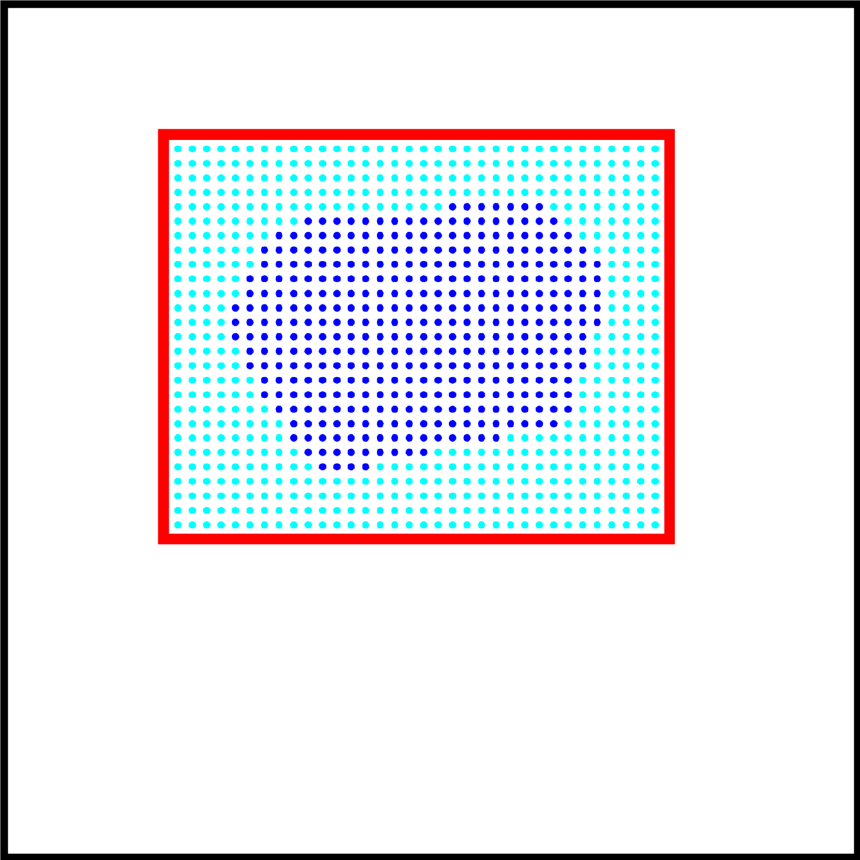}
\includegraphics[width = 0.19 \textwidth]{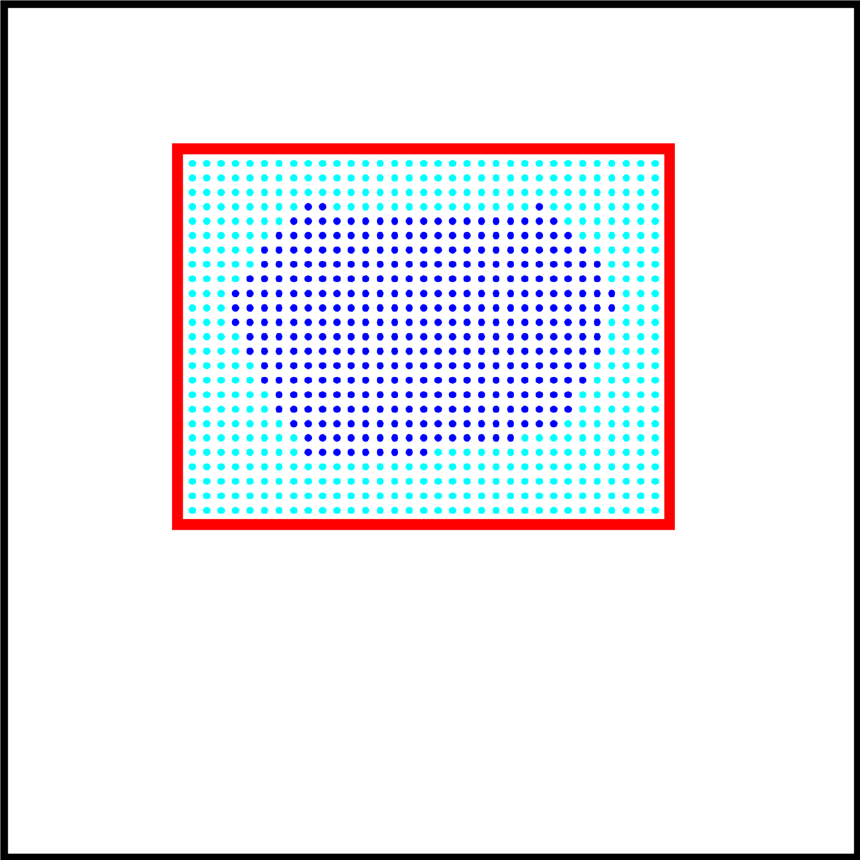}
\caption{Evolution of the local grid for a cell for iterations $1,10,25,45,85$. This computation corresponds to $k=10$ and $p=1$.}
\label{grid-refinement}
\end{figure}

The optimization procedure described above uses a relaxed formulation. Let us now describe how this allows to construct a partition $\cD^{k,p}$ of $\Omega$ whose energy will be computed with a finite element method. 
\begin{itemize}
\item For each $i \in \{1,\ldots,k\}$ we look for the grid points where $\tilde \varphi_i  \geq \tilde \varphi_j$ for every $j \neq i$.
\item We use Matlab's \texttt{contour} function to find the contour associated to these points.
\end{itemize}
This approach, as opposed to looking directly at some level sets of $\tilde \varphi$, has the advantage that the contours we obtain form a strong partition $\cD^{k,p}$ of the domain $\Omega$. 
Then we compute the first Dirichlet-Laplacian eigenvalue on each subdomain of the partition by using a finite element method : either each cell is then triangulated using the free software Triangle \cite{triangle} and its Dirichlet-Laplacian eigenvalues are computed using the finite elements library {\sc M\'elina} \cite{melina}, or we use \textsc{FreeFEM}++ \cite{freefem}. In cases where both {\sc M\'elina} and \textsc{FreeFEM}++ are used we recover the same results.

\subsection{Remarks on the accuracy of the numerical methods} 
The use of the relaxed formulation \eqref{relax} is well adapted when working with partitions, but it leads to a certain loss of precision. A study of the precision of the method compared to the precision of more precise spectral methods is performed in \cite[Section 5]{BoVe16}. The previous study looks at a few examples and takes into account both the finite difference discretization parameter $N$ and the penalization parameter $C$. First, let's note that the quantitative error result \eqref{quantitative} shows a slow convergence as $C$ increases. This was also observed in the numerical computations made in \cite{BoVe16}. Moreover, relative errors observed in the simulations in \cite{BoVe16} range between $10^{-3}$ and $10^{-2}$ for $C$ up to $10^9$ and $N \leq 500$. In our case we cannot expect to have a better accuracy, since when considering multiple cells in our computational domain, this would correspond to an eventual lower resolution when we restrict to a grid around each of the cells. Therefore, having an error around $1\%$ is to be expected. 

We underline that the errors in the computation of the eigenvalues come mainly from the use of the relaxed formulation in our iterative algorithms. When computing the eigenvalues of shapes defined after extracting the contours of the partitions, third and fourth order Lagrange finite element methods {\Gn($\mathbb P_{3}$ or $\mathbb P_{4}$)}  are used (in \textsc{M\'elina} and \textsc{FreeFem++}), which are quite precise. When using the Dirichlet-Neumann method in Section \ref{section.max} precise high order finite element methods are used, which assure the high accuracy of the results.

\subsection{Numerical results\label{ssec.num}}
We denote by $\cD^{k,p}$ the partition obtained by the iterative numerical method. 
We study three particular geometries of $\Omega$:  a square $\square$ of sidelength $1$, a disk $\Circle$ of radius $1$ and an equilateral triangle $\triangle$ of sidelength $1$. We perform computations up to $p=50$. Computations for higher $p$ lead to instabilities in our numerical algorithms due to large powers which appear in the computation of the $p$-norm and its derivative. Moreover, for $p \in (40,50)$ the optimal energy of partitions varies very little, of the order of $0.01\%$. We notice that the partitions obtained numerically for $p=50$ are good candidates to approximate the $\infty$-minimal $k$-partitions, since when performing the analysis of the evolution of the optimal energies and eigenvalues with respect to $p$ the maximal eigenvalues is greatly reduced between $p=1$ and $p=50$. Further analysis presented in the next sections reinforce this argument.

Let us first consider the case of the disk. When $k=2,3,4, 5$, the algorithm gives the same partition for the two optimization problems (the sum $p=1$ and the max $p=\infty$). These partitions, given in Figure~\ref{fig.disk}, are composed of $k$ similar angular sectors of opening $2\pi/k$ and then, the first eigenvalues on each cell are equal. 
Some comments about the relation to the notion of equipartition will be addressed in the next section.
It is conjectured that the ``Mercedes partition'' is minimal for the max, but this result is not yet proved (see \cite{HelHof10, BoHe13}). These simulations reinforce this conjecture.

\begin{figure}[h!]
\centering 
\begin{tabular}{m{3cm}m{3cm}m{3cm}m{3cm}m{3cm}}
\multicolumn{1}{c}{$k=2$} & \multicolumn{1}{c}{$k=3$} & \multicolumn{1}{c}{$k=4$} & \multicolumn{1}{c}{$k=5$}\\
\includegraphics[width = 0.2\textwidth]{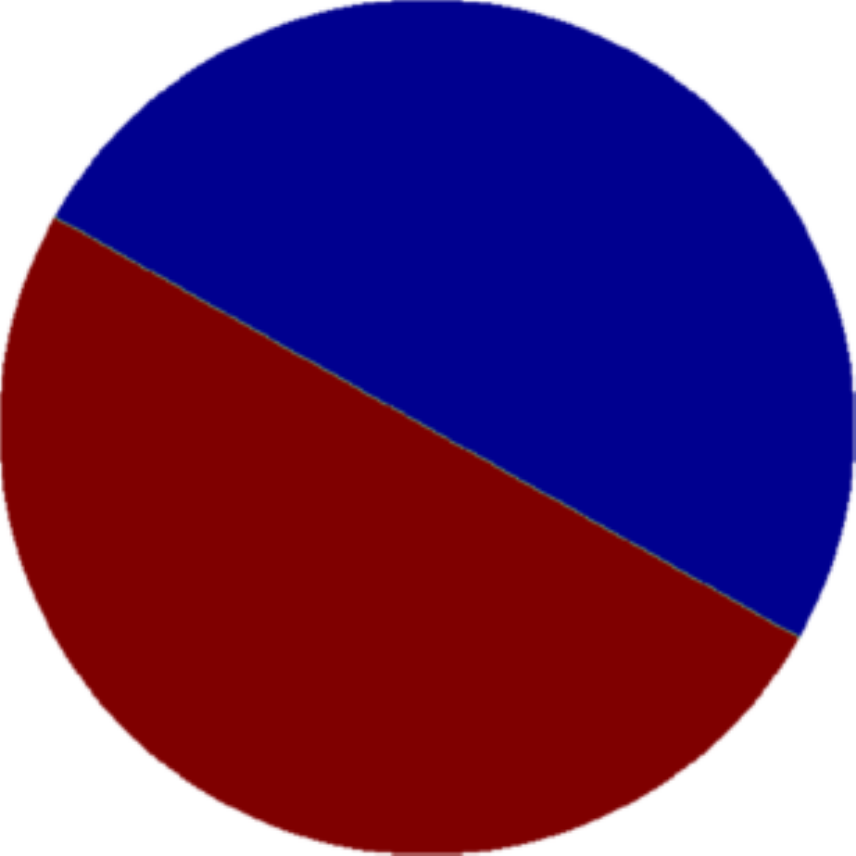} &
 \includegraphics[width = 0.2\textwidth]{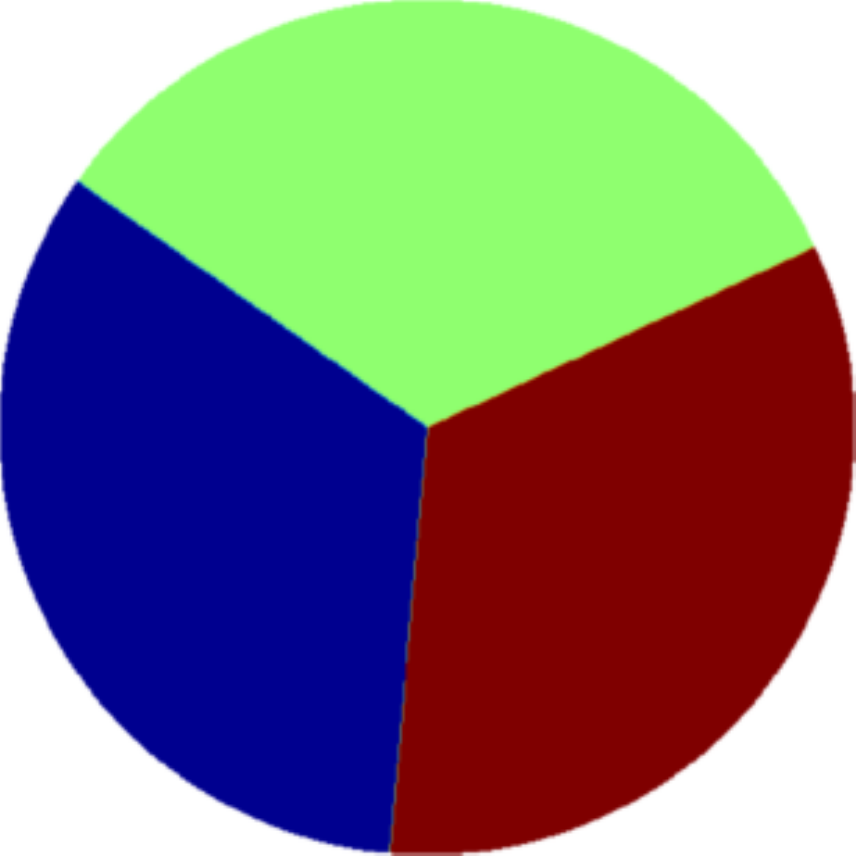} &
 \includegraphics[width = 0.2\textwidth]{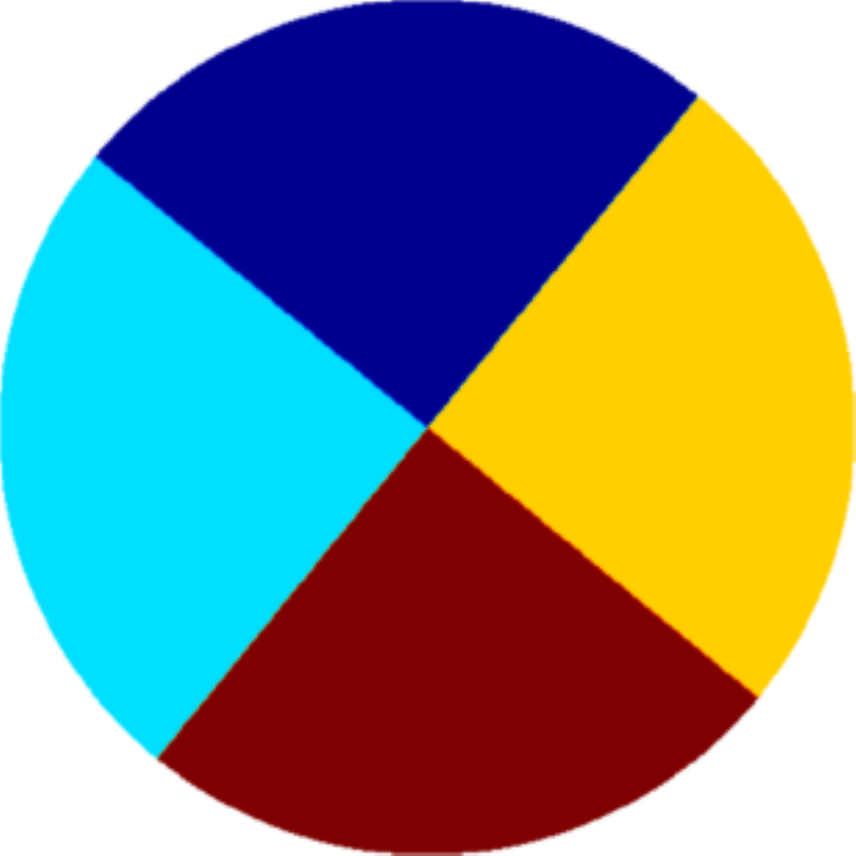} &
 \includegraphics[width = 0.2\textwidth]{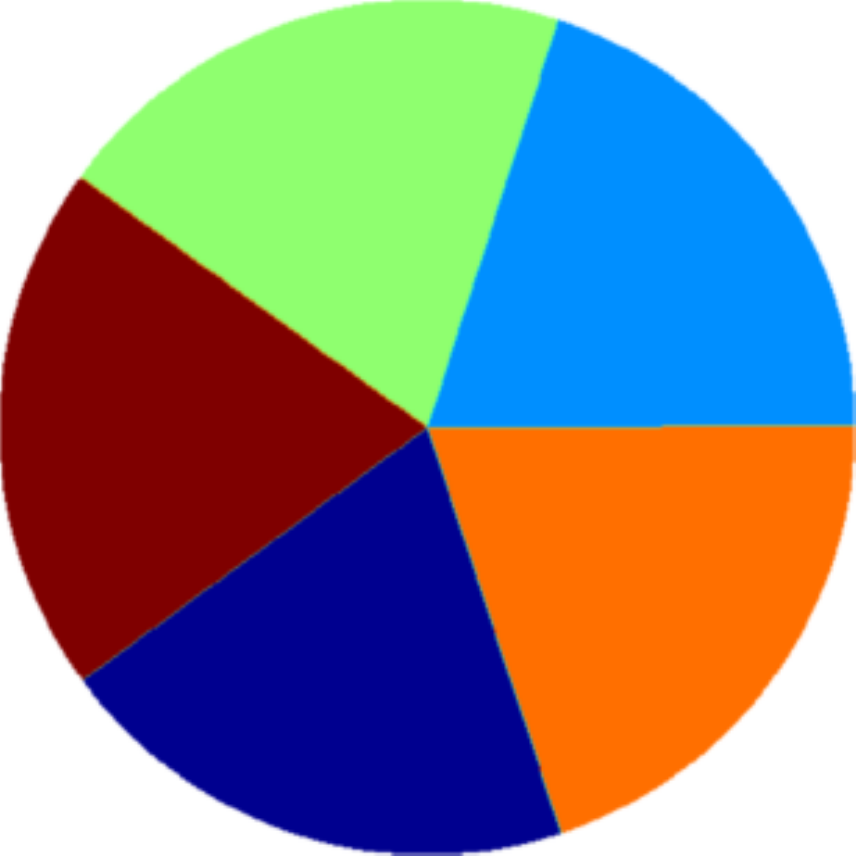}
\end{tabular}
\caption{Candidates for $p$-minimal $k$-partitions of the disk for $p=1$ and $50$.}
\label{fig.disk}
\end{figure}

We illustrate in Figure \ref{fig.equilateral} the results obtained for $p=1,50$ and $k \in\{2,3,4, 5\}$ in the case of the equilateral triangle. Note that except for $k=4$, partitions do not change much their structure. The case $k=4$ for the equilateral triangle is one of the few cases where the topology of the partition changes significantly with $p$, approaching the partition into $4$ equal triangles as $p$ is increasing. 
\begin{figure}[h!]
\centering 
\begin{tabular}{m{1.5cm}m{3cm}m{3cm}m{3cm}m{3cm}m{3cm}}
& \multicolumn{1}{c}{$k=2$} & \multicolumn{1}{c}{$k=3$} & \multicolumn{1}{c}{$k=4$} & \multicolumn{1}{c}{$k=5$}\\
{ $p=1$ }& \includegraphics[width = 0.19\textwidth]{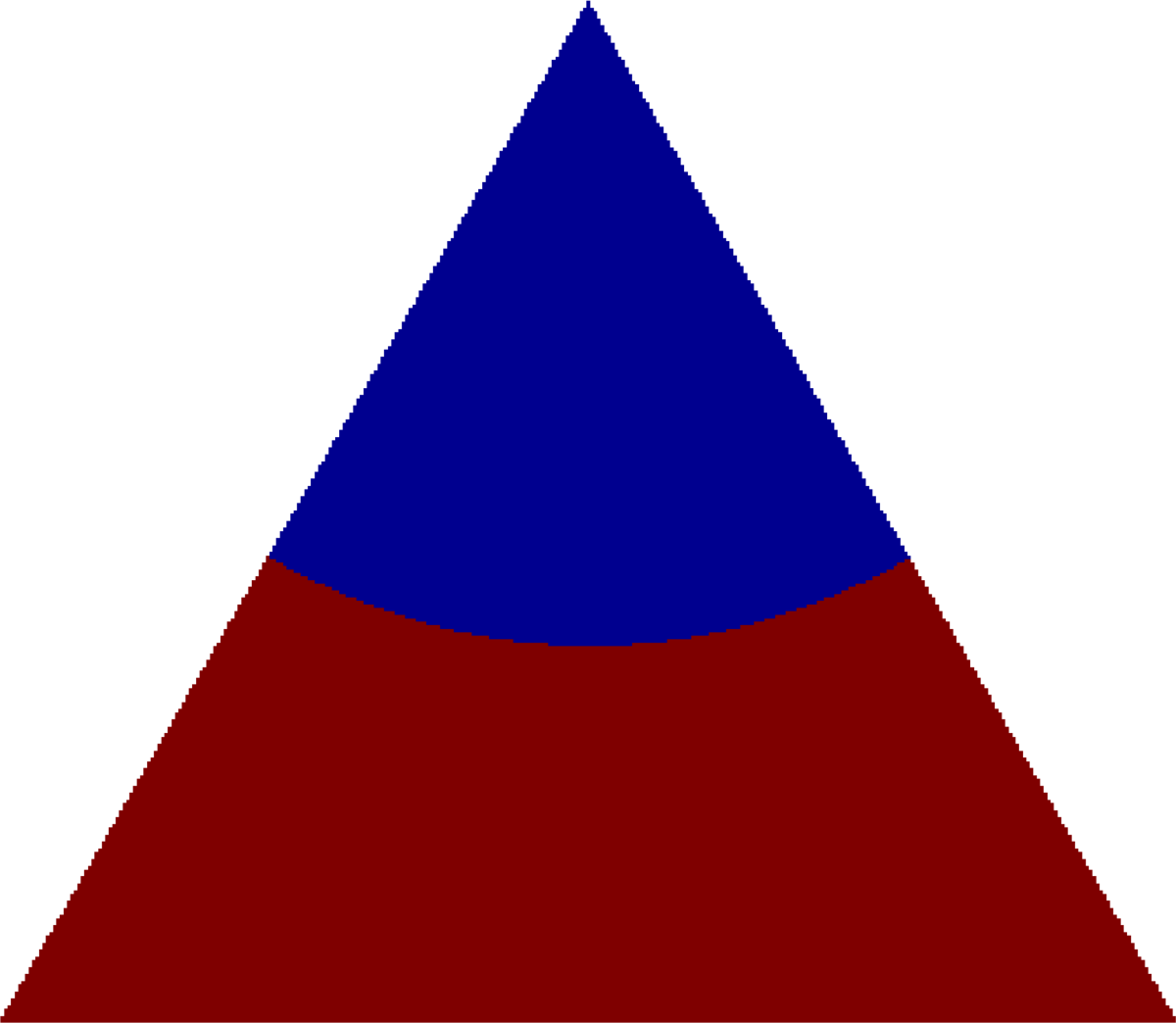} &
 \includegraphics[width = 0.19\textwidth]{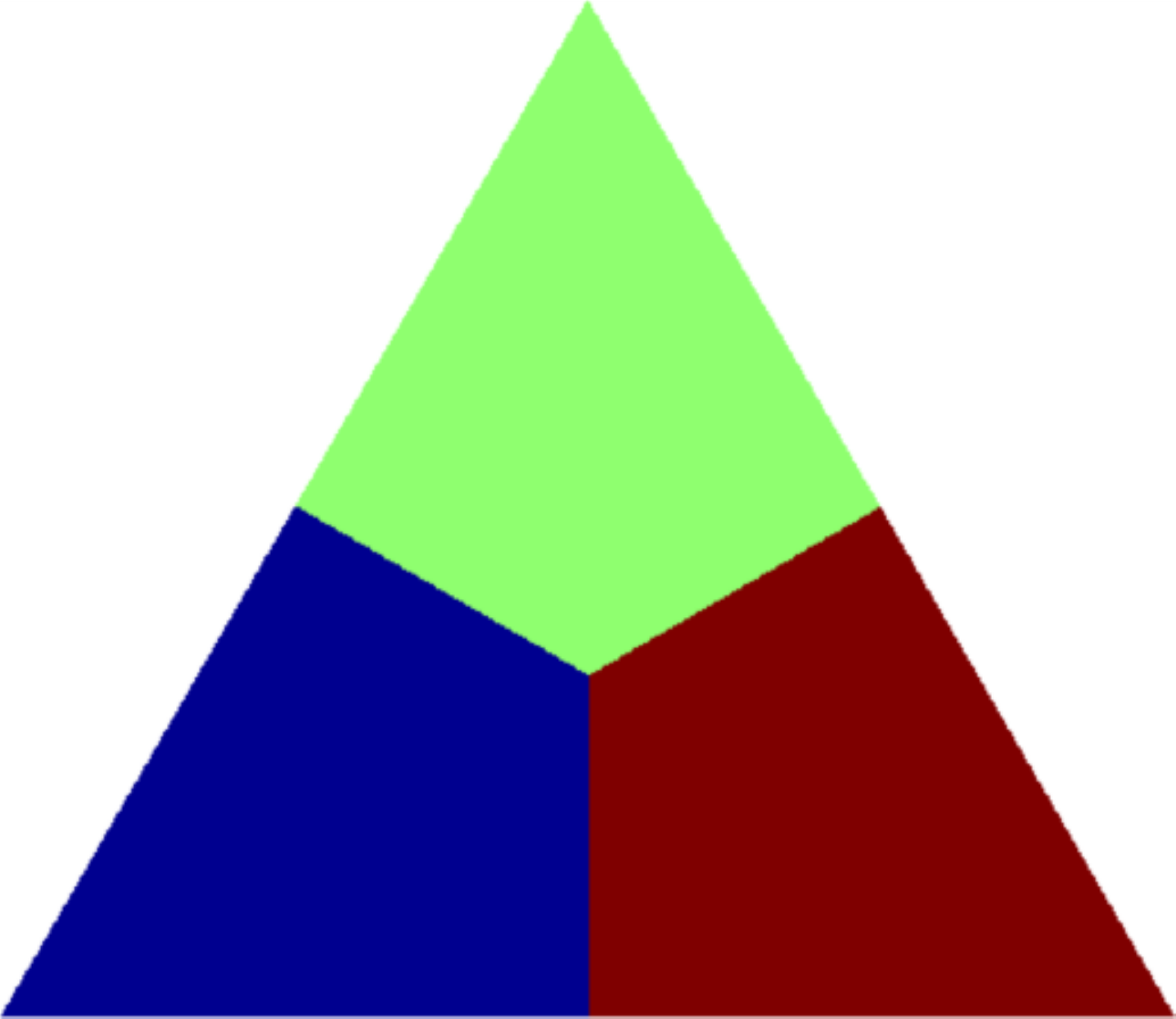} &
 \includegraphics[width = 0.19\textwidth]{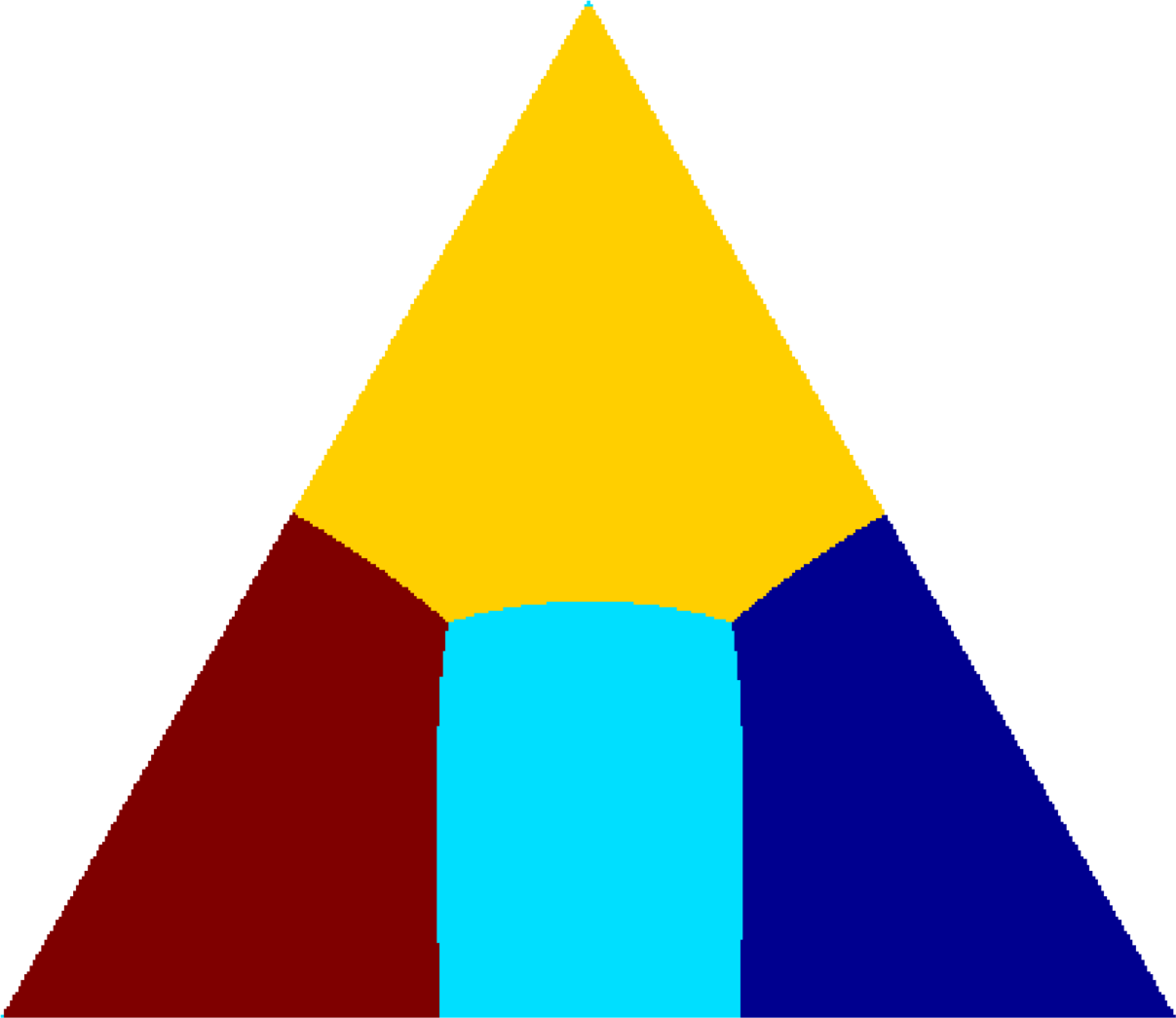} &
 \includegraphics[width = 0.19\textwidth]{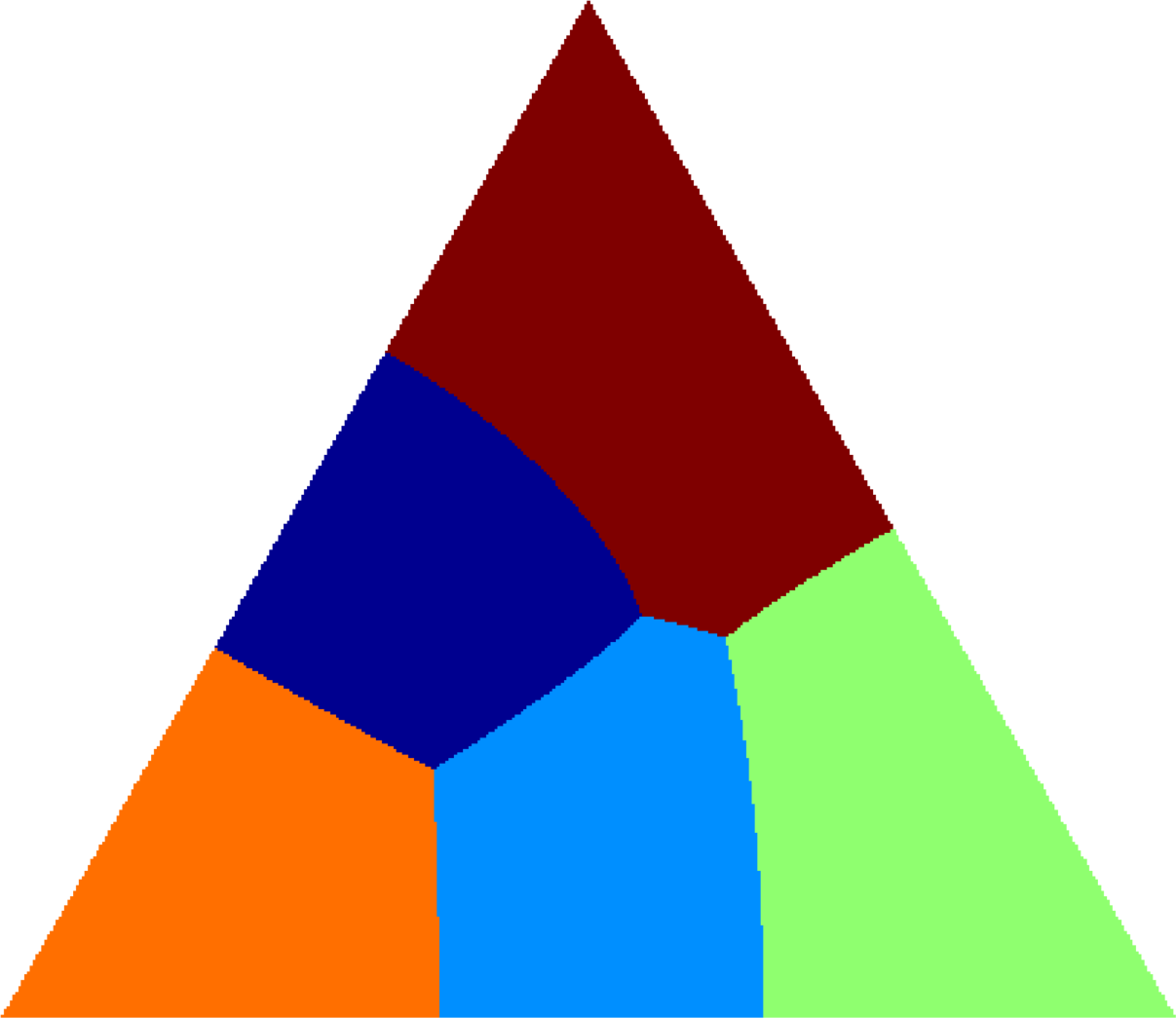}
\\
$p=50$ &  \includegraphics[width = 0.19\textwidth]{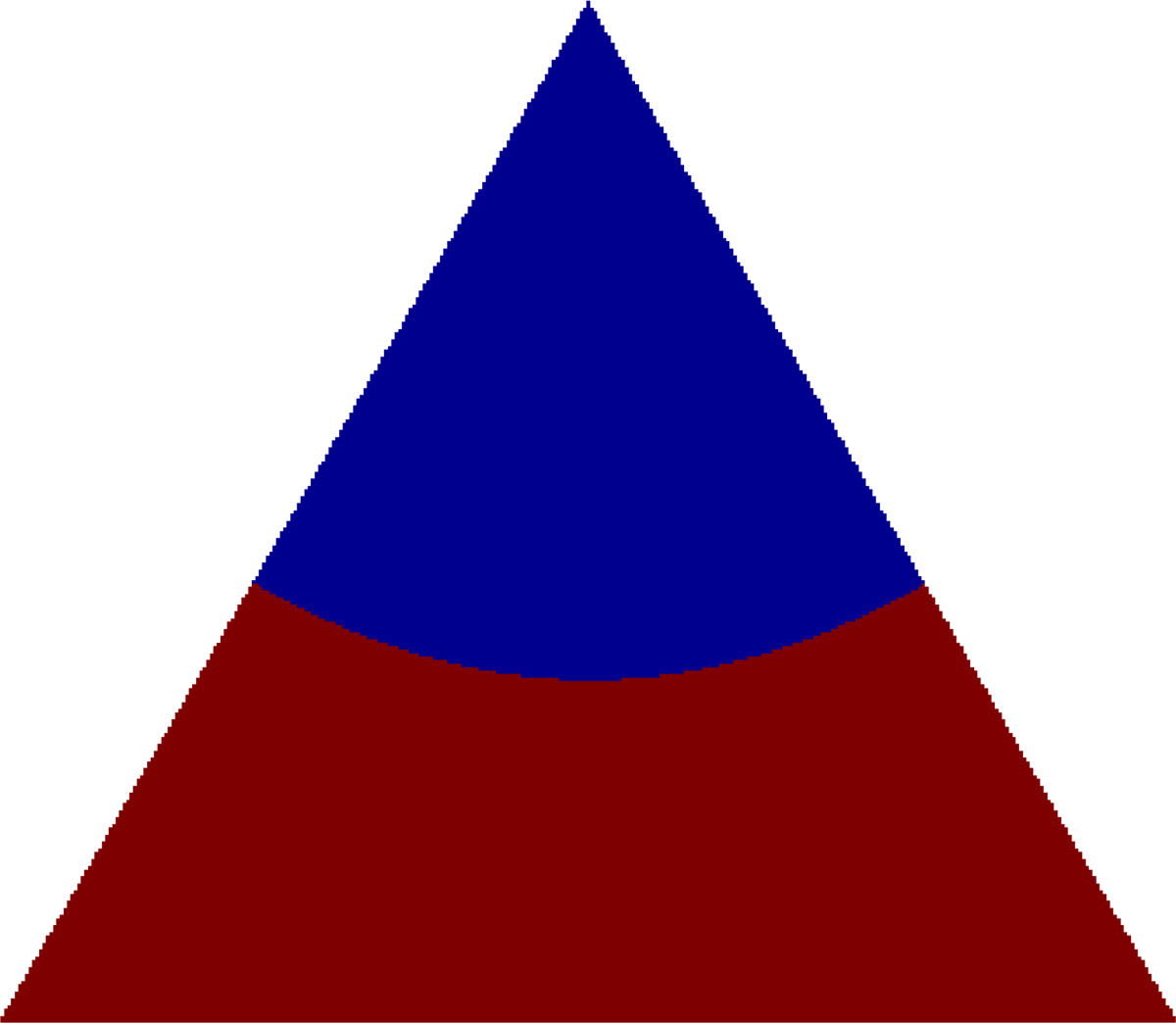} &
 \includegraphics[width = 0.19\textwidth]{sequi3} &
 \includegraphics[width = 0.19\textwidth]{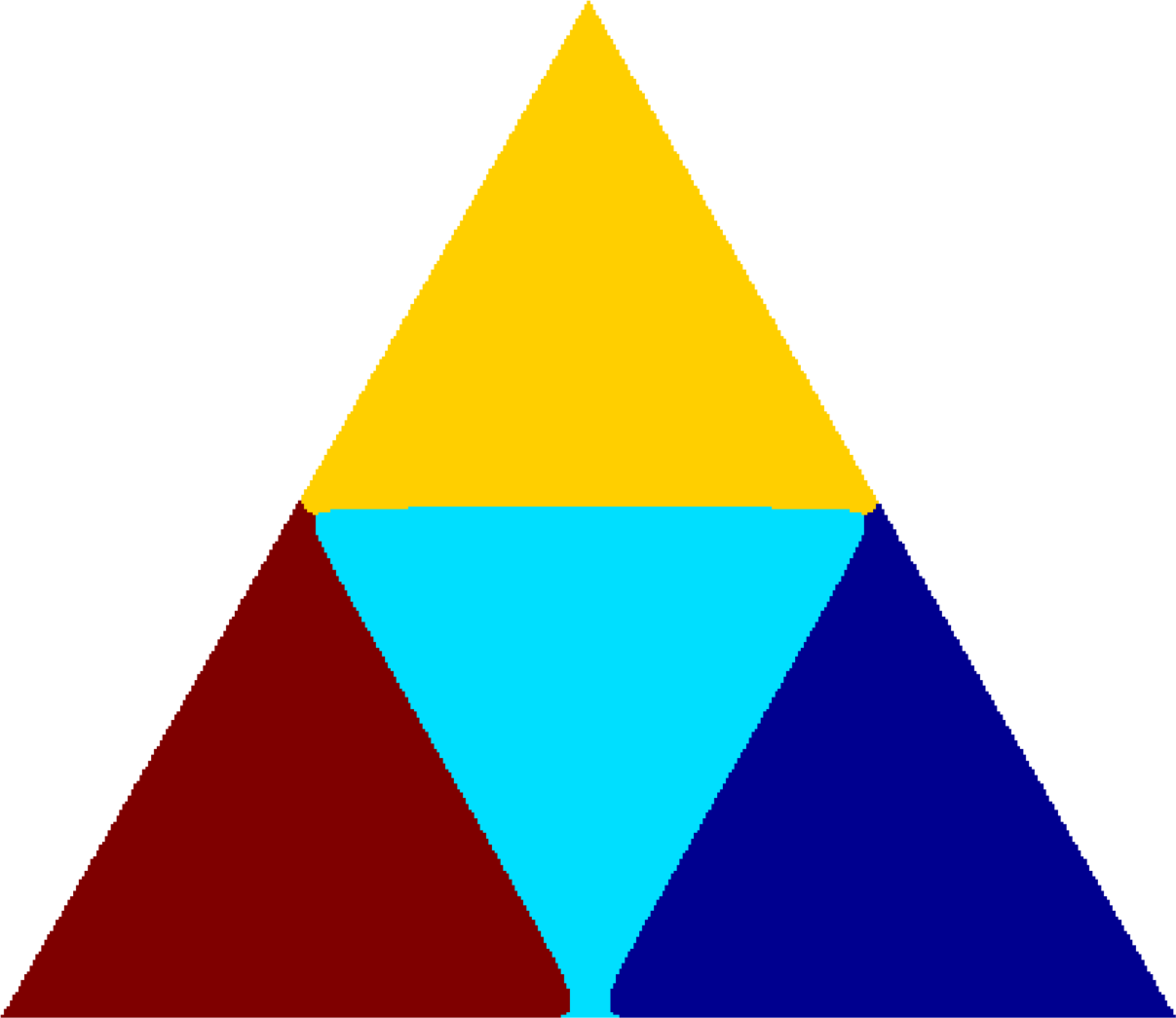} &
 \includegraphics[width = 0.19\textwidth]{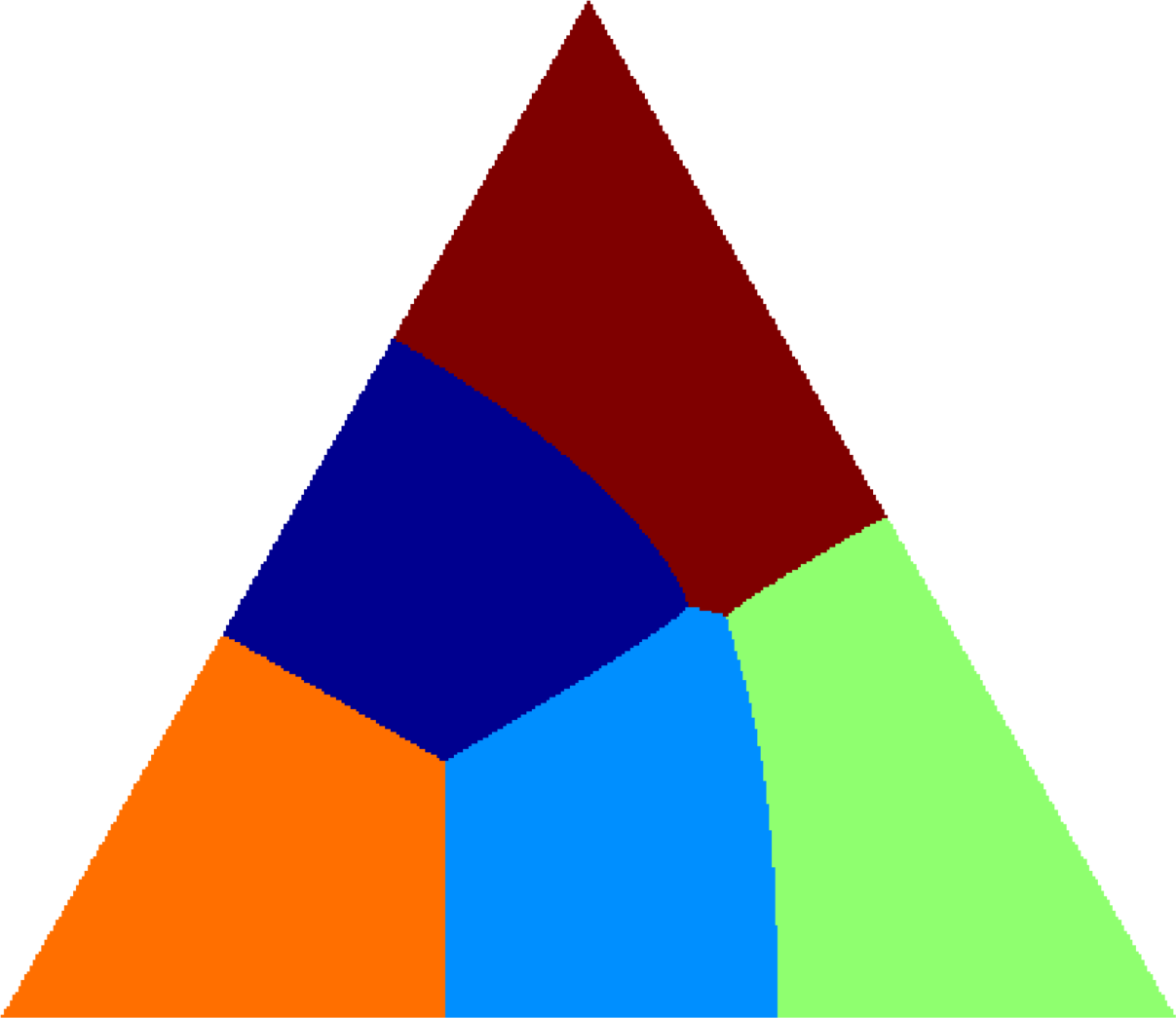}
\end{tabular}
\caption{Candidates for $p$-minimal $k$-partitions of the equilateral triangle when $p=1$ and $50$.}
\label{fig.equilateral}
\end{figure}
In Table~\ref{table-equi}, we analyze the energies of the numerical $p$-minimal $k$-partitions for $p=1,50$. For each partition, we give the energy $\Lambda_{k,p}(\cD^{k,p})$ (which corresponds to the energy for which $\cD^{k,p}$ should be optimal) and the largest first eigenvalue on the cells of $\cD^{k,p}$, that is to say $\Lambda_{k,\infty}(\cD^{k,p})$. 
We can observe that the minimizer for $p=1$ has a larger maximal eigenvalue than the one obtained for $p=50$. This indicates that partitions $\cD^{k,1}$ which minimize $\Lambda_{k,1}$ are not necessarily good candidates for minimizing $\Lambda_{k,\infty}$ and that the candidates $\cD^{k,50}$ give better upper bound for $\fL_{k,\infty}(\Omega)$ than the candidates $\cD^{k,1}$. Indeed, we observe that
 $$ \Lambda_{k,\infty}(\cD^{k,50}) \leq \Lambda_{k,\infty}(\cD^{k,1}),\qquad 2\leq k\leq 5.$$
Furthermore, by definition of $\fL_{k,\infty}(\triangle)$, we have $\fL_{k,\infty}(\triangle) \leq \Lambda_{k,\infty}(\cD^{k,50})$ for any $k$.
In the case $p=50$, the energies $\Lambda_{k,50}$ and $\Lambda_{k,\infty}$ are rather close, which leads one to believe that the numerical $p$-minimal $k$-partition with $p=50$ is a rather good candidate to minimize the maximum of the first eigenvalues $\Lambda_{k,\infty}$.

\begin{table}[h!]
\centering
\begin{tabular}{|c|c|c|c|c|c|c|}
\hline 
&\multicolumn{2}{c|}{$\cD^{k,1}$} &\multicolumn{2}{c|}{$\cD^{k,50}$}\\
\hline
$k$&$\Lambda_{k,1}$&$\Lambda_{k,\infty}$ &$\Lambda_{k,50}$&$\Lambda_{k,\infty}$ \\
\hline
$2$ & $106.62$ & $136.11$  & $123.25$ & $123.38$ \\ \hline
$3$ & $143.05$ & $143.07$  & $143.06$ & $143.07$ \\ \hline
$4$ & $206.15$ & $229.44$  & $209.86$ & $211.71$ \\ \hline
$5$ & $249.62$ & $273.69$  & $251.06$ & $252.68$ \\ \hline
\end{tabular}\\[5pt]
\caption{ Energies of $\cD^{k,p}$ for the equilateral triangle when $p=1$ and $p=50$.}
\label{table-equi}
\end{table}

The situation when $p = 1$ appears to be very different from when $p =\infty$. 
Thus we recall in the following section some theoretical results regarding properties of the partitions minimizing $\Lambda_{k,\infty}$ as well as criteria allowing to decide whether a partition optimal for the max are not optimal for the sum.

\section{Theoretical results}
\label{theory}
In this section, let us recall some theoretical results about the $p$-minimal $k$-partitions.  With these theoretical results we can comment  on the implementation done in the previous section. This is also useful to propose some new adaption of the algorithm in the next section. 
\subsection{Monotonicity}
First of all, let us recall a monotonicity result. 
\begin{thm}
Let $k\geq 1$ and $1 \leq p \leq q <\infty$. We have monotonicity
\begin{itemize}
\item with respect to the domain
$$\Omega\subset\tilde\Omega\quad \Rightarrow\quad \fL_{k,p}(\tilde\Omega)\leq\fL_{k,p}(\Omega);$$
\item with respect to the number $k$ of domains of the partition
$$\fL_{k,p}(\Omega) < \fL_{k+1,p}(\Omega);$$
\item with respect to the $p$-norm 
\begin{equation}\label{eq.monop}
\frac{1}{k^{1/p}}\mathfrak L_{k,\infty}(\Omega) \leq \mathfrak L_{k,p}(\Omega) \leq \mathfrak L_{k,q}(\Omega) \leq \mathfrak L_{k,\infty}(\Omega),\qquad \forall 1\leq p\leq q< \infty.
\end{equation}
\end{itemize}
\end{thm}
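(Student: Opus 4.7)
The plan is to handle the three monotonicity statements separately. The first and third are direct consequences of set inclusion and of standard $\ell^p$-mean comparisons, while the strict monotonicity in $k$ is the genuinely delicate point.

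For the monotonicity in the domain, I take a regular $p$-minimal $k$-partition $\cD^* = (D_1^*,\ldots,D_k^*)$ of $\Omega$, whose existence is guaranteed by Theorem~\ref{thm.ex}. Since $\Omega \subset \tilde\Omega$, the family $\cD^*$ still consists of connected, open, mutually disjoint subsets of $\tilde\Omega$ and is admissible for the relaxed optimization problem on $\tilde\Omega$ described by~\eqref{eq.partiincl}. By the Remark following Theorem~\ref{thm.ex}, this relaxed infimum coincides with $\mathfrak L_{k,p}(\tilde\Omega)$, so
\[ \mathfrak L_{k,p}(\tilde\Omega) \;\leq\; \Lambda_{k,p}(\cD^*) \;=\; \mathfrak L_{k,p}(\Omega). \]

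The chain \eqref{eq.monop} reduces to a pointwise inequality at the level of each partition: for every $\cD \in \fP_k(\Omega)$ and every $1 \leq p \leq q < \infty$,
\[ \frac{1}{k^{1/p}}\,\Lambda_{k,\infty}(\cD) \;\leq\; \Lambda_{k,p}(\cD) \;\leq\; \Lambda_{k,q}(\cD) \;\leq\; \Lambda_{k,\infty}(\cD), \]
which is the standard monotonicity of normalized $\ell^p$-means applied to the finite positive sequence $\bigl(\lambda_1(D_i)\bigr)_{1\leq i\leq k}$. Taking the infimum over $\cD \in \fP_k(\Omega)$ in each of these three inequalities yields \eqref{eq.monop}.

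The strict inequality in $k$ rests on a merging argument, which I carry out first for $p<\infty$. Let $\cD^* = (D_1^*, \ldots, D_{k+1}^*)$ be a regular $(k+1)$-minimal partition and fix an index $i_0$ with $\lambda_1(D_{i_0}^*) = \max_i \lambda_1(D_i^*)$. By regularity, $D_{i_0}^*$ shares a portion of its boundary with at least one other cell $D_{j_0}^*$, so I can form the merged cell $D' := {\rm Int}\bigl(\overline{D_{i_0}^* \cup D_{j_0}^*}\bigr)$. Replacing the pair $(D_{i_0}^*, D_{j_0}^*)$ by $D'$ produces a $k$-partition $\cD'$ of $\Omega$, and strict domain monotonicity of the first Dirichlet eigenvalue under strict open inclusion gives $\lambda_1(D') < \min\bigl(\lambda_1(D_{i_0}^*), \lambda_1(D_{j_0}^*)\bigr)$. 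Combining this with $\lambda_1(D_i^*) \leq \lambda_1(D_{i_0}^*)$ for every $i$, a short computation (comparing the normalization factors $1/k$ and $1/(k+1)$) yields
\[ \Lambda_{k,p}(\cD')^p \;<\; \Lambda_{k+1,p}(\cD^*)^p \;=\; \mathfrak L_{k+1,p}(\Omega)^p, \]
hence $\mathfrak L_{k,p}(\Omega) \leq \Lambda_{k,p}(\cD') < \mathfrak L_{k+1,p}(\Omega)$.

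The case $p = \infty$ is the main obstacle: since $\infty$-minimal partitions are equipartitioned, with $\lambda_1(D_i^*) = \mathfrak L_{k+1,\infty}(\Omega)$ for every $i$, the merging procedure above need not strictly decrease the maximum (the remaining untouched cells still realize it). Simply passing to the limit $p \to \infty$ in the finite-$p$ bound only yields the non-strict inequality $\mathfrak L_{k,\infty}(\Omega) \leq \mathfrak L_{k+1,\infty}(\Omega)$, so to obtain strictness one has to invoke the finer arguments based on the nodal and equipartition structure of $\infty$-minimal partitions developed in~\cite{HelHofTer09}; this is the step I expect to be the most technical to fill in.
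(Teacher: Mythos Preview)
The paper itself offers essentially no proof of this theorem beyond a one-line sketch of the third item (the $p$-norm chain), and your argument for that part reproduces the paper's exactly: the pointwise power-mean inequality for each partition, followed by taking the infimum. Your treatments of domain monotonicity (via the relaxed formulation in the Remark after Theorem~\ref{thm.ex}) and of strict monotonicity in $k$ for finite $p$ (via merging a maximal cell with a neighbour) are correct and fill in what the paper simply leaves as recalled background.

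Where your proposal falls short is the case $p=\infty$ of the second item, which you flag as a gap requiring ``finer arguments \ldots\ from~\cite{HelHofTer09}''. In fact you already have everything needed. After merging two neighbouring cells of a $\infty$-minimal $(k+1)$-partition (which is an equipartition), the resulting $k$-partition $\cD'$ satisfies $\Lambda_{k,\infty}(\cD') = \mathfrak L_{k+1,\infty}(\Omega)$ but is \emph{not} an equipartition, since the merged cell has strictly smaller first eigenvalue than the untouched ones. If one had $\mathfrak L_{k,\infty}(\Omega) = \mathfrak L_{k+1,\infty}(\Omega)$, then $\cD'$ would itself be a $\infty$-minimal $k$-partition, hence an equipartition by Proposition~\ref{prop.equip} --- a contradiction. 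Thus $\mathfrak L_{k,\infty}(\Omega) < \Lambda_{k,\infty}(\cD') = \mathfrak L_{k+1,\infty}(\Omega)$. The strict inequality therefore follows directly from the equipartition property, with no need for nodal-structure arguments.
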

The proof of the third point is based on the monotonicity for the $p$-norm. Indeed, 
for any partition $\cD\in\fP_{k}(\Omega)$ and for any $1\leq p\leq q< \infty$, we have
\begin{equation}
\frac 1{k^{1/p}} \Lambda_{k,\infty}(\cD) \leq \Lambda_{k,p}(\cD) \leq 
\Lambda_{k,p}(\cD) \leq \Lambda_{k,\infty}(\cD).
\end{equation}
We notice that the results of Table~\ref{table-equi} are coherent with \eqref{eq.monop} since $\Lambda_{k,p}(\cD^{k,p})$ should be close to $\fL_{k,p}(\triangle)$ and we observe that $\Lambda_{k,1}(\cD^{k,1}) \leq\Lambda_{k,50}(\cD^{k,50})$.
The monotonicity $p\mapsto \fL_{k,p}(\Omega)$ does not imply the monotonicity $p\mapsto \Lambda_{k,p}(\cD^{k,p})$. Nevertheless, if $\cD^{k,p}$ is very close to the $p$-minimal $k$-partition, we can observe numerically such a monotonicity.

\subsection{Equipartition}
We say that $\cD=(D_{1},\ldots, D_{k})$ is an \emph{equipartition} if the first eigenvalue on each subdomain $\lambda_{1}(D_{j})$ are equal. The equipartitions play an important role in these optimization problems. 
Indeed, as soon as the $p$-minimal $k$-partition is an equipartition, it is minimal for any larger $q$. Furthermore any $\infty$-minimal $k$-partition is an equipartition (see \cite[Chap. 10]{BookEigen}):
\begin{prop}\label{prop.equip}\ 
\begin{itemize}
\item If ${{\mathcal D}^*=(D_{i})_{1\leq i\leq k}}$ is a {$\infty$-minimal $k$-partition},
then ${\mathcal D}^*$ is an equipartition:
$${\lambda_{1}(D_i)=\fL_{k,\infty}(\Omega)\,,\qquad \mbox{ for any }\quad1\leq i\leq k}.$$
\item Let $p\geq 1$ and ${\mathcal D}^*$ a $p$-minimal $k$-partition.
If ${\mathcal D}^*$ is an equipartition, then 
$$\mathfrak{L}_{k,q}(\Omega)=\mathfrak{L}_{k,p}(\Omega),\qquad\mbox{ for any }\quad q\geq p.$$
\end{itemize}
\label{properties-min}
\end{prop}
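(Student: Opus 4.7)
My plan is to treat the second assertion first, as it follows directly from the monotonicity \eqref{eq.monop}, and then focus on the first, which requires a perturbation argument.

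For the second assertion, suppose $\mathcal{D}^*=(D_i^*)$ is a $p$-minimal $k$-partition and an equipartition with common eigenvalue $\lambda$. Since all the $\lambda_1(D_i^*)$ coincide, $\Lambda_{k,q}(\mathcal{D}^*)=\lambda$ for every $q\in[1,\infty]$; in particular $\lambda=\Lambda_{k,p}(\mathcal{D}^*)=\mathfrak L_{k,p}(\Omega)$. For any $q\geq p$, combining $\mathfrak L_{k,q}(\Omega)\le \Lambda_{k,q}(\mathcal{D}^*)$ with the monotonicity $\mathfrak L_{k,p}(\Omega)\le\mathfrak L_{k,q}(\Omega)$ from \eqref{eq.monop} yields
\[
\lambda \;=\; \mathfrak L_{k,p}(\Omega) \;\le\; \mathfrak L_{k,q}(\Omega) \;\le\; \Lambda_{k,q}(\mathcal{D}^*) \;=\; \lambda,
\]
forcing equality throughout.

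For the first assertion, I would argue by contradiction. Assume $\mathcal{D}^*=(D_i)$ is $\infty$-minimal with $M:=\mathfrak L_{k,\infty}(\Omega)$, but that $I:=\{i:\lambda_1(D_i)=M\}$ is a proper nonempty subset of $\{1,\ldots,k\}$. By Theorem~\ref{thm.ex} we may take $\mathcal{D}^*$ regular. Since $\Omega$ is connected and the partition is strong, at least one pair $(i_0,j_0)$ with $i_0\in I$, $j_0\notin I$ shares a nontrivial boundary arc $\gamma$. I would then perform a small normal deformation of $\gamma$ that enlarges $D_{i_0}$ at the expense of $D_{j_0}$, producing new cells $\tilde D_{i_0}\supsetneq D_{i_0}$ and $\tilde D_{j_0}\subsetneq D_{j_0}$. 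Strict domain monotonicity of the Dirichlet first eigenvalue, which comes from strict positivity of the principal eigenfunction on open subsets, gives $\lambda_1(\tilde D_{i_0})<\lambda_1(D_{i_0})=M$, while continuity of $\lambda_1$ under small boundary perturbations preserves $\lambda_1(\tilde D_{j_0})<M$ for sufficiently small deformations. The resulting partition is still admissible, and its set of indices attaining $M$ has strictly smaller cardinality. Iterating at most $|I|$ times produces a $k$-partition with all first eigenvalues strictly below $M$, contradicting the definition of $\mathfrak L_{k,\infty}(\Omega)$.

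The main obstacle is controlling the iteration: each deformation shrinks a non-$I$ cell and therefore pushes its eigenvalue upward, so one must ensure that after finitely many steps no cell previously outside $I$ has been inflated to the level $M$. This can be handled by fixing $\epsilon>0$ smaller than $M-\max_{i\notin I}\lambda_1(D_i)$ and choosing the amplitude of each successive perturbation small enough, using uniform continuity of $\lambda_1$ under regular boundary perturbations, so that every modified eigenvalue on the complement of $I$ stays below $M-\epsilon/2$. A cleaner variant would perform all deformations simultaneously after assigning to each $i\in I$ a distinct neighbor $f(i)\notin I$ along a nontrivial interface; when such a system of distinct neighbors exists, the single-step argument combines monotonicity and continuity directly and avoids the bookkeeping of the iteration.
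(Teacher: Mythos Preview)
The paper does not prove Proposition~\ref{prop.equip}; it merely cites \cite[Chap.~10]{BookEigen}. So there is no in-paper argument to compare against, and I assess your proof on its own.

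Your treatment of the second assertion is correct and is exactly the one-line argument from monotonicity.

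For the first assertion your strategy is the standard one, and the single step (enlarge a maximal cell across an interface with a strictly sub-maximal neighbour, using strict monotonicity and continuity) is fine. The gap is in the control of the iteration. You write that each deformation ``shrinks a non-$I$ cell'' and you bound those cells by $M-\epsilon/2$. But at step $\ell$ the neighbour $j_\ell$ of $i_\ell\in I_\ell$ need not lie in the original $J=I^c$: it may be a cell $i_m\in I$ already processed at step $m<\ell$. Take for instance three cells in a row with $I=\{1,2\}$, $J=\{3\}$; after enlarging $D_2$ into $D_3$ the only neighbour of $D_1$ is $D_2$. In that case your $\epsilon$-margin, which was defined from $\max_{i\notin I}\lambda_1(D_i)$, gives no control: $\lambda_1(\tilde D_{i_m})$ may sit at $M-\delta_m$ with $\delta_m$ arbitrarily small, and shrinking it can send it back to $M$. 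Your ``cleaner variant'' with an injection $f:I\to J$ along interfaces does not resolve this either, since such a matching need not exist (already impossible when $|I|>|J|$, and even for $|I|\le|J|$ Hall's condition may fail for the adjacency graph).

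The repair is cheap: drop the uniform $\epsilon$ and choose the amplitude at step $\ell$ adaptively, small enough that the shrunk cell $j_\ell$ (whatever its origin) keeps $\lambda_1(\tilde D_{j_\ell})<M$; this is always possible because its current eigenvalue is strictly below $M$ and $\lambda_1$ is continuous under small perturbations. Strict monotonicity still forces $\lambda_1(\tilde D_{i_\ell})<M$, so $|I_\ell|$ drops by one, and since there are at most $|I|$ steps no uniform lower bound on the gaps is needed. Alternatively, one may cascade the perturbations along a spanning tree of the adjacency graph rooted in $J$, which gives a one-shot simultaneous deformation without the matching hypothesis.
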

Consequently, it is natural to set
\begin{equation}\label{eq.p}
p_{\infty}(\Omega,k) = \inf\{p\geq1, \mathfrak{L}_{k,p}(\Omega)=\mathfrak{L}_{k,\infty}(\Omega)\}.
\end{equation}
Let us apply this result in the case of the disk, see Figure~\ref{fig.disk}. If we can prove that the $p$-minimal $k$-partition for the norm $p=1$ and $2\leq k\leq 5$, is the equipartition with $k$ angular sectors, then according to Proposition~\ref{prop.equip}, this partition is minimal for any {$p\geq1$ and $p_{\infty}(\Circle,k)=1$}. 
In the case of the equilateral triangle, Table~\ref{table-equi} makes us think that the $p$-minimal $k$-partition is not an equipartition when $k=2,4,5$ and thus  $p_{\infty}(\triangle,k)\geq 50$ in that case.

\subsection{Nodal partition}
When dealing with optimal partitioning problems for functionals depending on spectral quantities it is quite natural to consider nodal partitions. These partitions give, at least, some upper bounds of the optimal energies. 
Let us recall the definition of a nodal partition.
\begin{defin}
Let  $u$ be an eigenfunction of the Dirichlet-Laplacian on $\Omega$. 
The nodal sets of $u$ are the components of 
$$ \Omega\setminus N(u)\qquad\mbox{ with }\qquad N(u) = \overline{\{ x\in \Omega|\, u(x)=0\}}.$$
The partition composed by the nodal sets is called nodal partition.
\end{defin}
Nevertheless, to be useful, it is important to have {\Gn information} about the number of components of the nodal partitions. 
According Courant's theorem, any eigenfunction $u$ associated with $\lambda_{k}(\Omega)$ has at most $k$ nodal domains. 
An eigenfunction is said {\it Courant sharp} if it has exactly $k$ nodal domains. 
The following result, proved by Helffer-Hoffmann--Ostenhof-Terracini \cite{HelHofTer09} gives some bounds using the eigenvalues of the Dirichlet-Laplacian on the whole domain $\Omega$ and {\Gn gives explicitly} the cases when we can determine a $\infty$-minimal $k$-partition. 
\begin{thm}\label{thm.HHOT}
For $k\geq1$, $L_k(\Omega)$ denotes the smallest eigenvalue (if any) for which there exists an eigenfunction with $k$ nodal domains. We set $L_k(\Omega) =+\infty$ if there is no eigenfunction with $k$ nodal domains. Then we have
\begin{equation}\label{ineq.HHOT}
\lambda_{k}(\Omega)\leq\fL_{k,\infty}(\Omega) \leq L_{k}(\Omega).
\end{equation}
If $\mathfrak L_{k,\infty}(\Omega)=L_k(\Omega)$ or $\mathfrak L_{k,\infty}(\Omega)=\lambda_k(\Omega)$, then 
${\lambda_k(\Omega)=\mathfrak L_{k,\infty}(\Omega)=L_k(\Omega)}$ and then any Courant sharp eigenfunction associated with $\lambda_{k}(\Omega)$ produces a $\infty$-minimal $k$-partition. 
\end{thm}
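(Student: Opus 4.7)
I would proceed in three steps: the two inequalities in \eqref{ineq.HHOT}, and then the analysis of the equality case.

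For the lower bound $\lambda_k(\Omega)\leq\fL_{k,\infty}(\Omega)$, the natural tool is the Courant--Fischer min-max. Given any partition $\cD=(D_1,\ldots,D_k)\in\fP_k(\Omega)$, let $u_j$ denote the positive first Dirichlet eigenfunction on $D_j$, extended by zero to $\Omega$. The $u_j$'s have pairwise disjoint supports and span a $k$-dimensional subspace $V\subset H^1_0(\Omega)$; on $V$ the Rayleigh quotient reads as a convex combination of the $\lambda_1(D_j)$'s and is therefore bounded by $\Lambda_{k,\infty}(\cD)$. The min-max principle then gives $\lambda_k(\Omega)\leq\Lambda_{k,\infty}(\cD)$, and taking an infimum over $\cD$ yields the bound.

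For the upper bound $\fL_{k,\infty}(\Omega)\leq L_k(\Omega)$, I may assume $L_k(\Omega)<+\infty$. By definition there is an eigenfunction $u$ at level $L_k(\Omega)$ with exactly $k$ nodal domains $D_1,\ldots,D_k$. On each $D_j$, $u$ is one-signed and vanishes on $\partial D_j$, so $u|_{D_j}$ is a first Dirichlet eigenfunction of $D_j$ with eigenvalue $L_k(\Omega)$, and the nodal partition $\cD_u$ belongs to $\fP_k(\Omega)$ with energy exactly $L_k(\Omega)$.

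For the equality case, assume $\fL_{k,\infty}(\Omega)\in\{\lambda_k(\Omega),L_k(\Omega)\}$. Using Theorem~\ref{thm.ex}, I pick a regular optimal $k$-partition $\cD^*$; by Proposition~\ref{prop.equip} it is an equipartition at level $\fL_{k,\infty}(\Omega)$. The associated subspace $V$, built as above from the first eigenfunctions on each cell, then satisfies $R\equiv\fL_{k,\infty}(\Omega)$ on $V$ and realises the min-max value for $\lambda_k(\Omega)$. By polarisation, $V$ is isotropic for the bilinear form $B(v,w):=\int_\Omega(\nabla v\cdot\nabla w-\fL_{k,\infty}(\Omega)\,vw)$ attached to the shifted Laplacian. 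A maximal-isotropic-subspace argument, together with the regularity of $\cD^*$ (smooth interfaces, equal-angle meeting at singular points), then produces coefficients $(c_j)$ such that $\phi=\sum_{j}c_j u_j$ is a genuine eigenfunction of $-\Delta$ on $\Omega$ at level $\fL_{k,\infty}(\Omega)$. Its nodal set coincides with the boundary of the partition, so $\phi$ has exactly $k$ nodal domains; hence $L_k(\Omega)\leq\fL_{k,\infty}(\Omega)$, and combined with \eqref{ineq.HHOT} this forces $\lambda_k(\Omega)=\fL_{k,\infty}(\Omega)=L_k(\Omega)$. The final statement on Courant-sharp eigenfunctions is then immediate: such an eigenfunction's nodal partition is, by the construction of the upper bound, a $k$-partition of energy $\lambda_k(\Omega)=\fL_{k,\infty}(\Omega)$, and is therefore optimal.

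The main obstacle lies in the passage from the piecewise eigenfunction $\sum_j c_j u_j$ to a global eigenfunction $\phi$ of $\Omega$: a priori there is a normal-derivative mismatch along each common interface of the partition. The matching condition reduces to a compatibility relation between $\partial_n u_i$ and $\partial_n u_j$ on the interface $\partial D_i^*\cap\partial D_j^*$, and enforcing it rests essentially on the regularity of the optimal partition and on the variational extremality of $V$; this step is the technical core of the Helffer--Hoffmann-Ostenhof--Terracini argument.
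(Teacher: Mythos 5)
The paper gives no proof of Theorem~\ref{thm.HHOT}; it is quoted from \cite{HelHofTer09}, so your attempt can only be measured against the known argument of Helffer--Hoffmann-Ostenhof--Terracini. Your treatment of the two inequalities in \eqref{ineq.HHOT} is correct and is exactly the standard one: the zero-extensions of the first eigenfunctions of the cells span a $k$-dimensional subspace of $H^1_0(\Omega)$ on which the Rayleigh quotient is a convex combination of the $\lambda_1(D_j)$, whence $\lambda_k(\Omega)\leq\Lambda_{k,\infty}(\cD)$ by min--max; and the nodal partition of an eigenfunction with $k$ nodal domains is an admissible $k$-partition of energy exactly $L_k(\Omega)$.

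The equality case, however, contains a genuine gap. First, the ``isotropy'' of $V$ for the shifted form $B$ is true for \emph{every} equipartition at level $\mu$, minimal or not, and whether or not $\mu$ is an eigenvalue of $\Omega$; it therefore cannot, by itself or via a ``maximal isotropic subspace argument,'' produce coefficients making $\sum_j c_j u_j$ a global eigenfunction -- the normal-derivative mismatch you mention at the end is precisely the whole difficulty, and deferring it leaves the key step unproved. Moreover your assertion that $V$ ``realises the min-max value for $\lambda_k(\Omega)$'' is only available when $\fL_{k,\infty}(\Omega)=\lambda_k(\Omega)$; in the other branch, $\fL_{k,\infty}(\Omega)=L_k(\Omega)$ with a priori $\lambda_k(\Omega)<L_k(\Omega)$, this is exactly what must be ruled out, and your sketch does not address it (in \cite{HelHofTer09} this half requires a separate, more delicate deformation argument on the nodal partition of the eigenfunction realizing $L_k$). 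Second, in the branch $\fL_{k,\infty}(\Omega)=\lambda_k(\Omega)$ the correct and much simpler route avoids interface matching and regularity of $\cD^*$ altogether: choose $(c_j)\neq 0$ so that $\phi=\sum_j c_j u_j$ is $L^2$-orthogonal to the first $k-1$ eigenfunctions of $\Omega$ (that is $k-1$ linear conditions on $k$ unknowns); since $\cD^*$ is an equipartition at level $\lambda_k(\Omega)$, the Rayleigh quotient of $\phi$ equals $\lambda_k(\Omega)$, so by the equality case of the variational characterization $\phi$ is an eigenfunction associated with $\lambda_k(\Omega)$; unique continuation then forbids $\phi$ from vanishing on any cell, so all $c_j\neq0$ and $\phi$ has exactly $k$ nodal domains, giving $L_k(\Omega)\leq\lambda_k(\Omega)$ and hence the three equalities. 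As written, your proof establishes \eqref{ineq.HHOT} but not the second half of the theorem.
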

Consequently, if there exists a Courant sharp eigenfunction associated with the $k$-th eigenvalue, then the $\infty$-minimal $k$-partition is nodal. Otherwise the $\infty$-minimal $k$-partition is not nodal. 
Note that we always have $\lambda_{2}(\Omega) = L_{2}(\Omega)$ (since the second eigenfunctions has exactly two nodal domains), then any $\infty$-minimal $2$-partition is nodal and 
\begin{equation}\label{eq.k2}
\fL_{2,\infty}(\Omega) = \lambda_{2}(\Omega).
\end{equation}
As soon as $k\geq3$, it is not so easy and it is then important to determine for which $k$ we have equality $\lambda_{k}(\Omega)=L_{k}(\Omega)$. Pleijel \cite{Plei56} established that it is impossible for $k$ large: 
\begin{thm}
There exists $k_{0}$ such that $\lambda_{k}(\Omega)<L_{k}(\Omega)$ for $k\geq k_{0}$.
\end{thm}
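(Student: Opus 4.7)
The plan is to combine the Faber--Krahn inequality with Weyl's asymptotic law, reaching a contradiction with the Courant-sharp property for $k$ large enough. The key numerical fact driving the argument is that the first Dirichlet eigenvalue of the disk of area $1$ equals $\pi j_{0,1}^2$, where $j_{0,1}\approx 2.4048$ is the first positive zero of $J_0$, so that $j_{0,1}^2>4$; this strict inequality is what makes Pleijel's theorem work.

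First, I would set up the contradiction: suppose there exists an arbitrarily large $k$ with $\lambda_k(\Omega)=L_k(\Omega)$. By the definition of $L_k(\Omega)$, there is an eigenvalue $\lambda_j(\Omega)$ admitting an eigenfunction $u$ with exactly $k$ nodal domains $D_1,\ldots,D_k$, and $\lambda_j(\Omega)=L_k(\Omega)=\lambda_k(\Omega)$. Courant's nodal domain theorem forces $k\le j$, and since eigenvalues are non-decreasing with equality only on multiplicity plateaux, we may use $\lambda_j(\Omega)=\lambda_k(\Omega)$ as the common value. On each nodal domain $D_i$, the restriction $u|_{D_i}$ vanishes on $\partial D_i$ and has constant sign, hence is a positive (up to sign) first Dirichlet eigenfunction of $D_i$, giving $\lambda_1(D_i)=\lambda_k(\Omega)$ for every $i$.

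Next I would invoke the Faber--Krahn inequality in $\mathbb{R}^2$: for every open bounded set $D$,
\begin{equation*}
\lambda_1(D)\,|D|\;\ge\;\pi\,j_{0,1}^2.
\end{equation*}
Applying this to each $D_i$ and summing yields
\begin{equation*}
k\,\pi\,j_{0,1}^2 \;\le\; \lambda_k(\Omega)\sum_{i=1}^{k}|D_i|\;\le\;\lambda_k(\Omega)\,|\Omega|,
\end{equation*}
that is, $k/\lambda_k(\Omega)\le |\Omega|/(\pi j_{0,1}^2)$. Now Weyl's law
\begin{equation*}
\lim_{k\to\infty}\frac{\lambda_k(\Omega)}{k}\;=\;\frac{4\pi}{|\Omega|}
\end{equation*}
implies $k/\lambda_k(\Omega)\to |\Omega|/(4\pi)$ as $k\to\infty$. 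Passing to the limit along the hypothetical subsequence of $k$'s where $\lambda_k(\Omega)=L_k(\Omega)$ gives
\begin{equation*}
\frac{|\Omega|}{4\pi}\;\le\;\frac{|\Omega|}{\pi j_{0,1}^2},\qquad\text{i.e.}\qquad j_{0,1}^2\le 4,
\end{equation*}
which contradicts $j_{0,1}^2\approx 5.7832>4$. Hence $\lambda_k(\Omega)<L_k(\Omega)$ for all $k$ beyond some threshold $k_0$.

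The only delicate step is justifying that each nodal component of $u$ is really a first eigenfunction on itself with eigenvalue $\lambda_j(\Omega)$; this is standard once one knows that $u$ has constant sign on $D_i$ and satisfies homogeneous Dirichlet conditions on $\partial D_i$, so it is not a genuine obstacle. The argument otherwise is just a clean juxtaposition of Faber--Krahn, Courant, and Weyl, with the strict inequality $4<j_{0,1}^2$ as the essential quantitative ingredient.
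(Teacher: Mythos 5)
Your argument is correct and is precisely the classical proof of Pleijel's theorem, which is what the paper invokes here (it cites Pleijel \cite{Plei56} without reproducing the proof): the combination of the Faber--Krahn inequality on each nodal domain, Courant's theorem, and Weyl's law, with the strict inequality $j_{0,1}^2>4$ ruling out Courant-sharp eigenfunctions for large $k$. No gaps worth flagging; the justification that $u$ restricted to a nodal domain is a first eigenfunction with the same eigenvalue is indeed standard.
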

Therefore, a $\infty$-minimal $k$-partition is never nodal when $k>k_{0}$. 
This result proves the existence of such $k_{0}$ but is not quantitative. Recently, B\'erard-Helffer \cite{BH16} and van den Berg-Gittins \cite{vdBG16} exhibit an explicit bound for $k_{0}$. 

In some specific geometries, we can determine exactly for which eigenvalue $\lambda_{k}(\Omega)$, there exists an associated Courant sharp eigenfunction. For such $k$, we thus exhibit a $\infty$-minimal $k$-partition whose energy is $\lambda_{k}(\Omega)$.
The following property gives such result for the disk \cite[Proposition 9.2]{HelHofTer09}, the square \cite{MR3445517}, and the equilateral triangle \cite{BH2016LMP} (see also references therein).
\begin{prop}\label{prop.k24}
If $\Omega$ is a square $\square$, a disk $\Circle$ or an equilateral triangle $\triangle$, then 
$${\lambda_{k}(\Omega)=\mathfrak L_{k,\infty}(\Omega)= L_k(\Omega) \qquad\mbox{if and only if}\qquad k=1,2,4}.$$
Thus the $\infty$-minimal $k$-partition is nodal if and only if $k=1,2,4$. 
\label{courant_sharp}
\end{prop}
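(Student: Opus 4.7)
The proposition asserts a biconditional for three specific domains, so the plan splits naturally into an ``if'' part (establishing the equalities for $k=1,2,4$) and an ``only if'' part (ruling out equality for $k=3$ and for $k\geq 5$). For each geometry the proof will rely on explicit knowledge of the Dirichlet spectrum: trigonometric eigenfunctions for $\square$, Bessel functions for $\Circle$, and the explicit formulas of Lam\'e/Pinsky for $\triangle$.

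First I would dispatch the easy cases. For $k=1$ the statement is tautological since $\Omega$ is itself a $\infty$-minimal $1$-partition. For $k=2$, the combination of \eqref{eq.k2} with the fact that any second eigenfunction has exactly two nodal domains yields $\lambda_2(\Omega)=\fL_{2,\infty}(\Omega)=L_2(\Omega)$ automatically, in any of the three geometries. For $k=4$, the plan is to exhibit in each of $\square$, $\Circle$, $\triangle$ an eigenfunction associated with $\lambda_4(\Omega)$ that has exactly $4$ nodal domains: for the square, the tensor product $\sin(\pi x)\sin(\pi y)$ is replaced at level $\lambda_4 = 5\pi^2$ by eigenfunctions in the $2$-dimensional eigenspace spanned by $\sin(2\pi x)\sin(\pi y)$ and $\sin(\pi x)\sin(2\pi y)$, which have $2\times 2=4$ nodal rectangles; for the disk, $\lambda_4$ is simple and generated by $J_2(\sqrt{\lambda}r)\cos(2\theta)$, whose nodal set is a cross through the origin, producing $4$ nodal sectors; for the equilateral triangle, the Lam\'e eigenfunction at level $\lambda_4$ has the required four-fold structure. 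Once Courant-sharpness is verified in each case, Theorem~\ref{thm.HHOT} closes the argument.

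For the converse, I need to exclude $k=3$ and $k\geq 5$. For $k=3$ the enumeration of eigenvalues is short enough in each geometry to be done by hand: in the square $\lambda_3 = \lambda_2 = 5\pi^2 > \lambda_1$ has multiplicity $2$, and every eigenfunction in that eigenspace has either $3$ or $4$ nodal domains, but the ones with $4$ nodal domains correspond to $k=4$ whereas the generic element has only $2$ nodal domains at level $\lambda_3$, so $L_3 > \lambda_3$; analogous arguments apply to the disk (where $\lambda_3$ is the double eigenvalue from $J_1$) and to the triangle. Hence in each case $\lambda_3(\Omega) < L_3(\Omega)$. For $k\geq 5$ I would invoke the quantitative Pleijel-type results of B\'erard--Helffer and van den Berg--Gittins cited in the text, but since these thresholds may still leave a finite window of values of $k$ open, the cleanest route is to appeal to the existing literature \cite{MR3445517,BH2016LMP} and \cite[Proposition 9.2]{HelHofTer09}, where the Courant-sharp analysis has been carried out exhaustively for exactly the three domains under consideration, and which establishes that no Courant-sharp eigenfunction occurs at $\lambda_k$ for $k=3$ or $k\geq 5$.

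The delicate step is the elimination of $k=3$, because the argument is geometry-dependent and one must rule out \emph{every} eigenfunction in a possibly multi-dimensional eigenspace having exactly three nodal domains; the remaining sub-case $k\geq 5$ is essentially bookkeeping once the Courant-sharp lists for $\square$, $\Circle$, $\triangle$ are quoted. The final deduction $\fL_{k,\infty}(\Omega)=\lambda_k(\Omega)=L_k(\Omega)$ for $k\in\{1,2,4\}$, and strict inequality $\lambda_k(\Omega) \leq \fL_{k,\infty}(\Omega) < L_k(\Omega)$ otherwise, then follows directly from the sandwich \eqref{ineq.HHOT} together with the rigidity clause in Theorem~\ref{thm.HHOT}.
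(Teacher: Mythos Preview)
The paper does not prove this proposition at all: it simply cites \cite[Proposition~9.2]{HelHofTer09} for the disk, \cite{MR3445517} for the square, and \cite{BH2016LMP} for the equilateral triangle. Your overall strategy---verify Courant-sharpness at $k=1,2,4$, rule it out for $k=3$ and $k\geq 5$, then invoke Theorem~\ref{thm.HHOT}---is exactly the strategy carried out in those references, and you correctly end up citing the same papers for the nontrivial part. So in spirit your plan and the paper's ``proof'' coincide.

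However, the explicit computations you sketch for the easy cases contain several factual errors that would need to be fixed. For the square, $\lambda_4(\square)=8\pi^2$ (coming from $m=n=2$), not $5\pi^2$; the level $5\pi^2$ is $\lambda_2=\lambda_3$. The functions $\sin(2\pi x)\sin(\pi y)$ and $\sin(\pi x)\sin(2\pi y)$ you mention lie in the $\lambda_2$-eigenspace and each has exactly \emph{two} nodal domains, not four; the Courant-sharp eigenfunction for $\lambda_4$ is $\sin(2\pi x)\sin(2\pi y)$. For the disk, $\lambda_4(\Circle)=j_{2,1}^2$ is not simple but double (it equals $\lambda_5$); your description of its nodal structure is nevertheless correct. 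Your discussion of $k=3$ on the square is internally inconsistent: you first assert that every eigenfunction at that level has ``either $3$ or $4$ nodal domains'' and then that the generic element ``has only $2$ nodal domains''. The correct statement is that every eigenfunction in the $\lambda_2=\lambda_3$ eigenspace of the square has exactly two nodal domains (this is the content of Stern's analysis, cf.\ \cite{MR3445517}), whence $L_3(\square)>\lambda_3(\square)$. Once these slips are corrected, your outline is sound.
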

Figure~\ref{fig.PartNod} gives examples of $\infty$-minimal $k$-partitions. {\Mg Note that when $\Omega=\square,\Circle, \triangle$, since $\lambda_{2}(\Omega)$ is double, the $\infty$-minimal $2$-partition is not unique whereas for $k=4$ we do have uniqueness (modulo rotation for the disk). The eigenspace associated with $\lambda_{2}(\Omega)$ produces a family of $\infty$-minimal $2$-partitions which is invariant by rotation in the case of the disk.} We note that for $\Omega=\Circle, \triangle$ and $k=2,4$, we recover the $k$-partitions obtained numerically in Figures~\ref{fig.disk} and \ref{fig.equilateral}.

\begin{figure}[h!]
\begin{center}
\subfigure[$k=2$\label{fig4.tri2}]{
\includegraphics[height=2cm]{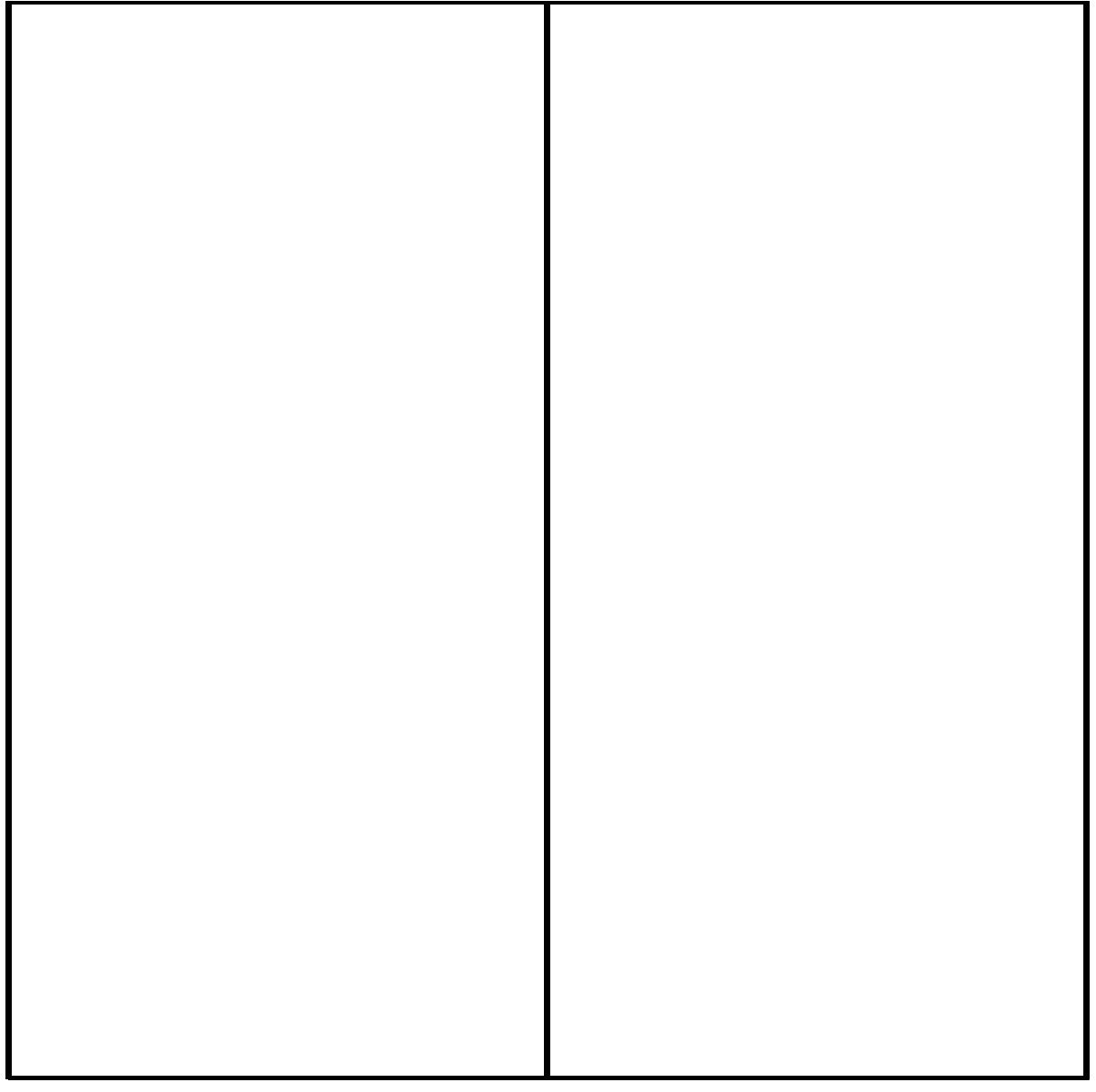}\
\includegraphics[height=2cm]{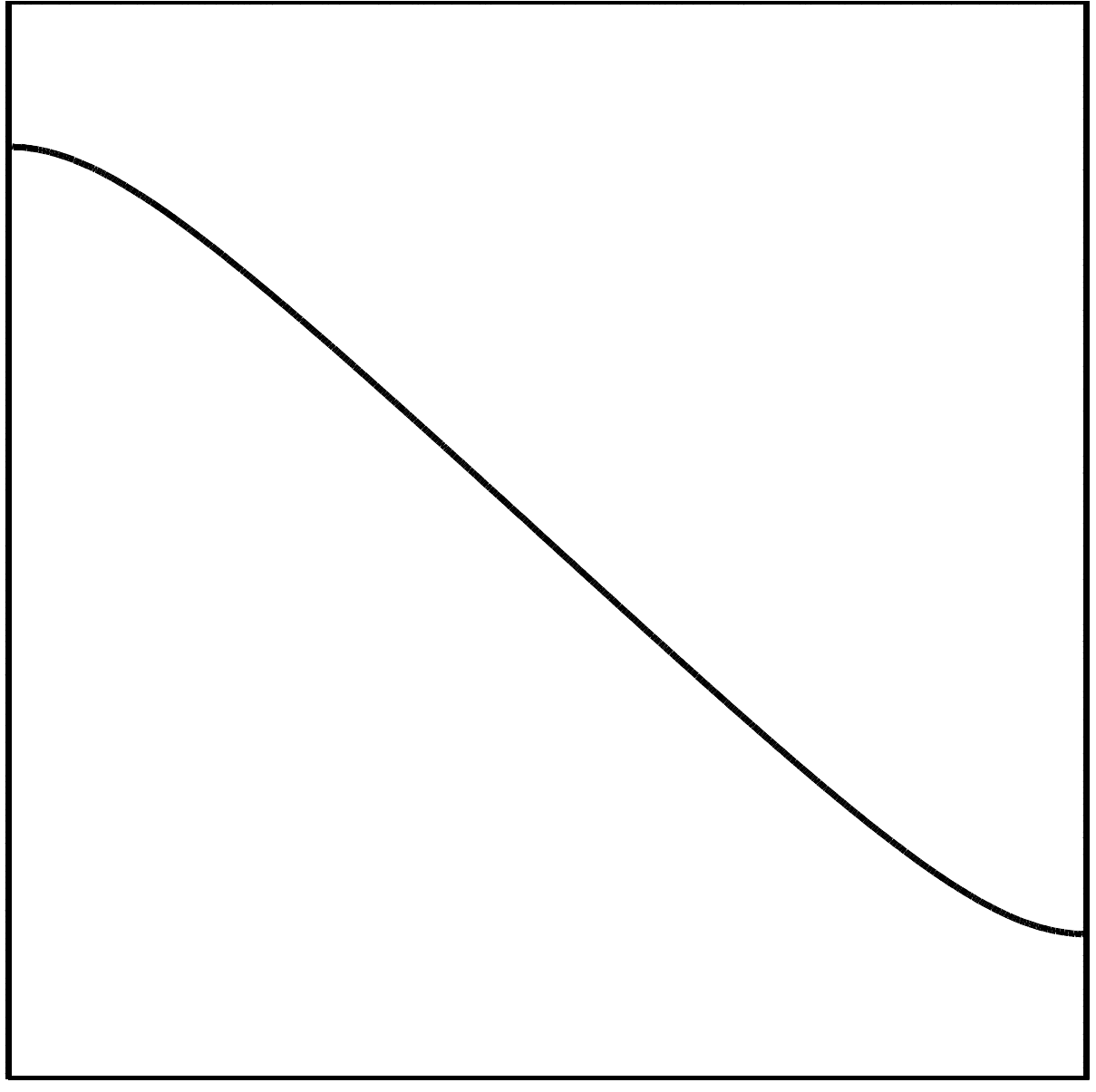}\quad
\includegraphics[height=2cm]{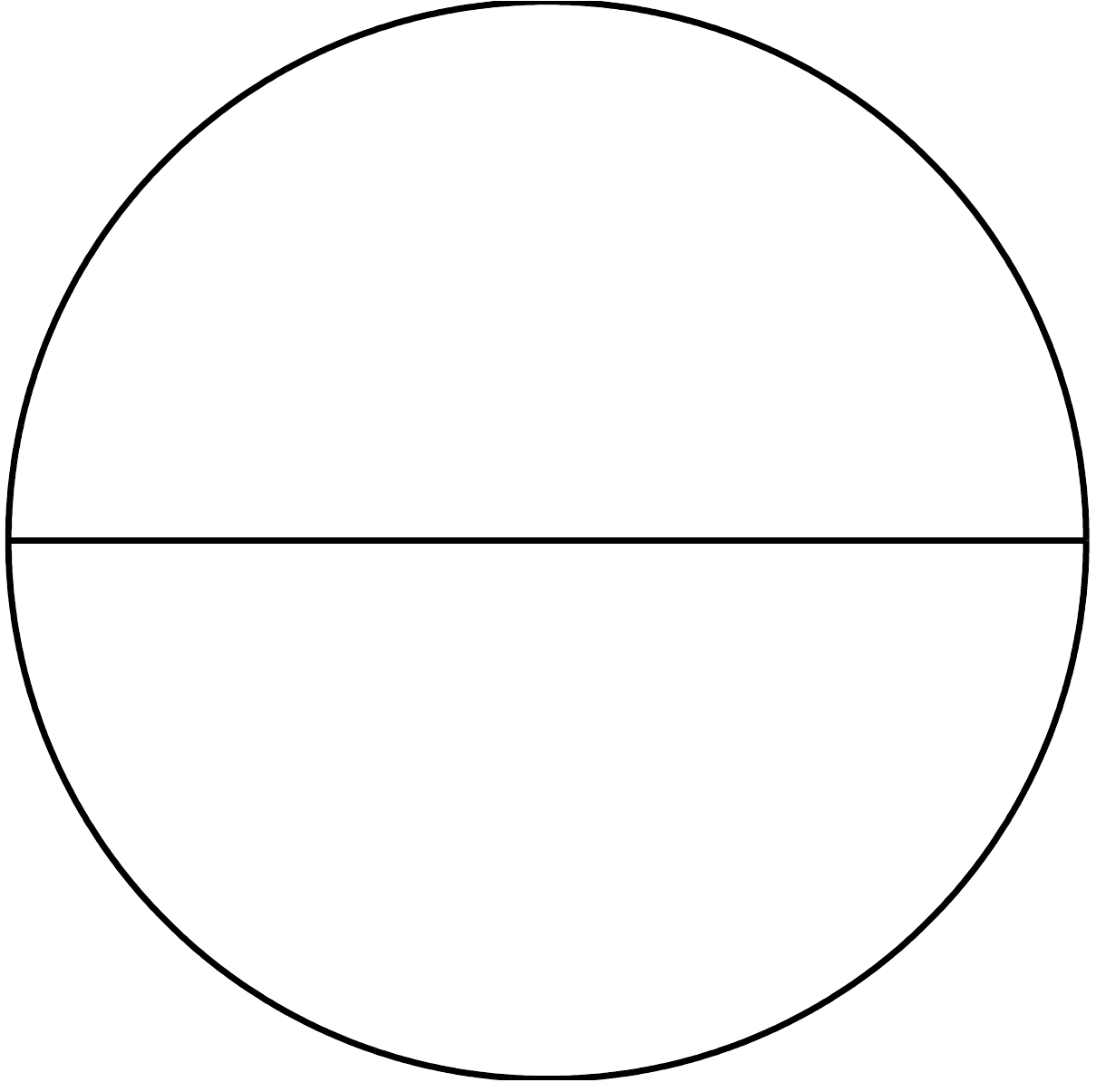}\
\includegraphics[height=2cm]{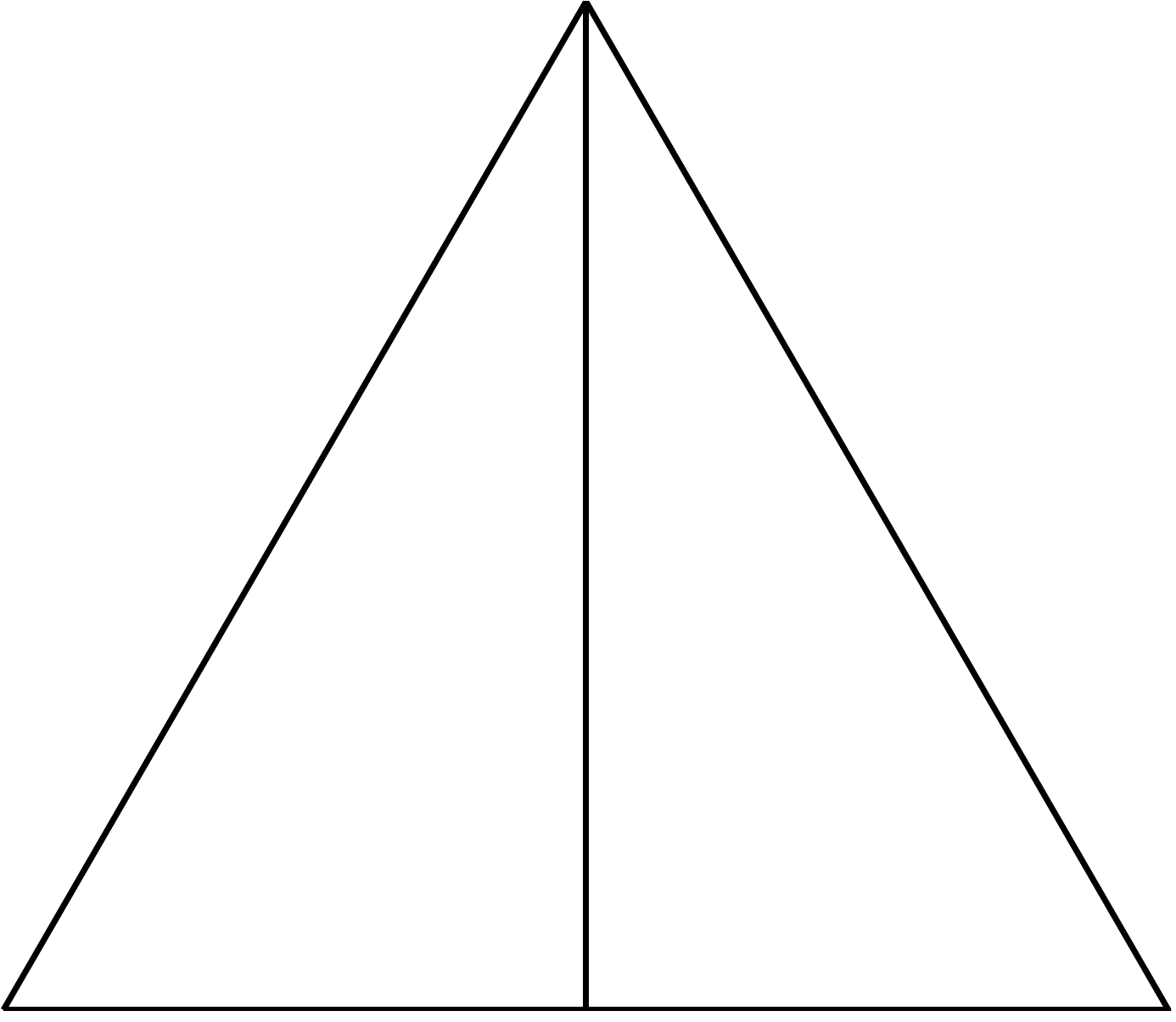}\
\includegraphics[height=2cm]{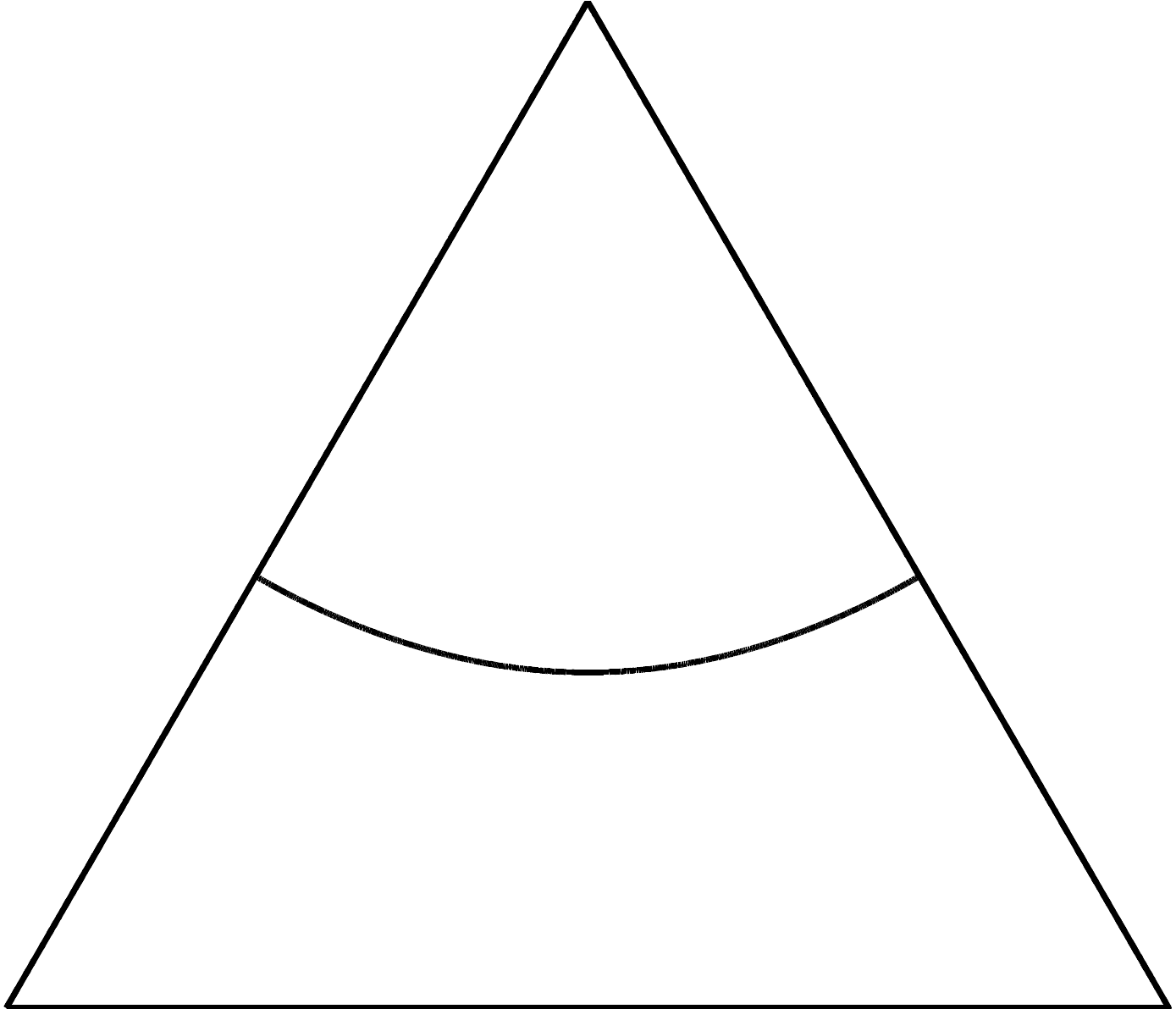}
}
\subfigure[$k=4$\label{fig4.tri4}]{
\includegraphics[height=2cm]{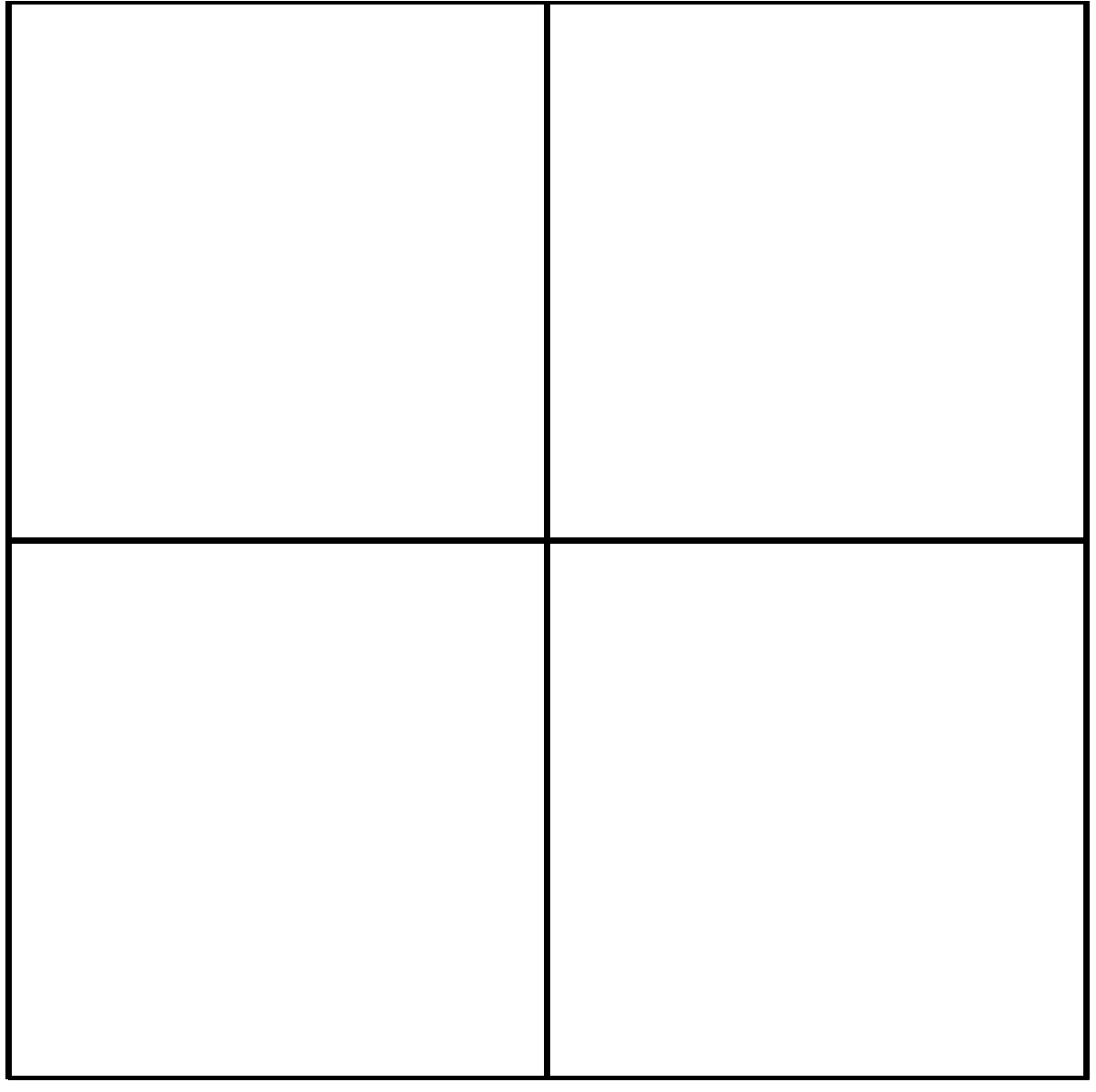}\quad
\includegraphics[height=2cm]{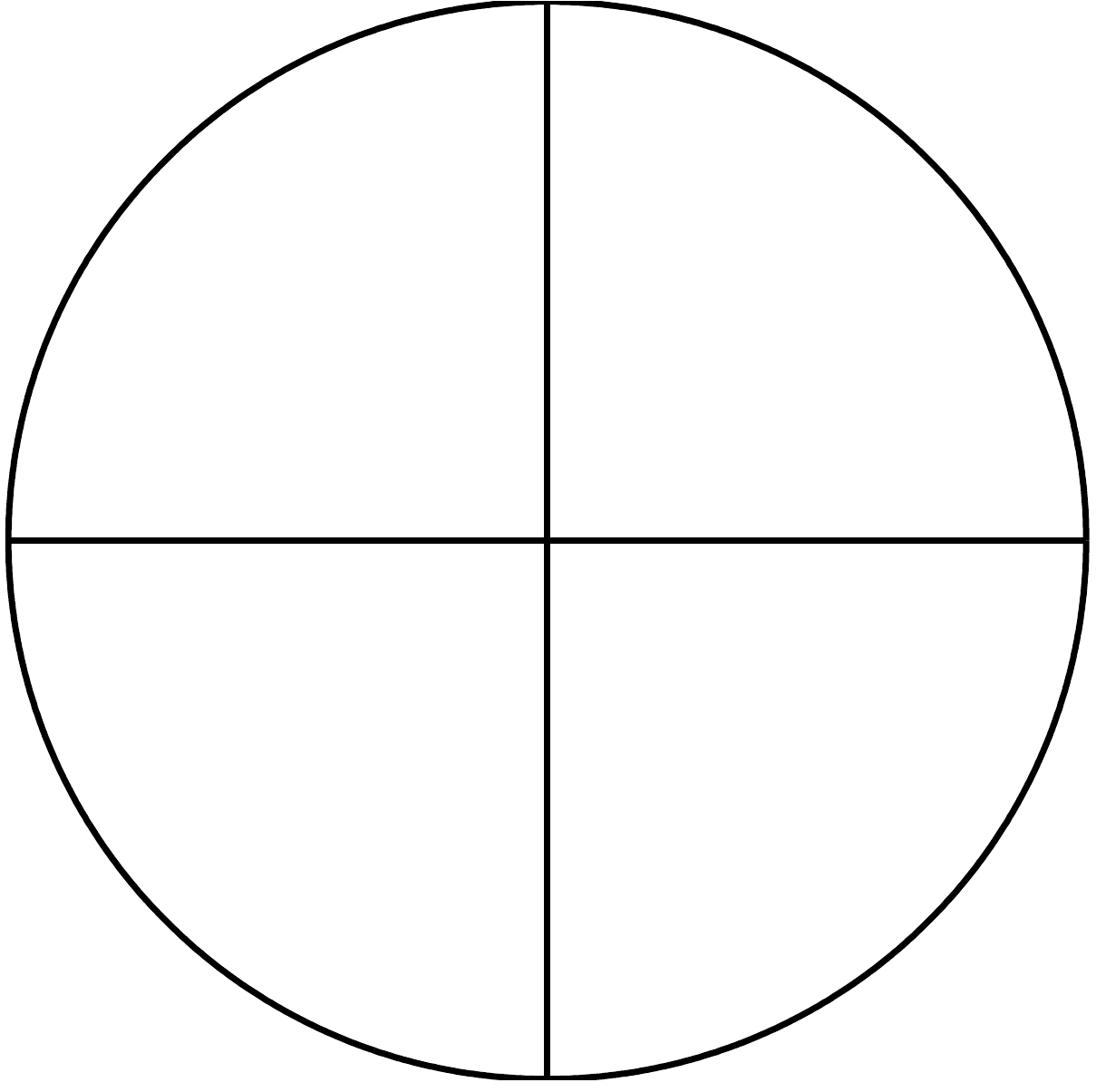}\quad
\includegraphics[height=2cm]{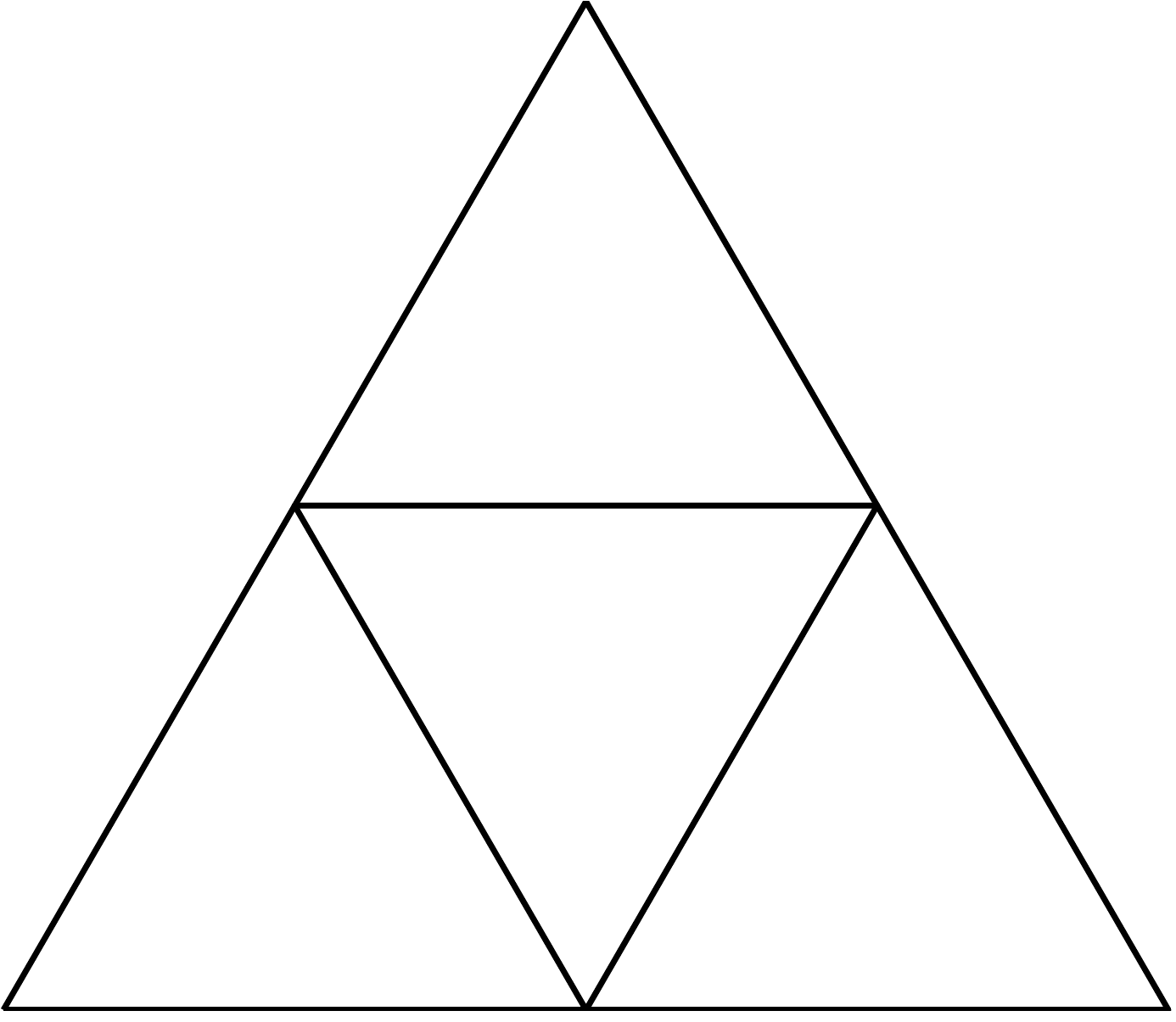}
}
\end{center}
\caption{Nodal $\infty$-minimal $k$-partitions, $k=2,4$.\label{fig.PartNod}}
\end{figure}

\subsection{Bounds with spectral quantities }
\subsubsection{Lower bounds}
The lower bounds \eqref{ineq.HHOT} can be generalized when considering the $p$-norm instead of the $\infty$-norm and we have (see \cite{HelHofTer09} for $p=\infty$ and \cite{HelHof10} for the general case)
\begin{equation}\label{ineq.HHOTnormp}
\left(\frac{1}{k}\sum_{i=1}^{k}\lambda_{i}(\Omega)^p\right)^{1/p}
\leq \mathfrak L_{k,p}(\Omega) \leq L_{k}(\Omega).
\end{equation}
When $\Omega$ is a square, a disk or an equilateral triangle, the eigenvalues are explicit and thus they produce explicit lower and upper bounds. Computing the number of nodal domains of some eigenfunctions give us an upper bound for $L_{k}(\Omega)$ (see Table~\ref{table.mu(vp)}). Note that when eigenvalues are double, we may have eigenfunctions with different numbers of nodal domains. We mention all possible values, since the goal is to find upper bounds $L_k$ for each $k$.
\begin{table}[h!]
\begin{center}\begin{tabular}{c | cl}
$\Omega$ & $\lambda_{m,n}(\Omega)$ & $m,n$\\
\hline
& \\
$\square$ & $\pi^2(m^2+n^2)$& $m,n\geq 1$\\[5pt]
$\triangle$ & $\frac{16}9 \pi^2(m^2+mn+n^2)$& $m,n\geq 1$\\[5pt]
$\Circle$ & $j_{m,n}^2$ & $m\geq0$, $n\geq 1$ {(multiplicity 2 for $m\geq1$)}\\[5pt]
& \multicolumn{2}{c}{{\small\Bl where $j_{m,n}$ is the $n$-th positive zero of the Bessel function of the first kind $J_{m}$.}}
\end{tabular}
\end{center}
\caption{Eigenvalues for $\Omega=\square,\ \triangle,\ \Circle$. \label{tab.vp}}
\end{table}
 
\begin{table}[h!]
\begin{center}\begin{tabular}{|c||c|c||c|c||c|c|}
\hline
& \multicolumn{2}{c||}{Square} & \multicolumn{2}{c||}{Disk}& \multicolumn{2}{c|}{Equilateral triangle}\\
\hline
 $k$ & $\lambda_{k}(\square)$ & $\mu(u_{k})$ & $ \lambda_{k}(\Circle)$ & $\mu(u_{k})$ & $\lambda_{k}(\triangle) $ & $\mu(u_{k})$ \\
 \hline
1   &   19.739 & 1 &  5.7831 & 1 &  52.638 & 1 \\
2   &   49.348 & 2 & 14.6819 & 2 & 122.822 & 2 \\
3   &   49.348 & 2 & 14.6819 & 2 & 122.822 & 2 \\
4   &   78.957 & 4 & 26.3746 & 4 & 210.552 & 4 \\
5   &   98.696 & 3 & 26.3746 & 4 & 228.098 & 4 \\
6   &   98.696 & 3 & 30.4713 & 2 & 228.098 & 3 \\
7   &  128.305 & 4 & 40.7065 & 6 & 333.373 & 4 \\
8   &  128.305 & 4 & 40.7065 & 6 & 333.373 & 4 \\
9   &  167.783 & 4 & 49.2184 & 4 & 368.465 & 4 \\
10  &  167.783 & 4 & 49.2184 & 4 & 368.465 & 4 \\
\hline
\end{tabular}\end{center}
\caption{Lowest eigenvalues $\lambda_{k}(\Omega)$ and number of nodal sets for associated eigenfunctions $u_{j}$ of the Dirichlet-Laplacian on $\Omega=\square$, $\Circle$ and $\triangle$.\label{table.mu(vp)}}
\end{table}

\subsubsection{Upper bounds}
Let us mention that in the case of the disk, we can easily construct a $k$-partition of $\Circle$ by considering the partition with $k$ angular sectors of opening $2\pi/k$. If we {denote} by $\Sigma_{{2\pi}/k}$ an angular sector of opening $2\pi/k$, then we have the upper bound
\begin{equation} \label{eq.sect}
\mathfrak L_{k,p}(\Circle) \leq \lambda_{1}(\Sigma_{{2\pi}/k}).
\end{equation}
Recall that the {eigenvalues} of a sector $\Sigma_{\alpha}$ of opening $\alpha$ are given by (see \cite{BonLen14}) : 
$$ \lambda_{m,n}(\alpha)=j_{m\frac\pi\alpha,n}^2,$$
where $j_{m\frac\pi\alpha,n}$ is the $n$-th positive zero of the Bessel function of the first kind $J_{m\frac\pi\alpha}$. In particular, we have
$$\lambda_{1}(\Sigma_{{2\pi}/k}) = j^2_{\frac{k}2,1}.$$\
Let us remark that if $k$ is odd, the $k$-partition with $k$ angular sectors is not nodal and \eqref{eq.sect} gives a new upper bound which can be better than \eqref{ineq.HHOT} or \eqref{ineq.HHOTnormp}. If $k$ is even, we have $L_{k}(\Circle)\leq \lambda_{1}(\Sigma_{{2\pi}/k})$.

In the case of the square, we will use the following upper bound which is weaker but more explicit than \eqref{ineq.HHOTnormp}:
$$ \mathfrak L_{k,p}(\square) \leq \inf_{m,n\geq1}\{\lambda_{m,n}(\square)| mn=k\}
\leq \lambda_{k,1}(\square),$$
with $\lambda_{m,n}(\square)$ defined in Table~\ref{tab.vp}.

\subsection{Candidates for the sum and the max}
We have seen in Section~\ref{ssec.num} that the candidates to be minimal for the sum and the max seem to be the same in the case of the disk when $k=2,3,4,5$ (see Figure~\ref{fig.disk}) whereas they are different for the equilateral triangle  when $k=2, 4,5$ (see Figure~\ref{fig.equilateral}). 
Then it could be interesting to have some criteria to discriminate if a $\infty$-minimal $k$-partition can be minimal for the sum ($p=1$). 
A necessary condition is given in \cite{HHO10}:
\begin{prop}
Let ${\mathcal D}=(D_1,D_{2})$ be a $\infty$-minimal $2$-partition and $\varphi_{2}$ be a second eigenfunction  of the Dirichlet-Laplacian on $\Omega$ having $ D_1$ and $ D_2$ as nodal domains. 

Suppose that 
$\displaystyle\int_{D_1} |\varphi_{2}|^2 \neq \int_{D_2}|\varphi_{2}|^2$, then 
$\mathfrak L_{2,1}(\Omega) < \mathfrak L_{2,\infty} (\Omega).$
\end{prop}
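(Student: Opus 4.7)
The plan is to exploit the fact that $\mathfrak{L}_{2,\infty}(\Omega) = \lambda_2(\Omega)$ (see \eqref{eq.k2}), so that the hypothesis forces the partition $\mathcal{D} = (D_1, D_2)$ to be the nodal partition of $\varphi_2$ with $\lambda_1(D_1) = \lambda_1(D_2) = \lambda_2(\Omega)$, hence $\Lambda_{2,1}(\mathcal{D}) = \lambda_2(\Omega) = \mathfrak{L}_{2,\infty}(\Omega)$. It therefore suffices to exhibit an admissible perturbation of the interface $\Gamma = \overline{\partial D_1 \cap \partial D_2} \cap \Omega$ along which the sum $\lambda_1(D_1) + \lambda_1(D_2)$ \emph{strictly decreases}, since this would yield a competitor with $\Lambda_{2,1} < \mathfrak{L}_{2,\infty}(\Omega)$ and thus bound $\mathfrak{L}_{2,1}(\Omega)$ strictly below $\mathfrak{L}_{2,\infty}(\Omega)$.

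For the perturbation I would use the standard Hadamard shape derivative. By Theorem~\ref{thm.ex} and the regularity of nodal sets of Laplacian eigenfunctions in the plane, $\Gamma$ is a smooth curve except at finitely many singular points, so I can deform it along a smooth compactly supported normal velocity $V\nu$, where $\nu$ is the unit normal pointing from $D_1$ into $D_2$. Denoting by $u_i$ the $L^2$-normalized first eigenfunction of $D_i$, the Hadamard formula gives
\begin{equation*}
\left.\frac{d}{dt}\bigl(\lambda_1(D_1(t))+\lambda_1(D_2(t))\bigr)\right|_{t=0}
 = -\int_\Gamma |\partial_\nu u_1|^2\, V\, d\sigma + \int_\Gamma |\partial_\nu u_2|^2\, V\, d\sigma.
\end{equation*}

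The key observation is that on each $D_i$ the restriction $\varphi_2|_{D_i}$ is a first Dirichlet eigenfunction with eigenvalue $\lambda_2(\Omega) = \lambda_1(D_i)$; since first eigenvalues are simple, $u_i = \pm\varphi_2 / \|\varphi_2\|_{L^2(D_i)}$, whence $|\partial_\nu u_i|^2 = |\partial_\nu \varphi_2|^2 / \|\varphi_2\|_{L^2(D_i)}^2$ on $\Gamma$. Substituting,
\begin{equation*}
\left.\frac{d}{dt}\bigl(\lambda_1(D_1(t))+\lambda_1(D_2(t))\bigr)\right|_{t=0}
 = \left(\frac{1}{\|\varphi_2\|_{L^2(D_2)}^2} - \frac{1}{\|\varphi_2\|_{L^2(D_1)}^2}\right)\int_\Gamma |\partial_\nu \varphi_2|^2\, V\, d\sigma.
\end{equation*}
By the hypothesis $\int_{D_1}|\varphi_2|^2 \neq \int_{D_2}|\varphi_2|^2$, the scalar prefactor is nonzero. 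Choosing $V \geq 0$ supported in a small neighborhood of a regular point of $\Gamma$ (where $\partial_\nu \varphi_2 \neq 0$ by Hopf's lemma applied on either side) makes the integral strictly positive, and then picking the sign of $V$ opposite to that of the prefactor produces a strictly negative derivative. A first-order Taylor expansion yields a one-parameter family of admissible partitions $\mathcal{D}(t) \in \mathfrak{P}_2(\Omega)$ with $\Lambda_{2,1}(\mathcal{D}(t)) < \Lambda_{2,1}(\mathcal{D}) = \mathfrak{L}_{2,\infty}(\Omega)$ for small $t > 0$, giving the claimed strict inequality $\mathfrak{L}_{2,1}(\Omega) < \mathfrak{L}_{2,\infty}(\Omega)$.

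The main technical obstacle is justifying the use of Hadamard's formula and guaranteeing a regular point where $\partial_\nu \varphi_2 \neq 0$: the first is standard once one knows $\Gamma$ is piecewise $C^\infty$ (which holds because nodal sets of Laplace eigenfunctions on smooth domains are real-analytic curves away from a finite singular set in two dimensions); the second follows because $\varphi_2$ is not identically zero and $\Gamma$ is its nodal set, so the gradient cannot vanish identically on $\Gamma$ by unique continuation. With these two ingredients in hand, the perturbation argument is essentially algebraic.
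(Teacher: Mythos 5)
Your argument is correct: the identification $u_i=\pm\varphi_2/\|\varphi_2\|_{L^2(D_i)}$ turns the Hadamard first variation of $\lambda_1(D_1)+\lambda_1(D_2)$ under a localized normal perturbation of the nodal interface into $\bigl(\|\varphi_2\|_{L^2(D_2)}^{-2}-\|\varphi_2\|_{L^2(D_1)}^{-2}\bigr)\int_\Gamma|\partial_\nu\varphi_2|^2V\,d\sigma$, which is nonzero under the mass-asymmetry hypothesis, so the sum can be strictly decreased below $\lambda_2(\Omega)=\mathfrak L_{2,\infty}(\Omega)$. The paper itself gives no proof (the proposition is quoted from \cite{HHO10}), and your perturbation-of-the-nodal-line argument is essentially the one used in that reference.
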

Since any $\infty$-minimal $2$-partition is nodal, we can use the previous criterion by considering neighbors in a $k$-partition. 
We say that two sets $ D_i,D_j$ of the partition $\mathcal D$ are neighbors and write $ D_i\sim D_j$, if
$D_{ij}={\rm{ Int\,}}(\overline {D_i\cup D_j})\setminus \partial \Omega $
is connected.
\begin{prop}\label{l2norm}
Let ${\mathcal D}=(D_i)_{1\leq i\leq k}$ be a  {\Mg $\infty$-minimal} $k$-partition and $D_i\sim D_j$ be a pair of neighbors. We denote  
$D_{ij}={\rm Int}\overline {D_{i}\cup D_{i}}.$
There exists a second eigenfunction $\varphi_{ij}$ of the Dirichlet-Laplacian on $D_{ij}$ having $ D_i$ and $ D_j$ as nodal domains.

If $\displaystyle\int_{D_i} |\varphi_{ij}|^2 \neq \int_{D_j}|\varphi_{ij}|^2$, then $\mathfrak L_{k,1}(\Omega) < {\Mg \mathfrak L_{k,\infty} (\Omega)=}\lambda_{2} (D_{ij})$.
\end{prop}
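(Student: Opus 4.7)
The plan is to reduce matters to the preceding $k=2$ statement applied on the joint cell $D_{ij}$, and then splice the resulting improved local partition back into $\cD$ to lower the sum-energy.

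First I would show that $(D_i,D_j)$ is itself an $\infty$-minimal $2$-partition of $D_{ij}$. By Proposition~\ref{prop.equip} (equipartition), $\lambda_1(D_m)=\fL_{k,\infty}(\Omega)$ for every $m$; in particular $\lambda_1(D_i)=\lambda_1(D_j)=\fL_{k,\infty}(\Omega)$. If there existed $(\tilde D_i,\tilde D_j)\in\fP_2(D_{ij})$ with $\max(\lambda_1(\tilde D_i),\lambda_1(\tilde D_j))<\fL_{k,\infty}(\Omega)$, then substituting this pair for $(D_i,D_j)$ inside $\cD$ would yield a $k$-partition of $\Omega$ with strictly smaller $\infty$-energy, contradicting the minimality of $\cD$. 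Hence $\fL_{2,\infty}(D_{ij})=\fL_{k,\infty}(\Omega)$. Applying \eqref{eq.k2} on $D_{ij}$ gives $\fL_{2,\infty}(D_{ij})=\lambda_2(D_{ij})$, so $\lambda_2(D_{ij})=\fL_{k,\infty}(\Omega)$, and the $\infty$-minimal $2$-partition of $D_{ij}$ is nodal. This provides the announced second eigenfunction $\varphi_{ij}$, whose nodal domains are precisely $D_i$ and $D_j$.

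Next I would apply the preceding proposition (the $k=2$ case) on the domain $D_{ij}$. The hypothesis $\int_{D_i}|\varphi_{ij}|^2\neq\int_{D_j}|\varphi_{ij}|^2$ gives $\fL_{2,1}(D_{ij})<\fL_{2,\infty}(D_{ij})=\fL_{k,\infty}(\Omega)$, and so there exists $(\tilde D_i,\tilde D_j)\in\fP_2(D_{ij})$ with $\tfrac12(\lambda_1(\tilde D_i)+\lambda_1(\tilde D_j))<\fL_{k,\infty}(\Omega)$. Let $\tilde\cD$ be the $k$-partition of $\Omega$ obtained by replacing $(D_i,D_j)$ in $\cD$ by this better pair. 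Since the modification is confined to $D_{ij}$, one has $\tilde\cD\in\fP_k(\Omega)$, and using the equipartition on the remaining $k-2$ cells,
\begin{equation*}
\Lambda_{k,1}(\tilde\cD)=\frac{1}{k}\Bigl((k-2)\fL_{k,\infty}(\Omega)+\lambda_1(\tilde D_i)+\lambda_1(\tilde D_j)\Bigr)<\fL_{k,\infty}(\Omega).
\end{equation*}
Therefore $\fL_{k,1}(\Omega)\leq\Lambda_{k,1}(\tilde\cD)<\fL_{k,\infty}(\Omega)$, as required.

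The main subtlety is the splicing step: one must justify that the substitution really produces an admissible element of $\fP_k(\Omega)$. This relies on the regularity of $\cD$ guaranteed by Theorem~\ref{thm.ex} together with the fact that the modification lies entirely inside the open set $D_{ij}$, so that the outer boundary shared with the other cells of $\cD$ is untouched. Once this is granted, the argument is a clean reduction to the $k=2$ criterion combined with the equipartition property.
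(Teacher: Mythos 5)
Your overall architecture --- show that $(D_i,D_j)$ is an $\infty$-minimal $2$-partition of $D_{ij}$ (so that $\lambda_2(D_{ij})=\fL_{k,\infty}(\Omega)$ and the nodal eigenfunction $\varphi_{ij}$ exists), apply the $k=2$ criterion on $D_{ij}$, and splice the improved pair back into $\cD$ to lower the $1$-energy --- is exactly the intended reduction, and your second and third steps are sound. However, the justification of the first step fails as written. If $(\tilde D_i,\tilde D_j)\in\fP_2(D_{ij})$ satisfied $\max(\lambda_1(\tilde D_i),\lambda_1(\tilde D_j))<\fL_{k,\infty}(\Omega)$, substituting it into $\cD$ does \emph{not} yield a $k$-partition with strictly smaller $\infty$-energy: by the very equipartition property you invoke at the start, every one of the $k-2$ untouched cells still has $\lambda_1(D_m)=\fL_{k,\infty}(\Omega)$, so the maximum over the new partition is still exactly $\fL_{k,\infty}(\Omega)$ whenever $k\geq 3$. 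There is no contradiction with the minimality of $\cD$, so your proof that $\fL_{2,\infty}(D_{ij})=\fL_{k,\infty}(\Omega)$ has a genuine gap precisely in the case the proposition is meant for.

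The conclusion of that step is nevertheless true and can be repaired with tools you already use. The spliced partition $\tilde{\cD}$ satisfies $\Lambda_{k,\infty}(\tilde{\cD})=\fL_{k,\infty}(\Omega)$, hence is itself an $\infty$-minimal $k$-partition; by Proposition~\ref{prop.equip} it must then be an equipartition; but its cells $\tilde D_i,\tilde D_j$ have first eigenvalue strictly below $\fL_{k,\infty}(\Omega)=\lambda_1(D_m)$ for any untouched cell $D_m$, a contradiction for $k\geq 3$ (for $k=2$ the statement is just the preceding proposition). This ``extraction'' property of pairs of neighbors in a minimal partition --- $(D_i,D_j)$ is an $\infty$-minimal $2$-partition of $D_{ij}$, whence $\fL_{k,\infty}(\Omega)=\lambda_2(D_{ij})$ and the pair is nodal for a second eigenfunction $\varphi_{ij}$ of $D_{ij}$ --- is exactly the result of \cite{HelHofTer09,HHO10} that the paper relies on. With that correction, your application of the $k=2$ criterion on $D_{ij}$ and the splicing estimate $\Lambda_{k,1}(\tilde{\cD})<\fL_{k,\infty}(\Omega)$ (which only needs the infimum $\fL_{2,1}(D_{ij})$ to lie strictly below $\fL_{k,\infty}(\Omega)$, not an attained minimizer) complete the proof.
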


\section{Candidates for the infinity norm}
\label{section.max}
\subsection{Penalization method}
We note that the results obtained in Section \ref{ssec.num} using the $p$-norm approach do not consist of exact equipartitions. We recall that this is a necessary condition for a partition to be a solution of the min-max problem \eqref{eq.Lkp} with $p=\infty$ (see Proposition~\ref{prop.equip}). We use the following idea in order to force the eigenvalues to be closer.
If we are able to minimize the sum of eigenvalues 
\[ \lambda_1(D_1)+\ldots+\lambda_1(D_k),\]
under the constraint $\lambda_1(D_1) = \ldots = \lambda_1(D_k)$, we are in fact minimising the maximal eigenvalue. We can, thus, for every parameter $\varepsilon>0$ consider the smooth functionals
\[ F_\varepsilon((D_i)) = \frac{1}{k}\sum_{i=1}^k \lambda_1(D_i)+\frac{1}{\varepsilon} \sum_{1\leq i< j \leq k} (\lambda_1(D_i)-\lambda_1(D_j))^2,\]
{\it i.e.} the average of the eigenvalues plus a term penalizing pairs of non-equal eigenvalues.
We define the functional
\[ F((D_i)) = \begin{cases} \max\{ \lambda_1(D_i), 1\leq i\leq k\}  & \text{ if } (D_i) \text{ is an equipartition}, \\
+\infty & \text{ otherwise}.\end{cases} \]
 We note that functional $F_\varepsilon$ may not have minimizers in the class of domains, since it is not decreasing with respect to inclusions of sets. However, the functional $F$ admits a minimizer consisting of open, connected sets, and therefore each of these sets has at most $k$ holes. It is, therefore, not restrictive, in our case to consider the functionals $F_\varepsilon$ only for families of domains with at most $k$ holes. We denote by $\mathcal S_k$ the family of partitions of $\Omega$ consisting of domains with at most $k$ holes. In view of Sverak's theorem \cite{Sverak} the eigenvalues of the Dirichlet-Laplace operator are stable under Hausdorff convergence in the class $\mathcal S_k$.
Then we have the following result.

\begin{prop}
The functionals $F_\varepsilon$ $\Gamma$-converge to $F$ {\Bl for the topology induced by the Hausdorff distance on $\mathcal S_k$. More precisely, for $(D_i^\varepsilon),(D_i) \in \mathcal{S}_k$ we have:}
\begin{itemize}
\item for every $(D_i^\varepsilon) \to (D_i)$ as $\varepsilon\to 0$, $\liminf_{\varepsilon \to 0} F_\varepsilon((D_i^\varepsilon)) \geq F((D_i)) ,$
\item for every  $(D_i)$, we can find $(D_i^\varepsilon) \to (D_i)$ such that $\limsup_{\varepsilon \to 0} F_\varepsilon((D_i^\varepsilon)) \leq F((D_i)).$
\end{itemize}
Consequently any limit point of a sequence of minimizers of $F_\varepsilon$ is a minimizer for $F$.
\label{pen-method}
\end{prop}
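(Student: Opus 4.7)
The plan is to verify the two standard $\Gamma$-convergence inequalities separately, using Sverak's continuity theorem as the only nontrivial input, and then to read off the statement on minimizers from a routine compactness argument. Recall that the role of working in $\mathcal{S}_k$ is exactly to make $\lambda_1$ continuous under Hausdorff convergence: whenever $(D_i^\varepsilon) \to (D_i)$ in $\mathcal{S}_k$, Sverak's theorem gives $\lambda_1(D_i^\varepsilon) \to \lambda_1(D_i)$ for every $i = 1,\ldots,k$.

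For the liminf inequality I would split according to whether $(D_i)$ is an equipartition. If it is not, there exist indices $i_0 \neq j_0$ with $\lambda_1(D_{i_0}) \neq \lambda_1(D_{j_0})$; Sverak continuity then makes $(\lambda_1(D_{i_0}^\varepsilon) - \lambda_1(D_{j_0}^\varepsilon))^2$ bounded below by some $c > 0$ for all small $\varepsilon$, so the corresponding penalty contribution is at least $c/\varepsilon$. Since the average-of-eigenvalues term is nonnegative, we conclude $F_\varepsilon((D_i^\varepsilon)) \to +\infty = F((D_i))$. If instead $(D_i)$ is an equipartition, the penalty term is nonnegative and the average converges to $\tfrac{1}{k}\sum_i \lambda_1(D_i) = \lambda_1(D_1) = \max_i \lambda_1(D_i) = F((D_i))$, which gives the desired bound.

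For the limsup inequality I would use the constant recovery sequence $D_i^\varepsilon \equiv D_i$. In the non-equipartition case $F((D_i)) = +\infty$ and there is nothing to check. In the equipartition case the penalty term vanishes identically, so $F_\varepsilon((D_i)) = \tfrac{1}{k}\sum_i \lambda_1(D_i) = \lambda_1(D_1) = F((D_i))$ for every $\varepsilon$, and we actually have equality. For the last sentence, I would combine Hausdorff compactness of closed subsets of $\overline \Omega$ (which extracts a limit $(D_i^\ast) \in \mathcal{S}_k$ from any sequence of minimizers of $F_\varepsilon$) with the two $\Gamma$-convergence inequalities: applying the liminf at $(D_i^\ast)$ and the limsup recovery at an arbitrary competitor $(\tilde D_i)$ yields $F((D_i^\ast)) \leq \liminf_\varepsilon F_\varepsilon((D_i^\varepsilon)) \leq \liminf_\varepsilon F_\varepsilon((\tilde D_i)) \leq F((\tilde D_i))$, so $(D_i^\ast)$ minimizes $F$.

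The main obstacle I anticipate is the non-equipartition branch of the liminf: one must check that every component $D_i^\varepsilon$ really lies in a class on which $\lambda_1$ is Hausdorff continuous, which is precisely what the a priori restriction to $\mathcal{S}_k$ buys; without the uniform bound on the number of holes there is no reason for the pairwise differences to stay bounded away from zero, and the whole penalization mechanism would fail. The other steps are essentially bookkeeping.
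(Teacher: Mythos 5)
Your proof is correct and follows essentially the same route as the paper: Sverak continuity of the eigenvalues on $\mathcal{S}_k$ gives the liminf inequality (with the penalty term forcing blow-up in the non-equipartition case), the constant sequence serves as recovery, and the statement on minimizers follows by the standard $\Gamma$-convergence argument. You in fact spell out two points the paper leaves implicit, namely the quantitative $c/\varepsilon$ blow-up of the penalty and the compactness step for limit points of minimizers.
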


\begin{proof} Let $(D_i^\varepsilon) \in \mathcal{S}_k$ 
be a sequence of partitions of $\Omega$ with at most $k$ holes, which converges to $(D_i) \in \mathcal{S}_k$ in the Hausdorff metric. Since the Dirichlet-Laplace eigenvalues are stable under the Hausdorff convergence we directly obtain
\[ \liminf_{\varepsilon \to 0}F_\varepsilon((D_i^\varepsilon)) \geq  F((D_i)).\]
The above inequality is obvious if $(D_i)$ is not an equipartition, since then we have \[\liminf_{\varepsilon \to 0}F_\varepsilon((D_i^\varepsilon))=+\infty.\] On the other hand, if $\Mg(D_i)$ is an equipartition we clearly see that the inequality is true since
\[ \liminf_{\varepsilon \to 0} F_\varepsilon((D_i^\varepsilon)) \geq \frac 1k\sum_{i=1}^k \lambda_1(D_i)) = \max_{1\leq i\leq k} \lambda_1(D_i). \]
The $\Gamma-\limsup$ part is straightforward by choosing a constant sequence. 
\end{proof}

We use the result of Proposition \ref{pen-method} to construct a numerical algorithm which approaches the min-max problem \eqref{eq.Lkp} with $p=\infty$. We minimize the functional $F_\varepsilon$ for $\varepsilon \in \{10,1,0.1,0.01\}$ and each time we start from the result of the previous optimization. {\Gn We justify the choice of the parameter $\varepsilon$ as follows. We do not start directly with a small value of $\varepsilon$, since the penalization part of the functional would dominate and we would reach a local minimum where the eigenvalues are almost equal. Therefore, we start with a reasonably high value of $\varepsilon$ which makes the two parts of the functional $F_\varepsilon$ similar in magnitude. Then we progressively decrease $\varepsilon$ in order to diminish the differences between the eigenvalues. We stop at $0.01$, since the difference between the optimal value at $\varepsilon=0.01$ and $\varepsilon=0.001$ is less than $0.001$. Moreover, for smaller $\varepsilon$ the penalization part would dominate and the value of the maximal eigenvalue no longer decreases.}  In the minimization of $F_\varepsilon$ we use the same discrete framework presented in Section \ref{ssec.num} as well as the penalized eigenvalue problem \eqref{discrete}.  
In Table \ref{pnorm_pen} we present the minimal and maximal eigenvalues obtained when minimizing $\Lambda_{k,50}$ and when using the penalization method described in this section, that is to say, {\Gn we compare}
\[\min\{\lambda_{1}(D_{j}),1\leq j\leq k\}\quad \mbox{ and }\max\{\lambda_{1}(D_{j}),1\leq j\leq k\},
\]
 where $(D_{j})$ is either the numerical $p$-minimal $k$-partition $\cD^{k,p}$ for $p=50$ or the partition obtained with the penalization method.
 We also added the relative differences between maximal and minimal eigenvalues. Comparing these differences we note that the penalization method gives partitions which are closer to being an equipartition.  We also observe that the maximal value among the first eigenvalues is lower for the penalization method. Thus, in the cases considered here, this method gives us better candidates. The partitions obtained with the penalization method are presented in Figure \ref{pen-figs}.
 
\begin{table}[h!]
\centering 
\begin{tabular}{|c|c|c|c|c|c|c|c|}
\hline
\multirow{2}{*}{$\Omega$} & \multirow{2}{*}{$k$} & \multicolumn{3}{c|}{ $\fL_{k,50}(\Omega)$} & \multicolumn{3}{c|}{penalization}\\ \cline{3-8}
                                &         & min & max & diff.(\%) & min & max & diff.(\%) \\ \hline 
\multirow{7}{*}{$\triangle$}  
     & 4 & $208.92$ & $211.71$ & $1.32$ & $209.15$ & $211.04$ & $0.89$ \\ \cline{2-8} 
     & 5 & $249.17$ & $252.67$ & $1.38$ & $251.27$ & $252.17$ & $0.36$ \\ \cline{2-8}
     & 6 & $275.37$ & $276.16$ & $0.28$ & $275.34$ & $276.22$ & $0.31$ \\ \cline{2-8}
     & 7 & $338.04$ & $348.24$ & $2.92$ & $343.51$ & $345.91$ & $0.69$ \\ \cline{2-8}
     & 8 & $388.47$ & $391.06$ & $0.66$ & $388.46$ & $389.53$ & $0.27$ \\ \cline{2-8}
     & 9 & $422.80$ & $431.92$ & $2.11$ & $425.34$ & $428.74$ & $0.79$ \\ \cline{2-8}
     & 10& $445.50$ & $456.66$ & $2.44$ & $450.74$ & $453.25$ & $0.55$ \\ \hline
\multirow{6}{*}{$\square$}  
     & 5 & $103.75$ & $105.82$ & $1.95$ & $104.24$ & $104.60$ & $0.34$ \\ \cline{2-8}
     & 6 & $125.79$ & $128.11$ & $1.81$ & $126.36$ & $128.14$ & $1.38$ \\ \cline{2-8}
     & 7 & $144.49$ & $147.44$ & $2.00$ & $145.81$ & $146.90$ & $0.74$ \\ \cline{2-8}
     & 8 & $160.48$ & $161.64$ & $0.71$ & $160.76$ & $161.28$ & $0.32$ \\ \cline{2-8}
     & 9 & $176.64$ & $179.21$ & $1.49$ & $177.13$ & $178.08$ & $0.53$\\ \cline{2-8}
     & 10& $200.00$ & $206.85$ & $3.31$ & $202.78$ & $204.54$ & $0.86$ \\ \hline     
\end{tabular}\\[5pt]
\caption{Minimal and maximal eigenvalues of the candidates obtained by the $p$-norm and the penalization methods.}
\label{pnorm_pen}
\end{table}

\begin{figure}[h!t]
\centering 
\includegraphics[width = 0.1\textwidth]{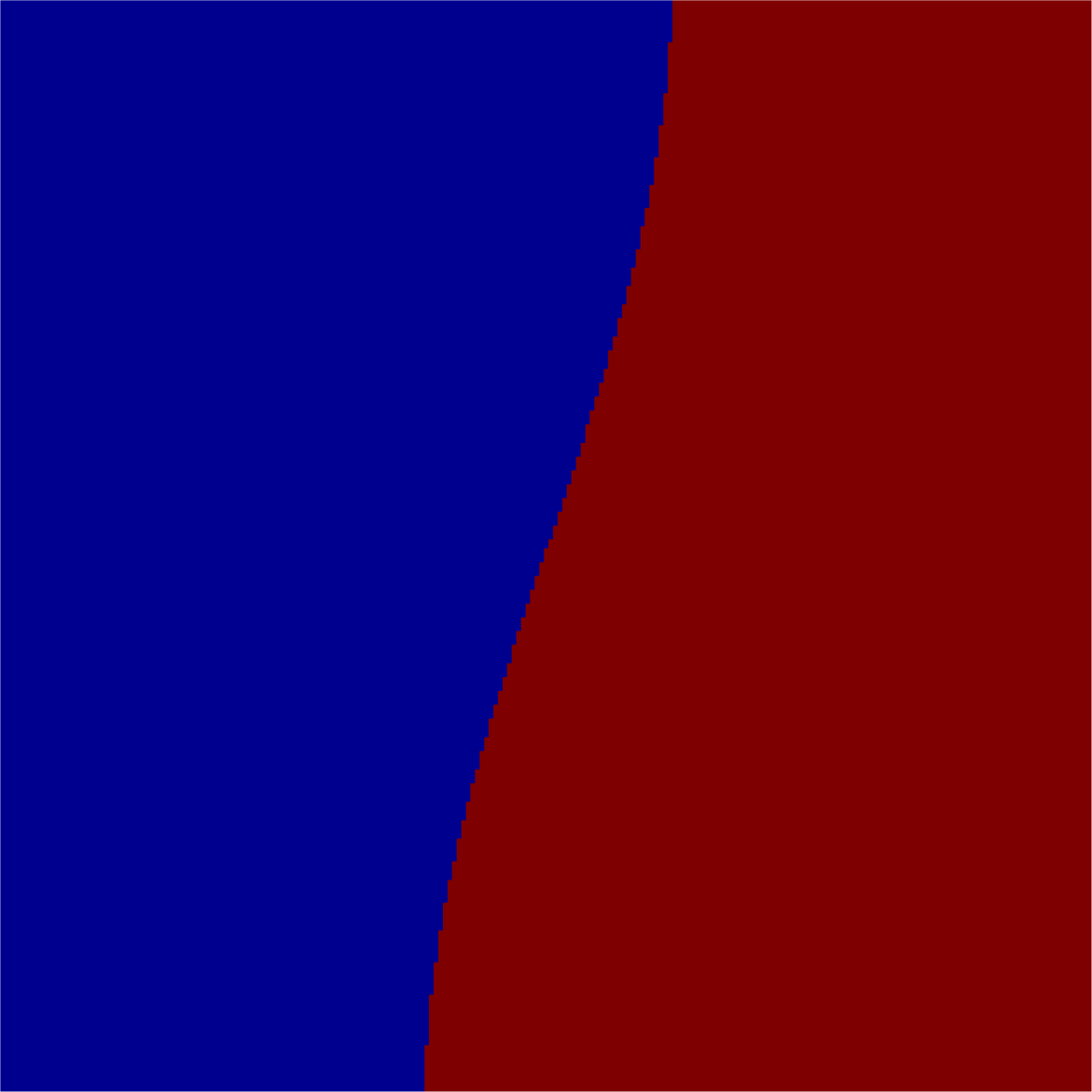}~
\includegraphics[width = 0.1\textwidth]{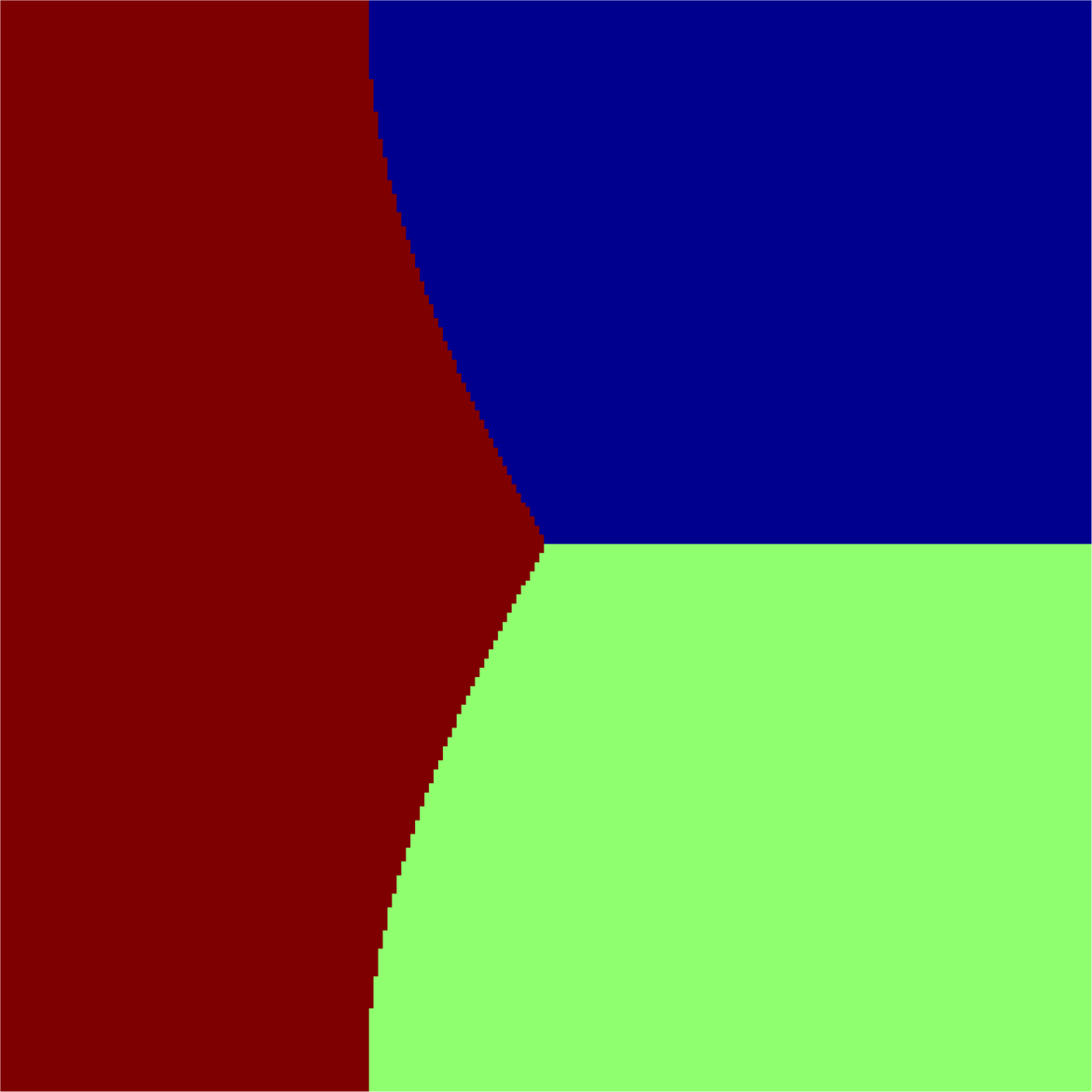}~
\includegraphics[width = 0.1\textwidth]{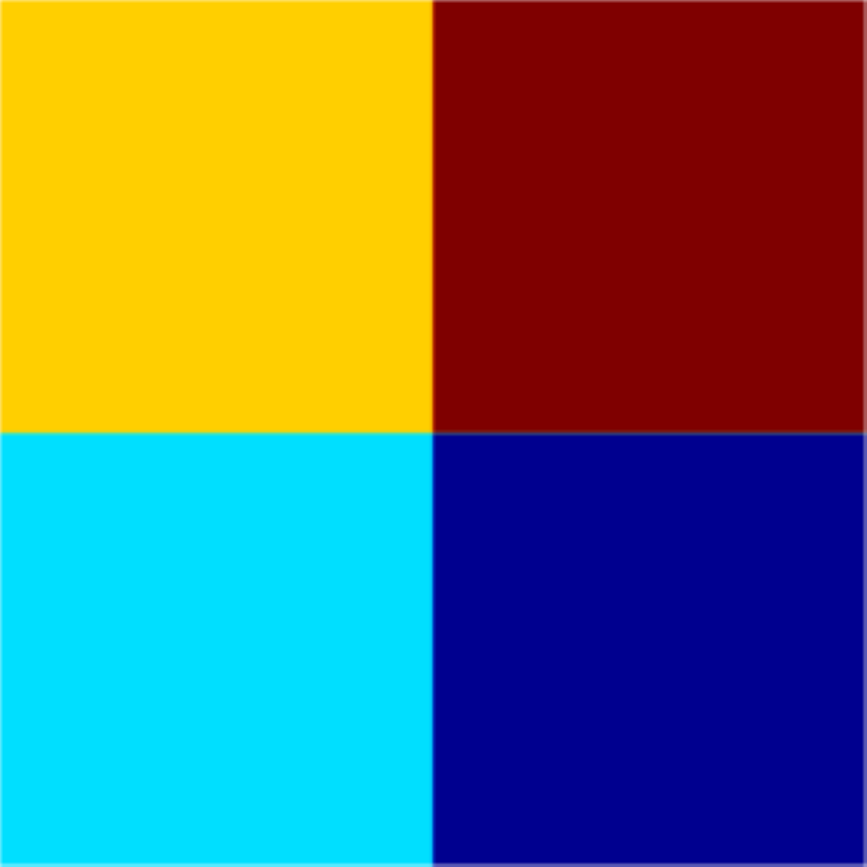}~
\includegraphics[width = 0.1\textwidth]{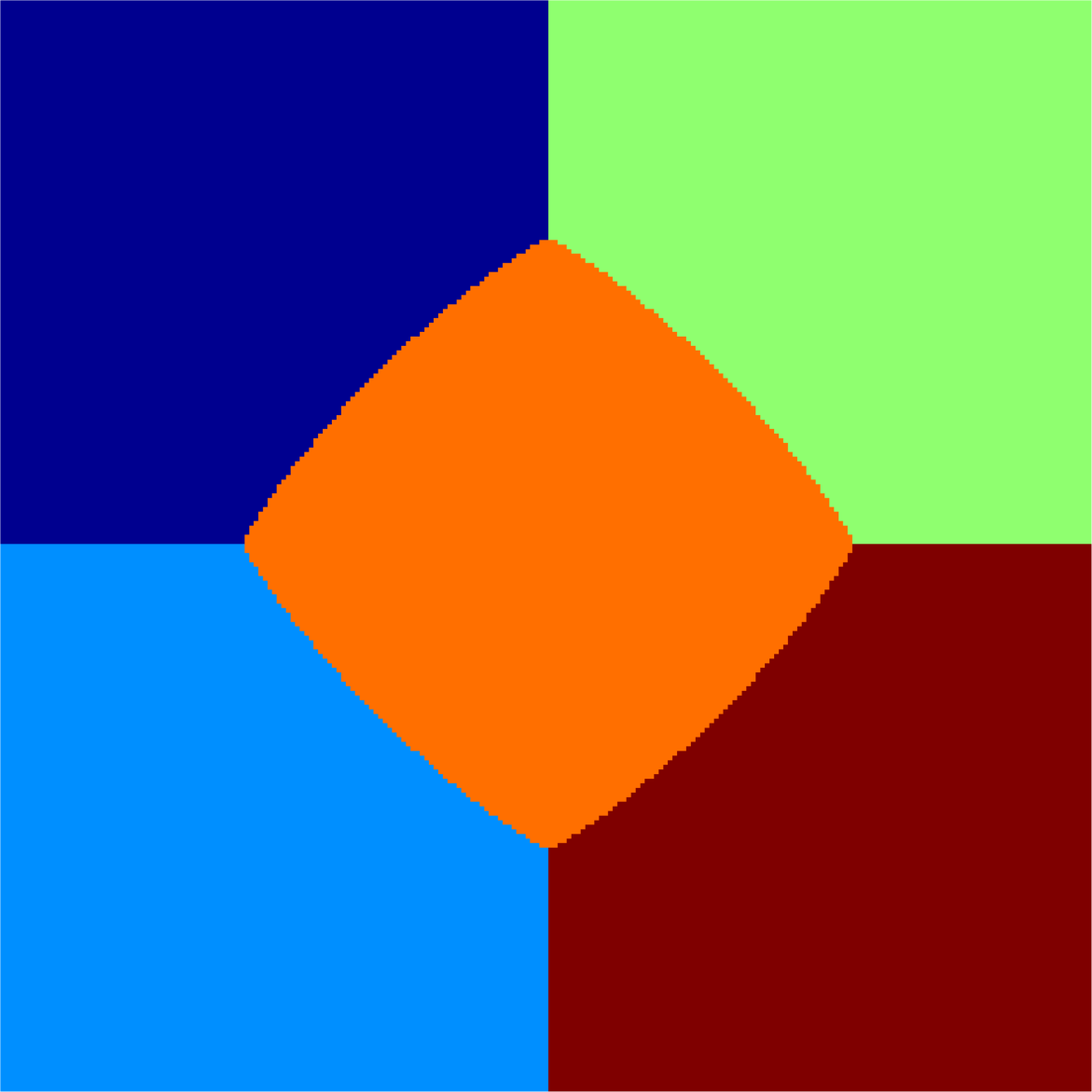}~
\includegraphics[width = 0.1\textwidth]{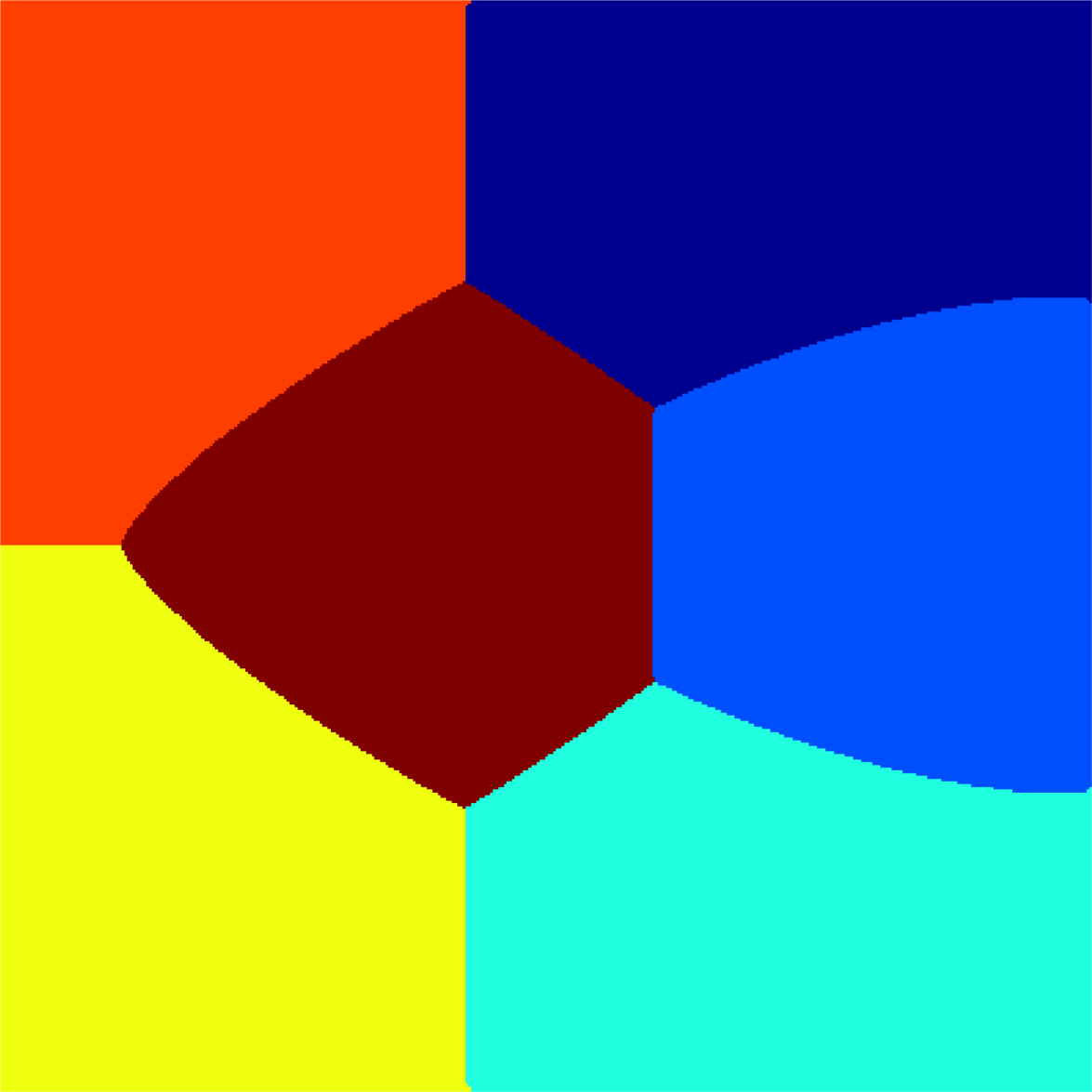}~
\includegraphics[width = 0.1\textwidth]{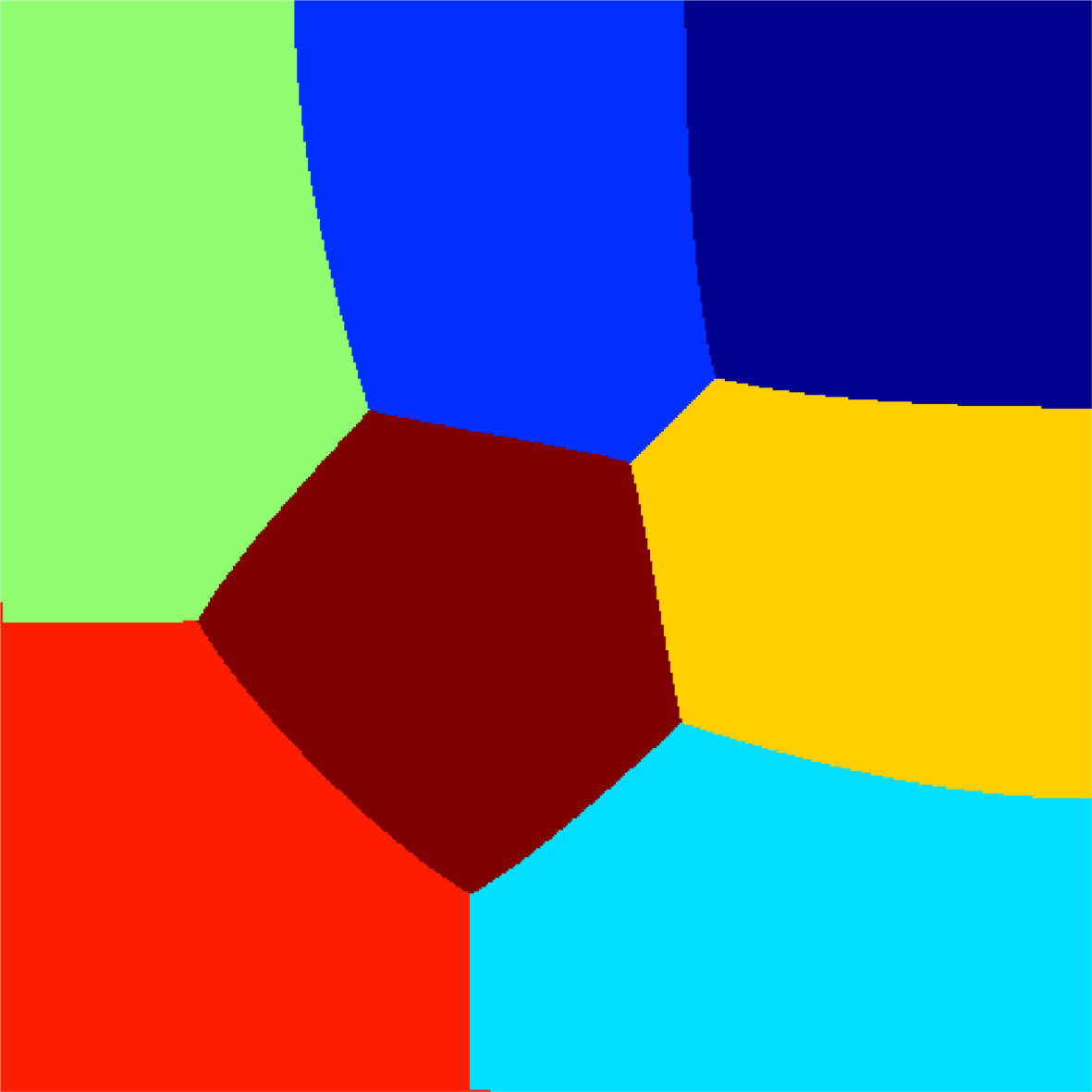}~
\includegraphics[width = 0.1\textwidth]{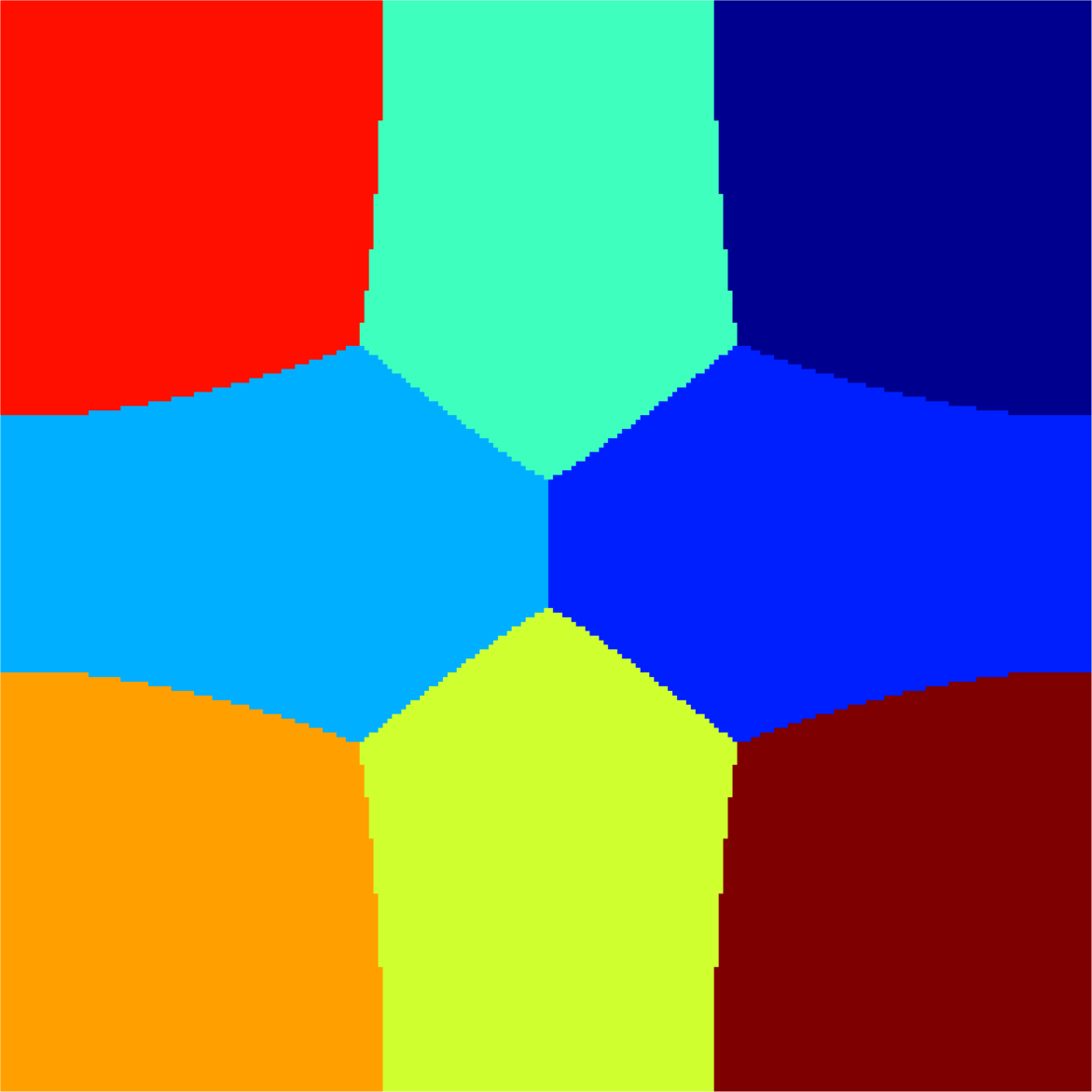}~
\includegraphics[width = 0.1\textwidth]{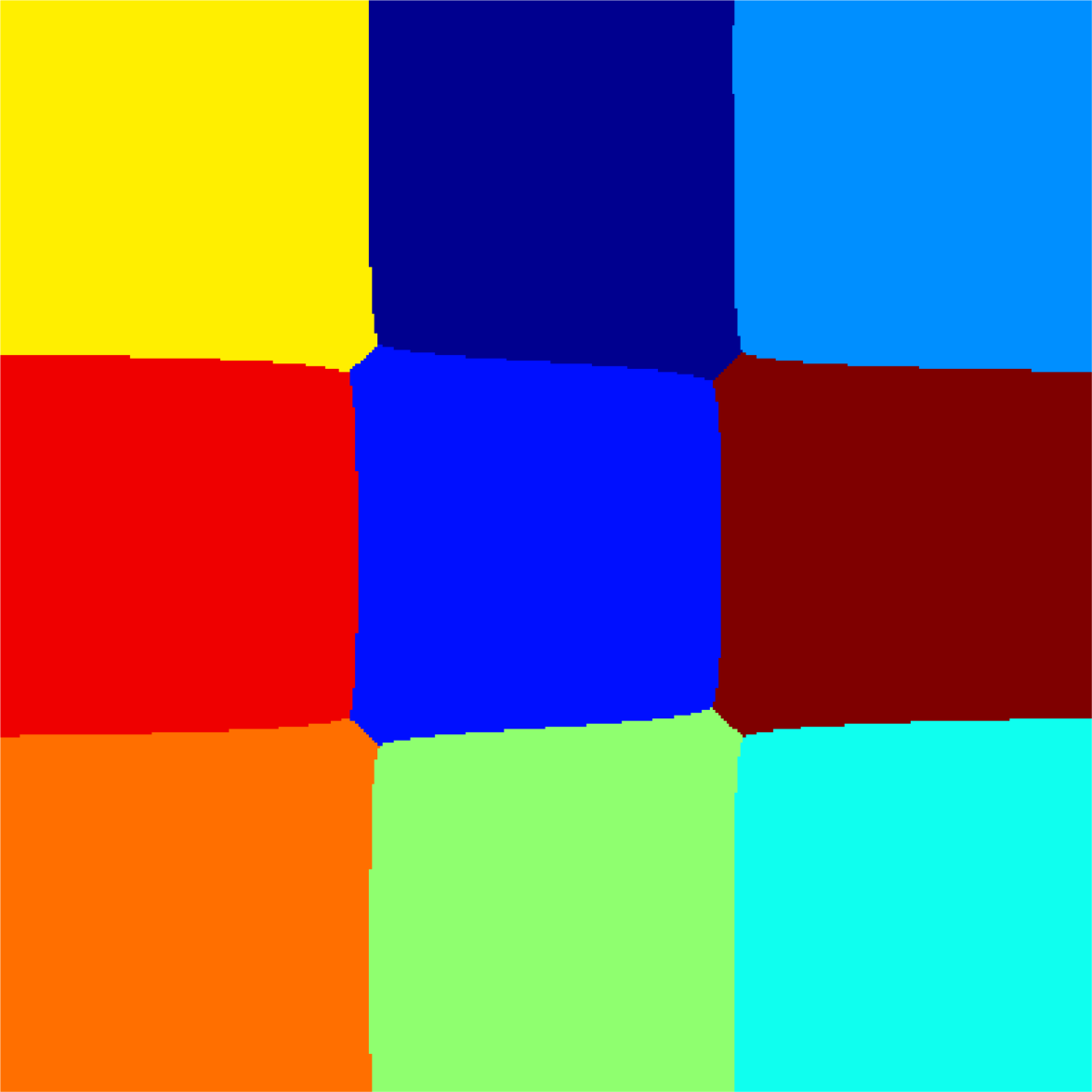}~
\includegraphics[width = 0.1\textwidth]{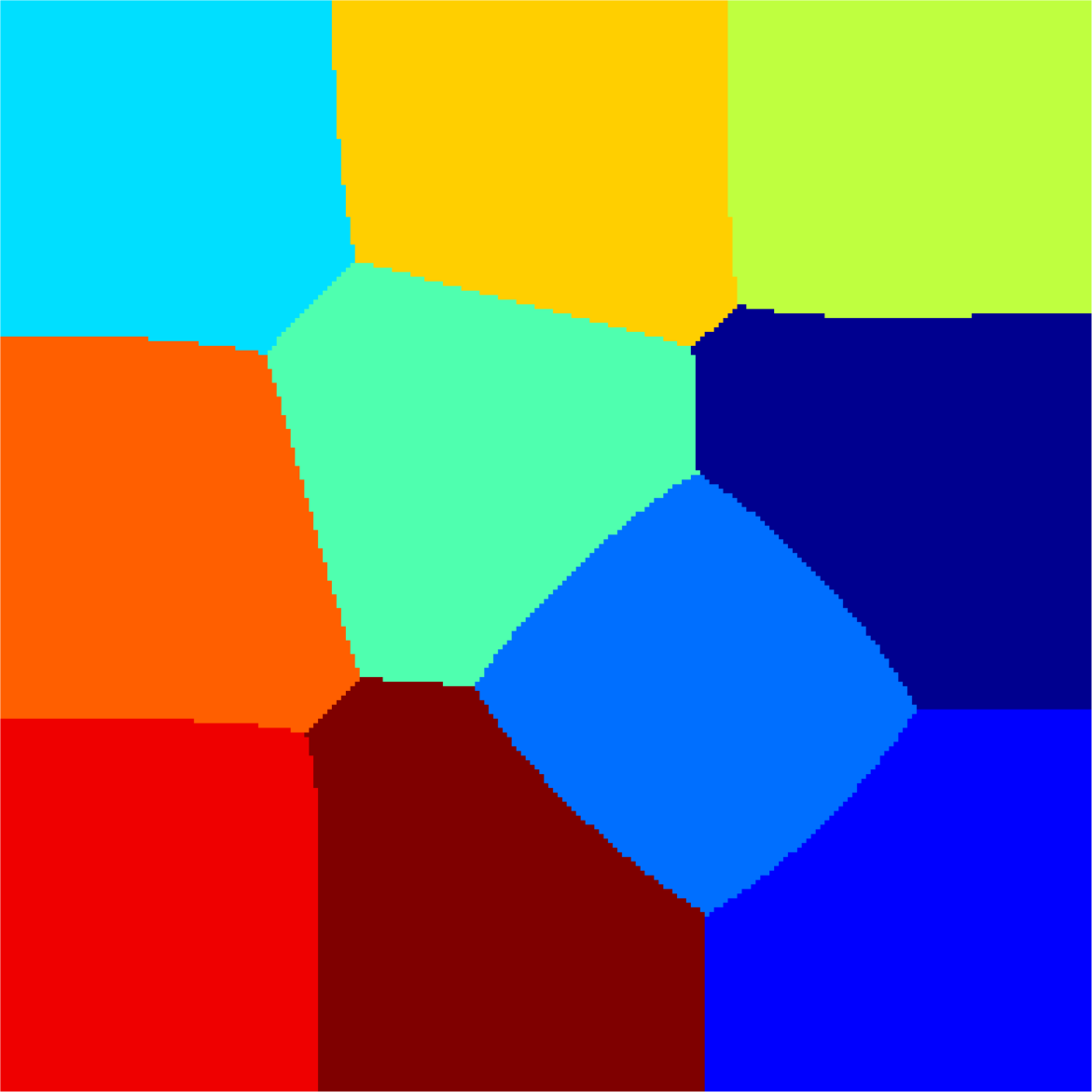}
\vspace{0.1cm}

\includegraphics[width = 0.1\textwidth]{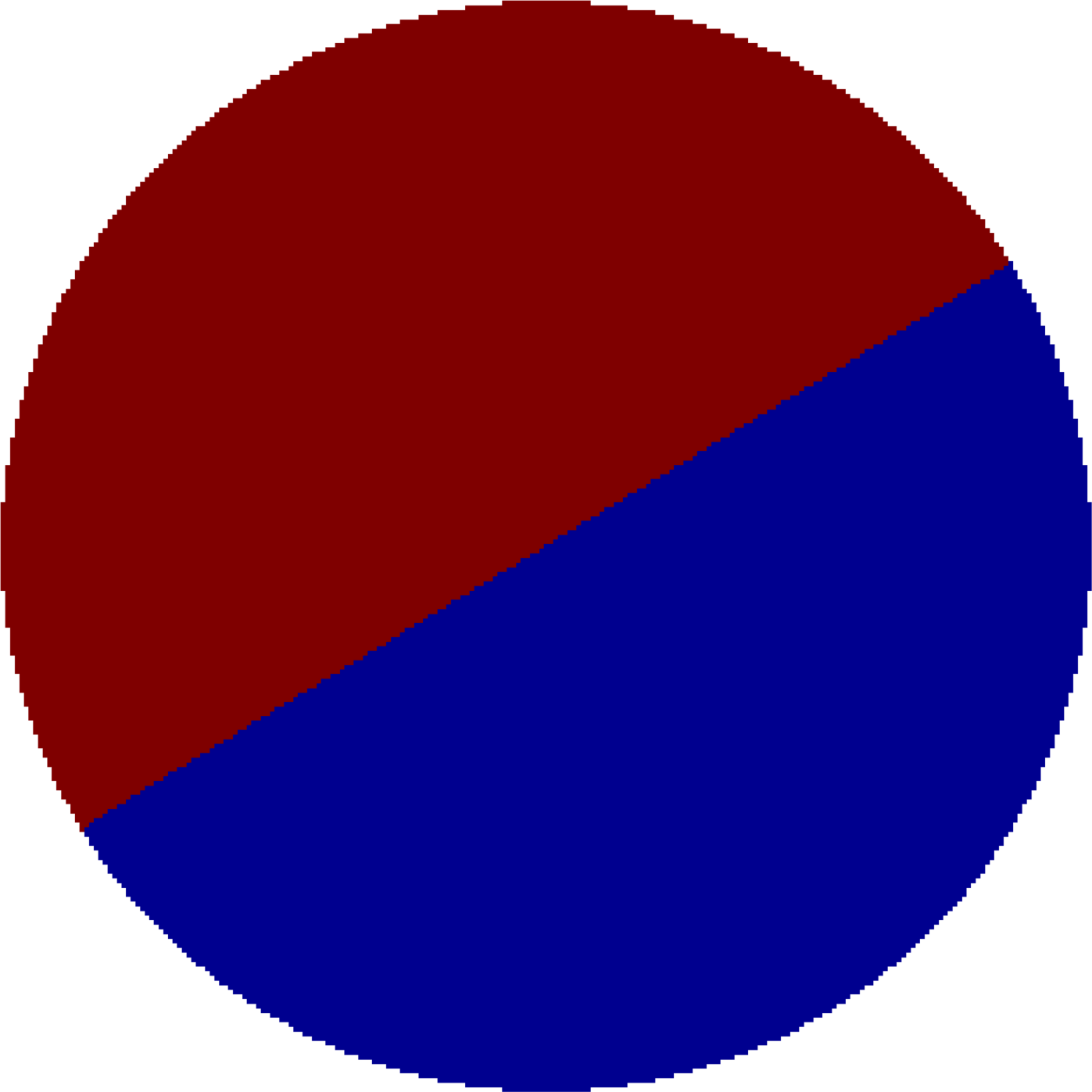}~
\includegraphics[width = 0.1\textwidth]{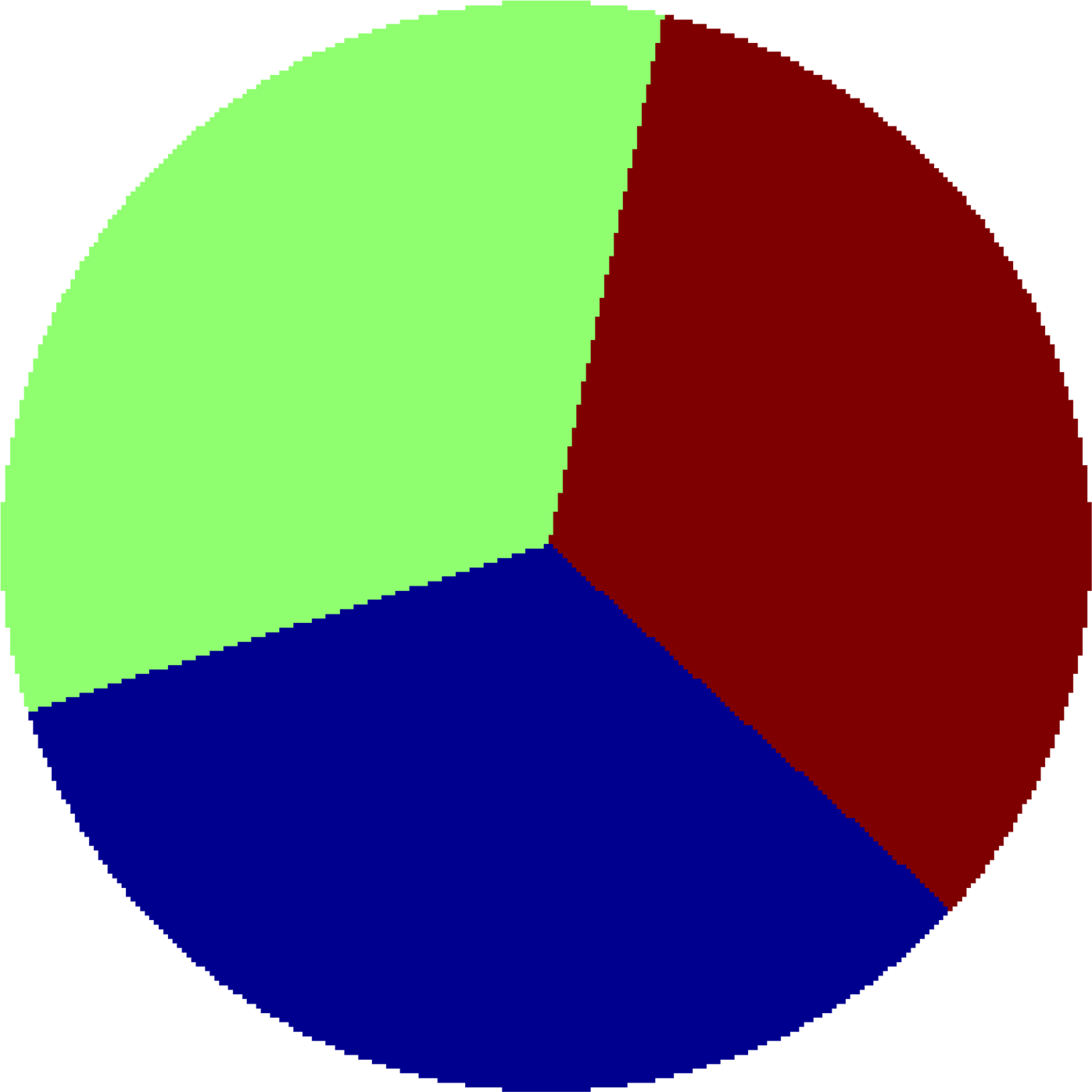}~
\includegraphics[width = 0.1\textwidth]{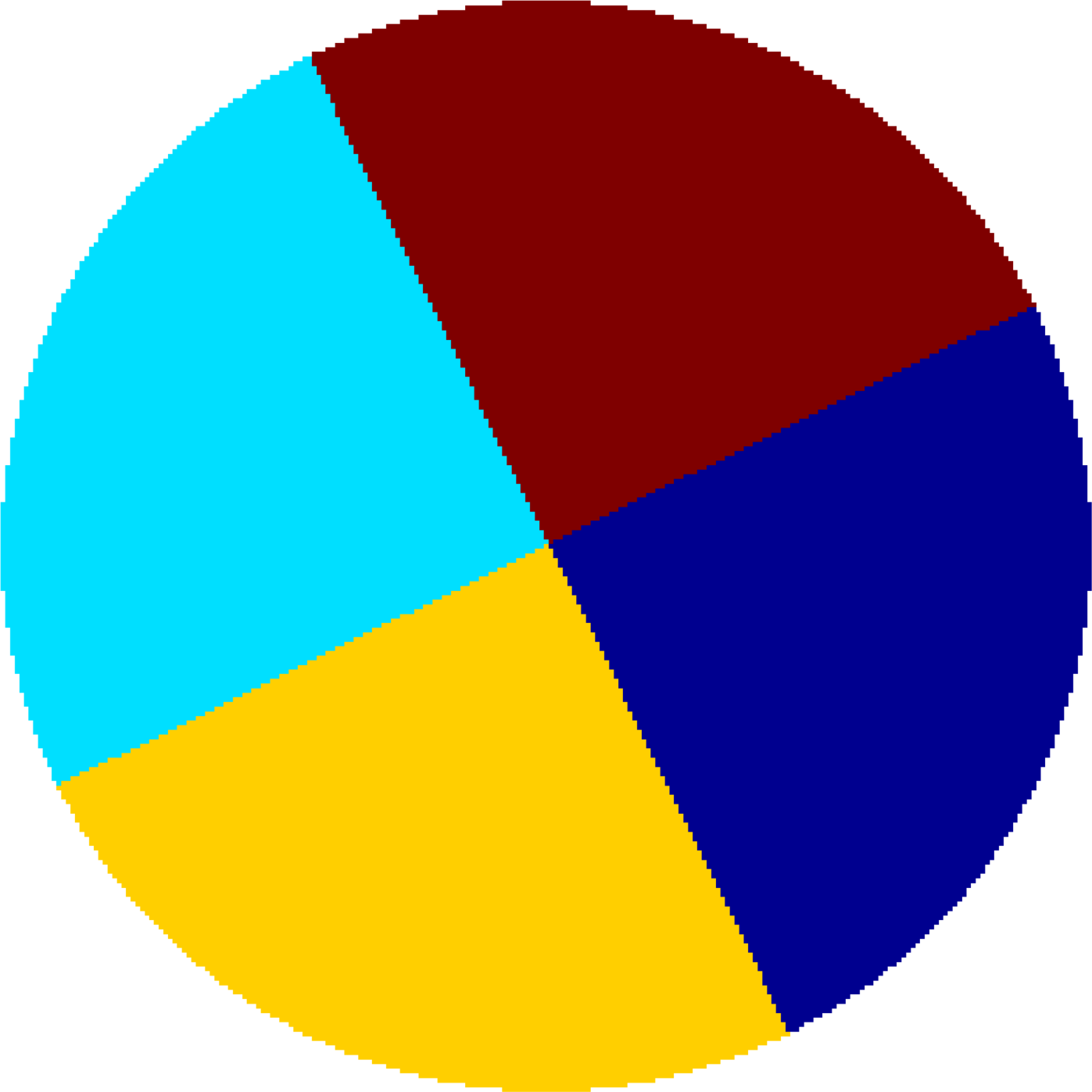}~
\includegraphics[width = 0.1\textwidth]{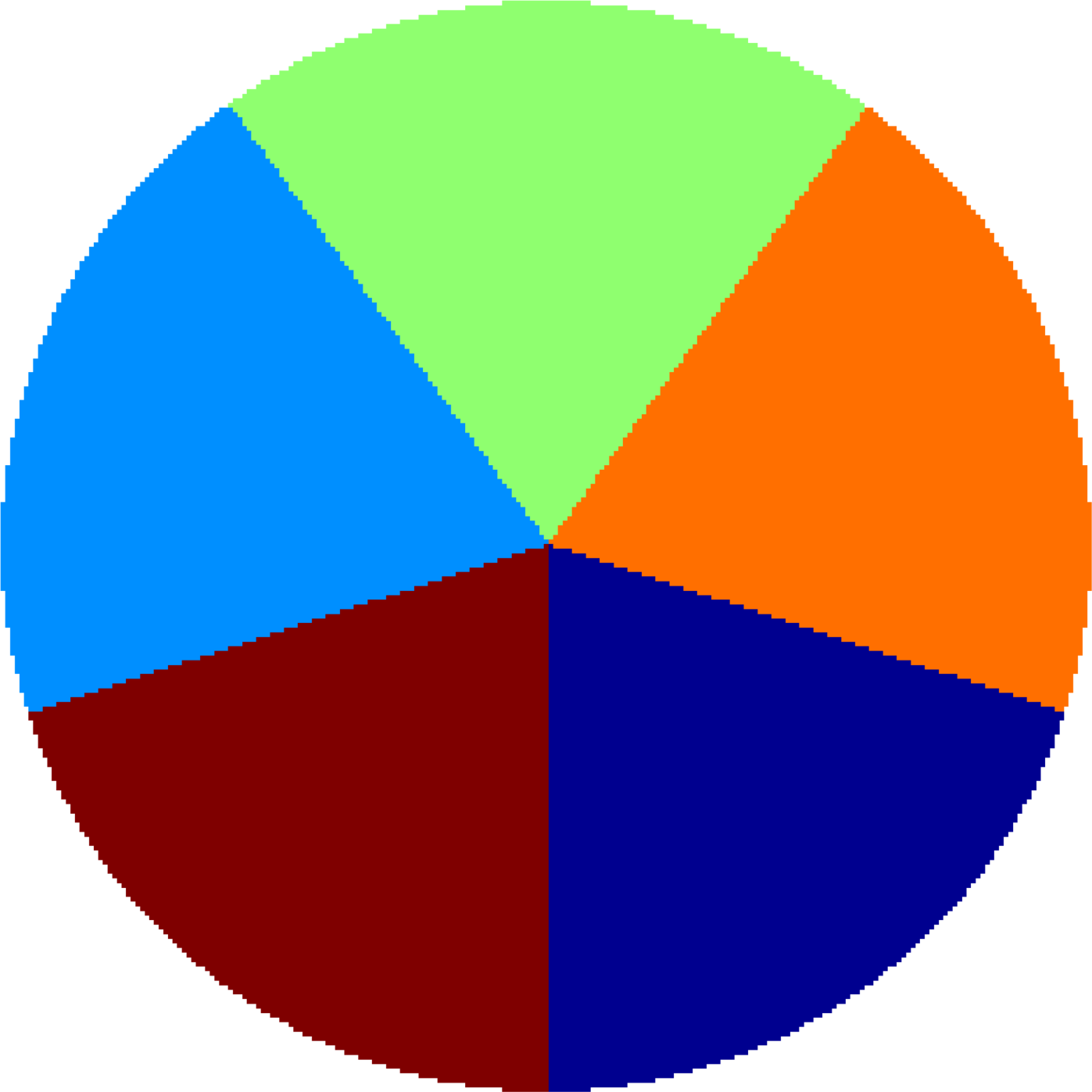}~
\includegraphics[width = 0.1\textwidth]{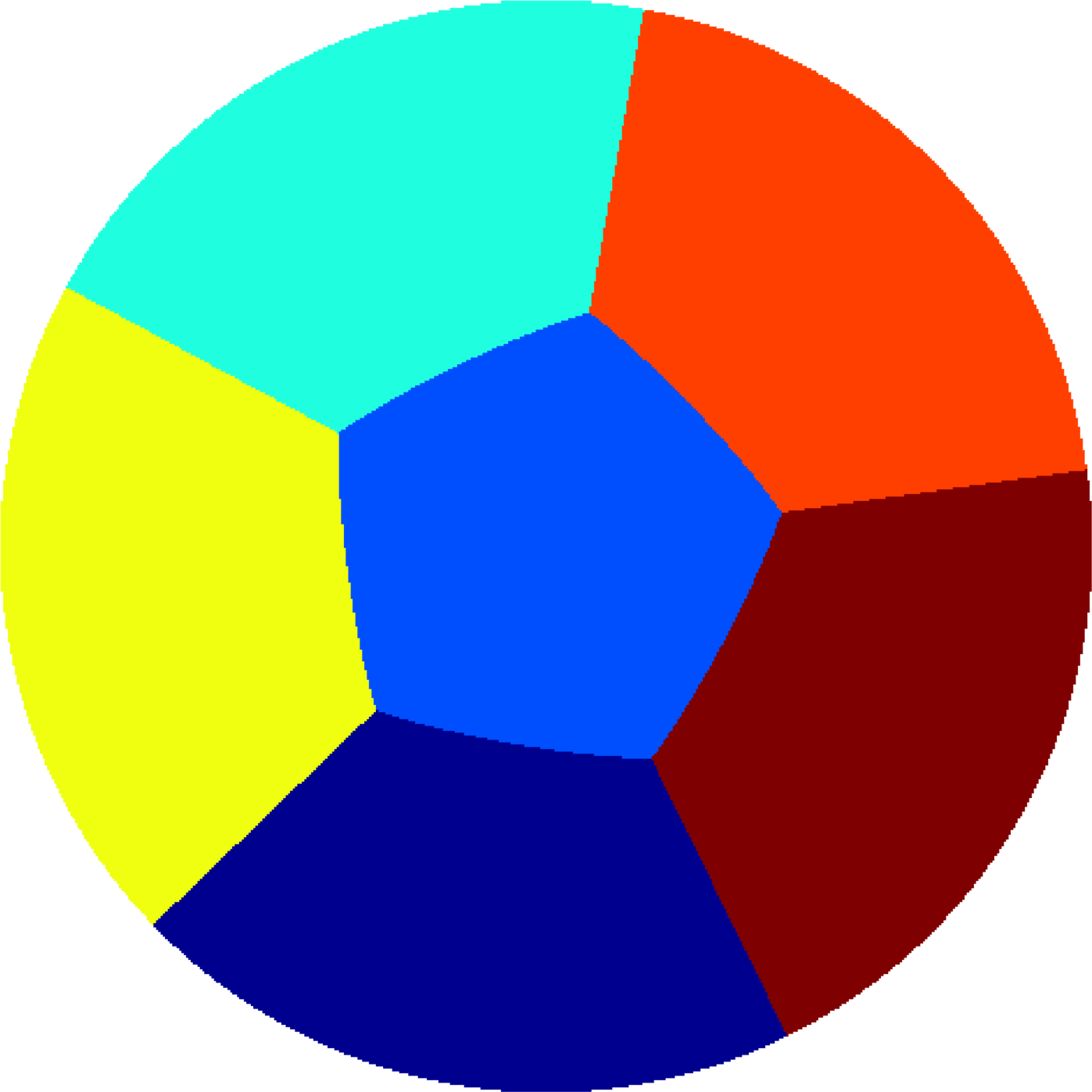}~
\includegraphics[width = 0.1\textwidth]{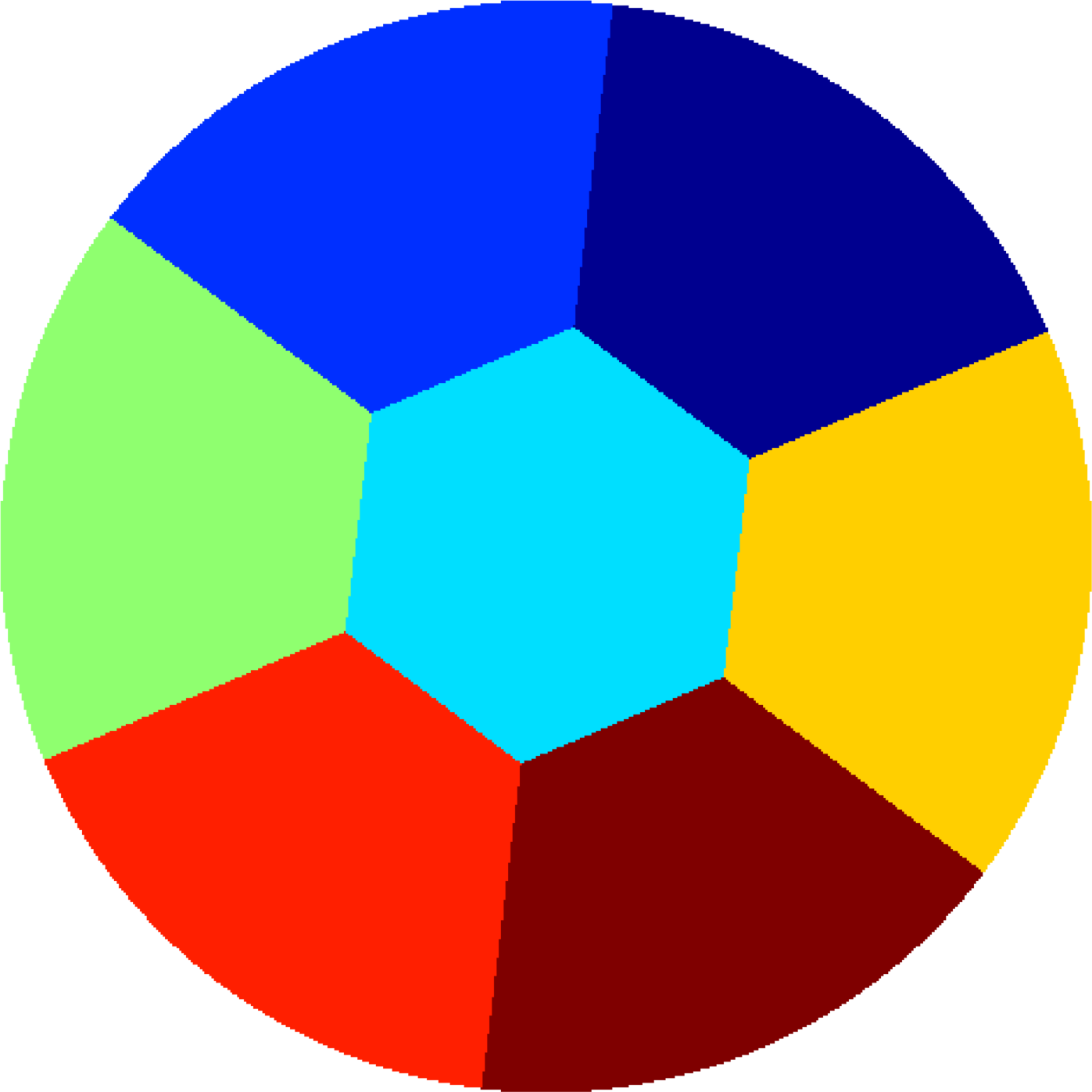}~
\includegraphics[width = 0.1\textwidth]{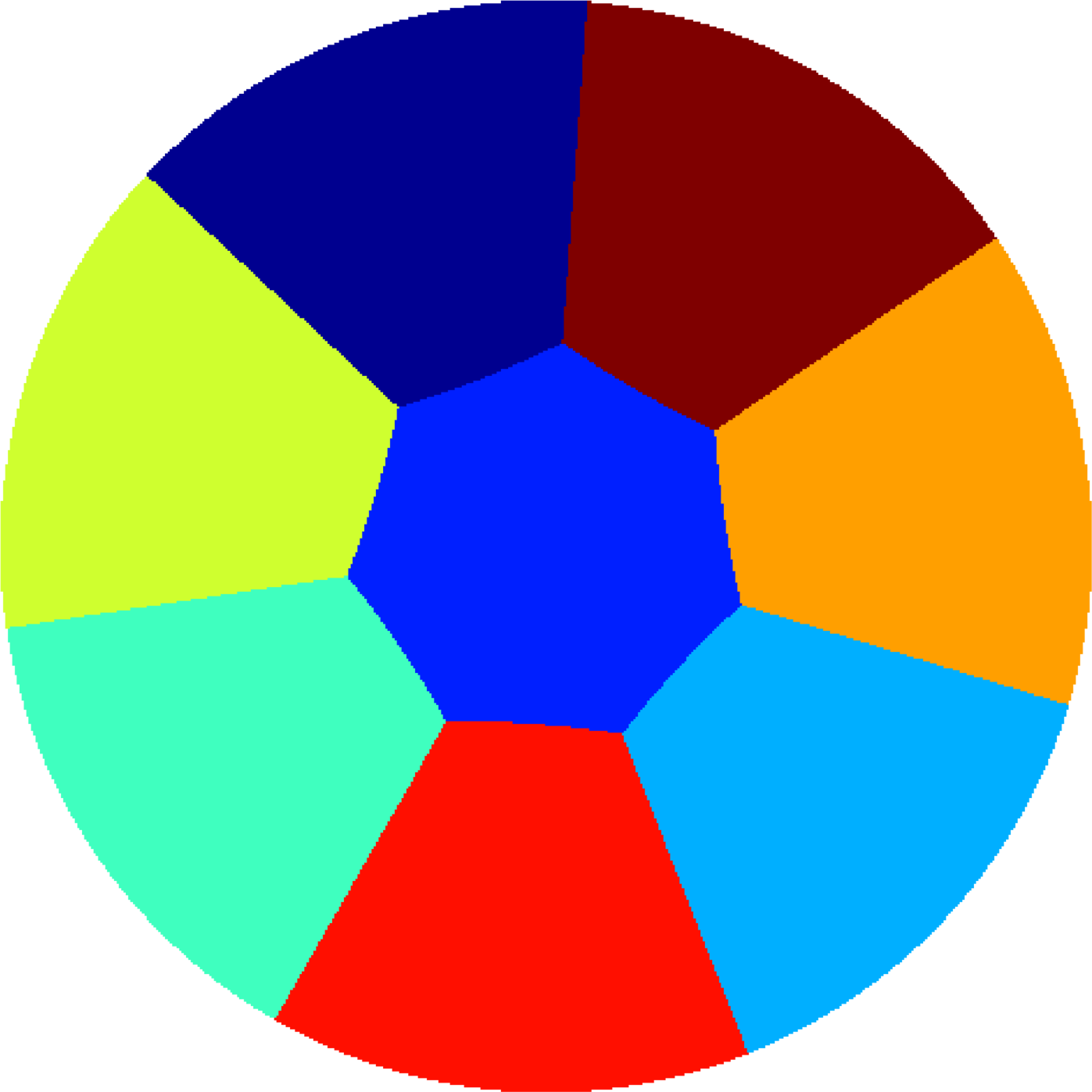}~
\includegraphics[width = 0.1\textwidth]{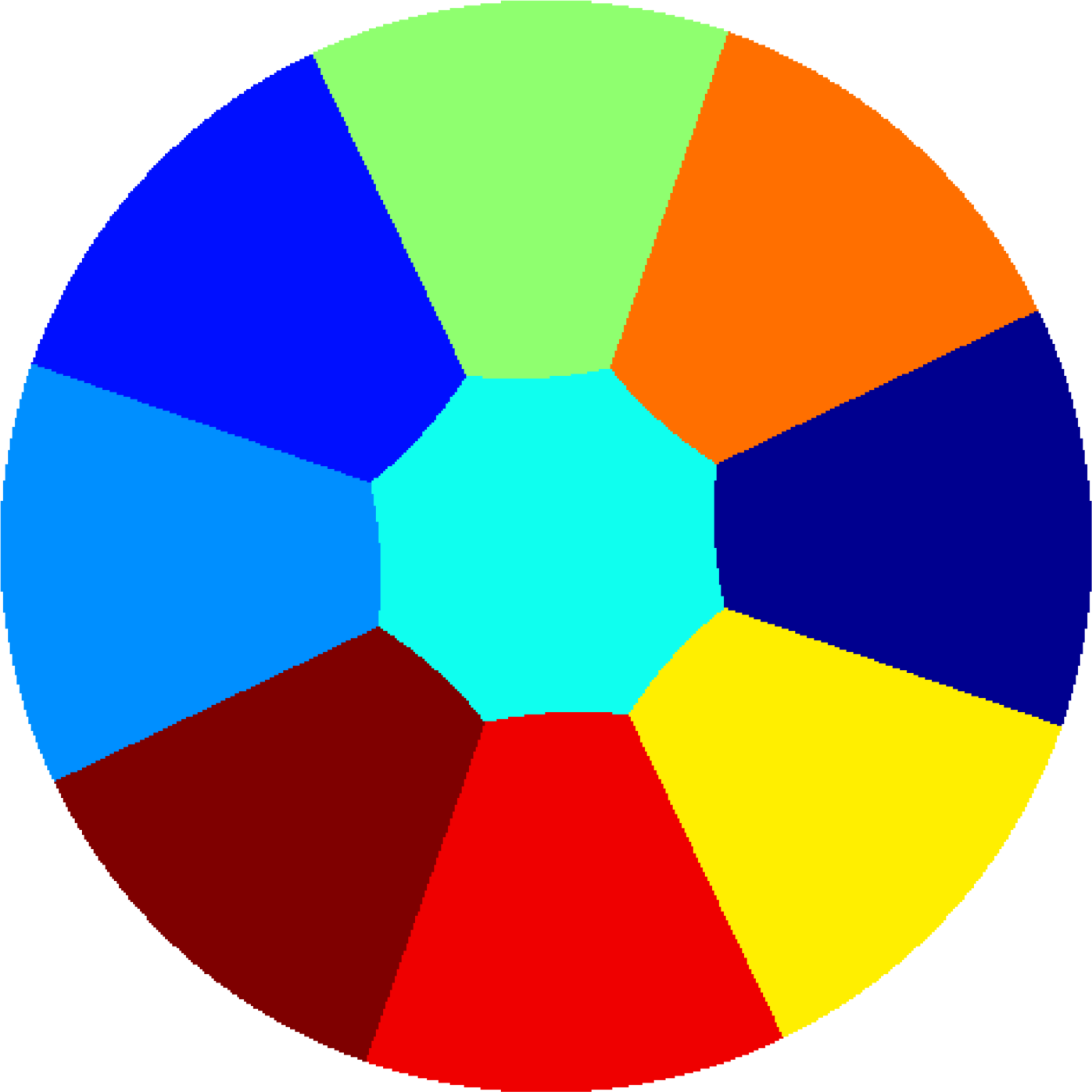}~
\includegraphics[width = 0.1\textwidth]{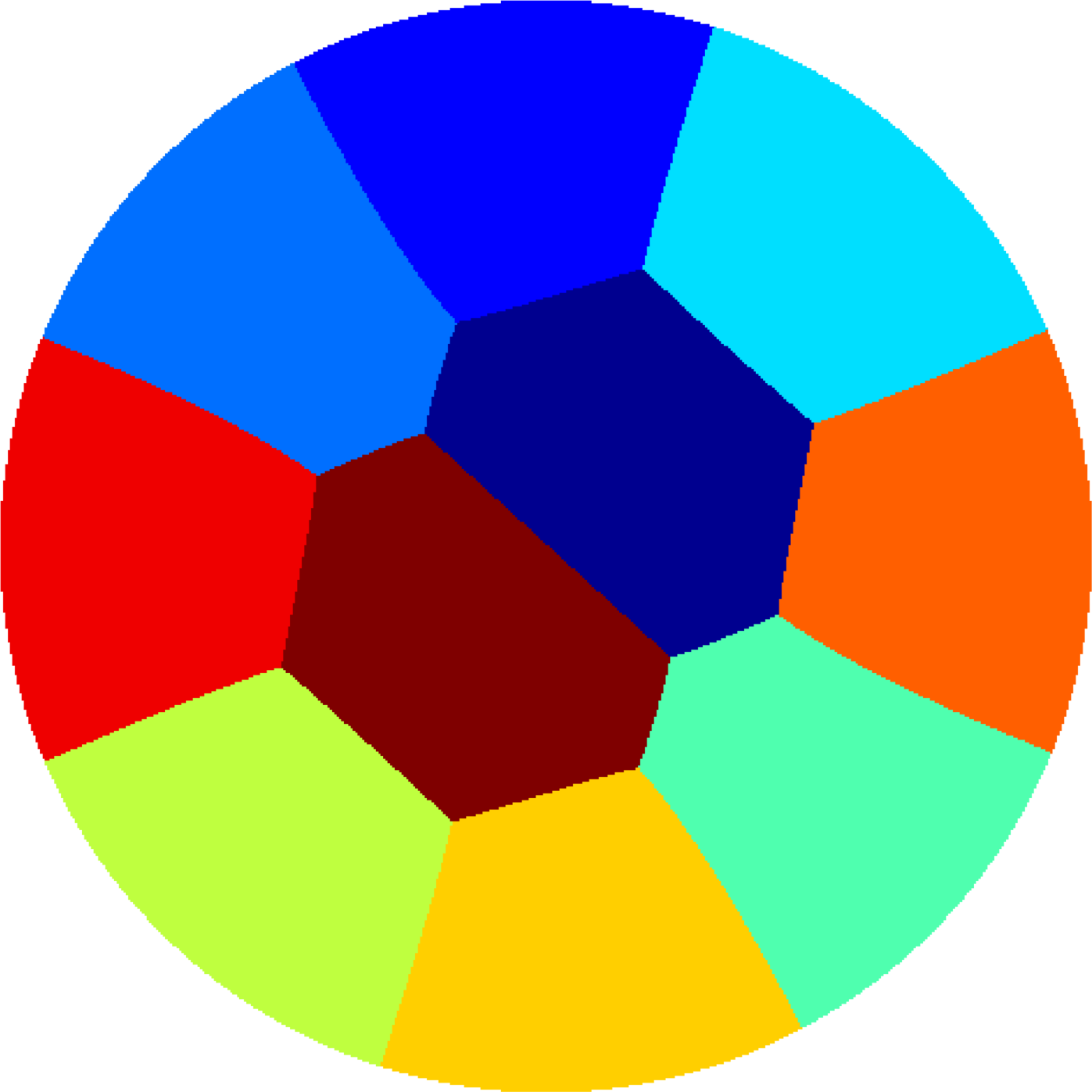}
\vspace{0.1cm}

\includegraphics[width = 0.1\textwidth]{sequi2}~
\includegraphics[width = 0.1\textwidth]{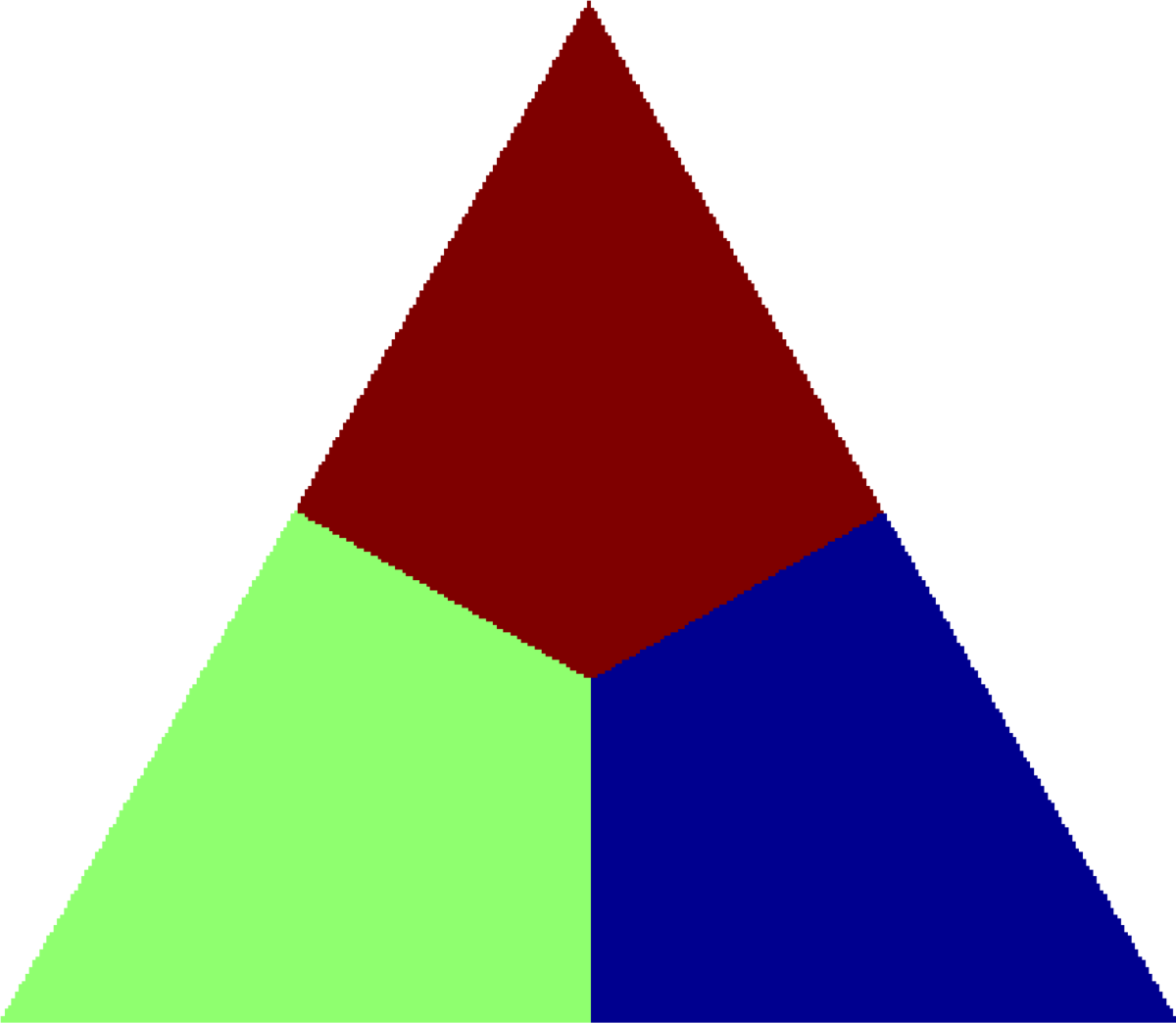}~
\includegraphics[width = 0.1\textwidth]{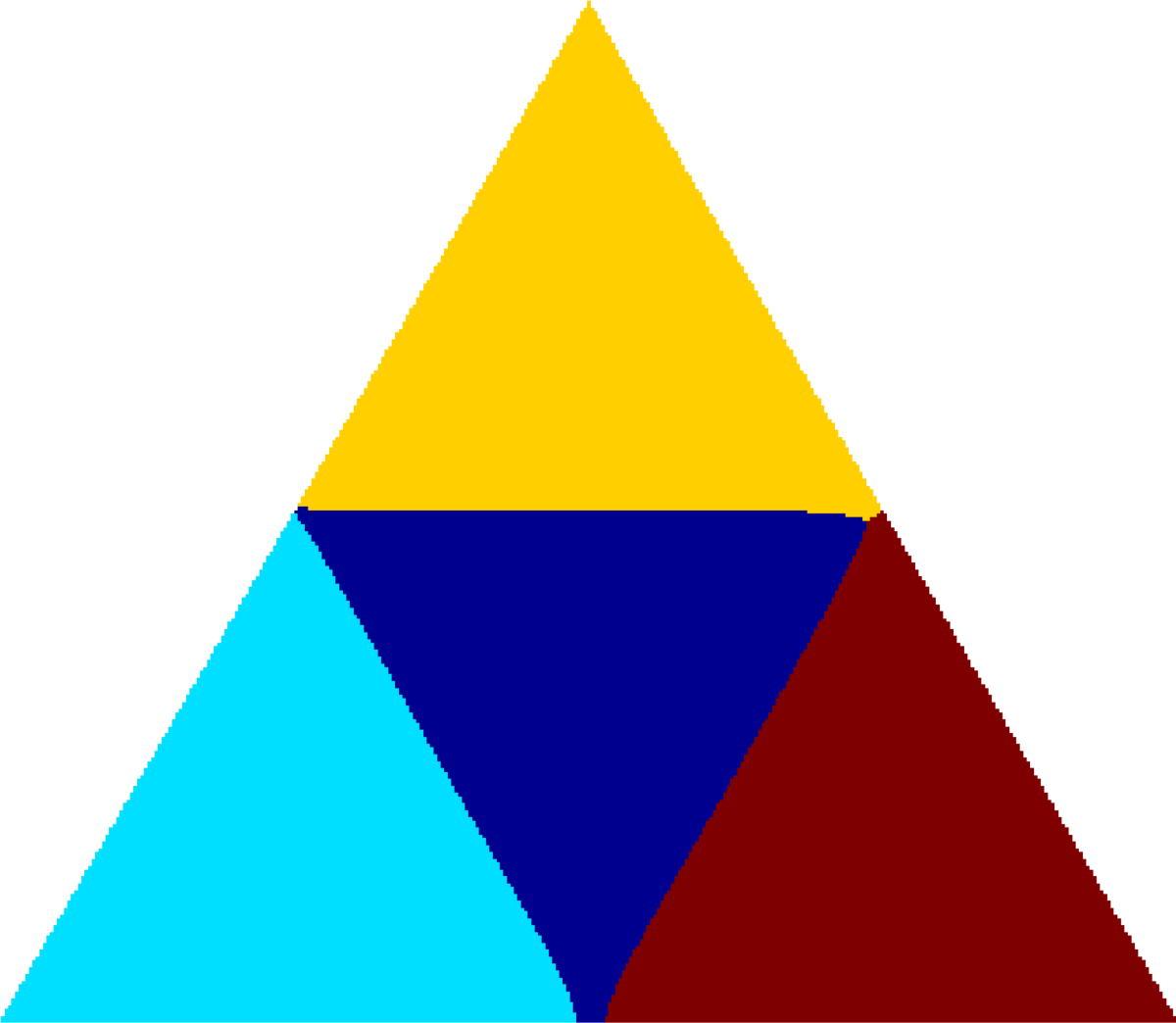}~
\includegraphics[width = 0.1\textwidth]{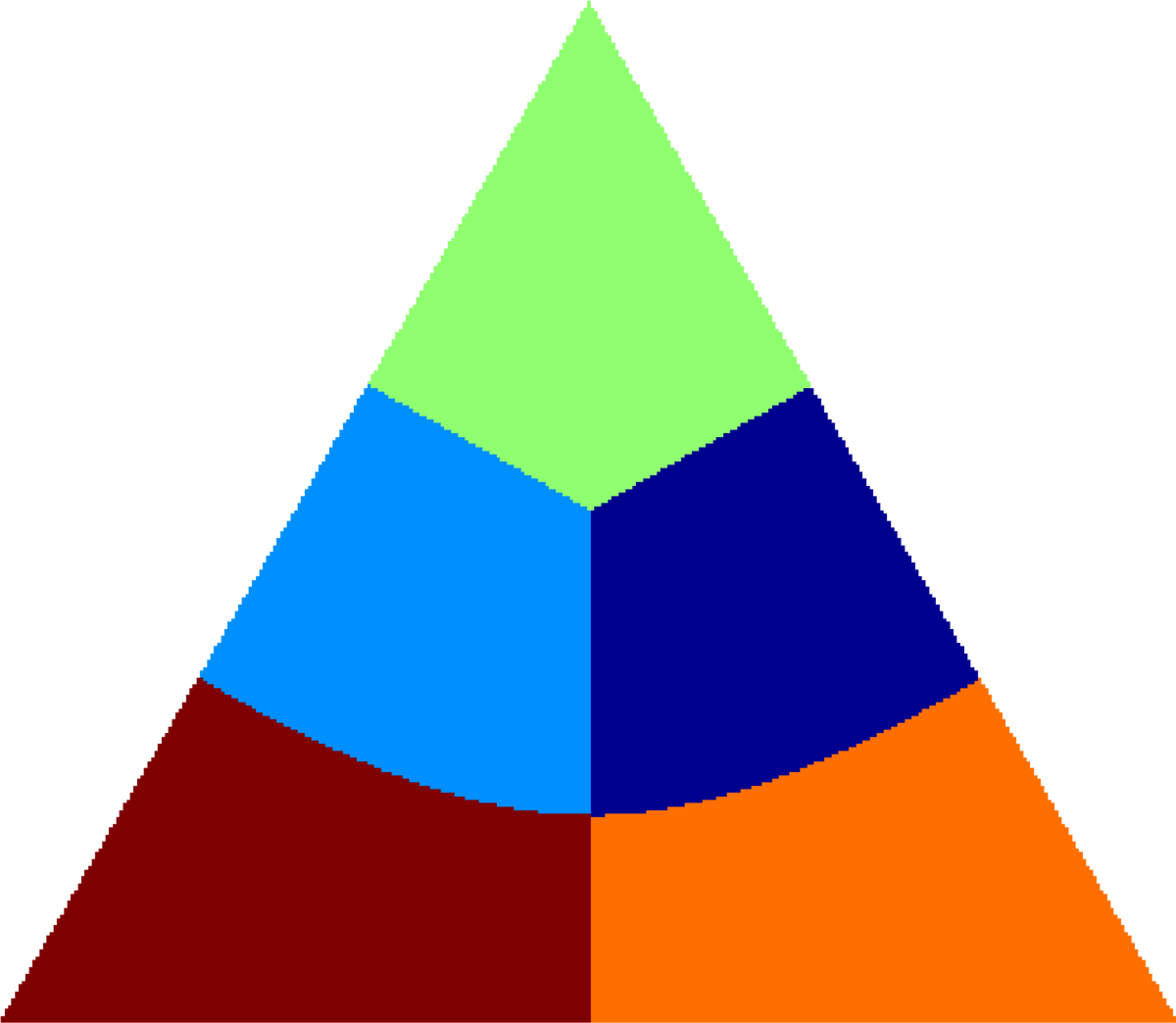}~
\includegraphics[width = 0.1\textwidth]{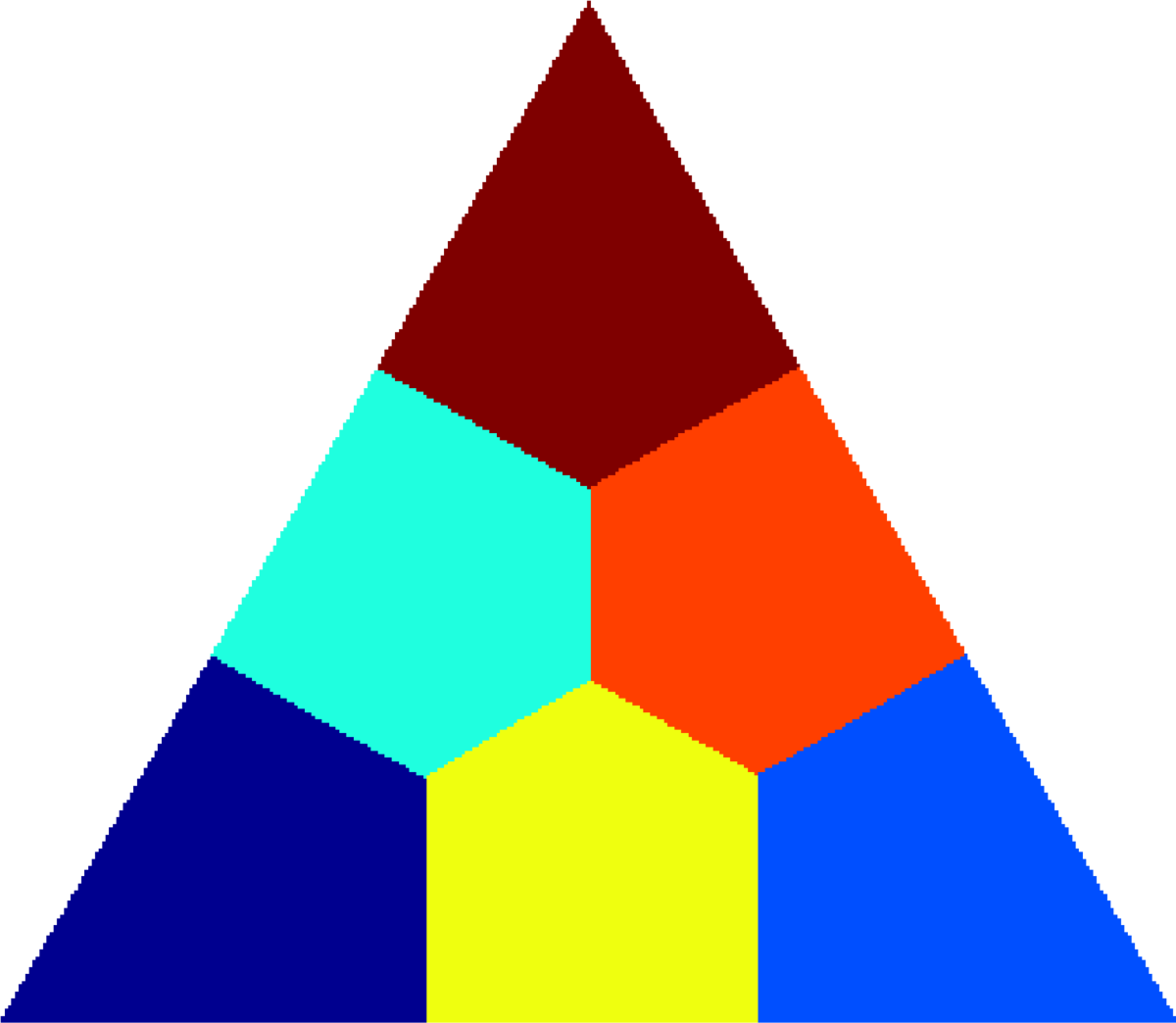}~
\includegraphics[width = 0.1\textwidth]{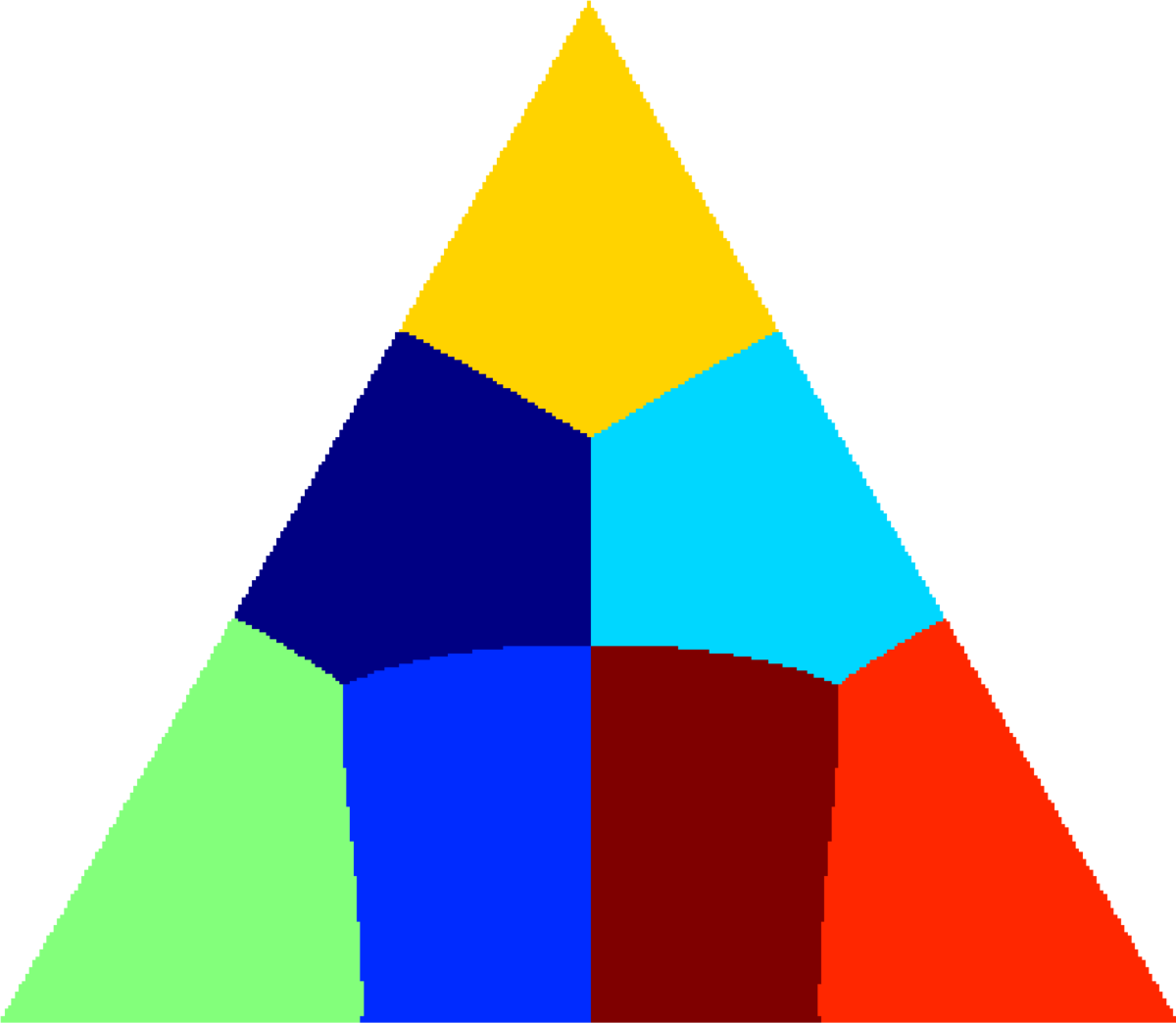}~
\includegraphics[width = 0.1\textwidth]{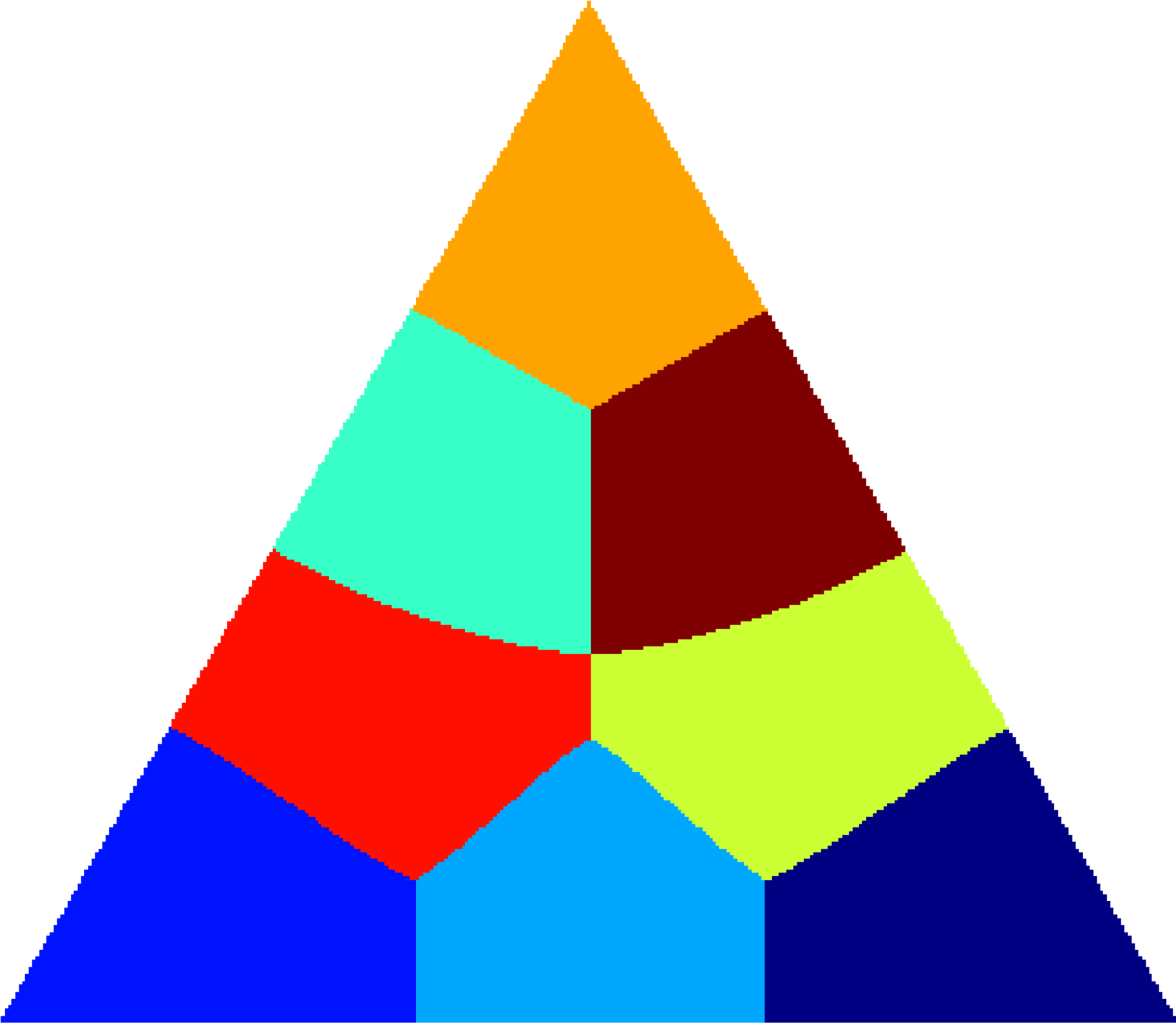}~
\includegraphics[width = 0.1\textwidth]{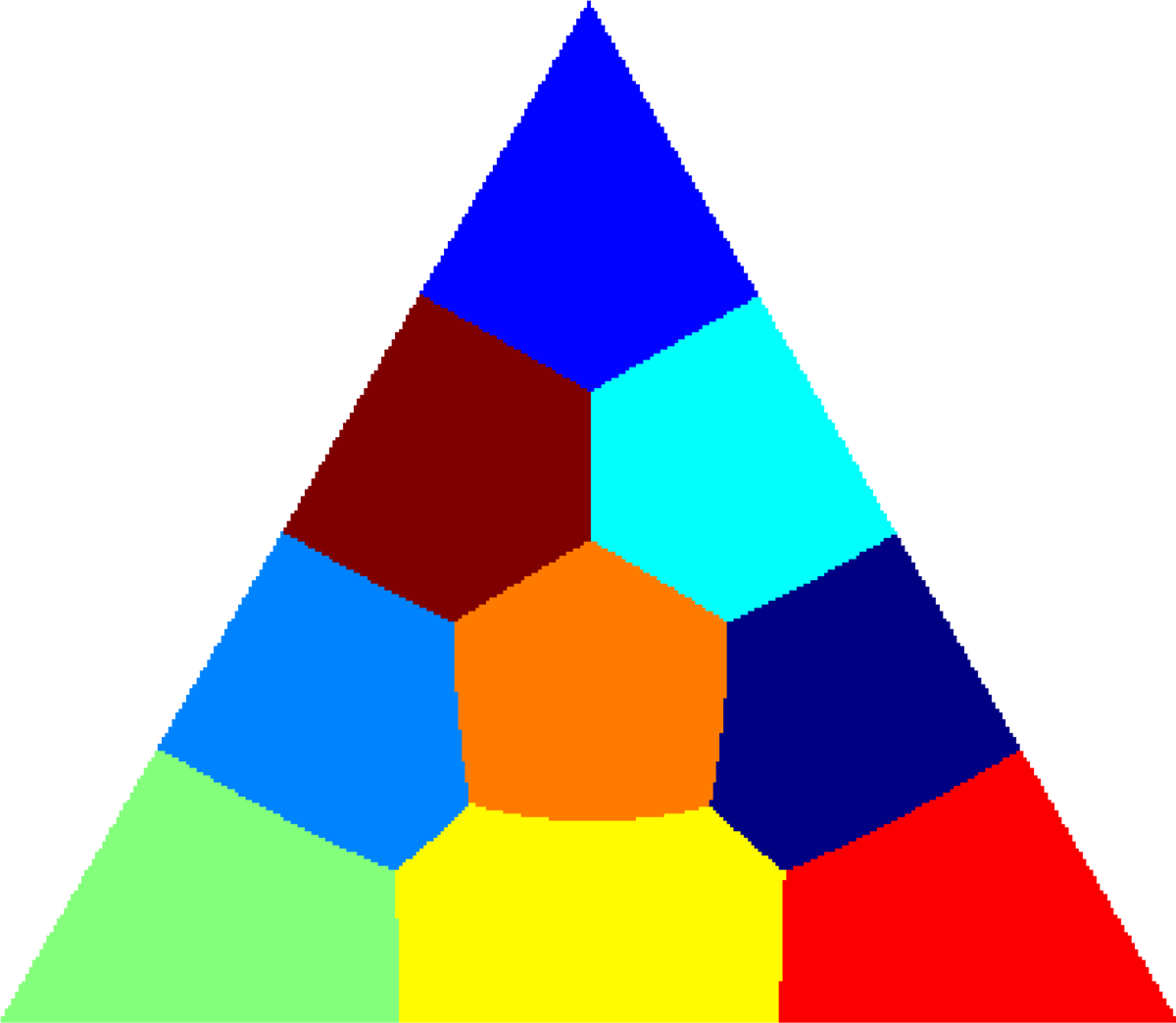}~
\includegraphics[width = 0.1\textwidth]{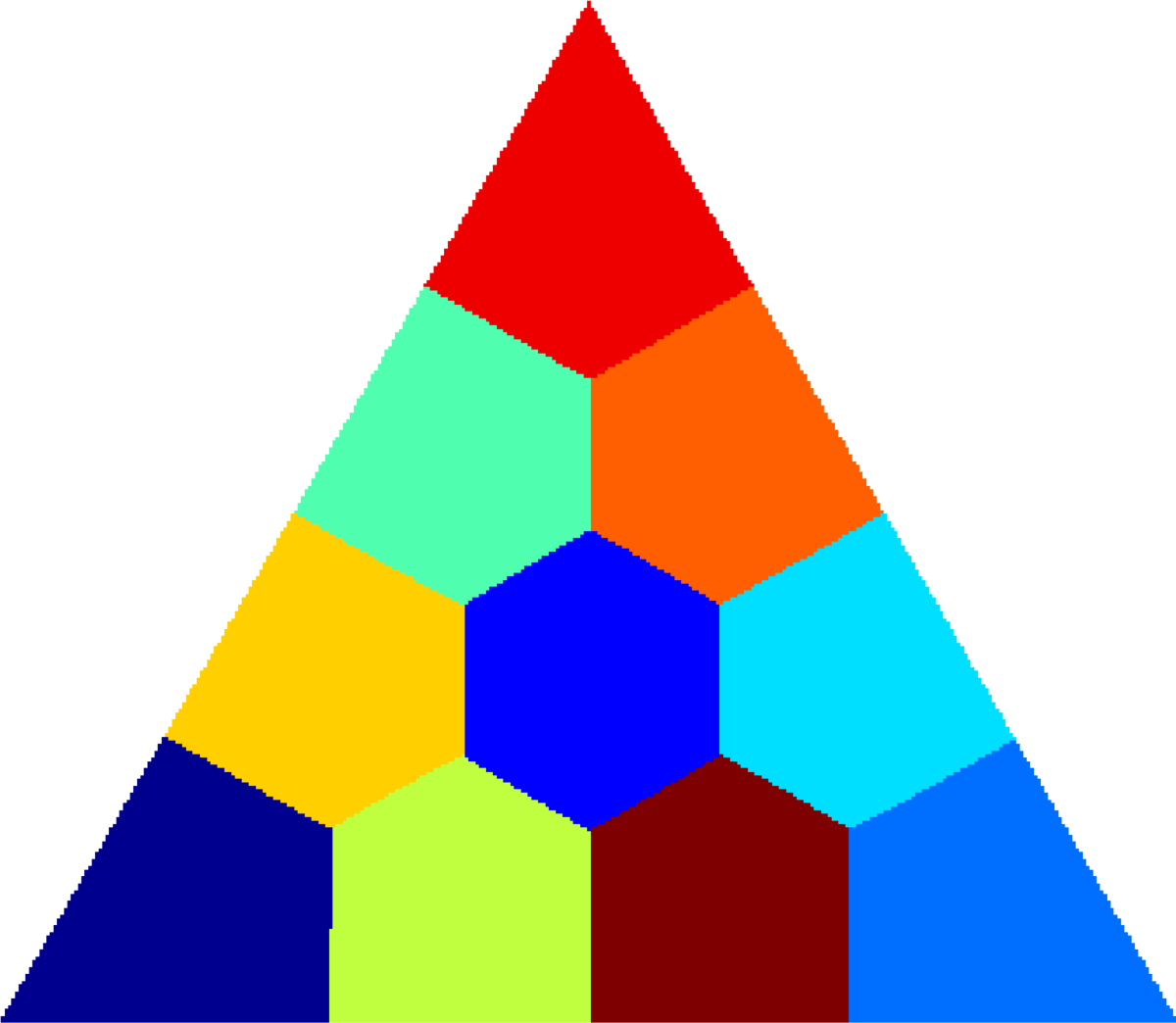}
\caption{Partitions obtained with the penalization method.}
\label{pen-figs}
\end{figure}

\begin{table}[h!]
\centering 
\begin{tabular}{|c|c|c|c|c|c|c|c|c|c|}
\hline 
 & \multicolumn{3}{c|}{Disk} & \multicolumn{3}{c|}{Square} & \multicolumn{3}{c|}{Equilateral triangle}\\ \hline
$k$ & $p=50$ & pen. & explicit & $p=50$ & pen. & explicit & $p=50$ & pen. & explicit \\ \hline  
$2$  & $14.68$ & $14.68$ & $14.68$   & $49.348$  & $49.348$ & $49.348$ & $123.38$ & $122.96$ & $122.82$          \\ \hline 
$4$  & $26.42$ & $26.42$ & $26.37$ & $78.957$ & $78.957$ & $78.957$ & $211.71$ & $211.04$ & $210.55$ \\ \hline 
\end{tabular}\\[5pt]
\caption{Comparison of the two methods for $\Omega=\Circle, \square, \triangle$ in explicit cases.}
\label{known_results}
\end{table}
Since in the cases $k=2,4$ we know the explicit optimizers we summarize in Table \ref{known_results} the results obtained with our numerical approaches in these two cases. We observe that the penalization method produces better candidates. 

Synthesized results are presented in Table~\ref{synthese-max} where we also present the values obtained with the mixed Dirichlet-Neumann method presented in the next section.

\subsection{Dirichlet-Neumann approach\label{sec.DN}}
The penalization method proposed in the previous section gives improved results in some situations as compared to the $p$-norm method. Still, the results we obtain are close, but not precisely an equipartition, as the theoretical results state in Proposition \ref{properties-min} presented in the previous section. In the following we propose a method which can further improve some of our results by working directly with equipartitions. 

A natural way of obtaining equipartitions is to use nodal partitions corresponding to some eigenvalue problems. Note that for $k=2$ any minimal $2$-partition for $\fL_{2,\infty}(\Omega)$ is a nodal partition for the second eigenvalue of the Dirichlet-Laplacian on $\Omega$ (see Theorem~\ref{thm.HHOT} and \eqref{eq.k2}). According to Proposition~\ref{prop.k24}, when $\Omega=\square,\ \bigcirc,\ \triangle$,  no $\infty$-minimal $k$-partition is nodal except for $k=1,2,4$. This is also observed numerically because the partitions we exhibit  for $k\notin \{2,4\}$ have { at least one critical point with odd degree}. Since nodal partitions are bipartite, the degree of every singular point must be even. Therefore we cannot expect to be able to express our partitions as nodal partitions corresponding to an eigenvalue problem on the domain $\Omega$ with Dirichlet boundary conditions on $\partial \Omega$. It is possible, nevertheless, to represent equipartitions as nodal partitions on $\Omega$, by adding some additional Dirichlet conditions on curves inside $\Omega$. Since we are interested in finding equipartitions with minimal energy, we wish to be able to easily parametrize these curves on which we impose the additional Dirichlet boundary conditions, in order to optimize their position. 

In the cases presented in this section, we consider adding additional Dirichlet conditions on segments in $\Omega$. Therefore, we look at the results obtained with the iterative methods in order to see in which cases some boundaries are segments. Moreover, every singular point of odd degree should be contained in one such segments, so that the remaining partition is nodal. If the optimal partition has certain symmetries, some of the cells may share some of the symmetry properties. Therefore we may reduce our computations to a subset of $\Omega$ by considering mixed Dirichlet-Neumann problems. 

The idea is to search for minimal partitions with the aid of nodal sets of a certain mixed Dirichlet-Neumann problem. 
This approach has already been used in \cite{BHV} for the study of the $3$-partitions of the square and the disk. In the following we identify other situations where the method applies. In those cases the partition obtained with the Dirichlet-Neumann method is an exact equipartition and it allows us to decrease even more the value of $\Lambda_{k,\infty}$ (see Table~\ref{synthese-max}).

Let us take the case of the $3$-partition in the equilateral triangle as an example. The notations are presented in Figure \ref{equi3-DN}. 
Figure~\ref{fig.candtrik3} gives the partition obtained by one of the iterative methods. 
We represent below the partition with the symmetry axis $\sA\sD$ and the triple point $\sD_r$. {\Gn It is not difficult to see that this partition can be regarded as a nodal partition if we consider an additional Dirichlet boundary condition on the segment $[\sD\sD_r]$. Due to the symmetry this is equivalent to a mixed Dirichlet-Neumann problem on the triangle $\sA\sB\sD$ with Dirichlet condition on the segment $[\sD\sD_r]$ and Neumann condition on the segment $[\sA\sD_r]$.}. 
Thus, the working configuration is the triangle $\sA\sB\sD$ with Dirichlet boundary conditions on $[\sD\sD_r], [\sD\sB]$ and $[\sA\sB]$ and a Neumann boundary condition on $[\sA\sD_r]$. We take the point $\sD_r$ variable on $[\sA\sD]$ and we look for the position of $\sD_r$ for which the nodal line touches the segment $[\sD\sD_r]$ and for which the value of the second eigenvalue is minimal. {\Mg Necessarily, the nodal line ends at $\sD_{r}$.} Figures \ref{fig.Mixtri} give examples of nodal partitions according to the position of the mixed Dirichlet-Neumann point. In the following we make the convention that red lines signify Dirichlet boundary conditions and blue dotted lines represent Neumann boundary conditions.

\begin{figure}[h!]
\begin{center}
\subfigure[$\cD^{3,50}$\label{fig.candtrik3}]{\quad\includegraphics[height = .25\linewidth]{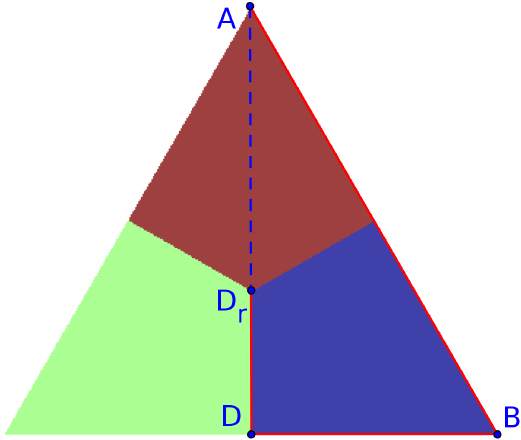}\quad}
\subfigure[Nodal lines according to the position of the mixed point\label{fig.Mixtri}]{\quad\includegraphics[height = .25\linewidth]{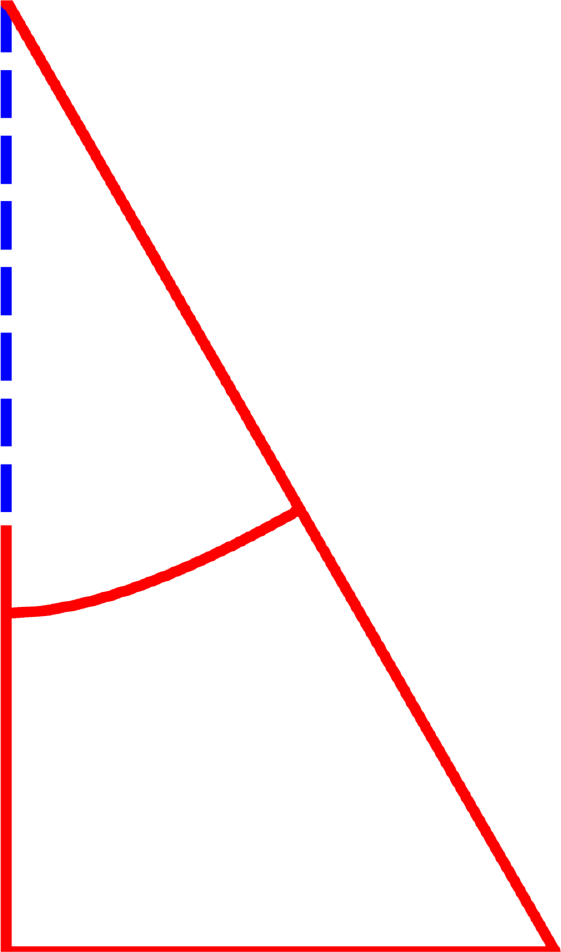}
\quad\includegraphics[height = .25\linewidth]{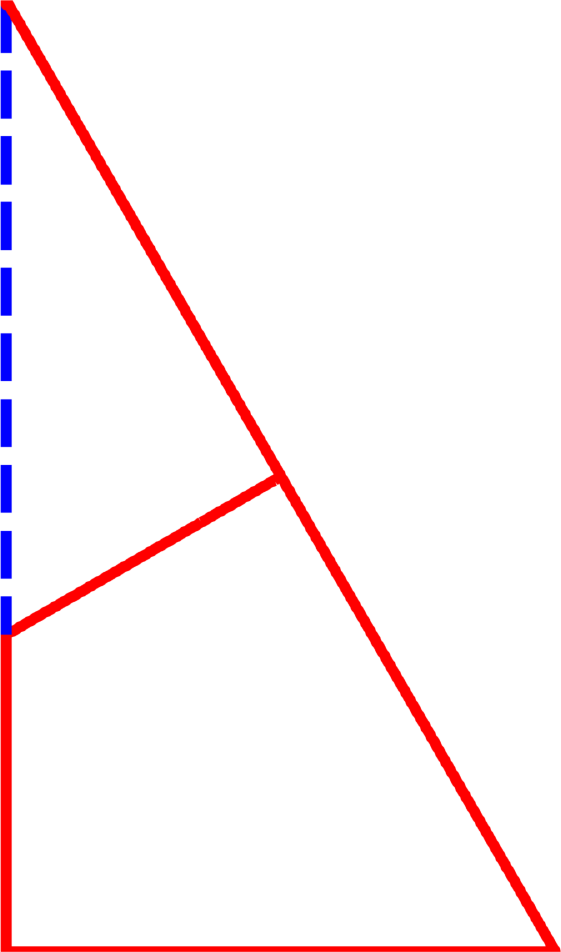}
\quad\includegraphics[height = .25\linewidth]{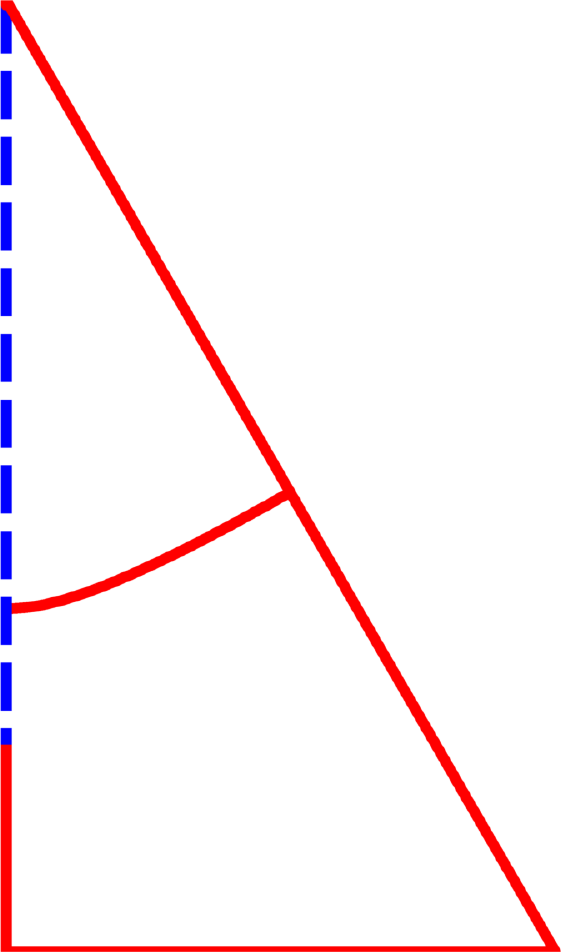}\quad}
\end{center}
\caption{Dirichlet-Neumann approach for $3$-partitions of the equilateral triangle.}
\label{equi3-DN}
\end{figure}

\subsubsection*{\bf The square.} 
We start with the case of $3$-partitions and we recall the results obtained in \cite{BHV}. The iterative algorithm gives a partition with an axis of symmetry parallel to the sides. Therefore we choose to impose a mixed condition on this axis, working on only half the square. Figure \ref{square3-DN} illustrates the choice of the mixed problem and the results. We obtain numerically that the triple point is at the center and that the value of the second Dirichlet-Neumann eigenvalue on the half-domain is $66.5812$. As it was noted in \cite{BHV}, choosing a mixed condition on the diagonal instead gives another partition with the same energy. Moreover, in \cite{BonHelHof09}, it is shown that we have a continuous family of partitions with the same energy.  
\begin{figure}[h!]
\begin{center}
\subfigure[Two mixed Dirichlet-Neumann configurations]{\quad
\includegraphics[width = 0.2\textwidth]{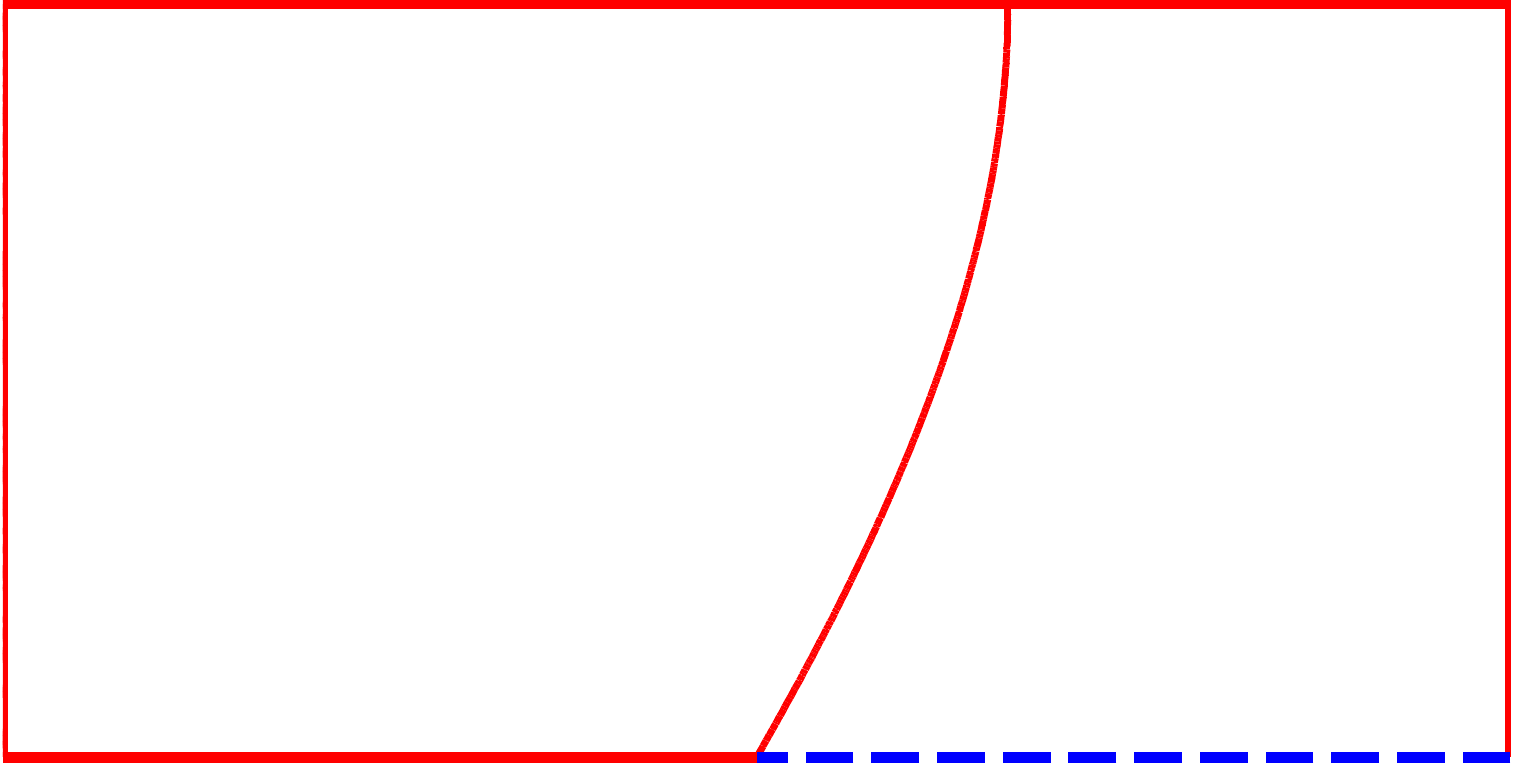}\ 
\includegraphics[width = 0.2\textwidth]{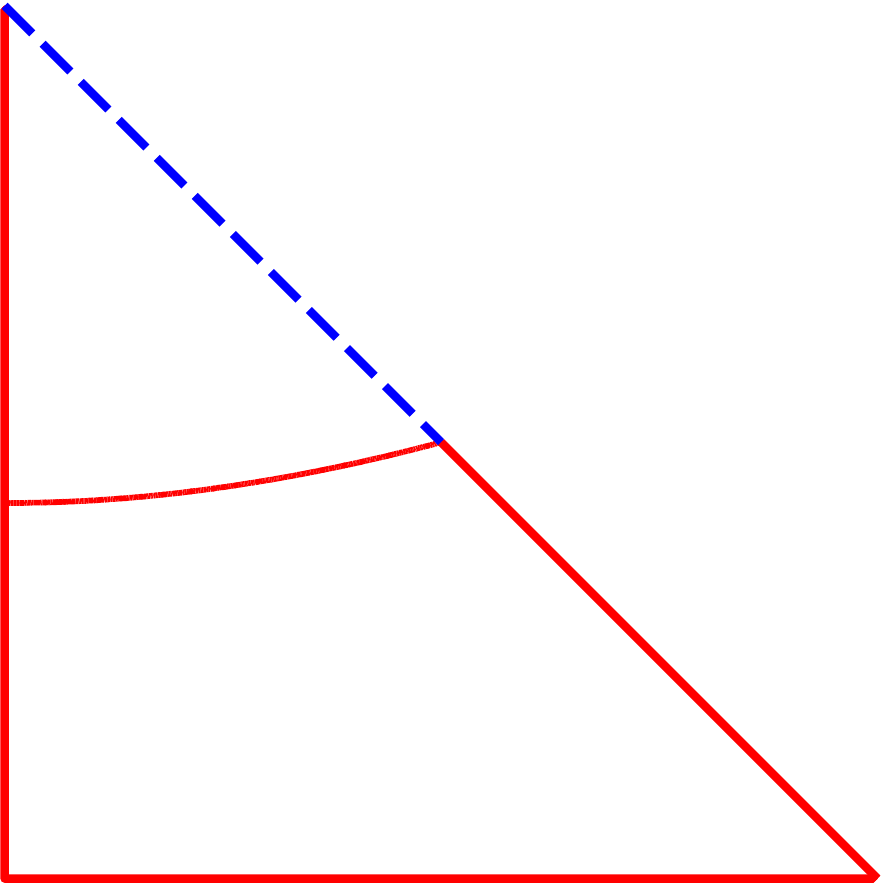}\quad}
\subfigure[Candidates for the $\infty$-minimal $3$-partition]{\quad
\includegraphics[width = 0.2\textwidth]{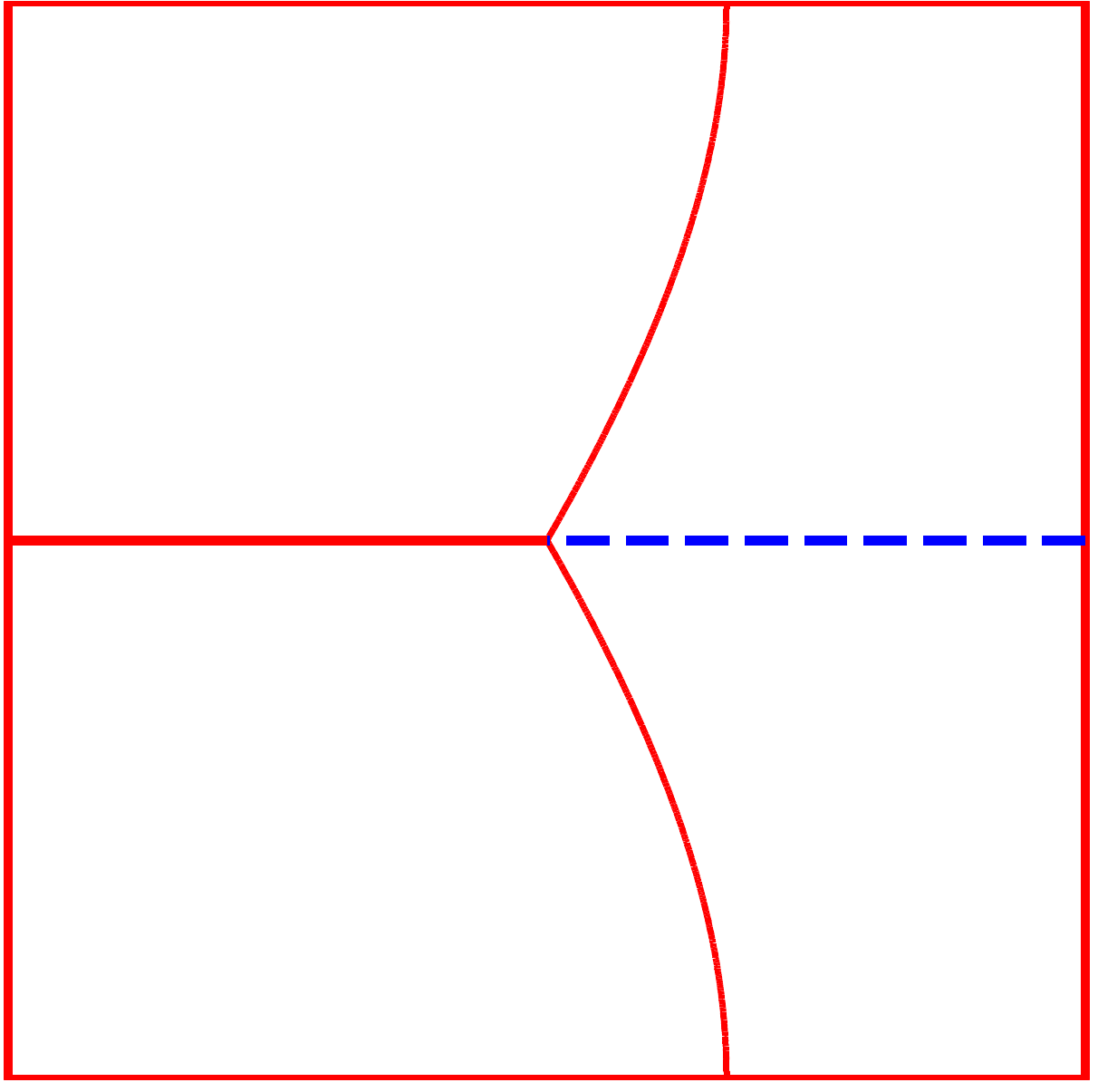}\
\includegraphics[width = 0.2\textwidth]{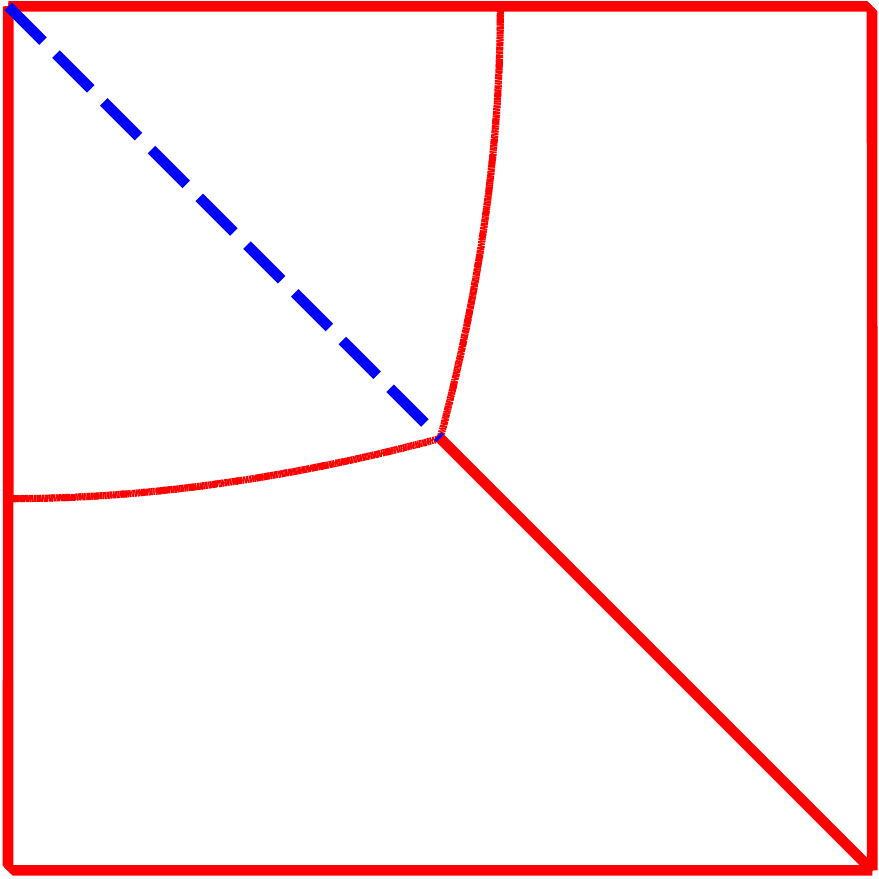}\quad}
\end{center}
\caption{Dirichlet-Neumann approach for $3$-partitions of the square.}
\label{square3-DN}
\end{figure}

In the case of the $5$-partition of the square we note that the partition obtained by the iterative algorithm seems to have the same axes of symmetry as the square. Due to the symmetry of the partition one can consider a mixed Dirichlet-Neumann problem on an eighth of the square as seen in Figure \ref{square5-DNa}. The second Dirichlet-Neumann eigenfunction of this configuration has nodal domains which extend by symmetry to a $5$-partition of the square. The  second eigenvalue of this mixed configuration is equal to the first Dirichlet eigenvalue on each cell of the $5$-partition built after symmetrization (see Figure~\ref{square5-DNb}). This second Dirichlet-Neumann eigenvalue, equal to $104.294$, gives a upper bound for $\fL_{5,\infty}(\square)$ which is lower than the ones obtained with the iterative methods.  

\begin{figure}[h!]
\centering 
\subfigure[Mixed problem\label{square5-DNa}]{\includegraphics[width= 0.2\textwidth]{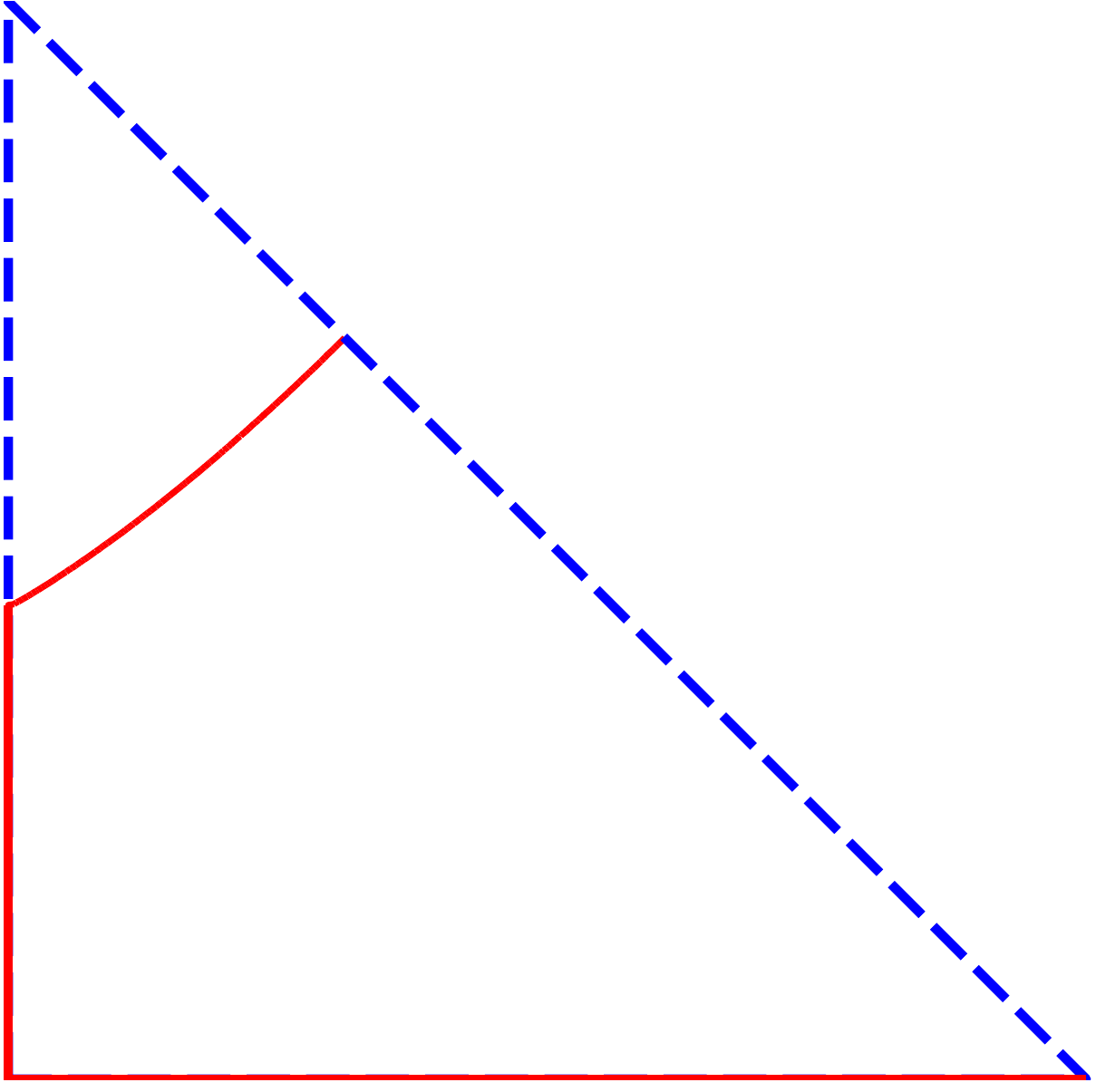}}
\qquad
\subfigure[Symmetrized $5$-partition\label{square5-DNb}]{\qquad\includegraphics[width= 0.2\textwidth]{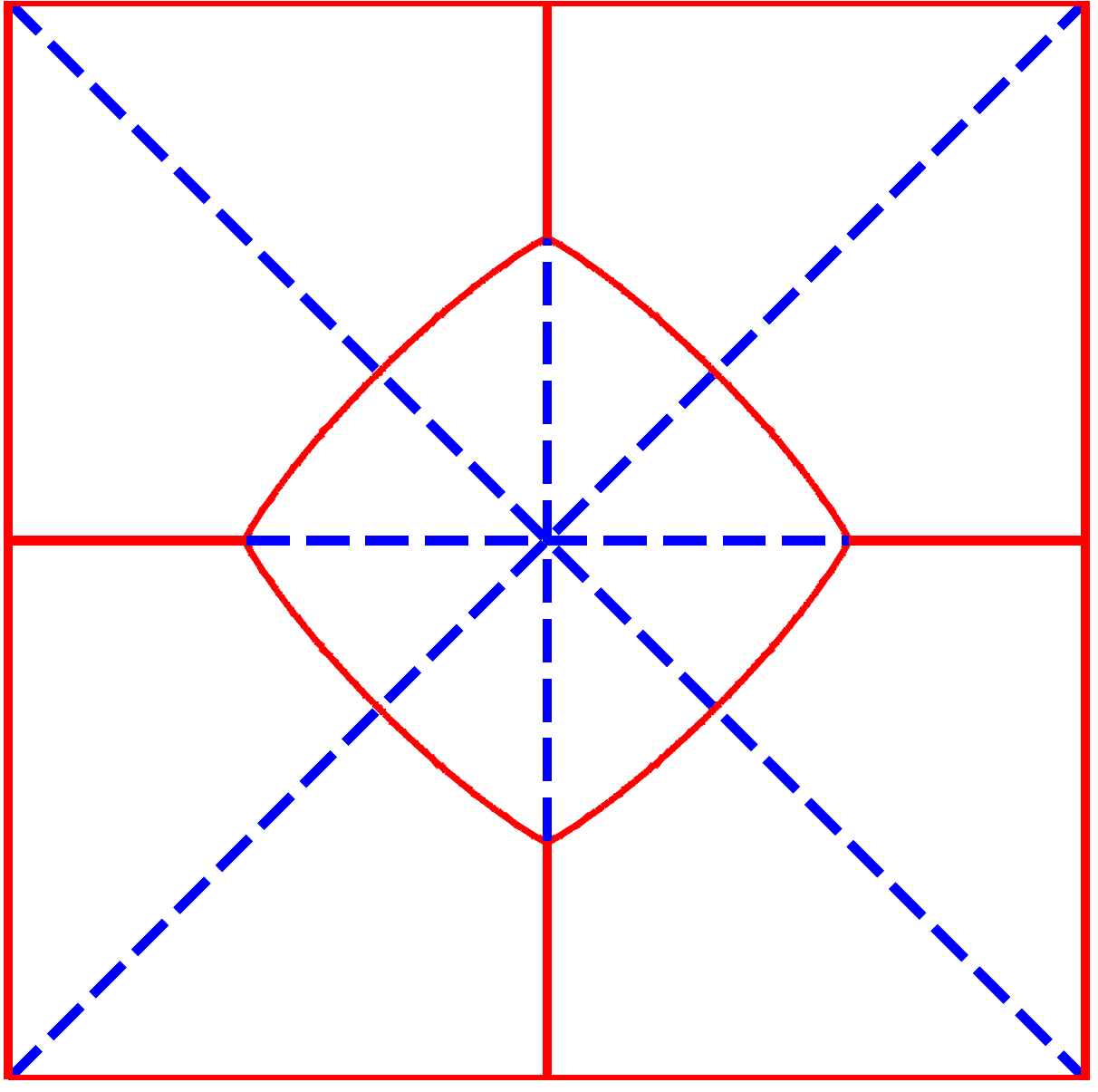}\qquad}
\caption{Dirichlet-Neumann approach for $5$-partitions of the square.} 
\label{square5-DN}
\end{figure}

The { partition} obtained with the iterative methods for $k=7$ admits two axes of symmetry and some parts of the boundaries of the partitions seem to be segments. We can thus formulate a mixed problem on the quarter of the square, denoted $\sA\sB\sC\sD$. Consider $\sX_t \in [\sA\sD]$ and $\sX_s$ inside the square such that $\angle \sX_s\sX_t\sD = 2\pi/3$. We solve the mixed problem with Dirichlet conditions on $[\sB\sC],[\sC\sD],[\sD\sX_t],[\sX_t\sX_s]$ and Neumann conditions on $[\sA\sB],[\sA\sX_t]$. A graphical representation of the configuration is given in Figure \ref{square7-DN}. We vary points $\sX_t \in [\sA\sD]$ and $\sX_s$ noting that the nodal line of the third eigenfunction of this mixed problem must touch $\sX_s$. The Dirichlet-Neumann eigenvalue is equal to $146.32$ which gives an upper bound for $\fL_{7,\infty}(\square)$ which is lower than the ones obtained with the iterative methods.

\begin{figure}[h!]
\centering 
\subfigure[Mixed problem\label{squareDN7}]{\includegraphics[height= 0.2\textwidth]{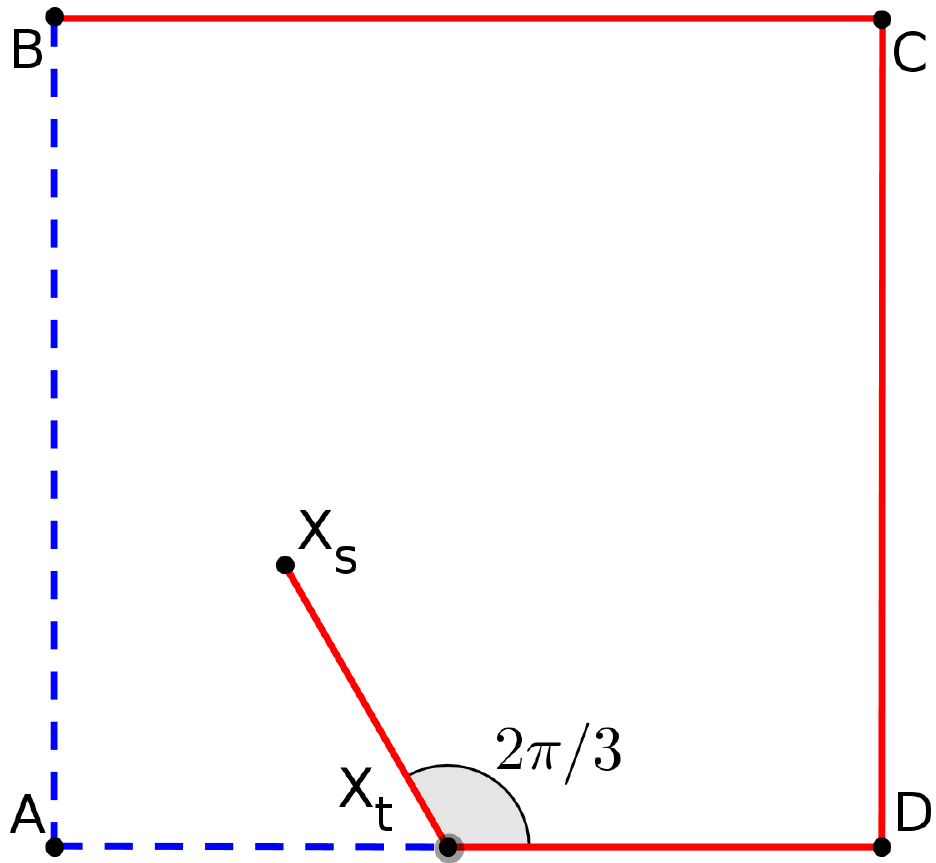}}
\qquad
\subfigure[Optimal nodal partition\label{square7-DNb}]{\qquad\includegraphics[width= 0.2\textwidth]{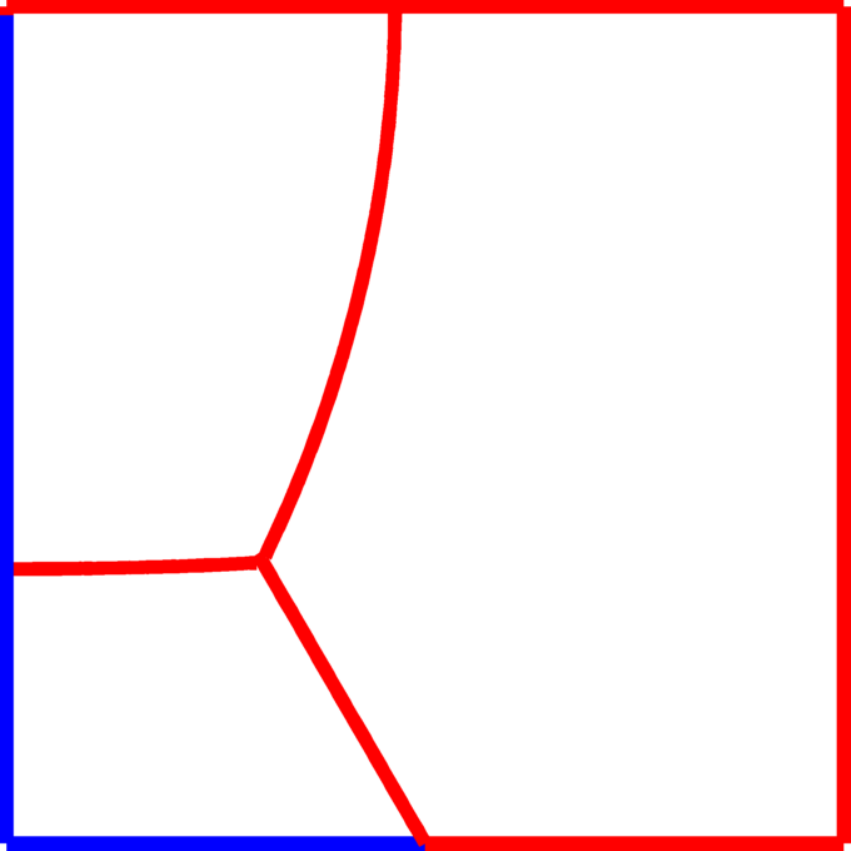}\qquad}
\quad
\subfigure[Symmetrized partition\label{square7-DNc}]{\qquad\includegraphics[width= 0.2\textwidth]{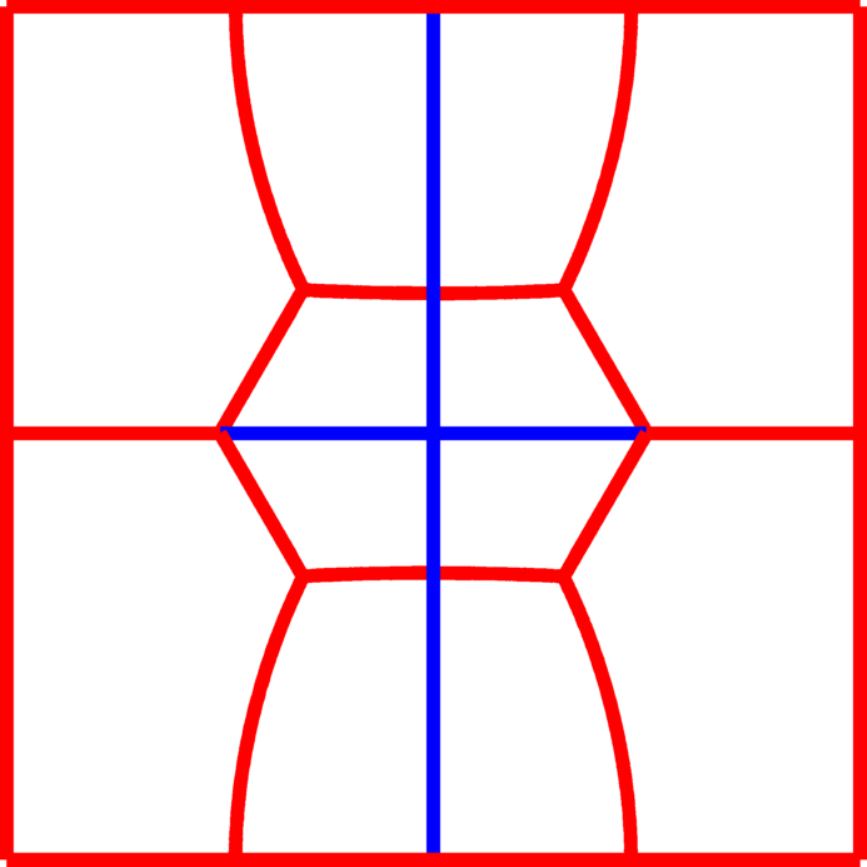}\qquad}
\caption{Dirichlet-Neumann approach for $7$-partitions of the square.} 
\label{square7-DN}
\end{figure}

In the case $k=8$ we also observe a {\Gn candidate} with two axes of symmetry and with some boundaries which seem to be segments. We formulate a mixed problem on a quarter of the square whose eigenfunction, after symmetrization has the desired structure. If we denote by $\sA\sB\sC\sD$ the quarter of the square, like in Figure \ref{square8-DN} we consider a variable point $\sX_t \in [\sA\sD]$ and another variable point $\sX_s$ inside $\sA\sB\sC\sD$ such that $\angle \sD\sX_t\sX_s = \pi/3$. We consider the mixed problem with Dirichlet boundary conditions on $[\sB\sC],[\sC\sD],[\sA\sX_t],[\sX_t\sX_s]$ and Neumann conditions on $[\sA\sB]$ and $[\sD\sX_t]$. We then vary the position of the points $\sX_s,\sX_t$ with the above properties and we compute the third eigenfunction of the mixed problem. The minimal value of the corresponding eigenfunction is attained when the nodal line corresponding to the third eigenfunction passes through $\sX_s$. In this case the Dirichlet-Neumann eigenvalue is equal to $160.87$ which gives a  better upper bound for $\fL_{8,\infty}(\square)$ than previsously.

\begin{figure}[h!]
\centering 
\subfigure[Mixed problem\label{square8-DNa}]{\includegraphics[height= 0.2\textwidth]{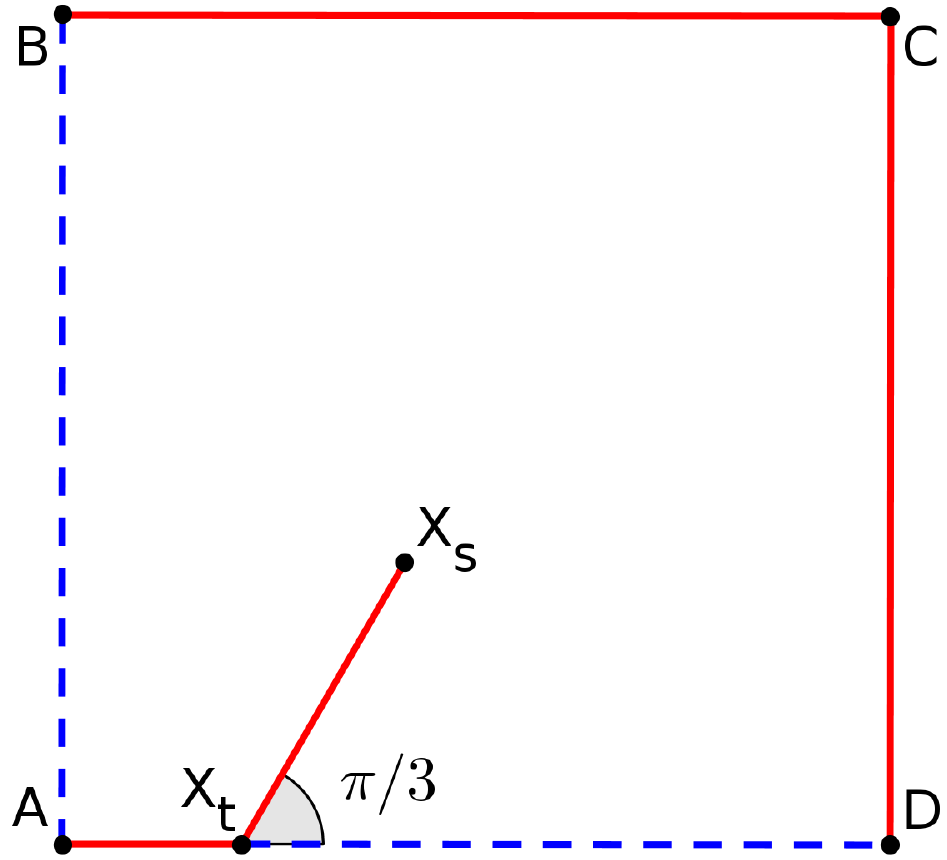}}
\qquad
\subfigure[Optimal nodal partition\label{square8-DNb}]{\qquad\includegraphics[width= 0.2\textwidth]{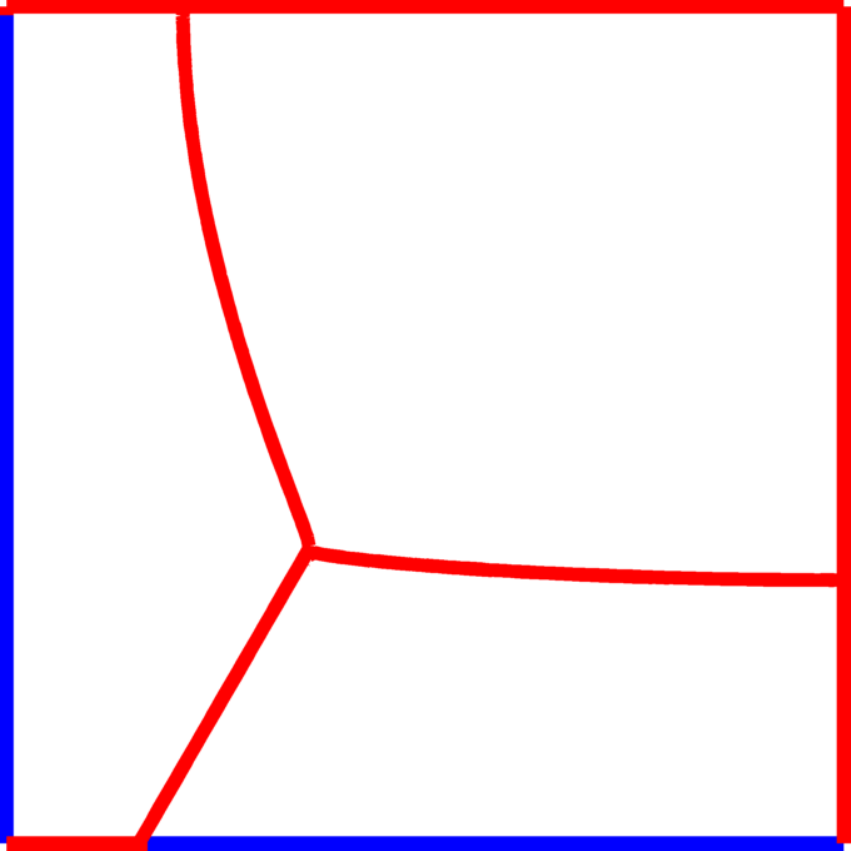}\qquad}
\quad
\subfigure[Symmetrized partition\label{square8-DNc}]{\qquad\includegraphics[width= 0.2\textwidth]{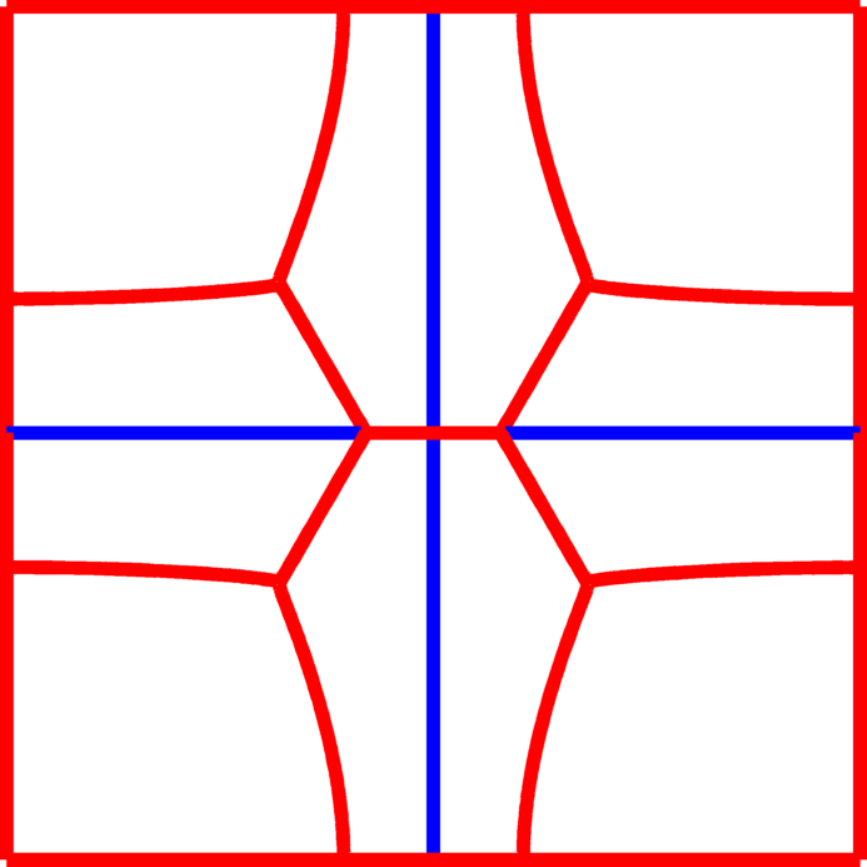}\qquad}
\caption{Dirichlet-Neumann approach for $8$-partitions of the square.} 
\label{square8-DN}
\end{figure}

\subsubsection*{\bf The Disk.} 
We know that the $\infty$-minimal $k$-partition consists in $k$ equal sectors when $k=2,4$. Numerically, it seems to be the same for $k\in\{3,5\}$ and some works tried to prove it when $k=3$ (see \cite{HelHof10, BoHe13}). 

For larger $k$ ($\Gn k \in \llbracket 6,9\rrbracket$), we observe that numerical partitions obtained with the iterative method consist of a structure which is invariant by a rotation of $2\pi/(k-1)$. This motivates us to use the Dirichlet-Neumann approach for the cases $\Gn k\in \llbracket 6,9\rrbracket$. Indeed, one can see that the invariance by a rotation of angle $2\pi/(k-1)$ allows us to represent exterior cells of the configurations as subsets of a sector of angle $2\pi/(k-1)$. This brings us to consider a mixed Dirichlet-Neumann problem on such sectors. If we consider the center of the disk at the origin, and we denote the sector by $\sO\arc{\sA\sB}$ then for $r \in (0,1)$ we consider the points $\sA_r \in [\sO\sA]$ and $\sB_r \in [\sO\sB]$ with $\sA_r\sO=\sB_r\sO = r$. We consider Neumann boundary conditions on $[\sO\sA_r]$, $[\sO\sB_r]$ and Dirichlet condition on $[\sA_r\sA],[\sB_r\sB]$ and the arc $\sA\sB$. Figure \ref{conf_sect} illustrates this mixed Dirichlet-Neumann configuration. 
\begin{figure}[h!]
\centering 
\includegraphics[width=0.3\textwidth]{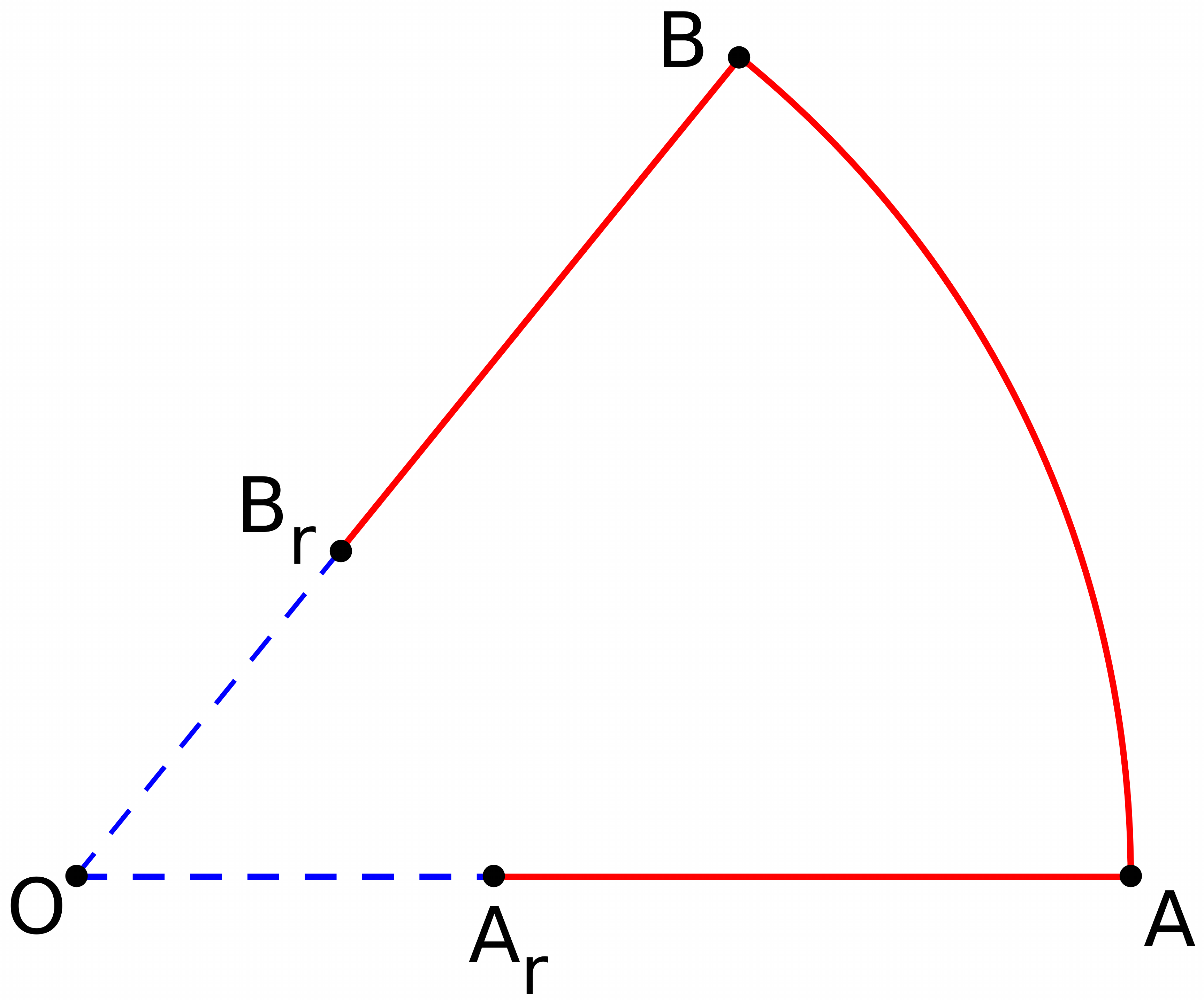}
\caption{The setup for the mixed problem on sectors.}
\label{conf_sect}
\end{figure}
Next we vary $r$ in $(0,1)$ and we record the position where the nodal line associated to the second eigenfunction of the Laplace operator with these mixed boundary conditions touches the segments $[\sA_r\sA],[\sB_r\sB]$. This is necessary in order to have  a $k$-partition after symmetrization. On the other hand we want the largest possible $r$ in order to obtain minimal eigenvalues in the symmetrized partition of the disk (since the eigenvalue of the mixed problem is decreasing when $r$ is increasing). Thus, for each $\Gn k \in \llbracket 6,9\rrbracket$ we consider the above mixed problem in the sector of angle $2\pi/(k-1)$ and we search in each case for the optimal value of $r$. The second eigenvalue of the mixed problem equals the first eigenvalue of each domain of the partition obtained by the symmetrization of this eigenvalue to the whole disk. The values obtained are recorded in Table \ref{synthese-max} and the partitions are given in Figure~\ref{fig.SectDN}. We note that for $k=10$ the same approach in a sector of angle $2\pi/9$ gives a candidate which has a larger energy than the partition obtained with the iterative algorithm {(but the topology of the 2 partitions are very different since there are two domains at the center in the configuration obtained by the iterative method)}.
\begin{figure}[h!]
\centering
\includegraphics[width=0.19\textwidth]{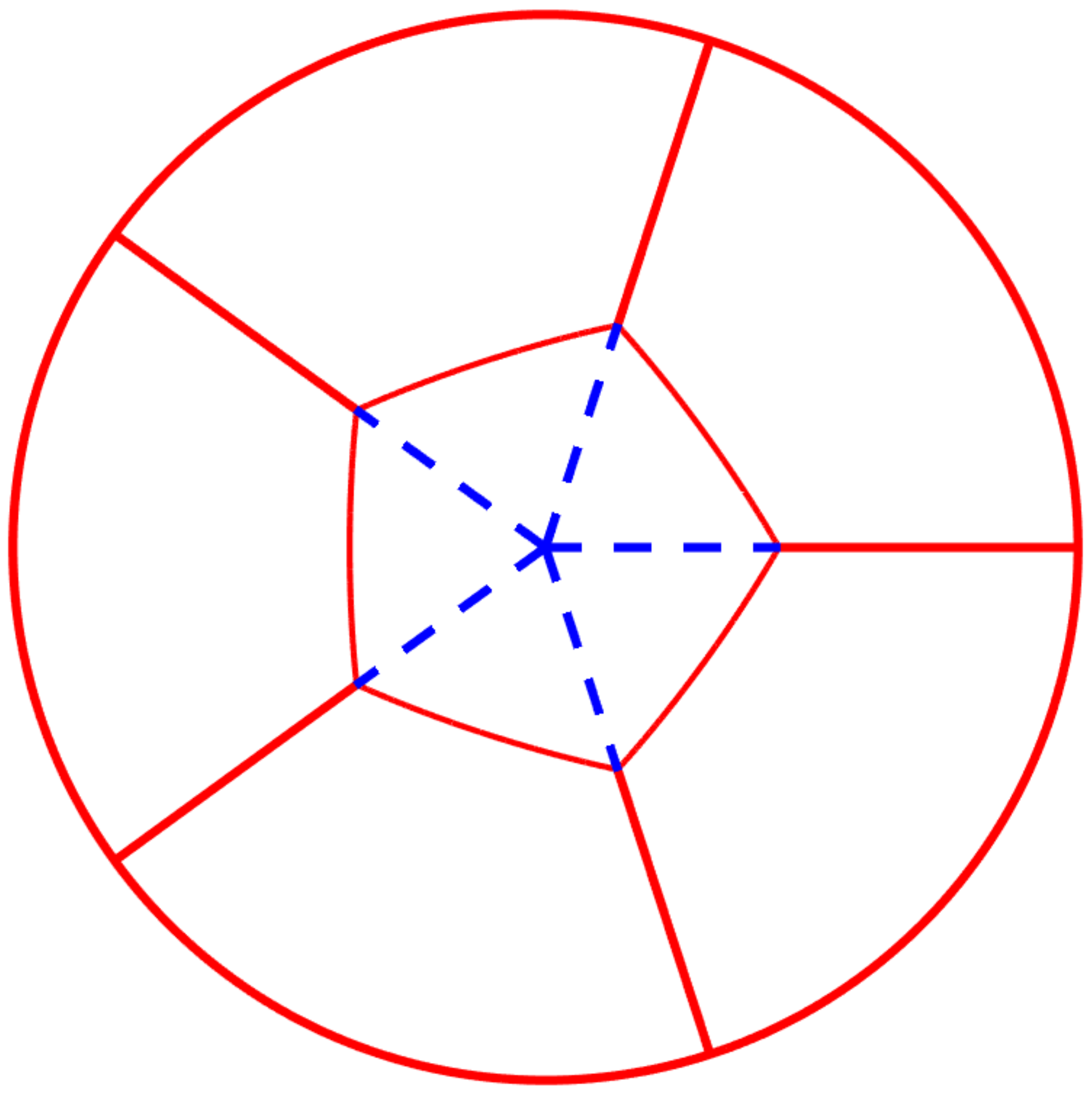}~
\includegraphics[width=0.19\textwidth]{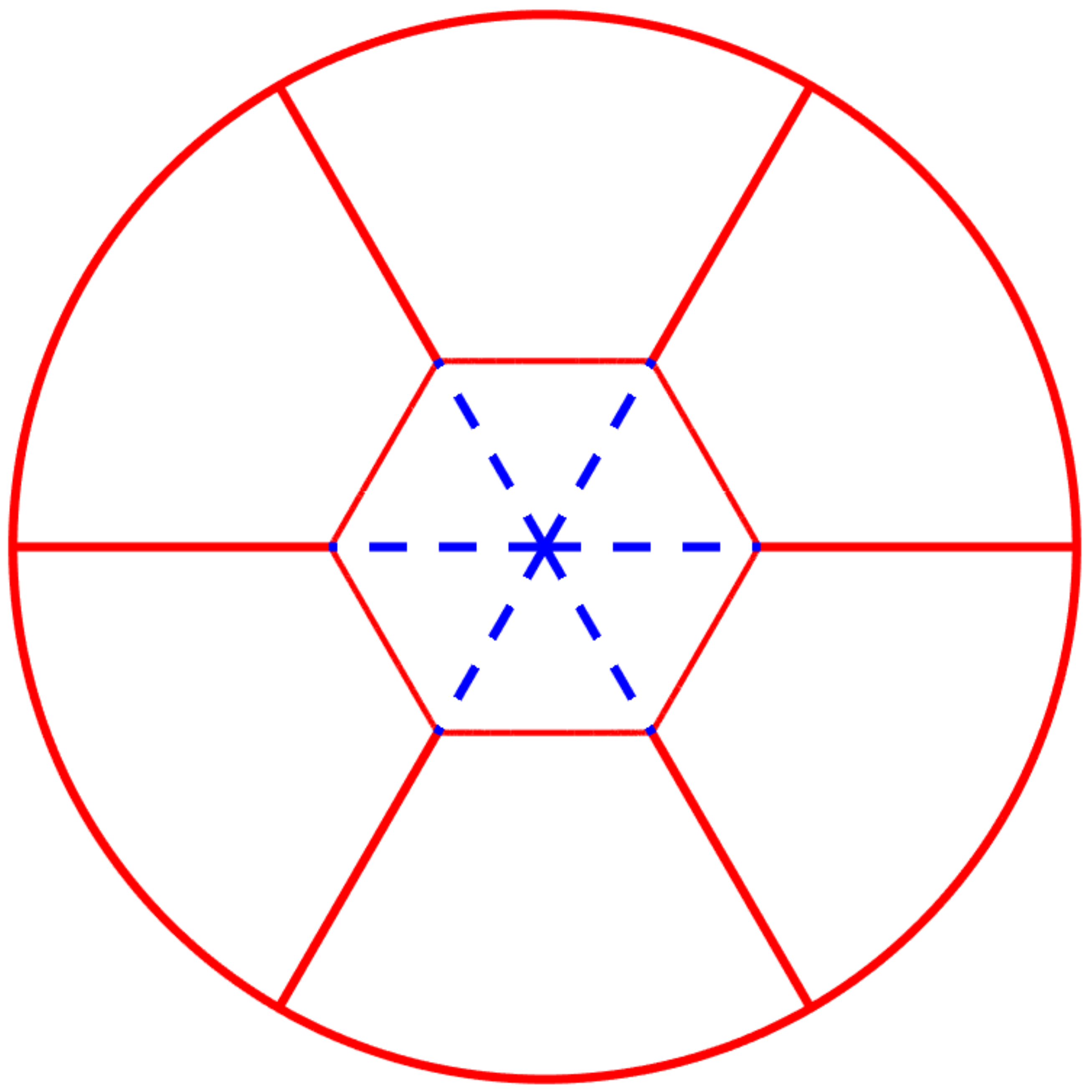}~
\includegraphics[width=0.19\textwidth]{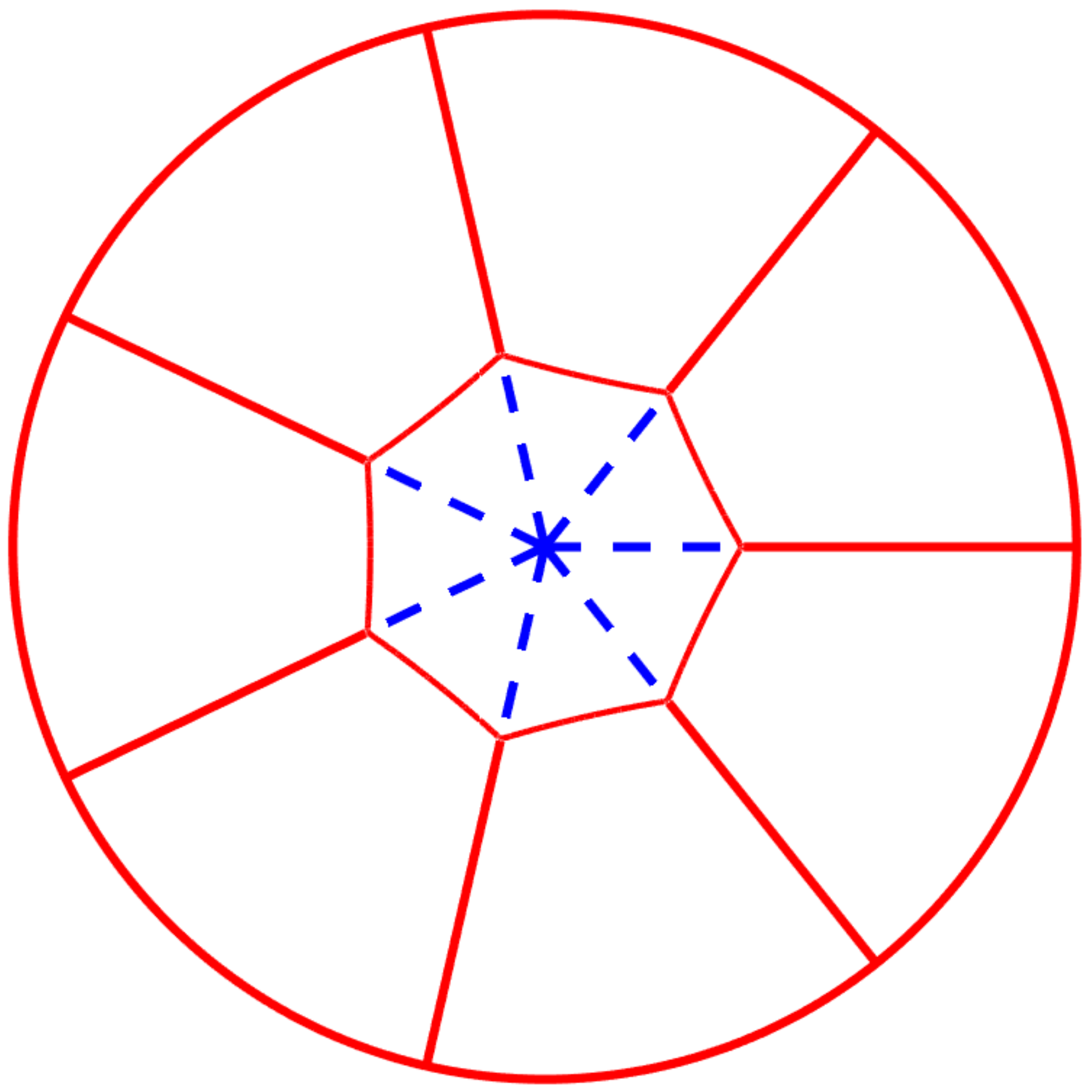}~
\includegraphics[width=0.19\textwidth]{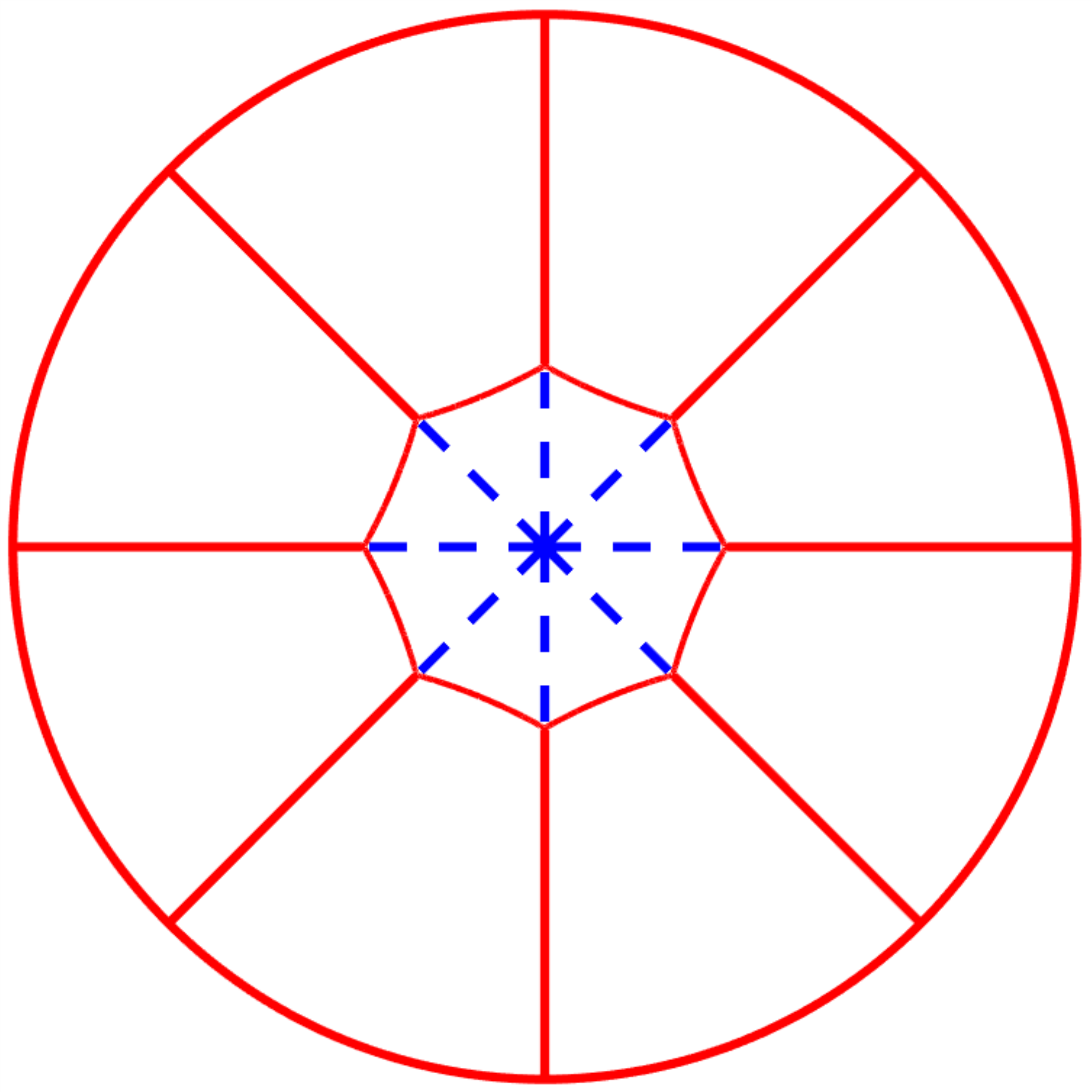}~
\includegraphics[width=0.19\textwidth]{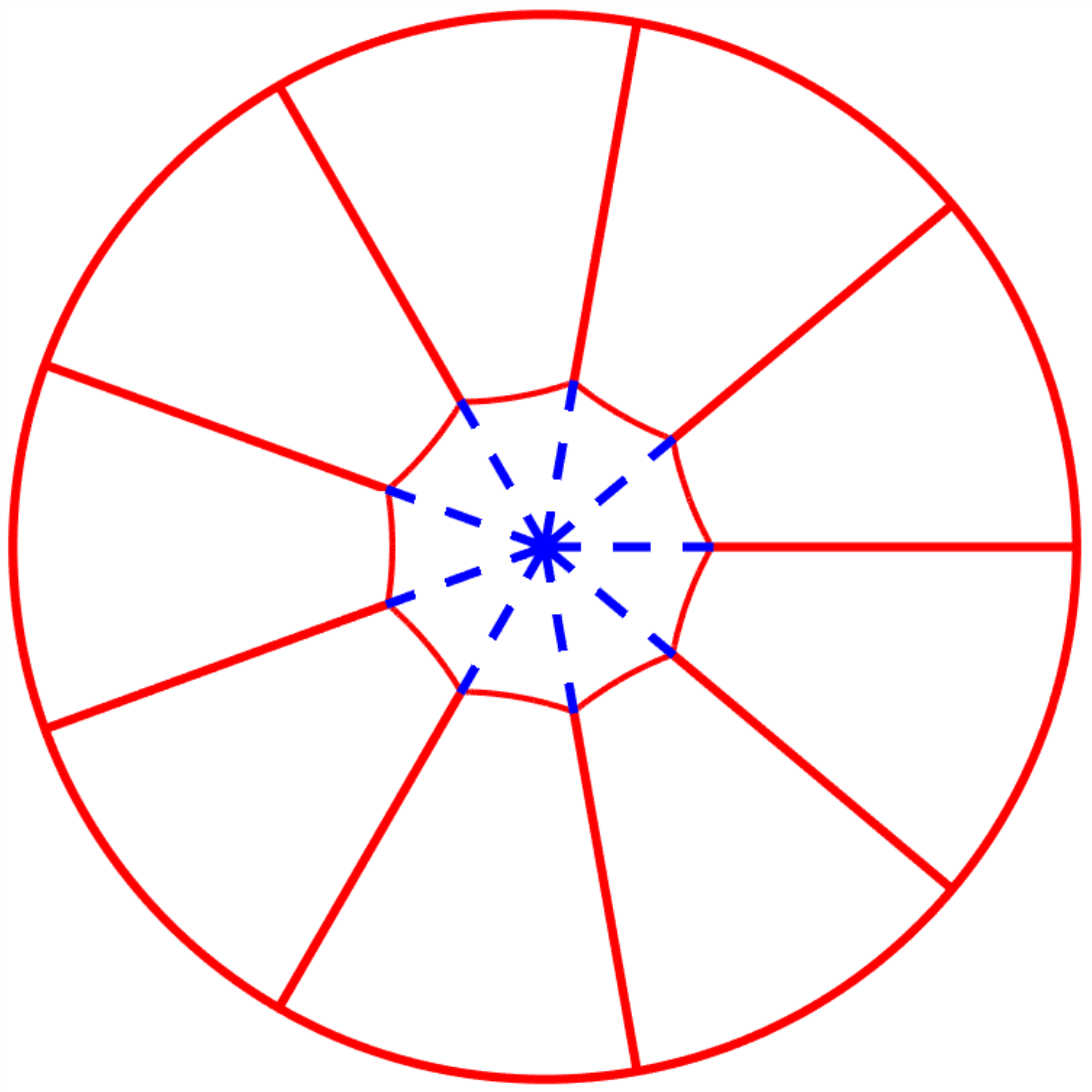}~
\caption{Candidates for the $\infty$-minimal $k$-partitions on the disk with the Dirichlet-Neumann approach, $6\leq k\leq 10$.}
\label{fig.SectDN}
\end{figure}

\subsubsection*{\bf Equilateral triangle.} In this case we also have some configurations where we can apply the Dirichlet-Neumann method. In the cases $k=3, 6, 10$ the partitions obtained by the iterative algorithm have the three axes of symmetry of the equilateral triangle. This allows us to reduce the problem to the study of mixed problems on a half or a sixth of the equilateral triangle. We also observe a possible application of the method to the case $k=5$ where we may consider Dirichlet boundary condition on part of the height of the triangle. The case $k=8$ also lets us use a mixed problem with Dirichlet boundary condition on part of the height and a vertical mobile segment.
  
We start with $k=3$ where the optimal candidate seems to be made of three congruent quadrilaterals with a common vertex at the centroid and each one having a pair of sides orthogonal to the sides of the triangle. Note that a brief idea of the method was described in Figure \ref{equi3-DN}. We consider a mixed Dirichlet Neumann problem on half of the equilateral triangle. Let $\sA\sB\sD$ be half of the equilateral triangle, where $\sA\sD$ is one of the heights of the triangle (see Figure \ref{equiDN3-details}). We consider a mobile point $\sD_r$ on the segment $[\sA\sD]$ and we compute the second eigenvalue of the Dirichlet Laplace operator with Dirichlet boundary conditions on segments $[\sD_r\sD],[\sD\sB],[\sA\sB]$ and Neumann conditions on $[\sA\sD_r]$. The choice of the Dirichlet boundary condition on $[\sD_r\sD]$ was motivated by the structure of the result in the iterative algorithm. We may ask what happens when we interchange the boundary condition on the height $[\sA\sD]$, {\it i.e.} considering Dirichlet boundary condition on $[\sA\sD_r]$ and Neumann boundary condition on $[\sD_r\sD]$. This is discussed at the end of this section in Remark \ref{other_DN}. Next we vary the position of $\sD_r$ on $[\sA\sD]$ so that the nodal line of the second eigenvalue of the mixed problem touches $[\sD\sD_r]$ exactly at $\sD_r$. As expected the position where we obtain this configuration is for $\sD\sD_r = \sA\sD/3$ which means that the triple point of the symmetrized partition is the centroid of the equilateral triangle.
\begin{figure}[h!]
\centering 
\subfigure[Mixed problem\label{equiDN3-detailsa}]{\qquad\includegraphics[height = 0.25\textwidth]{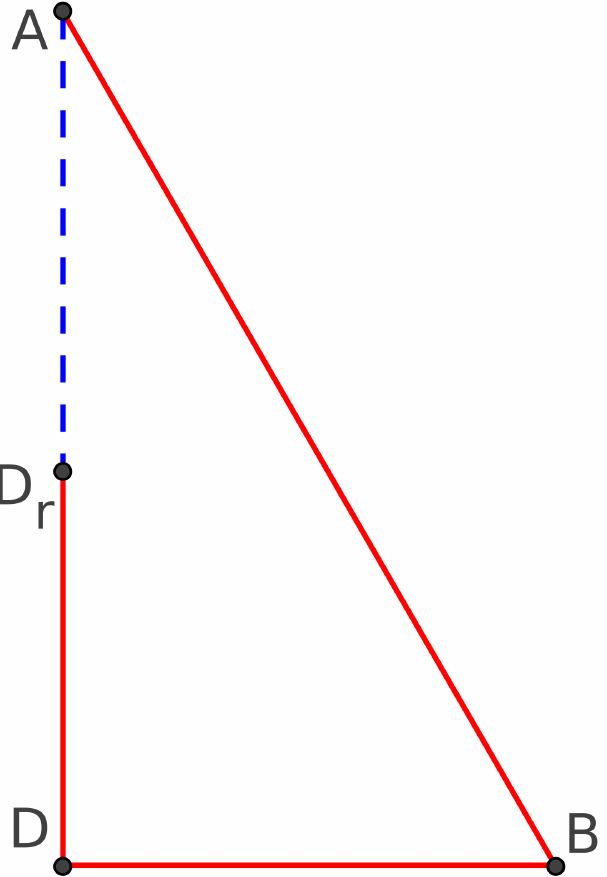}\qquad}
\subfigure[Optimal nodal partition]{\qquad\qquad\includegraphics[height =0.25\textwidth]{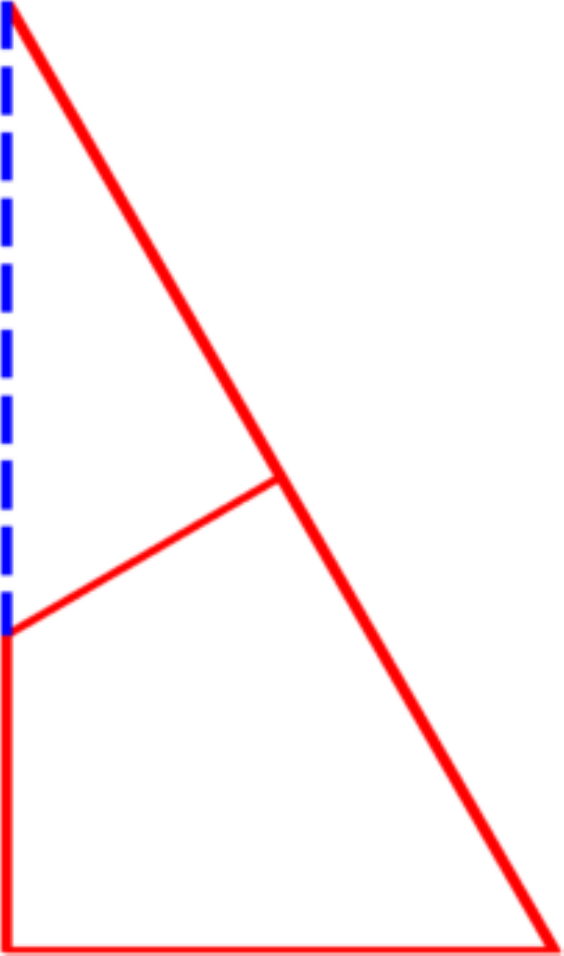}\qquad\qquad}
\subfigure[Symmetrized partition]{\qquad\includegraphics[height =0.25\textwidth]{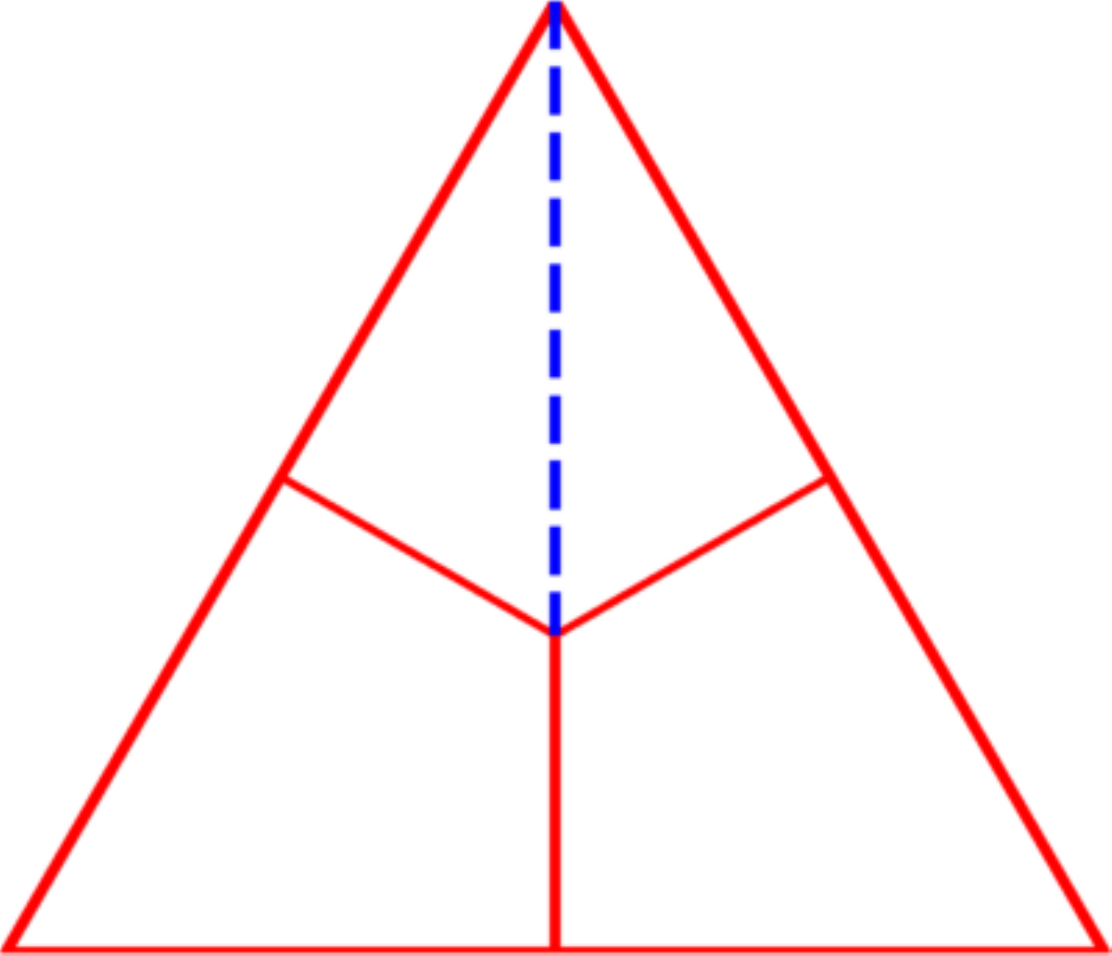}}
\caption{Dirichlet-Neumann approach for $3$-partitions of the equilateral triangle.}
\label{equiDN3-details}
\end{figure}

The case $k=5$ can be treated in the same framework, but instead of looking at the second eigenfunction of the mixed problem we study the third one. The result is presented in Figure \ref{equiDN5-details}. In optimal configuration, the triple point is such that $\sD\sD_{r} = \sA\sD/2$. 
\begin{figure}[h!]
\centering 
\subfigure[Mixed problem]{\qquad\includegraphics[height = 0.25\textwidth]{equiDN_3geo}\qquad}
\subfigure[Optimal nodal partition]{\qquad\qquad\includegraphics[height =0.25\textwidth]{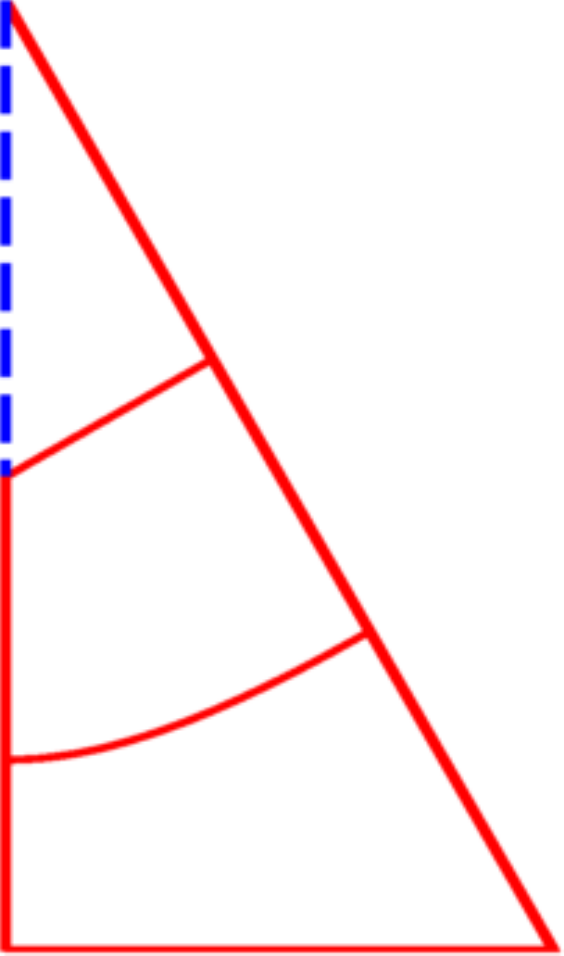}\qquad\qquad}
\subfigure[Symmetrized partition]{\qquad\includegraphics[height =0.25\textwidth]{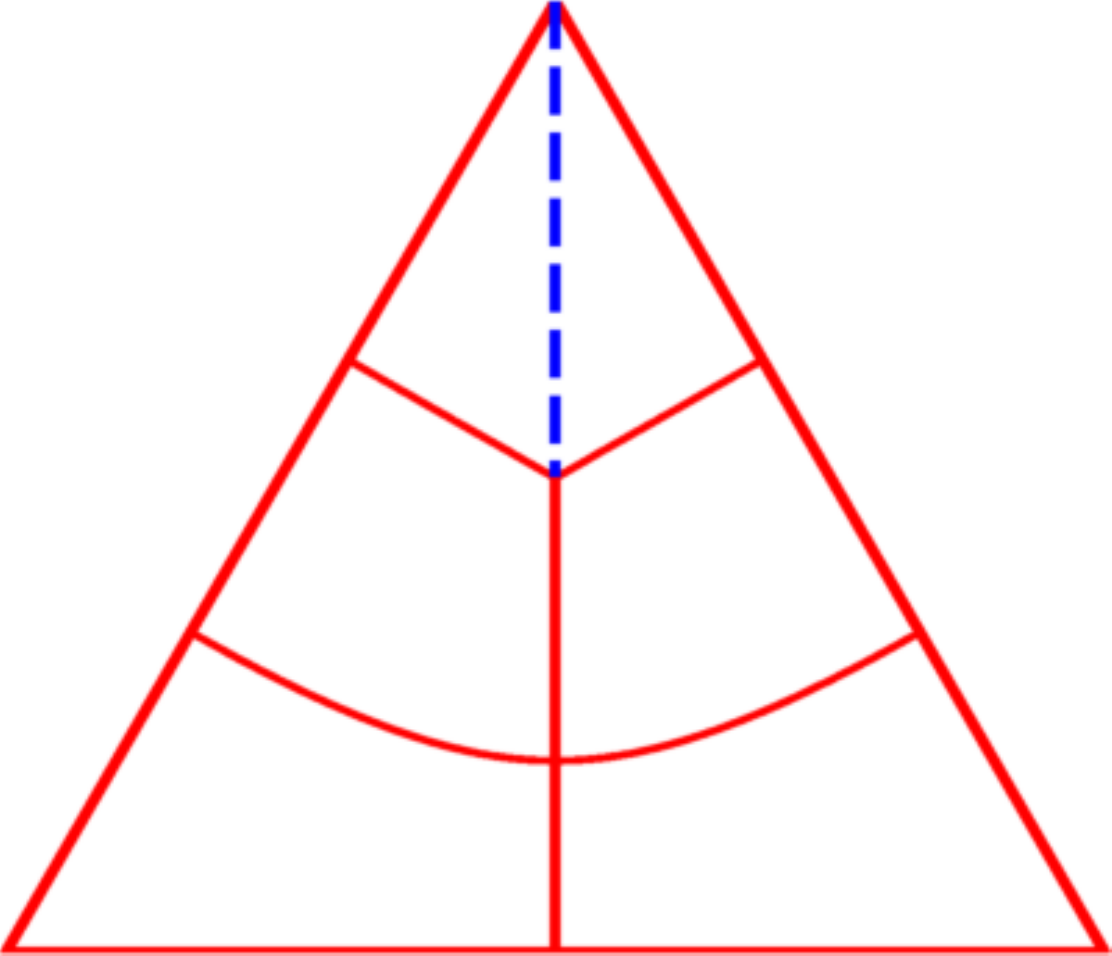}}
\caption{Dirichlet-Neumann approach for $5$-partitions of the equilateral triangle.}
\label{equiDN5-details}
\end{figure}

We continue with the case $k=8$ where we can also use a Dirichlet Neumann approach on half of the equilateral triangle. Here we observe that in addition to the axis of symmetry, one of the common boundaries between the cells also seems to be a vertical segment. We use this fact to define a mixed eigenvalue problem with four parameters on half of the equilateral triangle. Like in Figure \ref{equiDN8-details} we consider four variable points defined as follows. We consider the triangle $\sA\sB\sD$ where $\sA\sD$ is a height of the equilateral triangle. On the side $\sA\sD$ we consider two variable points $\sX_s,\sX_t$. On the segment $[\sX_s\sX_t]$ we put a Dirichlet boundary condition and on the segments $[\sA\sX_s],[\sD\sX_t]$ we have Neumann boundary conditions. We consider another variable point $\sY_r \in [\sB\sD]$ and we construct $\sY_q$ such that $\sY_q\sY_r \perp \sB\sD$ with the length of $[\sY_r\sY_q]$ as a variable. On the segment $[\sY_q\sY_r]$ we put a Dirichlet boundary condition. Of course, the remaining segments $[\sA\sB],[\sB\sD]$ also have a Dirichlet boundary conditions. We vary the position of these four points so that the fifth eigenfunction of the mixed problem has nodal lines which touch the Dirichlet parts at their extremities. The choice of the fifth eigenvalue is motivated by the fact that we need a nodal $5$-partition so that the symmetrized partition would have $8$ cells. The optimal configuration is shown in Figure \ref{equiDN8-details}. 

\begin{figure}[h!]
\centering 
\subfigure[Mixed problem]{\qquad\includegraphics[height = 0.25\textwidth]{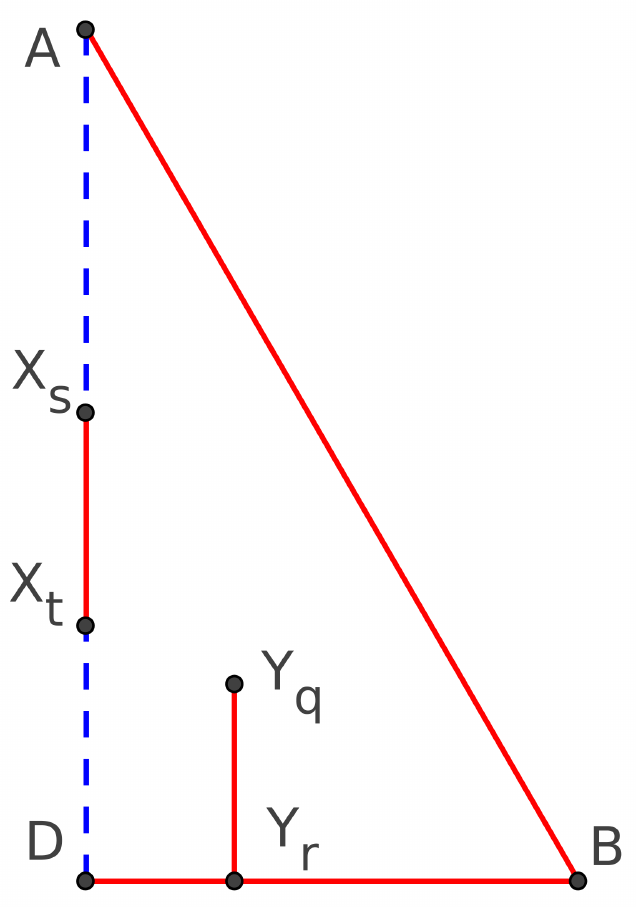}\qquad}
\subfigure[Optimal nodal partition]{\qquad\qquad\includegraphics[height =0.25\textwidth]{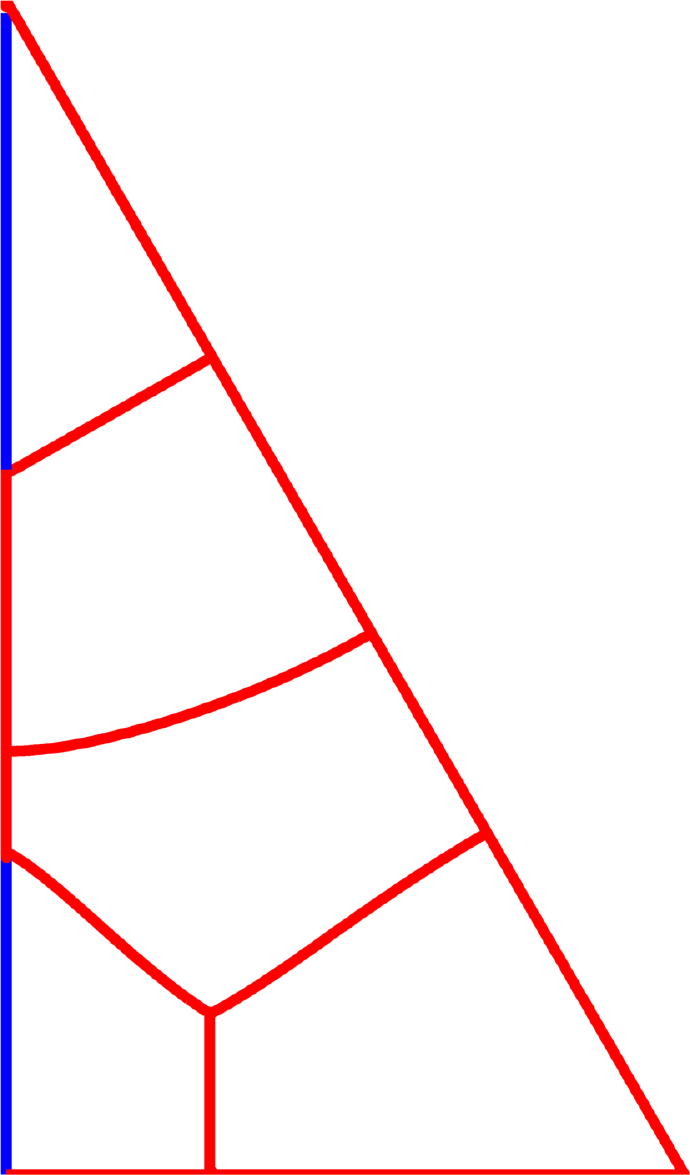}\qquad\qquad}
\subfigure[Symmetrized partition]{\qquad\includegraphics[height =0.25\textwidth]{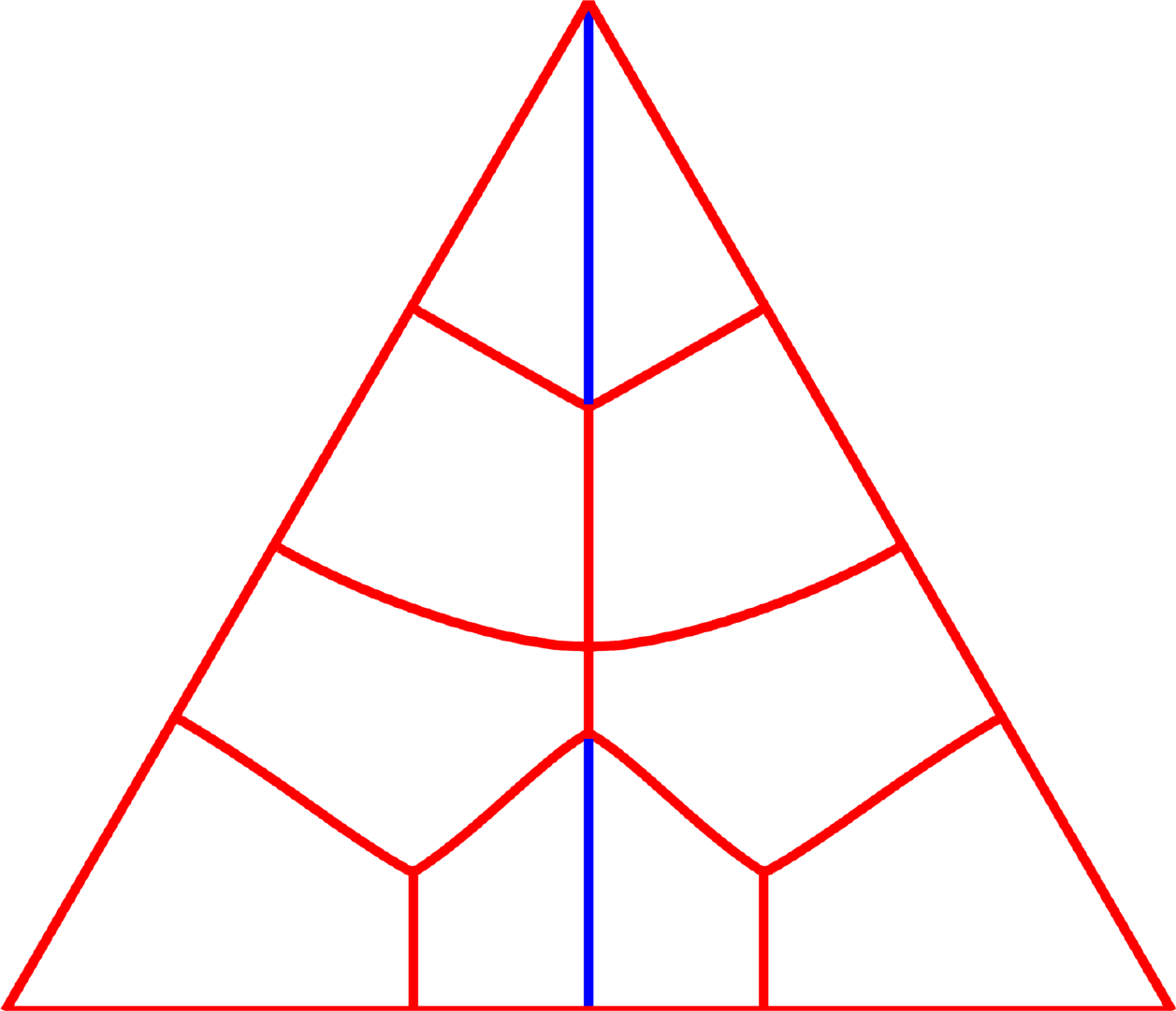}}
\caption{Dirichlet-Neumann approach for $8$-partitions of the equilateral triangle} 
\label{equiDN8-details}
\end{figure}

In the case $k=6$ the optimal partition obtained with the iterative algorithm has three axes of symmetry. Using this we can reduce the problem to the study of a mixed problem on one sixth of the equilateral triangle, {\it i.e.} a subtriangle defined by a vertex, the feet of a height and the centroid of the triangle. As in Figure \ref{equiDN6-detailsa} we consider the triangle defined by a vertex $\sA$, the feet of an altitude $\sD$ and the centroid $\sC$. On the side $\sA\sC$ we consider a mobile point $\sX_r = r\sA+(1-r)\sC$ for $r \in (0,1)$. We note that the candidate obtained with the iterative algorithm seems to correspond to a mixed problem on the triangle $\sA\sC\sD$ with Dirichlet boundary conditions on segments $[\sA\sD]$ and $[\sC\sX_r]$ and Neumann boundary conditions on $[\sC\sD]$ and $[\sA\sX_r]$. We search for the position of $\sX_r$ such that the nodal line of the second eigenfunction touches the segment $[\sC\sX_r]$ precisely at $\sX_r$ (see Figure \ref{equiDN6-detailsb}). The optimal nodal configuration and the partition obtained by performing symmetrizations is represented in Figure \ref{equiDN6-detailsc}. 

\begin{figure}[h!]
\centering 
\subfigure[Mixed problem\label{equiDN6-detailsa}]{\includegraphics[height = 0.15\textwidth]{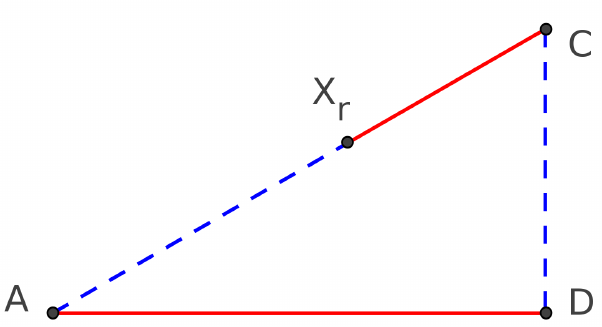}}\qquad
\subfigure[Optimal nodal partition\label{equiDN6-detailsb}]{\qquad\includegraphics[height =0.15\textwidth]{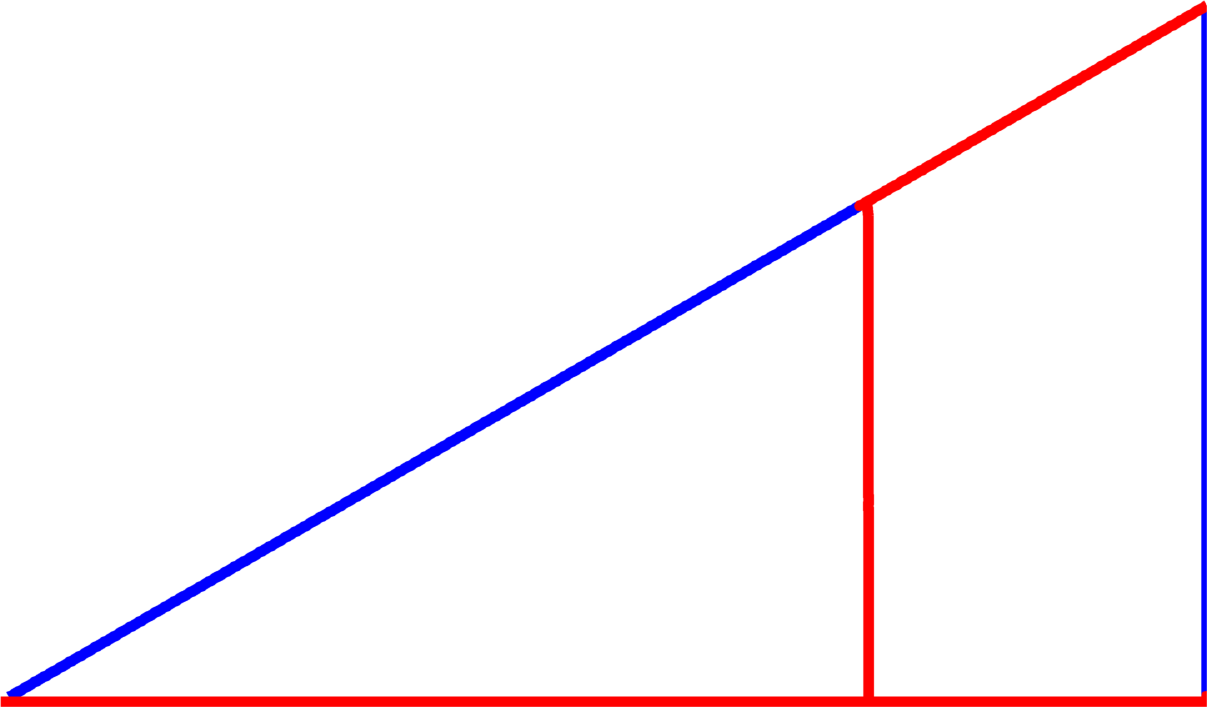}\qquad}\qquad
\subfigure[Symmetrized partition\label{equiDN6-detailsc}]{\includegraphics[height =0.22\textwidth]{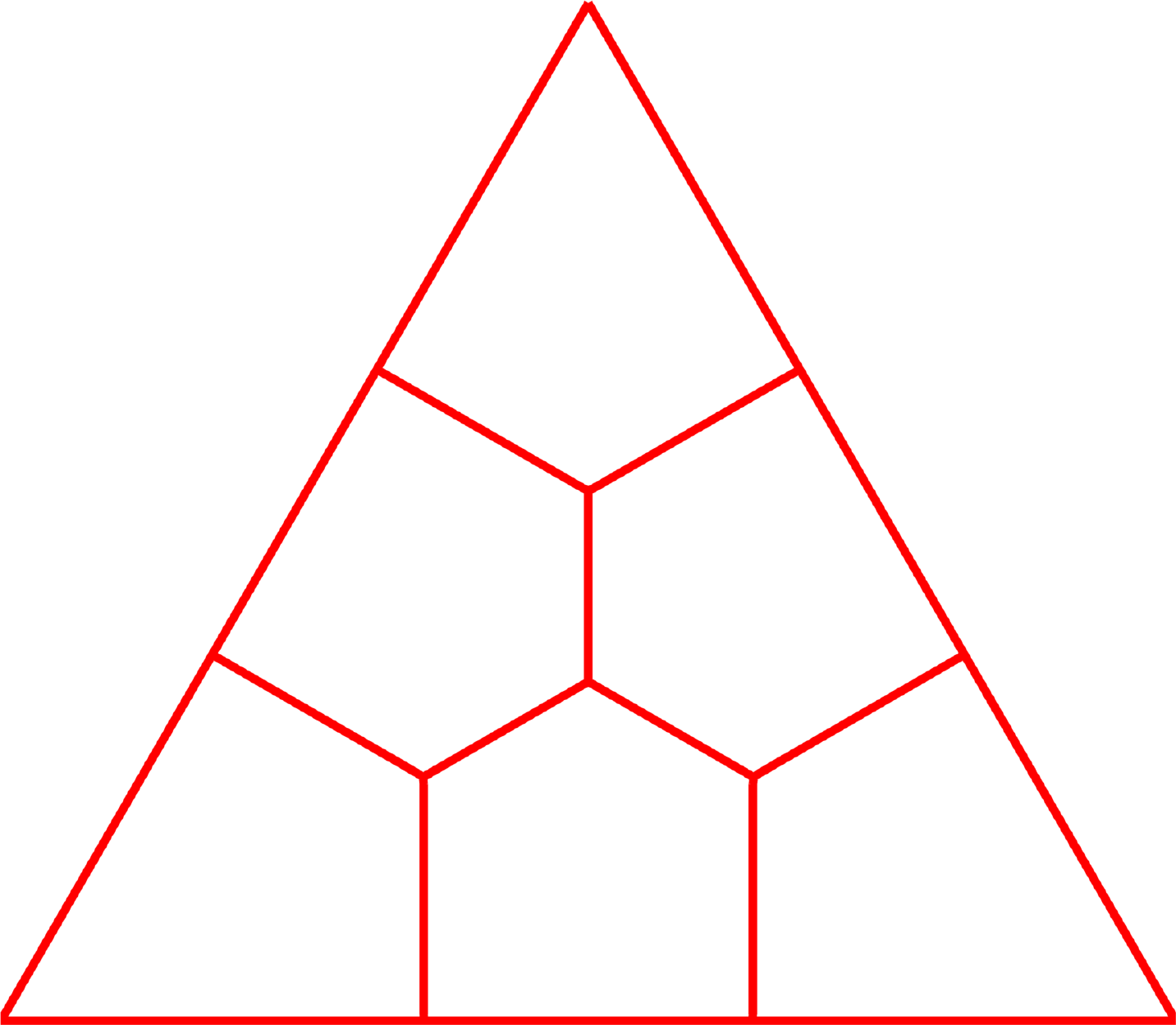}}
\caption{Dirichlet-Neumann approach for $6$-partitions of the equilateral triangle.}
\label{equiDN6-details}
\end{figure}

In the case $k=10$ we observe that the partition has again three axes of symmetry and we may try to represent it as a mixed Dirichlet-Neumann problem on a sixth of the equilateral triangle. Consider the same starting triangle $\sA\sC\sD$ like for $k=6$. Pick the variable points $X_r = (1-r)A+rD$ on $[AD]$ and $X_s = sC+(1-s)D$. Construct $Y_r \in [AC]$ such that $\sX_r\sY_r\perp \sA\sD$ and $\sY_s \in [\sA\sC]$ such that $\widehat{\sX_s\sC\sY_s} = \pi/3$ (to satisfy the equal angle property). If we pick the origin at $\sA$ and $\sD$ of coordinates $(0.5,0)$ then we obtain the following coordinates for all the above defined points: $\sC(0.5,\sqrt{3}/6)$, $\sX_r(r,0)$, $\sY_r(r,r\sqrt{3}/3)$, $\sX_s(0.5,s\sqrt{3}/6)$, $\sY_s(0.25+s\sqrt{3}/2,\sqrt{3}/12+s/2 )$. As in Figure \ref{equiDN10-detailsa} we take a Dirichlet boundary condition on segments $[\sA\sD],[\sD\sX_s],[\sY_s\sY_r]$ and Neumann boundary condition on segments $[\sA\sY_r],[\sC\sY_s],[\sC\sX_s]$. Since the numerical candidate in this case seems to have cells with polygonal borders we search the positions of $\sX_r$ and $\sX_s$ such that the nodal lines of the third eigenfunctions of the eigenvalue problem with mixed boundary conditions are exactly the segments $[\sX_r\sY_r]$ and $[\sX_s\sY_s]$. The result is shown in Figure \ref{equiDN10-details} together with the symmetrized partition.

\begin{figure}[h!]
\centering
\subfigure[Optimal nodal partition\label{equiDN10-detailsa}]{\includegraphics[height = 0.15\textwidth]{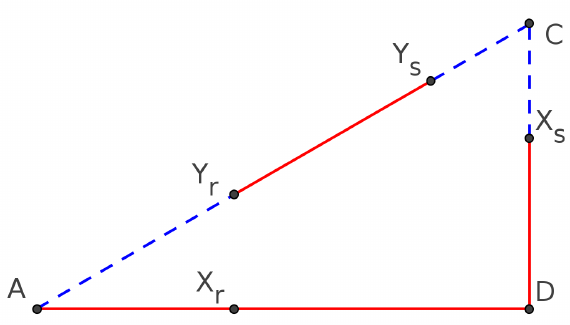}}\qquad
\subfigure[Optimal nodal partition\label{equiDN10-detailsb}]{\qquad\includegraphics[height =0.15\textwidth]{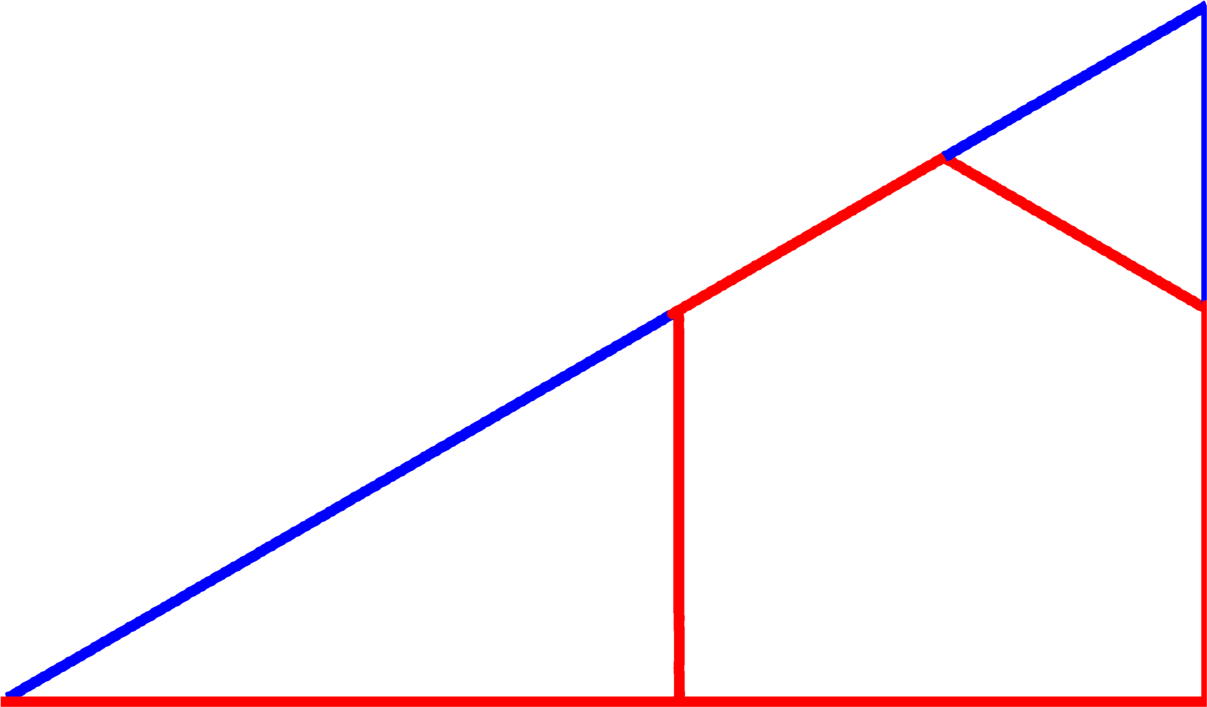}\qquad}\qquad
\subfigure[Symmetrized partition\label{equiDN10-detailsc}]{\includegraphics[height =0.22\textwidth]{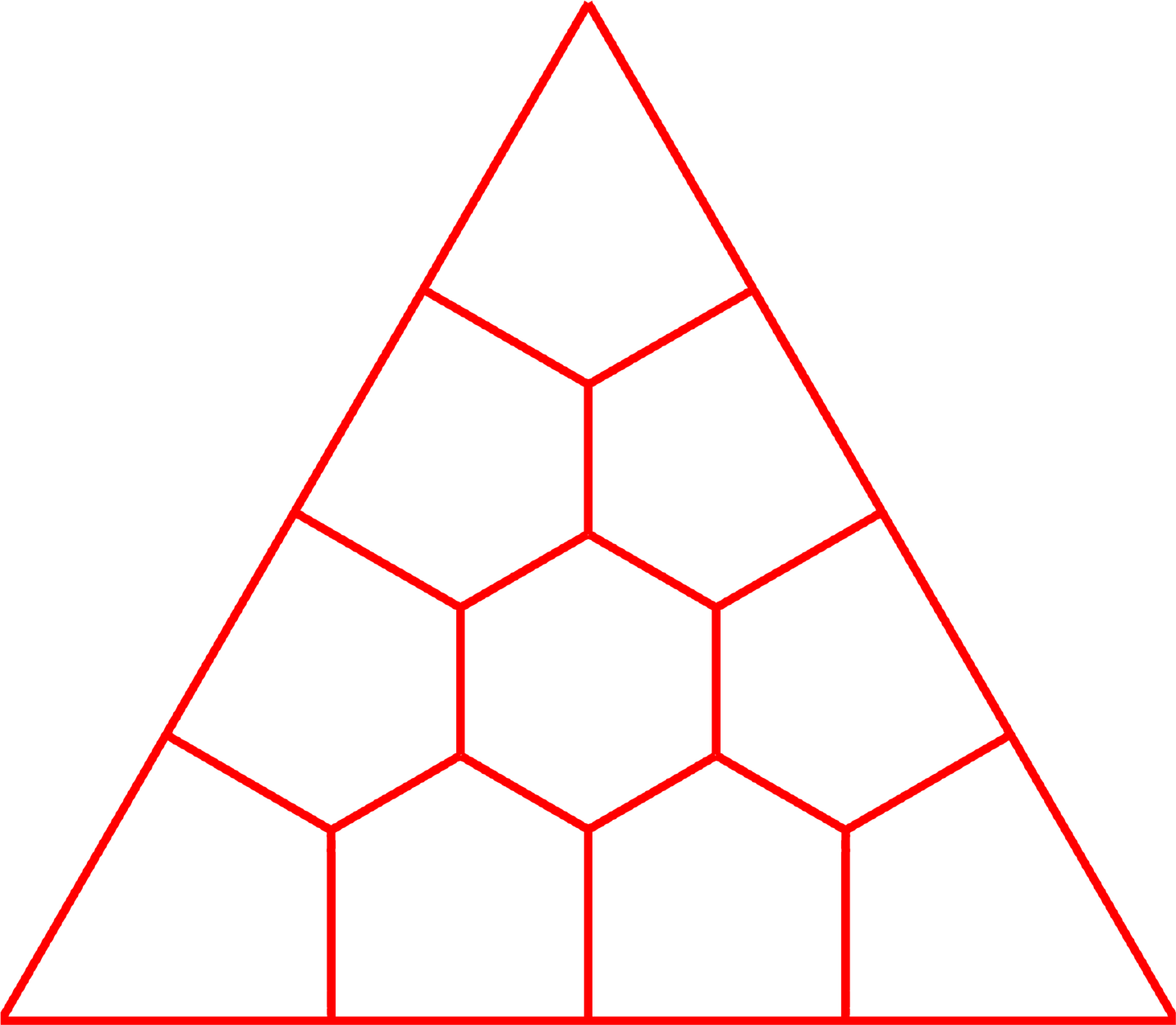}}
\caption{Dirichlet-Neumann approach for $10$-partitions of the equilateral triangle.}
\label{equiDN10-details}
\end{figure}

\begin{rem} In some cases we have chosen the Dirichlet and Neumann parts of the mixed problem based on the results given by the iterative method. We may ask what happens if we permute the two conditions. 

For the case $k=3$ on the equilateral triangle, if we consider Dirichlet boundary condition on segment $[AD_r]$ and Neumann boundary condition on $[\sD\sD_r]$ (see Figure \ref{equiDN3-detailsa} for the notations) then the optimal configuration is again when $DD_r = AD/3$, but the eigenvalues of the cells on the symmetrized domain are strictly higher than the one obtained before.

\begin{figure}[h!t]
\begin{center}
\begin{tabular}{ccc}
\includegraphics[height=3cm]{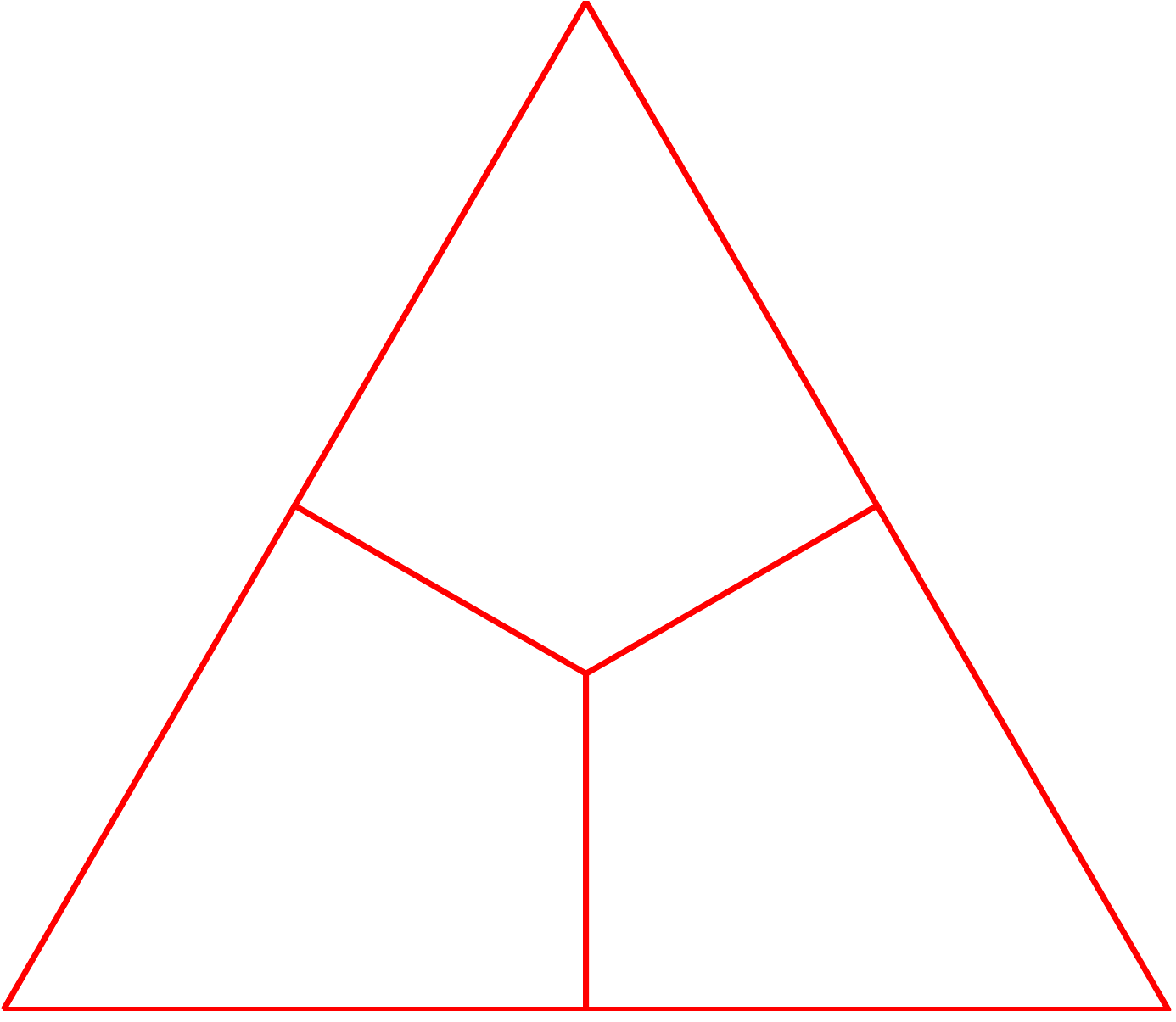} && \includegraphics[height=3cm]{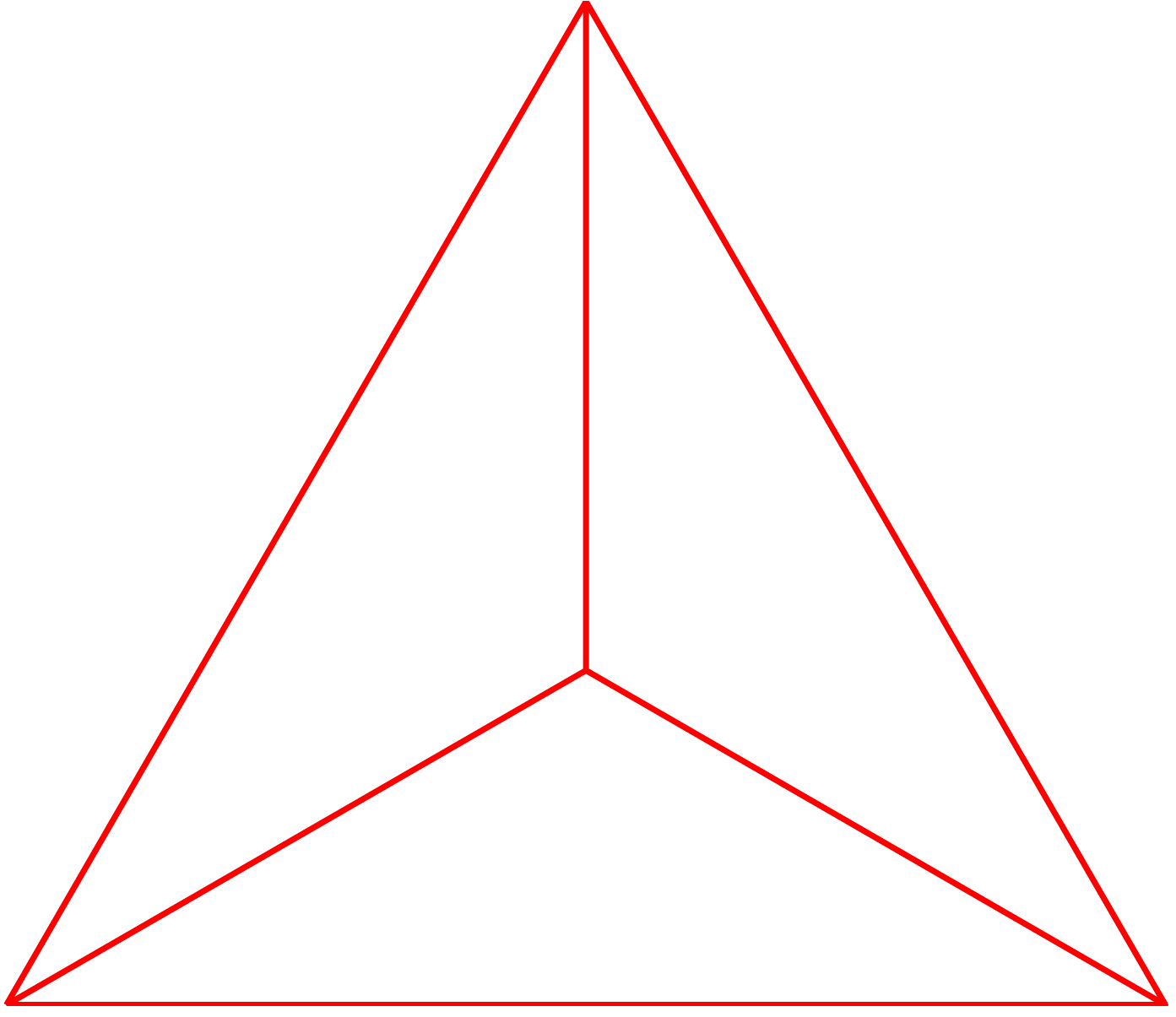}\\
  $\Lambda_{3,\infty}= 142.89$     
  &&  $\Lambda_{3,\infty}=  215.13$ 
\end{tabular}
\caption{Dirichlet-Neumann approach for $3$-partitions of the equilateral triangle.}
\end{center}
\end{figure}

For the case $k=5$ on the square we have seen that the partition seems to have all the symmetry axes of the square. As suggested by the result of the iterative method we considered a Dirichlet-Neumann condition corresponding to an axis of symmetry parallel to the sides of the square. As shown in \cite[Figure 19]{BH11} choosing a mixed boundary condition on the diagonal gives a partition with a strictly higher maximal eigenvalue. 
\label{other_DN}
\end{rem}

\begin{rem}
We note the similarity of the partitions of the equilateral triangle for $k=3$ and $k=5$ to some eigenvalues of the Aharonov-Bohm operator on a sector considered in \cite{BonLen14,MR3270167}. Thus we were able to check that the partition for $k=3$ corresponds to the third eigenvalue of the Aharonov-Bohm operator on the equilateral triangle with a singularity at its centroid. In the same way, the partition for $k=5$ corresponds to the sixth eigenvalue of the Aharonov-Bohm operator on the equilateral triangle with a singularity at the midpoint of one of the heights.
\label{arhanov-bohn}
\end{rem}

\subsection{Summary of the numerical results}

We have seen three numerical approaches for the study of the minimizers of $\fL_{k,\infty}$: the use of $p$-norms of eigenvalues with $p$ large, the penalization method and the Dirichlet-Neumann method. We make below a brief analysis and a comparison of the results given by these methods. 

First we note that the $p$-norms method and the penalization method work in all cases. In most of the cases, the penalization method does exactly what it was build for: penalize the difference between the eigenvalues while minimizing their sum. Thus there is no great surprise to see that it manages to give better upper bounds for $\fL_{k,\infty}(\Omega)$, {\Gn in most cases}. As we can see in Table \ref{pnorm_pen} the penalization method produces results where the gap between the minimal and maximal eigenvalues of cells is smaller. Inspiring from the results of the iterative methods, we can improve them by restricting the research to some particular partitions where we fixe some parts of the boundaries of the subdomains: this is the Dirichlet-Neumann approach. Once the structure is fixed, we express the partition as a nodal set of a mixed problem. In this paper, we apply this method only with fixed straight lines and symmetry. On the other hand, when the Dirichlet-Neumann can be applied, it produces equipartitions and thus gives the best upper bounds for $\fL_{k,\infty}(\Omega)$. Table~\ref{synthese-max} summarize the lowest energy $\Lambda_{k,\infty}(\cD)$ obtained according to the three methods (iterative method for $p=50$, penalization and Dirichlet-Neumann approach), and thus we deduce some upper bounds for $\fL_{k,p}(\Omega)$.

\begin{table}[h!]
\centering 
\begin{tabular}{|c|c|c|c|c|c|c|c|c|c|}
\hline 
 & \multicolumn{3}{c|}{Disk} & \multicolumn{3}{c|}{Square} & \multicolumn{3}{c|}{Equilateral triangle}\\ \hline
$k$ & $p=50$ & pen. & D-N & $p=50$ & pen. & D-N & $p=50$ & pen. & D-N \\ \hline  
$3$  & $20.25$ & $20.24$ & $20.19$   & $66.69$  & $66.612$ & $66.581$ & $143.06$ & $142.88$ & $142.88$ \\ \hline 
$5$  & $33.31$ & $33.31$ & $33.21$   & $105.82$ & $104.60$ & $104.29$ & $252.67$ & $252.17$ & $251.99$ \\ \hline 
$6$  & $39.40$ & $39.17$ & $39.02$   & $128.11$ & $127.11$ & -        & $276.16$ & $276.22$ & $275.97$ \\ \hline 
$7$  & $44.26$ & $44.25$ & $44.03$   & $147.44$ & $146.88$ & { $146.32$} & $348.24$ & $345.91$ & -        \\ \hline 
$8$  & $50.46$ & $50.64$ & $50.46$   & $161.64$ & $161.28$ & { $160.87$}        & $391.06$ & $389.53$ & $389.31$ \\ \hline 
$9$  & $58.28$ & $58.30$ & $58.25$   & $179.21$ & $178.08$ & -        & $431.92$ & $428.75$ & -        \\ \hline 
$10$ & $64.54$ & $64.27$ & $67.19$   & $206.85$ & $204.54$ & -        & $456.66$ & $453.25$ & $451.93$ \\ \hline 
\end{tabular}\\[5pt]
\caption{Lowest energies $\Lambda_{k,\infty}$ for the three methods, $\Omega=\Circle,\ \square,\ \triangle$.}
\label{synthese-max}
\end{table}

\subsubsection*{\bf The Disk.} 
We notice that for $\Gn k \in\llbracket 2,5\rrbracket$ the optimal partitions correspond to sectors of angle $2\pi/k$ and we use the upper bound \eqref{eq.sect} to fill the third column for the disk in Table~\ref{synthese-max}.

For $\Gn k \in \llbracket 6,9\rrbracket$ the best candidates are given by the Dirichlet-Neumann approach on sectors of opening $2\pi/(k-1)$;  the other two methods give close but larger results. 

When $k=10$, we can also apply a Dirichlet-Neumann approach on an angular sector of opening $2\pi/9$ but the upper bound is worse than with the iterative methods. Indeed, we observe that the optimal $10$-partition seems to have two subdomains at the center (see Figure~\ref{pen-figs}). 

\subsubsection*{\bf The Square.} 
For $k \in \{3,5,7,8\}$, it is possible to use the Dirichlet-Neumann method and this gives the lowest upper bound for $\fL_{k,\infty}(\square)$. As shown in \cite[Figure 8]{BH11} for $k=3$ we have a continuous family of solutions, each with the same maximal eigenvalue.

In most other cases, the penalization method gives the best upper bounds for $\fL_{k,\infty}(\square)$. Let us note that for $k=9$ we are not able to obtain partitions which have a lower maximal eigenvalue than the partition into $9$ equal squares, for which all cells have eigenvalue $177.65$. On the other hand, since the partition into $9$ squares is nodal and the $9$-th eigenfunction on the square is not Courant sharp,  this is not a $\infty$-minimal $9$-partition (see Theorem~\ref{thm.HHOT}). In our computations, with the $p$-norm and penalization approaches we find partitions whose energy $\Lambda_{k,\infty}$ equals $179.21$ and $178.08$ respectively. 
Since our computations using iterative methods were based on a relaxed formulation for the eigenvalues, the limited numerical precision of the method does not enable us to reach better results whereas we know that the minimal $9$-partition has an energy less than $177.65$.  

\subsubsection*{\bf The Equilateral triangle.} 
The equilateral triangle gives us lots of occasions where a Dirichlet-Neumann method can be used. For $k \in \{3,5,6,8,10\}$ this method gives us the best known upper bound for $\fL_{k,\infty}(\triangle)$. For the cases $k \in \{7,9\}$ the penalization method gives lowest upper bounds. 
{ When $k \in \{3, 6, 10\}$ numerical simulations produce partitions {whose} subdomains are particular polygons with straight lines and it seems this behavior appears for some specific values of $k$.}

\begin{rem}\label{rem.nbtri} (Remark about partitions corresponding to triangular numbers)
We note that in cases where $k$ is a triangular number, i.e. $k = n(n+1)/2$ with $n \geq 2$, the $p$-minimal $k$-partition of the equilateral triangle seems to be the same for any $p$ and to be made of three types of polygonal cells: $3$ quadrilaterals at  corners which are each a third of an equilateral triangle, $3(n-2)$ pentagons with two right angles and three angles measuring $2\pi/3$ and a family of regular hexagons. In Figure \ref{triangular} we represent some of the results obtained numerically with the iterative method for $k \in \{15,21,28,36\}$.

\begin{figure}[h!]
\centering
\includegraphics[width =0.2\textwidth]{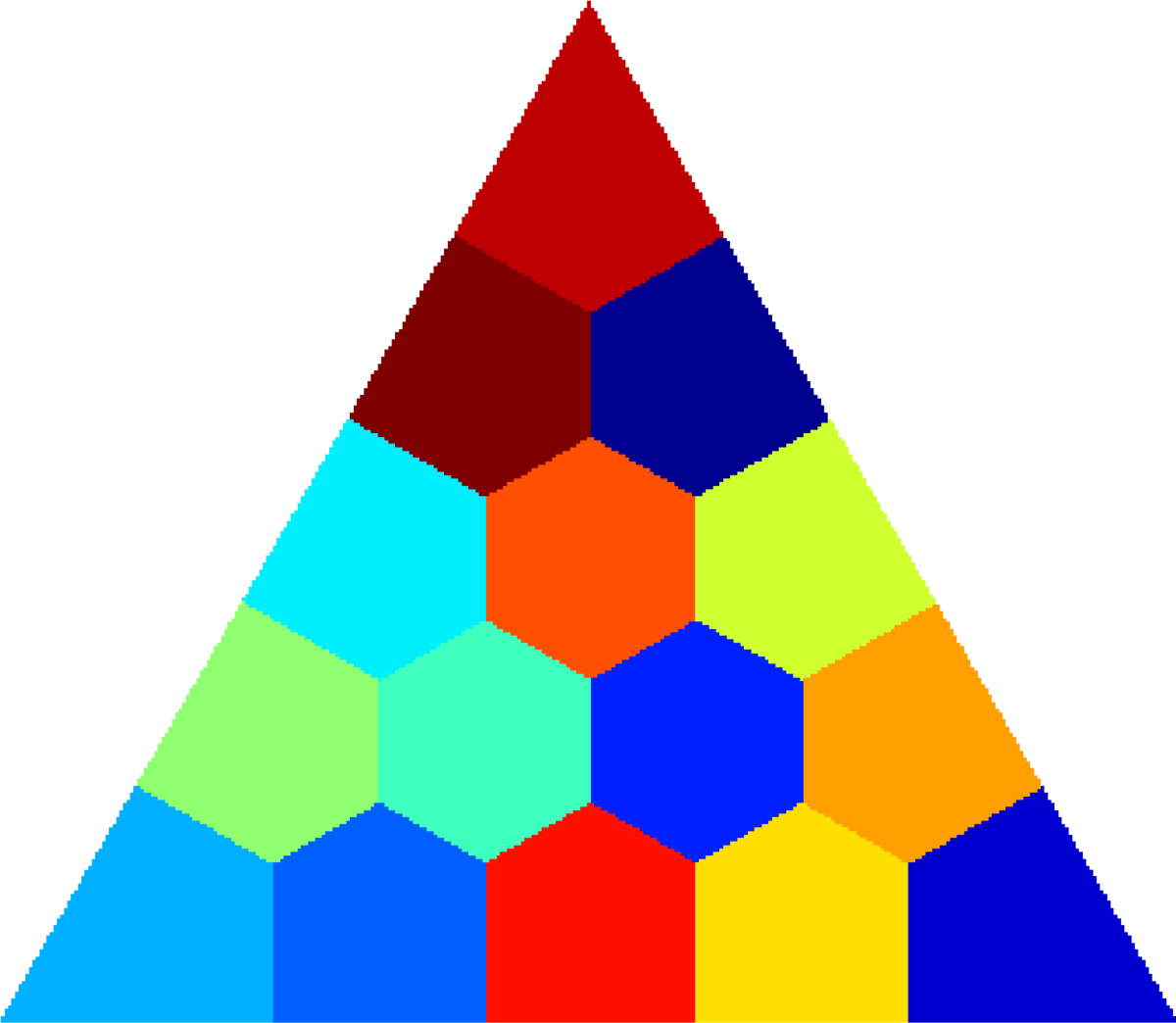}~
\includegraphics[width =0.2\textwidth]{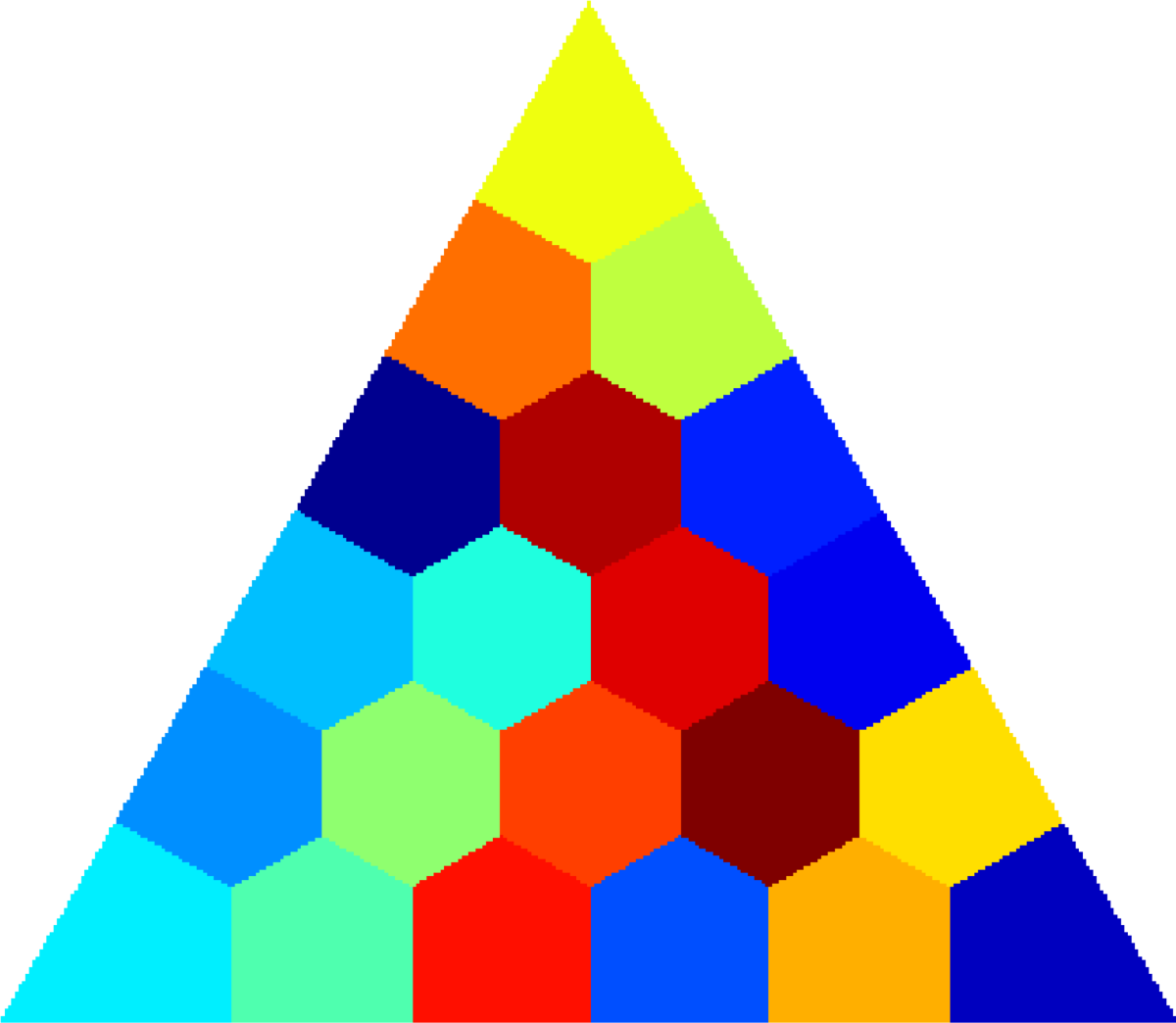}~
\includegraphics[width =0.2\textwidth]{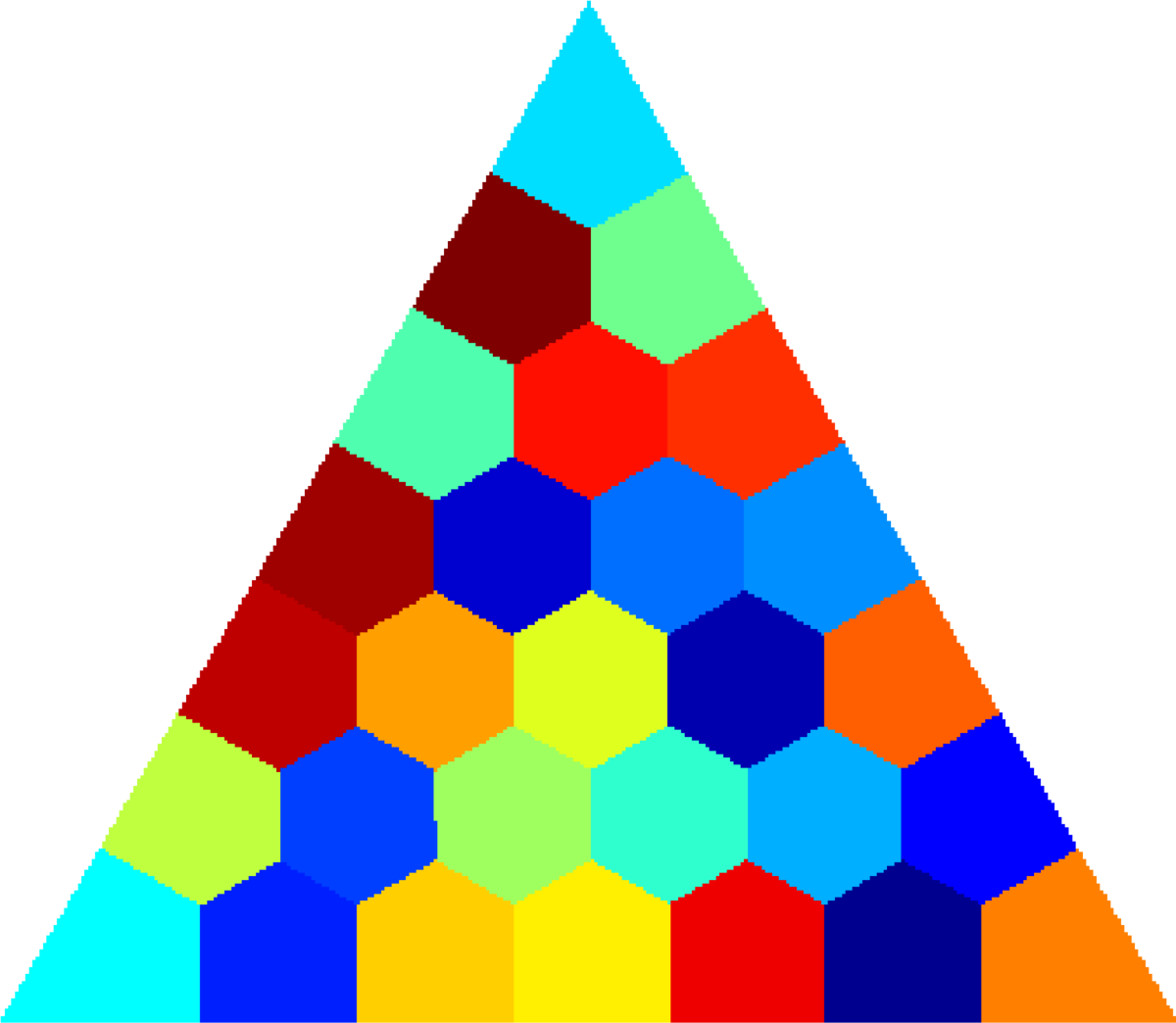}~
\includegraphics[width =0.2\textwidth]{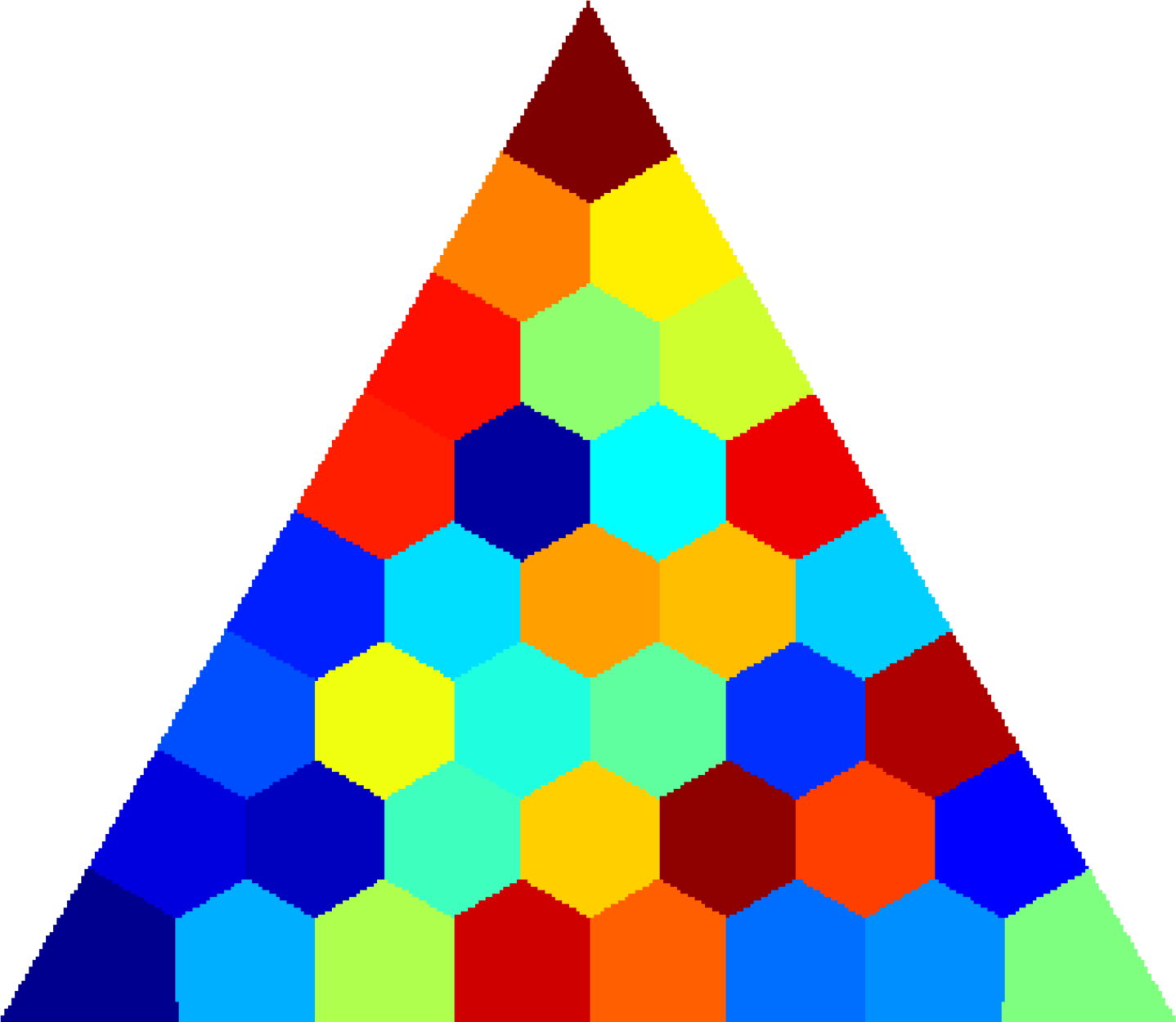}
\caption{Numerical candidates for $k \in \{15,21,28,36\}$.}
\label{triangular}
\end{figure}

\end{rem}

\subsection{Candidates for the max {\it vs.} the sum\label{ss.summax}}  
Given a candidate for minimizing $\Lambda_{k,\infty}$ we may wonder if this partition can also minimize the sum of the eigenvalues $\Lambda_{k,1}$. Such a discussion has already been made in \cite{HHO10} for $k=2$ and it is concluded that, in general, we have $\mathfrak{L}_{2,1}(\Omega) < \mathfrak{L}_{2,\infty}(\Omega)$. A criterion which allows us to make a decision in some cases was given in Proposition \ref{l2norm}. Since the optimal partition for $\Lambda_{k,\infty}$ is an equipartition, given two neighbors $D_i$ and $D_j$ from this partition, we have that $(D_i,D_j)$ forms a nodal partition for ${\rm Int\,}\overline{D_i\cup D_j}$. Our interest is to see whether the eigenfunction associated to this nodal partition has the same $L^2$ norm on the two domains $D_i,D_j$. In the case when the $L^2$ norms are different we conclude that, supposing the initial partition was optimal for the max, the corresponding partition is not optimal for the sum. Since the criterion can only be applied to equipartitions we quickly examine the candidates obtained with the Dirichlet-Neumann approach. 

Let's first remark that Proposition \ref{l2norm} does not allow us to say anything about the cases where the optimal partition for the max is made out of congruent elements. In this case the $L^2$ norms on the subdomains will evidently be the same. This is the case for the $k\in\{2,4\}$ on the square, $k\in\{3,4\}$ on the equilateral triangle and $k \in \{2,3,4,5\}$ on the disk.

For the other situations, let us apply Proposition \ref{l2norm} to the results obtained with the Dirichlet-Neumann method. For the cases $k\in\{ 5,7,8,9\}$ for the disk, $k \in \{3,5,7,8\}$ for the square and $k \in \{5,8\}$ for the equilateral triangle we always find two adjacent domains $D_i,D_j$ in the partition for which the second eigenfunction on $D_i\cup D_j$ has different $L^2$ norms on $D_i,D_j$: {\Gn the observed gap for normalized eigenfunction is larger than 0.03, which is significantly larger than the case we present below}. We can conclude that if the above configurations are optimal for the max then they are not optimal for the sum.

Let us analyze below in more detail the situation where Proposition \ref{l2norm} does not allow us to conclude {\Gn that the minimal partitions for $p=\infty$ and $p=1$ are different}. For the equilateral triangle with $k=6$ or $k=10$, the gap when we apply the $L^2$ norm criterion is less than $10^{-4}$. In these situations, the subdomains of the numerical $\infty$-minimal $k$-partition seem to be polygons with straight lines: quadrilateral, pentagon and regular hexagon (for $k=10$). If we consider only partitions whose subdomains are like this, we will now compare the best partitions for the sum or the max. Let us discuss a little more these two situations $k=6$ and $k=10$ below.
\begin{itemize}[leftmargin=*]
\item $k=6$: the partition is represented in Figure \ref{equiDN6-details}. We perform a one parameter study with respect to $r \in [0,1]$ just as in the case of the Dirichlet-Neumann approach where we compute numerically the eigenvalues on the two types of polygonal cells present in the partition (quadrilateral and pentagon). Numerically we find that the partitions minimizing the max and the sum are almost the same, in the sense that the difference between the values of $r$ which minimize the sum and the max is smaller than $10^{-4}$. Thus, the partitions minimizing the sum and the max are either the same or are too close to be distinguished numerically.
\item $k=10$: the partition is represented in Figure \ref{equiDN10-details}. We can see that we have three types of domains: a regular hexagon in the center, six pentagons and three quadrilaterals. As in the Dirichlet-Neumann mixed approach, we note that we can characterize the partition using two parameters $t,s \in [0,1]$. Next we search for the parameters which optimize the maximal eigenvalue and the sum. To obtain an equipartition (for the max), we need to consider non symmetric {\Gn pentagons}. As for $k=6$, it seems  that the optimal partitions are the same for the sum and the max (or very close). The difference between corresponding parameters is again smaller than $10^{-4}$.
\end{itemize}
This suggests that 
$$p_{\infty}(\triangle,k)=1, \qquad\mbox{ for }k=6,\ 10.$$

Next is the case of the disk for $k=7$. Here we also have $L^2$ norms which are close (the gap is around 0.03)  and we analyze this case more carefully in the following sense. Note that the central domain seems to be a regular hexagon $\hexagon$ and the exterior domains $D_{i}$ ($i=1,\ldots, 6$) are subsets of angular sector of opening $\pi/3$. We optimize the sum and the max by varying the size of the interior hexagon. Using two different finite element methods, {\sc M\'elina} and {\sc FreeFem}++, we obtain that the sum is minimized when the side of the hexagon is equal to $0.401$ and the maximum is minimized for a side equal to $0.403$. These computations let us think that the optimal partitions for the sum and the max might be different in this case. In the following Table \ref{tab.trik10}, we give the parameters for which the sum $\Lambda_{7,1}$ and the maximal eigenvalue $\Lambda_{7,\infty}$ are minimized, as well as the corresponding eigenvalues. We observe that $\Lambda_{7,1}(\cD^{7,1})<\Lambda_{7,\infty}(\cD^{7,\infty})$ in coherence with \eqref{eq.monop} and that the gap between the eigenvalues of the minimizer for the sum is significant enough to say that this partition is not an equipartition. Consequently, if the minimal $7$-partition of the disk for the max has the previous structure (a regular hexagon at the center and straight lines to join the boundary), it seems that this partition is not minimal for the sum. 

\begin{table}[h!t]
\centering 
\begin{tabular}{|c|c|c|c|c|c|}
\hline
$r$& $\lambda_1(\hexagon)$& $\lambda_1(D_1)$& $\Lambda_{7,\infty}$& $\Lambda_{7,1}$\\
\hline
0.401&  44.498& 43.949&  44.498&  44.028  \\
\hline
0.403&  44.030&  44.030&  44.030 & 44.030 \\ 
\hline
\end{tabular}\\[5pt]
\caption{Upper bounds for $\fL_{7,p}(\Circle)$ for $p=1$ and $p=\infty$.\label{tab.trik10}}
\end{table}

In the following section a more detailed analysis is devoted to showing the difference between the partitions minimizing the sum and the ones minimizing the maximal eigenvalue by looking at the evolutions of the partitions with respect to $p$ when minimizing the $p$-norm of eigenvalues.

\section{Numerical results for the $p$-norm}
\label{section.pnorm}
\subsection{Overview}
Our main interest when studying numerically the optimizers of the $p$-norm of the eigenvalues was the approximation of the $\fL_{k,\infty}$ problem. As we have seen before, the numerical $p$-minimal $k$-partition for $p=50$ is not far from being an equipartition. In this section we make some remarks, based on the numerical simulations, concerning the behavior of the $p$-minimal $k$-partitions with respect to $p$. We are interested in observing the evolution of the configuration of the partitions as $p$ varies from $1$ to $50$. In most cases the configuration is stable, but there are, however, some cases where the partitions change as $p$ grows and converge to different topological configurations when $p$ goes to $\infty$. Viewing the evolution of the maximal eigenvalue and the $p$-norm as $p$ grows can also confirm the conclusions of the previous section concerning the fact that some partitions which optimize the maximal eigenvalue may not optimize the sum. In the following, we will consider the three geometries $\Omega=\triangle, \square, \Circle$ and some values of $k$. 

For each of these parameters two figures will highlight the evolution of the $p$-minimal $k$-partitions $\cD^{k,p}$ obtained by the iterative method. The first one concerns the evolution of the energies: we represent $p\mapsto \Lambda_{k,p}(\cD^{k,p})$ in blue, $p\mapsto \Lambda_{k,\infty}(\cD^{k,p})$ in red and eventually the upper bound $L_{k}(\Omega)$ or $\Lambda^{DN}_{k}(\Omega)$ obtained by the Dirichlet-Neumann approach in magenta (see Figures~\ref{fig.eq2nrj}, \ref{fig.eq4nrj}, \ref{fig.sq3nrj}, \ref{fig.sq5nrj}, \ref{fig.sq9nrj}, \ref{evol.disk6-9}, \ref{disk.evol}). In these graphes, we observe that the curve $p\mapsto \Lambda_{k,p}(\cD^{k,p})$ (in blue) is increasing, in coherence with \eqref{eq.monop}. The decay of the curves $p\mapsto \Lambda_{k,\infty}(\cD^{k,p})$ (in red) show that as $p$ is increasing, we get a better and better upper-bound for $\fL_{k,\infty}(\Omega)$. Theses two curves converge to the same value, which is the upper bound obtained by the Dirichlet-Neumann approach when it can be applied.
We also illustrate the evolution of the boundary of $\cD^{k,p}$ according to $p$ with $p=1$ in blue and $p=50$ in red (see Figures~\ref{fig.eq2part}, \ref{fig.eq4part}, \ref{evol.equi.all}, \ref{fig.sq3part}, \ref{fig.sq5part}, \ref{evol.square6-10}, \ref{evol.square9}, \ref{compar_sq7b}-\ref{compar_sq7c}, \ref{evol.disk6-9}, \ref{disk.evol}).

\subsection{The equilateral triangle.} 
The equilateral triangle is a first example where the optimal partitions for $p=1$ and $p=\infty$ do not coincide, as seen in the previous section (see also \cite{HHO10} for $k=2$). Figure \ref{evol.equi2} represents the evolution of the energies and the optimal partitions as $p$ increases. 
We observe that even if the partitions do not change much, the maximal eigenvalue is significantly decreased as $p$ increases. 
\begin{figure}[h!t]
\centering
\subfigure[$\Bl\Lambda_{2,p}(\cD^{2,p})$, $\Rd\Lambda_{2,\infty}(\cD^{2,p})$ and $\Mg\Lambda^{DN}_{2}(\triangle)$ vs. $p$\label{fig.eq2nrj}]{\quad\qquad\includegraphics[width = 0.3\textwidth]{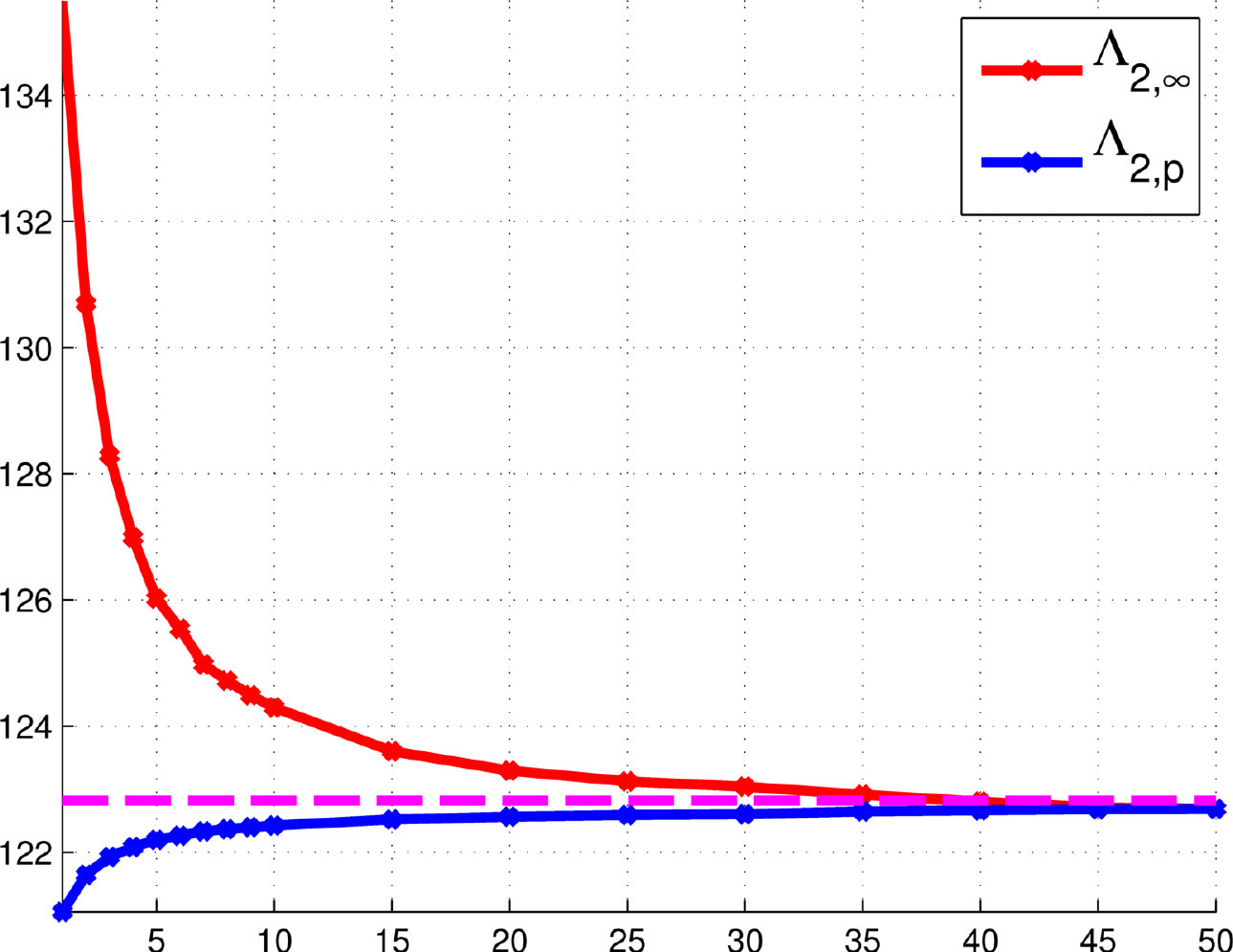}\qquad}\qquad\quad
\subfigure[$\cD^{2,p}$ vs. $p$\label{fig.eq2part}]{\includegraphics[width = 0.25\textwidth]{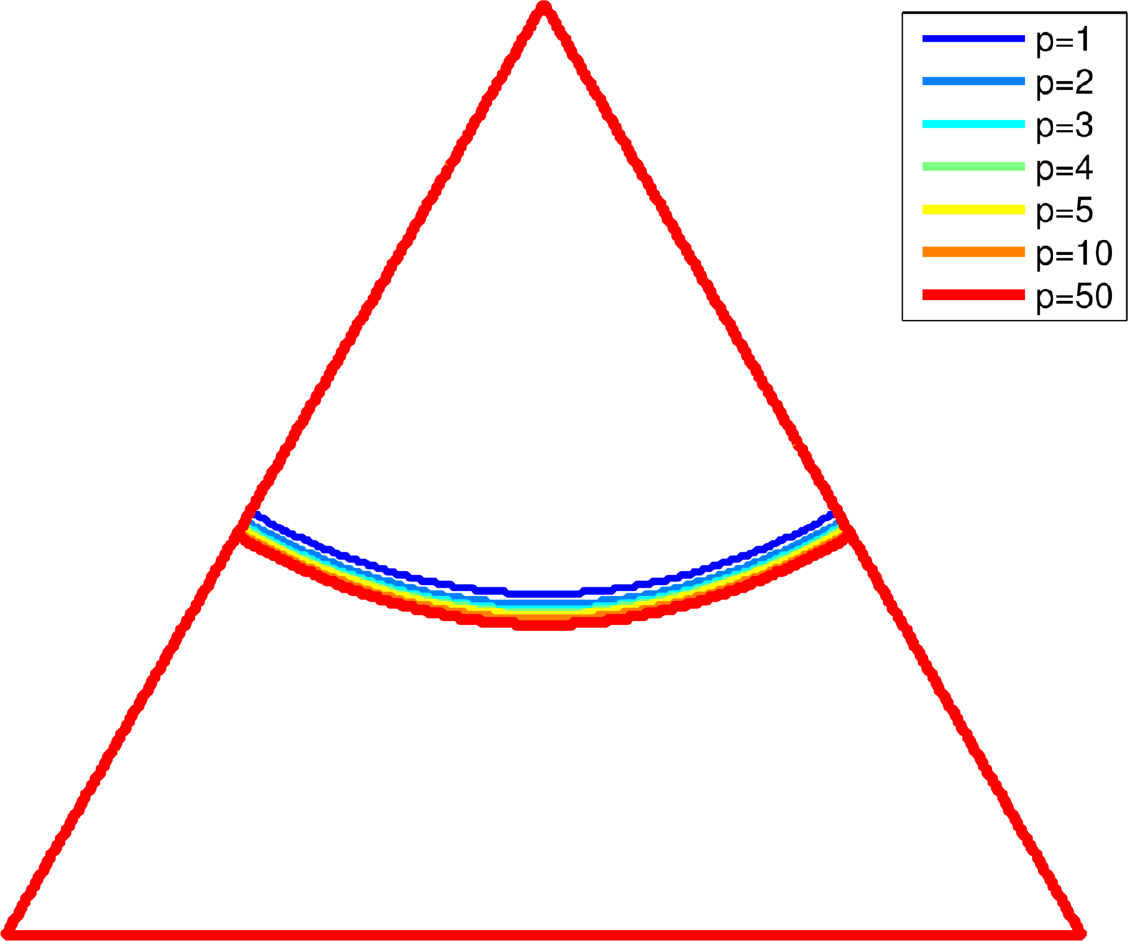}}
\caption{$p$-minimal $2$-partitions of the equilateral triangle {\it vs.} $p$.}
\label{evol.equi2}
\end{figure}

For $k=3$ we obtain an equipartition starting from $p=1$ and thus the partition does not change with $p$ and the energies are constant with respect to $p$. This suggests that the $p$-minimal $3$-partition is given by Figure~\ref{fig.candtrik3} and $p_{\infty}(\triangle, k)=1$.

For $k=4$, since the $4$-th eigenvalue of the equilateral triangle is Courant sharp, we know that the minimal $4$-partition for $p=\infty$ is the partition with 4 similar equilateral triangles  (see Figure~\ref{fig4.tri4}). The evolution of the partitions according to $p$ is given in Figure~\ref{evol.equi4}, where $L_{4}(\triangle)=\lambda_{4}(\triangle)=\fL_{4,\infty}(\triangle)$ is plotted in magenta. We observe the convergence of $\Lambda_{k,p}(\cD^{k,p})$  as well as the decay of the largest first eigenvalue to $\lambda_{4}(\triangle)$. The partition $\cD^{k,p}$ changes in a significant way with $p$. Indeed, it seems that the minimal $4$-partition for the sum has 4 singular points on the boundary and two inside. The points are moving with $p$ to collapse when $p=\infty$ where we have only 3 singular points on the boundary. Furthermore, the minimal $4$-partition for the max has more symmetry than those for $p<\infty$. 
\begin{figure}[h!t]
\centering 
\subfigure[$\Bl\Lambda_{4,p}(\cD^{2,p})$, $\Rd\Lambda_{4,\infty}(\cD^{4,p})$ and $\Mg\lambda_{4}(\triangle)$ vs. $p$\label{fig.eq4nrj}]{\qquad\includegraphics[width = 0.3\textwidth]{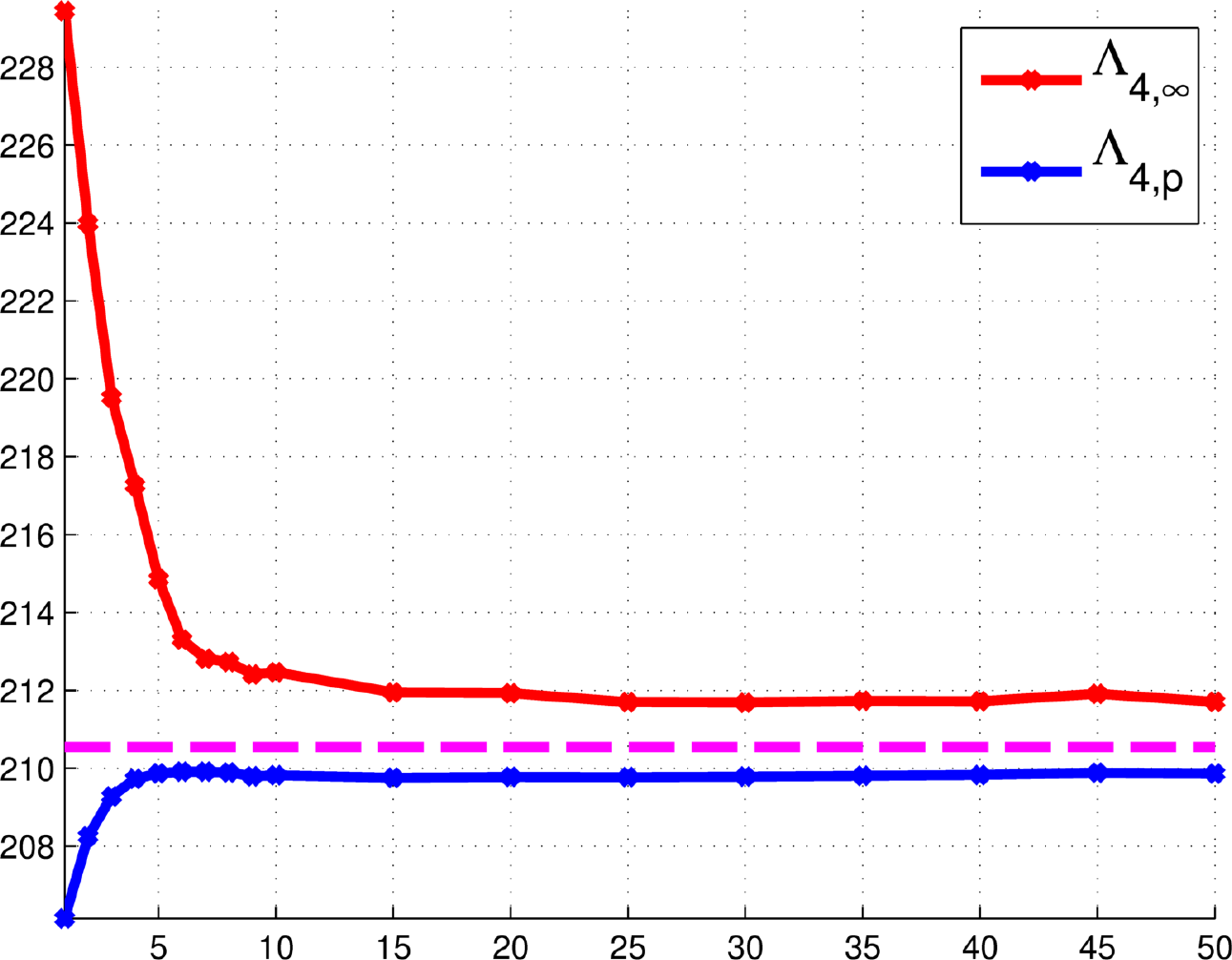}\qquad}
\subfigure[$\cD^{4,p}$ vs. $p$\label{fig.eq4part}]{\qquad\includegraphics[width = 0.25\textwidth]{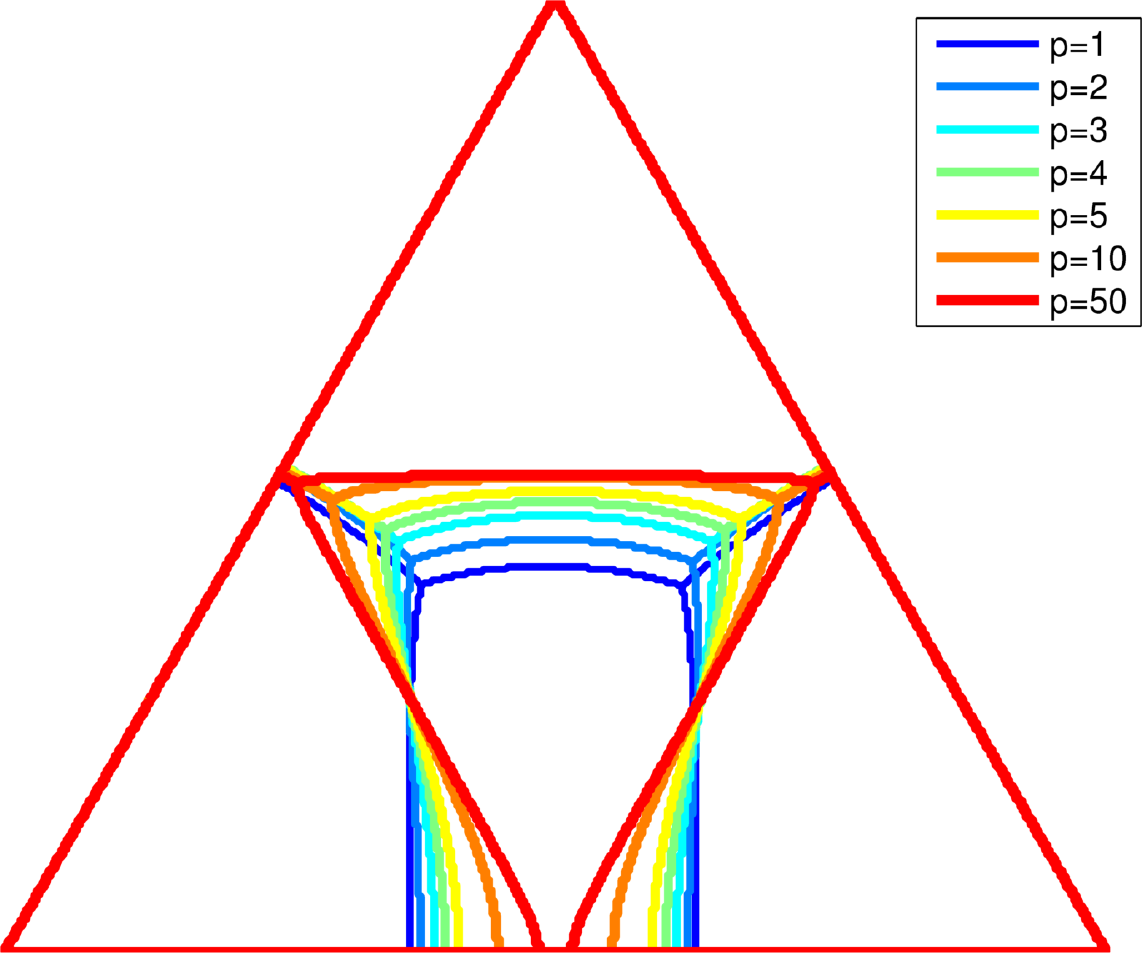}\qquad}
\caption{$p$-minimal $4$-partitions of the equilateral triangle {\it vs.} $p$.}
\label{evol.equi4}
\end{figure}

We represent in Figure \ref{evol.equi.all} the evolution of the partitions for these values of $k$. For $k \in \{5,7,8,9\}$ we observe similar behaviors for the maximal eigenvalue and the $p$-norm as the ones already shown for $k \in \{2,4\}$.
\begin{figure}[h!t]
\centering 
\includegraphics[width = 0.2 \textwidth]{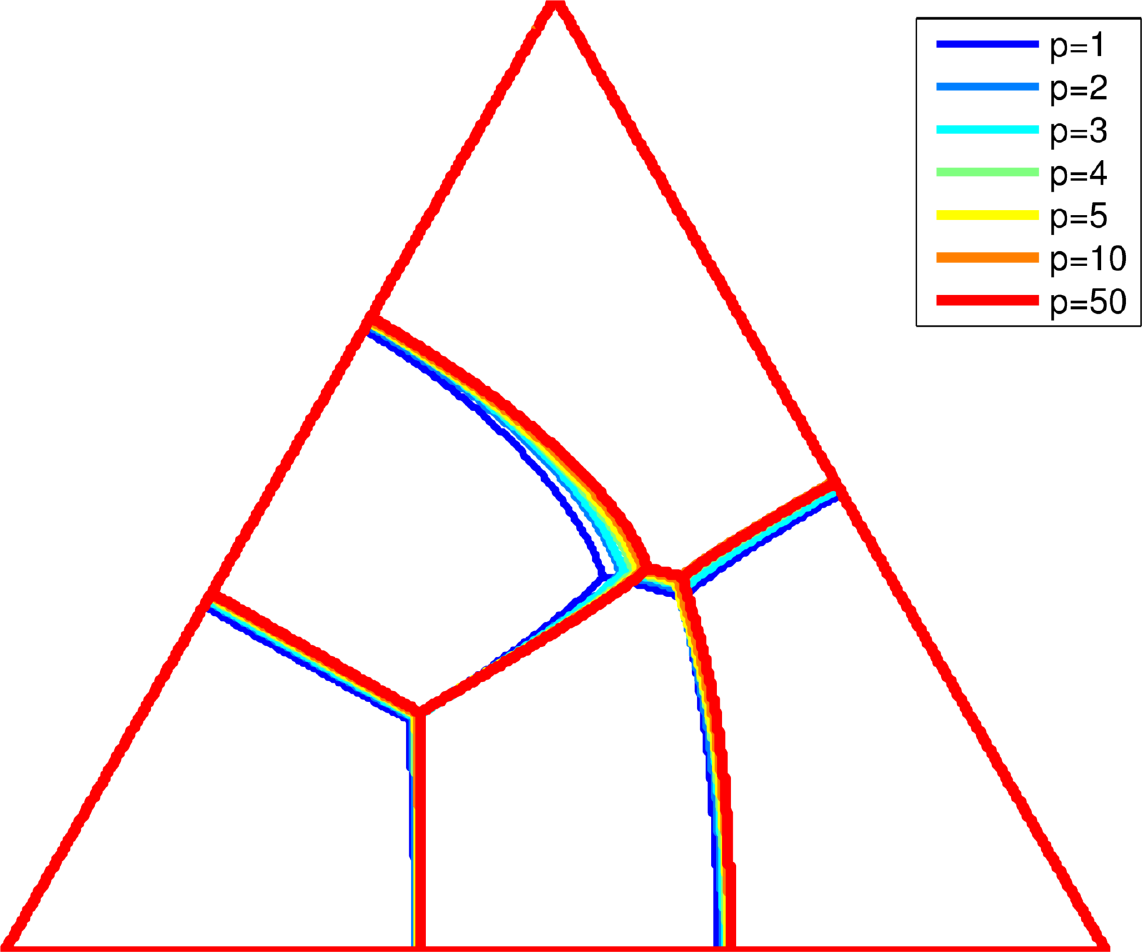}\hfill
\includegraphics[width = 0.2 \textwidth]{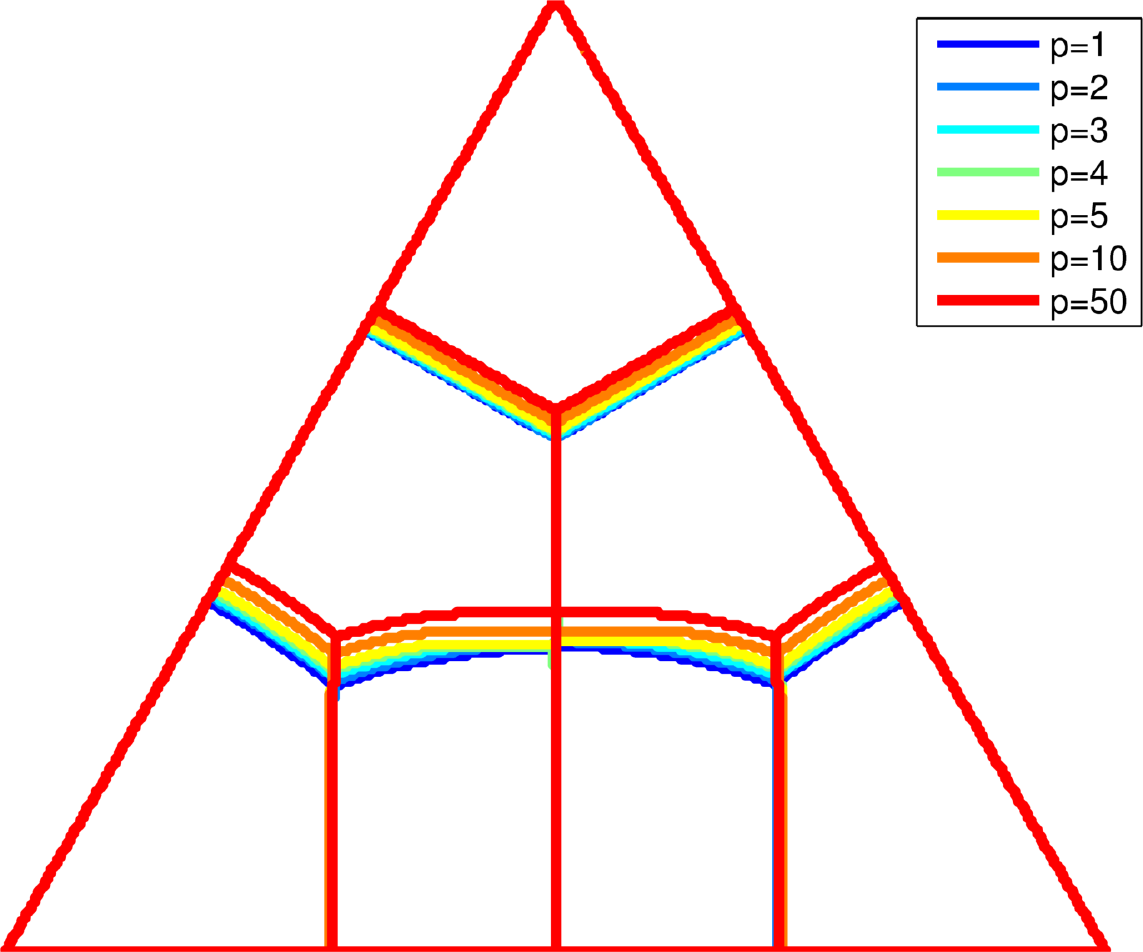}\hfill
\includegraphics[width = 0.2 \textwidth]{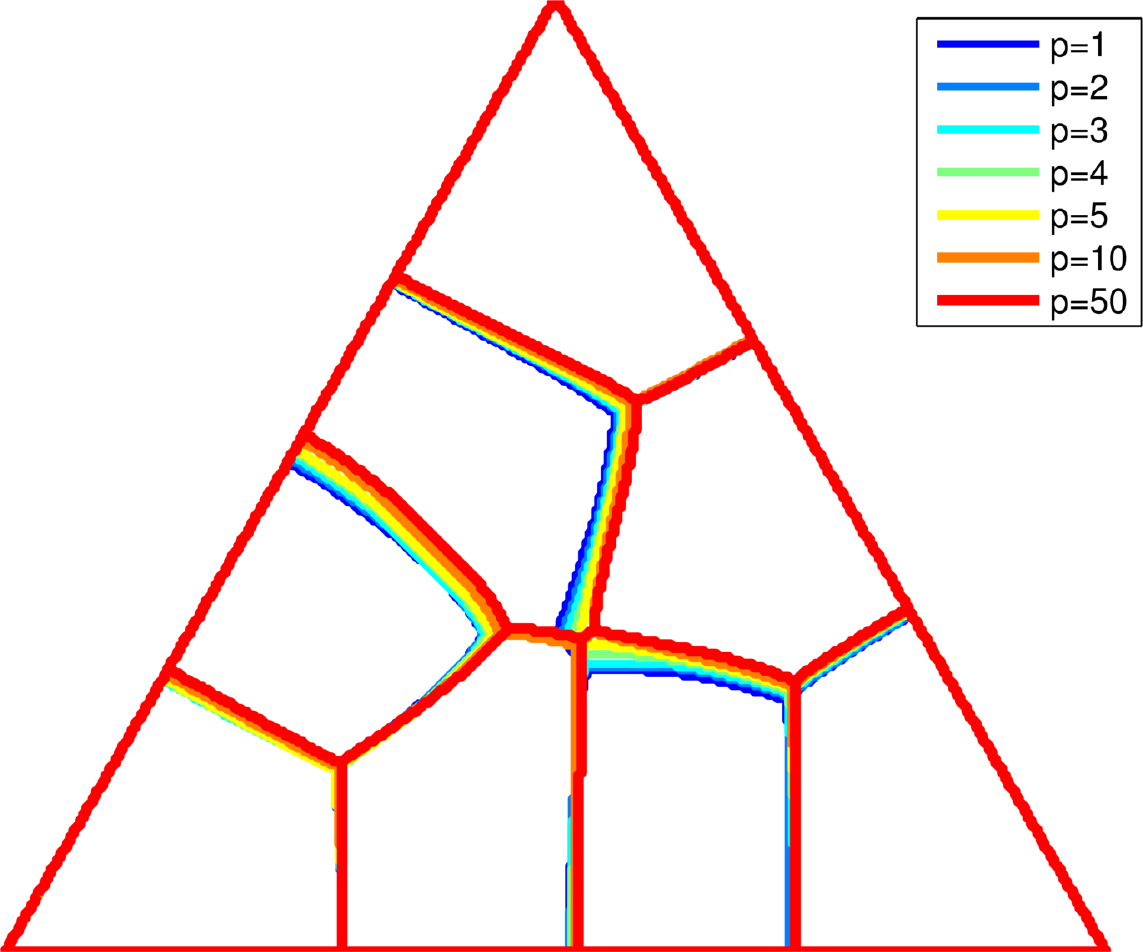}\hfill
\includegraphics[width = 0.2 \textwidth]{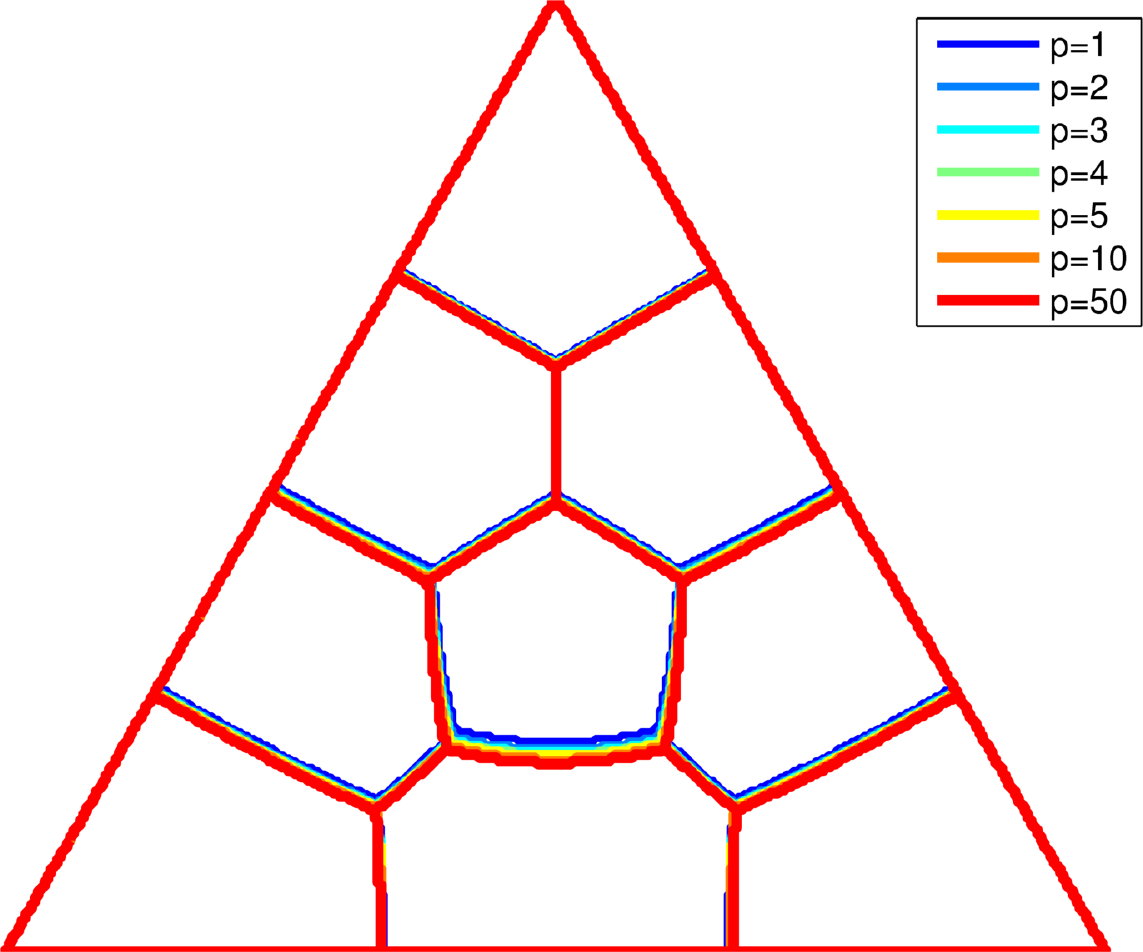}
\caption{$p$-minimal $k$-partitions of the equilateral triangle {\it vs.} $p$, for $k\in\{5,7,8,9\}$.}
\label{evol.equi.all}
\end{figure}

The remaining cases $k\in \{6,10\}$ are in the class of triangular numbers and as observed before (see Figure \ref{triangular}) in these cases it seems that the cells of the optimal partitions are polygonal domains. As seen in the previous section, the $L^2$ norm criterion does not allow us to say that the candidates found for minimizing $\mathfrak{L}_{k,\infty}$ are not minimizers for $\mathfrak{L}_{k,1}$ in these cases. The study of the evolution of the $p$-norms does not allow us to conclude that these partitions are different. In fact, the partitions are not observed to move at all and the energies do not vary much {(\Gn the relative variation $(\Lambda_{k,50}-\Lambda_{k,1})/\Lambda_{k,1}$ is less than 0.2\% when $k\in\{6,10\}$)}. This reinforces the observations at the end of the previous section where we have seen that the partitions minimizing the sum or the maximal eigenvalue are either the same or too close to decide. 
\begin{figure}[h!t]
\centering 
\includegraphics[width = 0.2\textwidth]{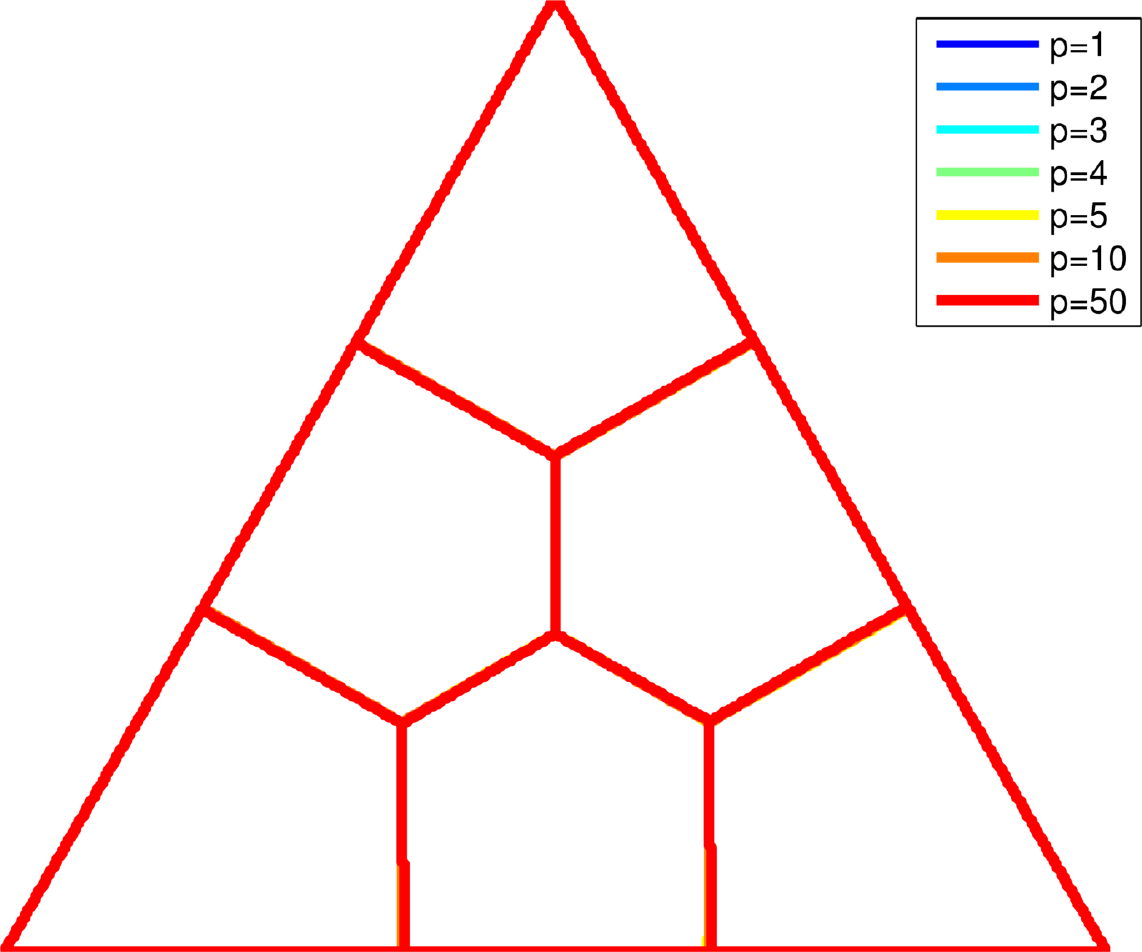}\qquad
\includegraphics[width = 0.2\textwidth]{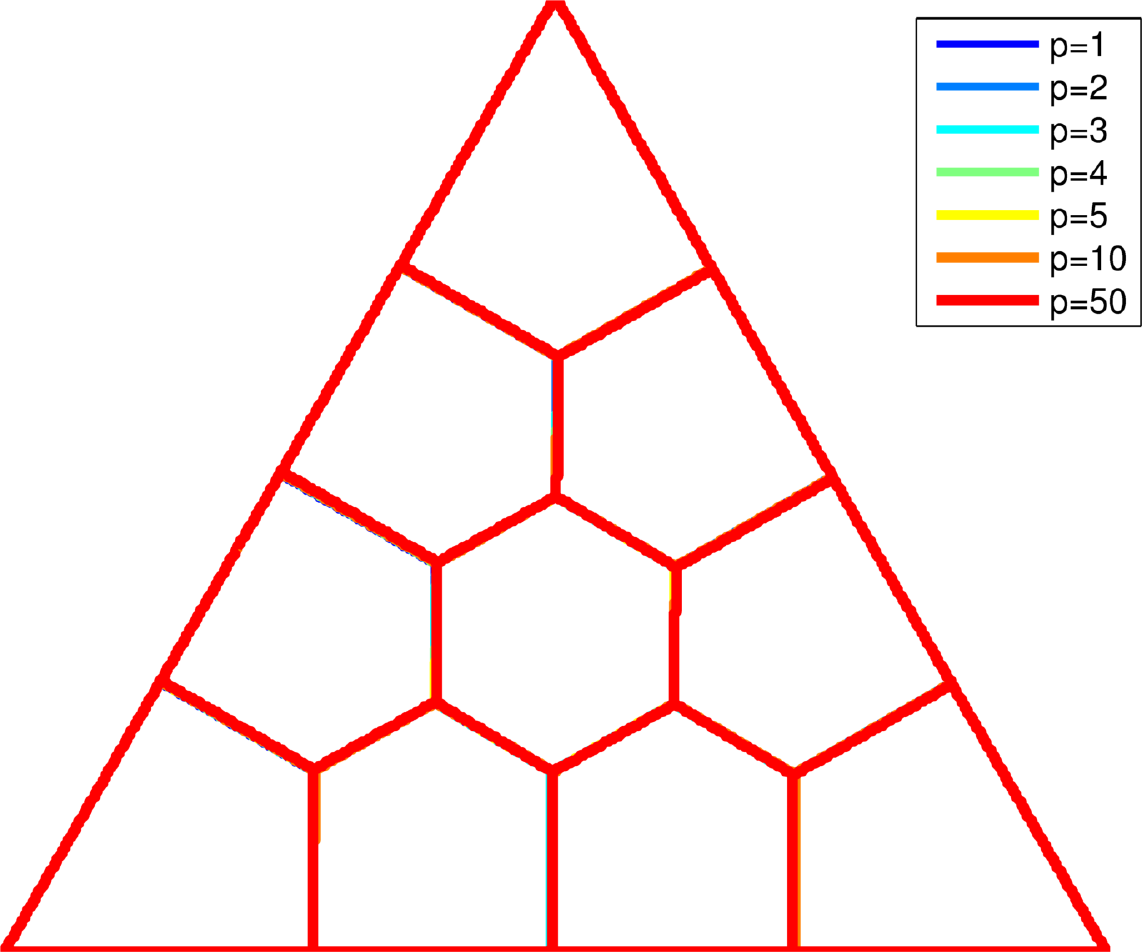}
\caption{$p$-minimal $k$-partitions of the equilateral triangle {\it vs.} $p$, for $k\in\{6,\ 10\}$.}
\label{evol.equi.6-10}
\end{figure}

\subsection{The square.} In cases $k\in\{2,4\}$ we obtain equipartitions starting from $p=1$, which makes the energies and partitions stationary (see Figure \ref{fig.PartNod} where we represent the nodal partition associated with the second and fourth eigenfunctions). 

For $k=3$ we have seen in the previous section that there seem to be different $p$-minimal $3$-partitions for $p=1$ and $p=\infty$. This can also be seen by looking at the evolution of the partitions and of the $p$-norms in Figure \ref{evol.square3}. We clearly see how the triple point approaches the center of the square, represented by a black dot in Figure~\ref{evol.square3}.
\begin{figure}[h!t]
\centering 
\subfigure[$\Bl\Lambda_{3,p}(\cD^{3,p})$, $\Rd\Lambda_{3,\infty}(\cD^{3,p})$ and $\Mg\Lambda^{DN}_{3}(\square)$ vs. $p$\label{fig.sq3nrj}]{\qquad\qquad\includegraphics[width=0.25\textwidth]{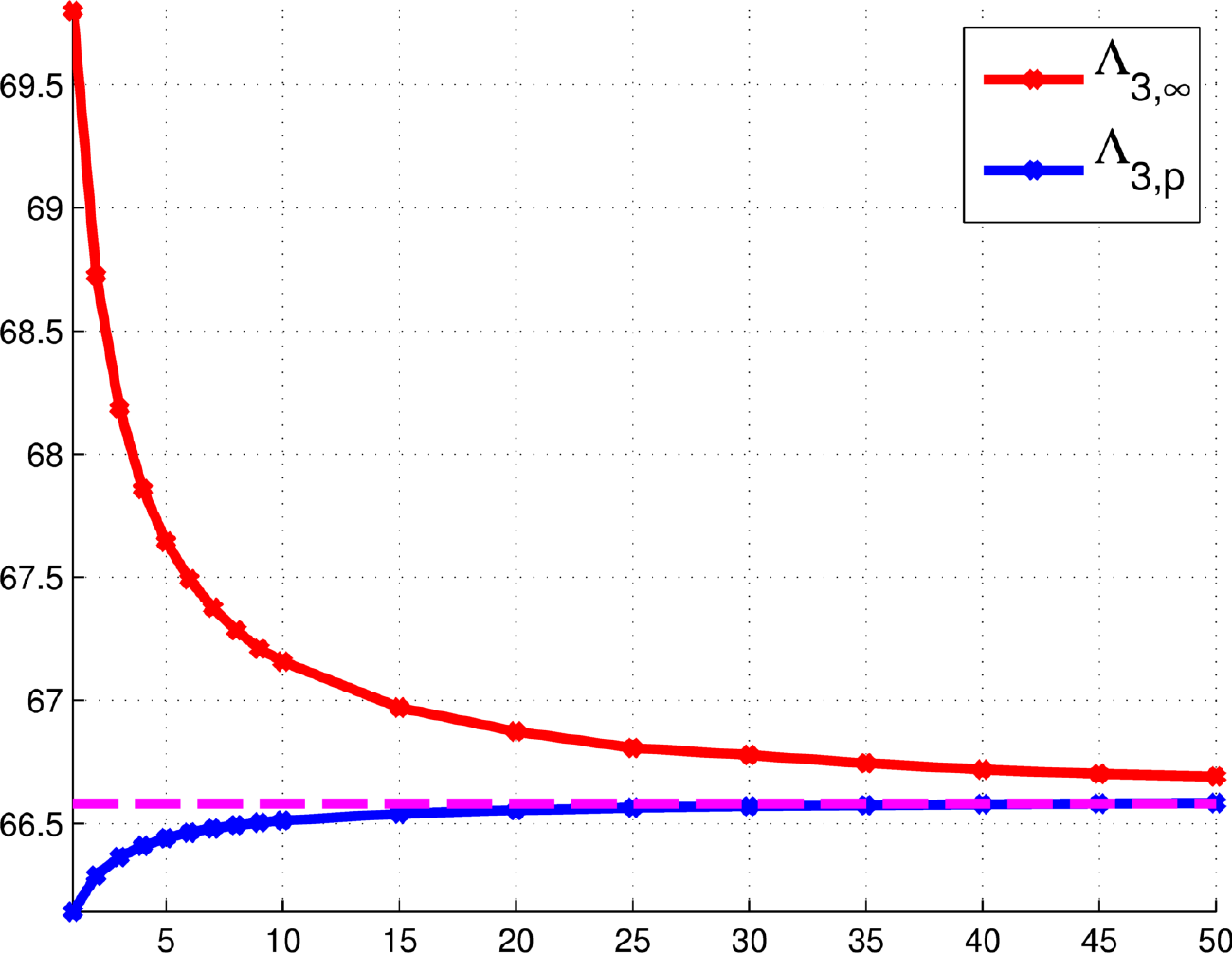}\qquad\quad}
\subfigure[$\cD^{3,p}$ vs. $p$\label{fig.sq3part}]{\qquad\includegraphics[width=0.25\textwidth]{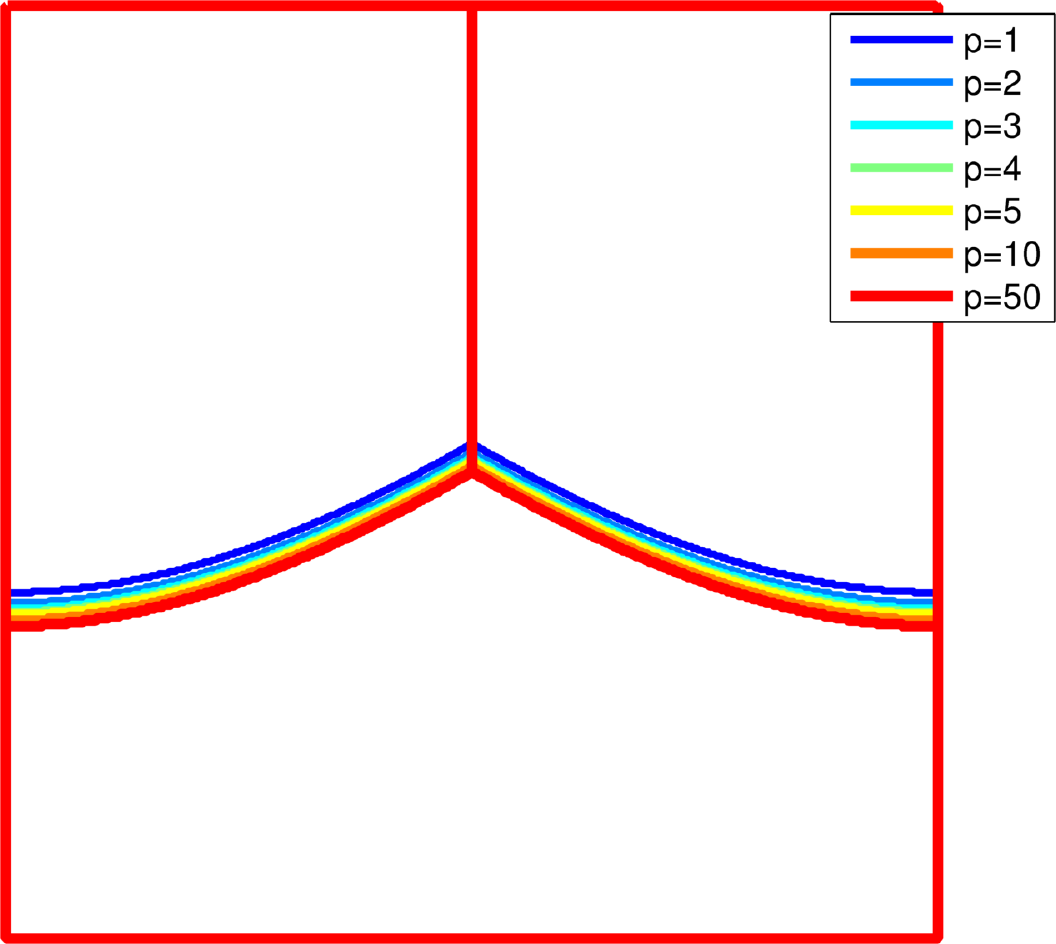}
\includegraphics[width=0.25\textwidth]{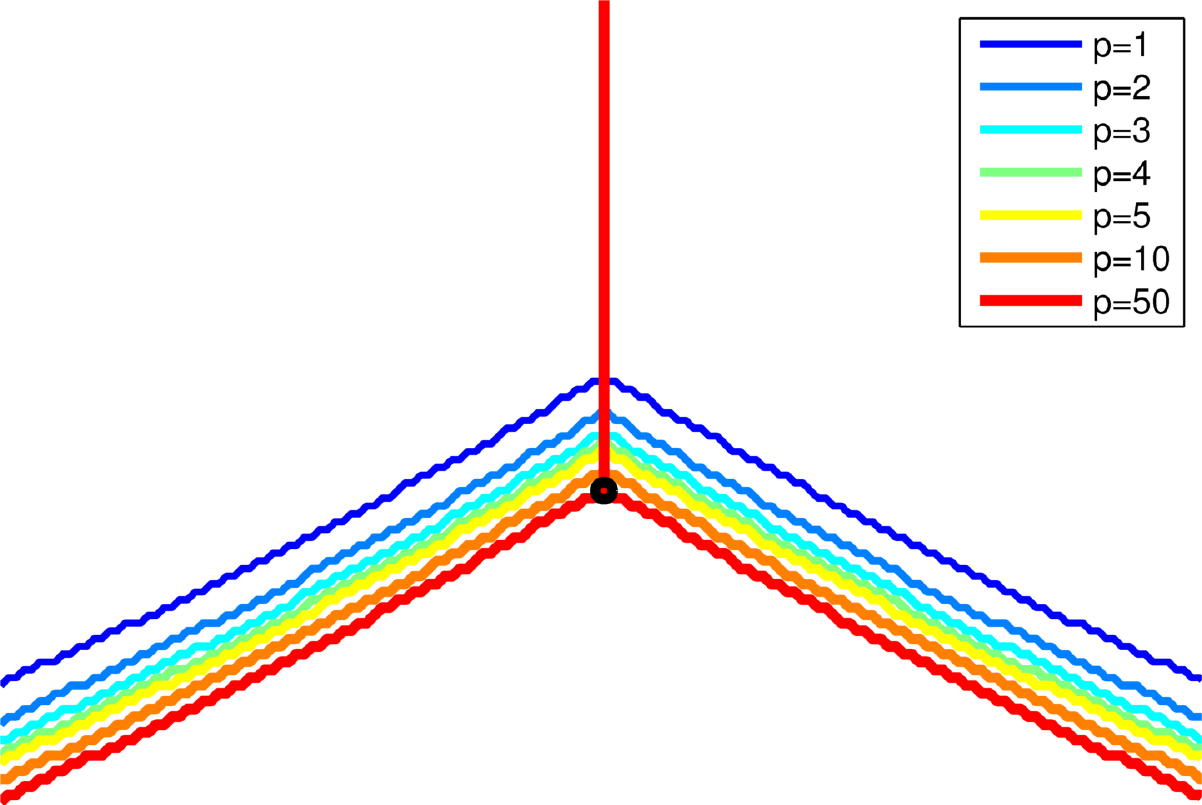}}
\caption{$p$-minimal $3$-partitions of the square {\it vs.} $p$.}
\label{evol.square3}
\end{figure}

Another interesting case is $k=5$. Here we were also able to use a Dirichlet-Neumann approach in order to present an equipartition which is a candidate for minimizing the maximal first eigenvalue. As seen in the previous section the $L^2$ norm criterion does show that the same partition cannot also be optimal for the sum. We observe in Figure \ref{evol.square5} that the energies and the numerical $p$-minimal $5$-partitions evolve when $p$ grows. 
\begin{figure}[h!t]
\begin{center}
\subfigure[$\Bl\Lambda_{5,p}(\cD^{5,p})$, $\Rd\Lambda_{5,\infty}(\cD^{5,p})$ and $\Mg\Lambda^{DN}_{5}(\square)$ vs. $p$\label{fig.sq5nrj}]{\qquad\includegraphics[width=0.3\textwidth]{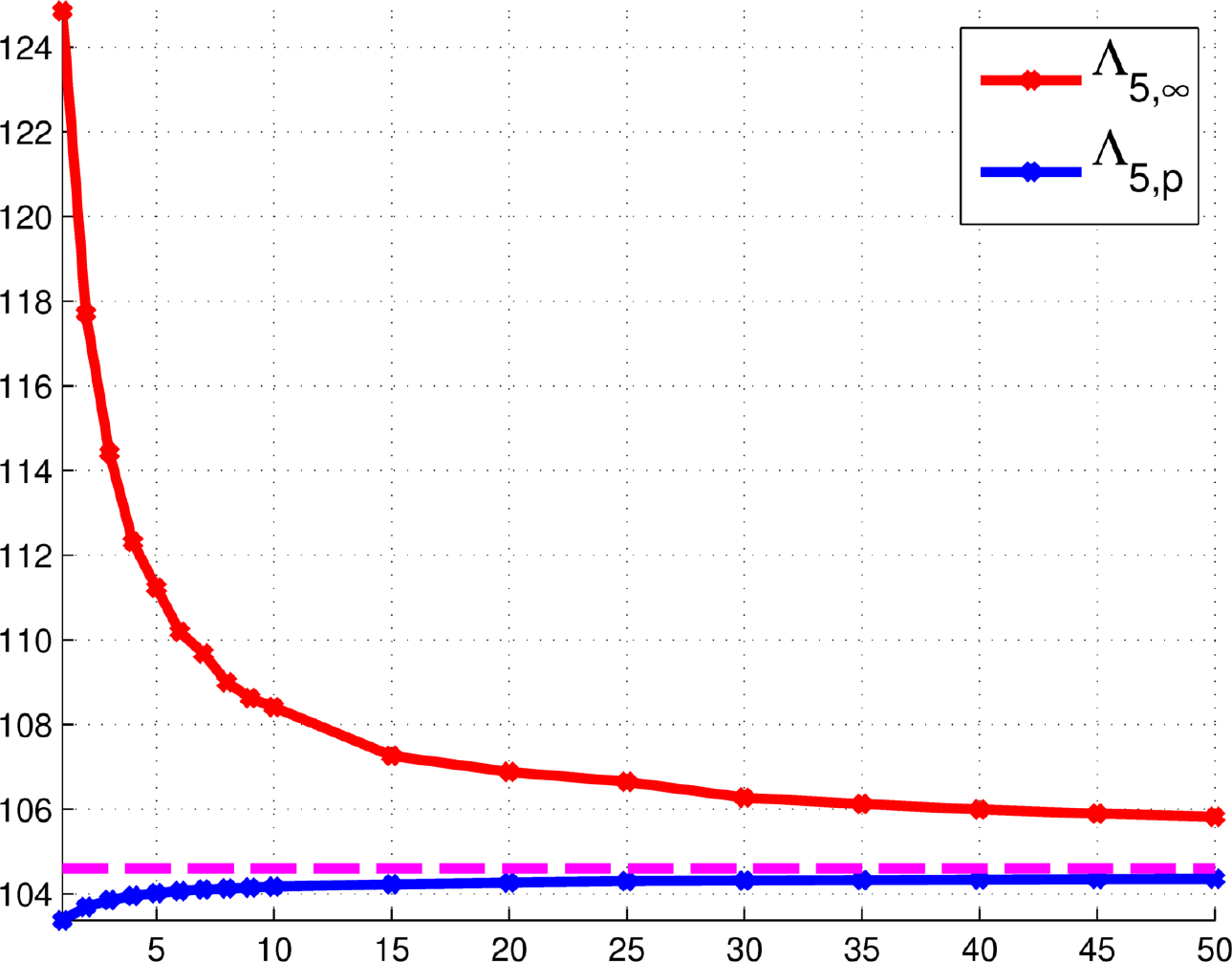}\qquad\qquad}
\subfigure[$\cD^{5,p}$ vs. $p$\label{fig.sq5part}]{\qquad\includegraphics[width=0.26\textwidth]{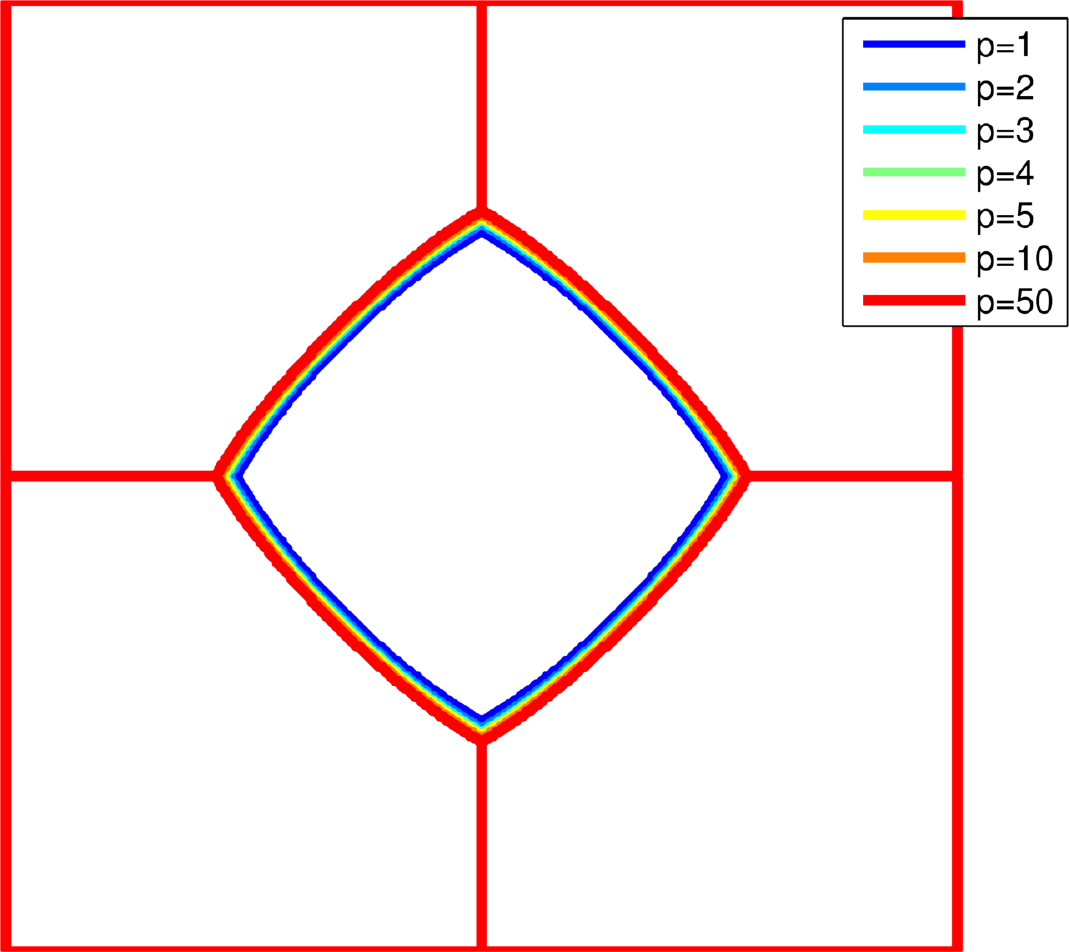}\qquad}
\caption{$p$-minimal $5$-partitions of the square {\it vs.} $p$.}
\label{evol.square5}
\end{center}
\end{figure}

When $k \in \{6,8,9,10\}$ we observe in Figure \ref{evol.square6-10} similar behaviors in the evolution of the energies and the numerical $p$-minimal $k$-partitions.
\begin{figure}[h!t]
\begin{center}
\includegraphics[width=0.2\textwidth]{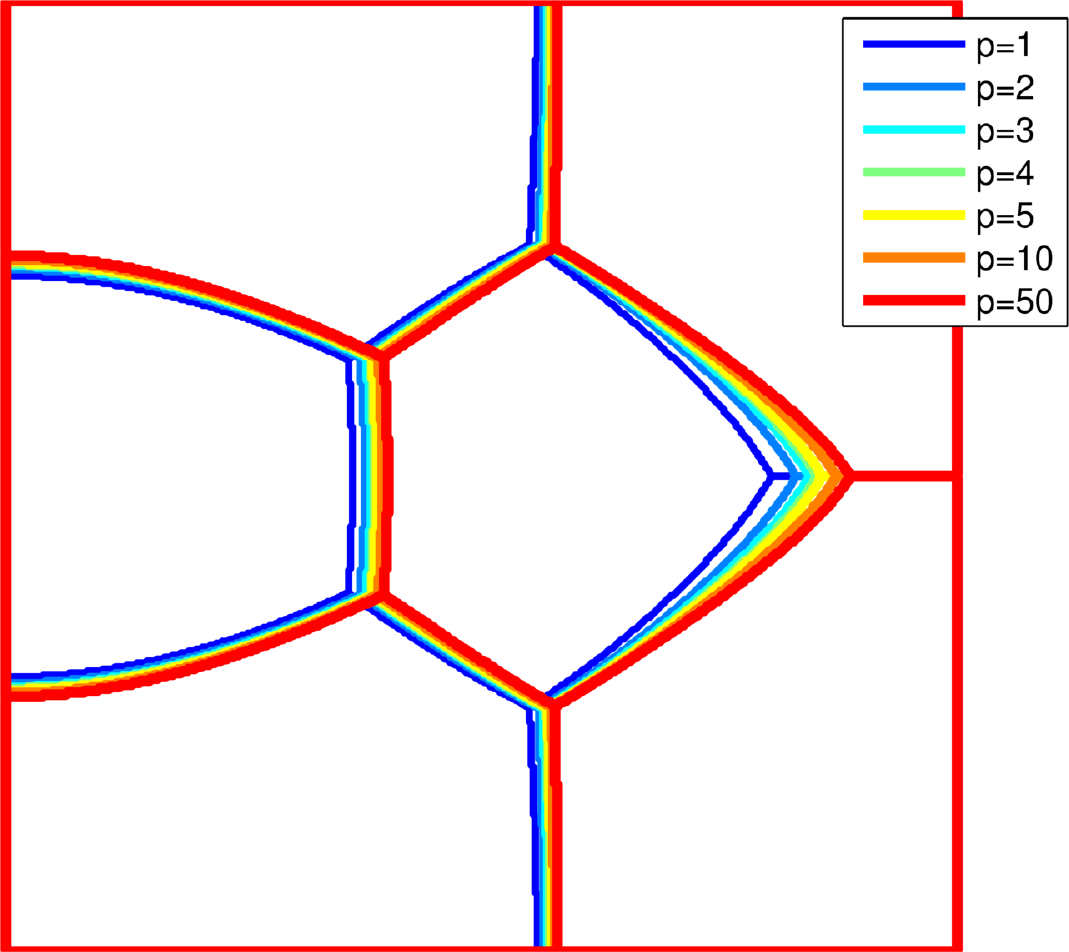}\qquad
\includegraphics[width=0.2\textwidth]{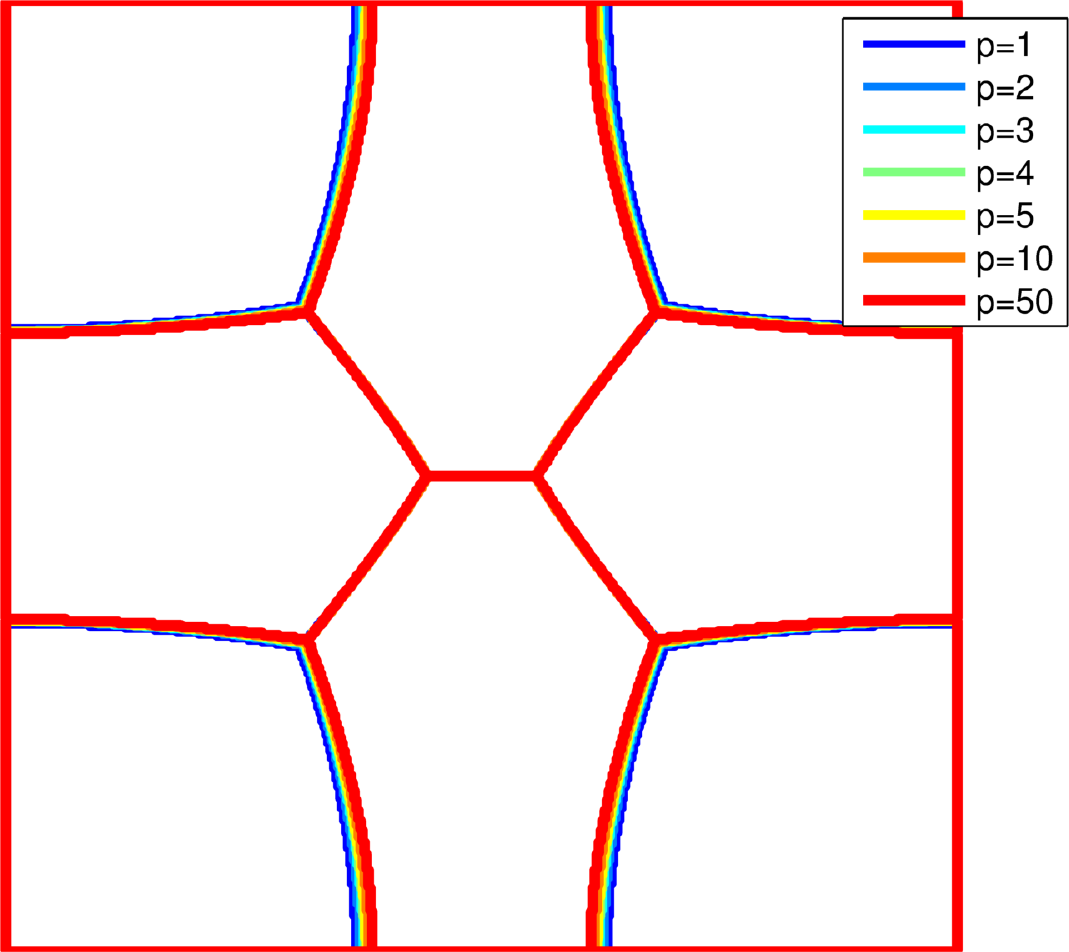}\qquad
\includegraphics[width=0.2\textwidth]{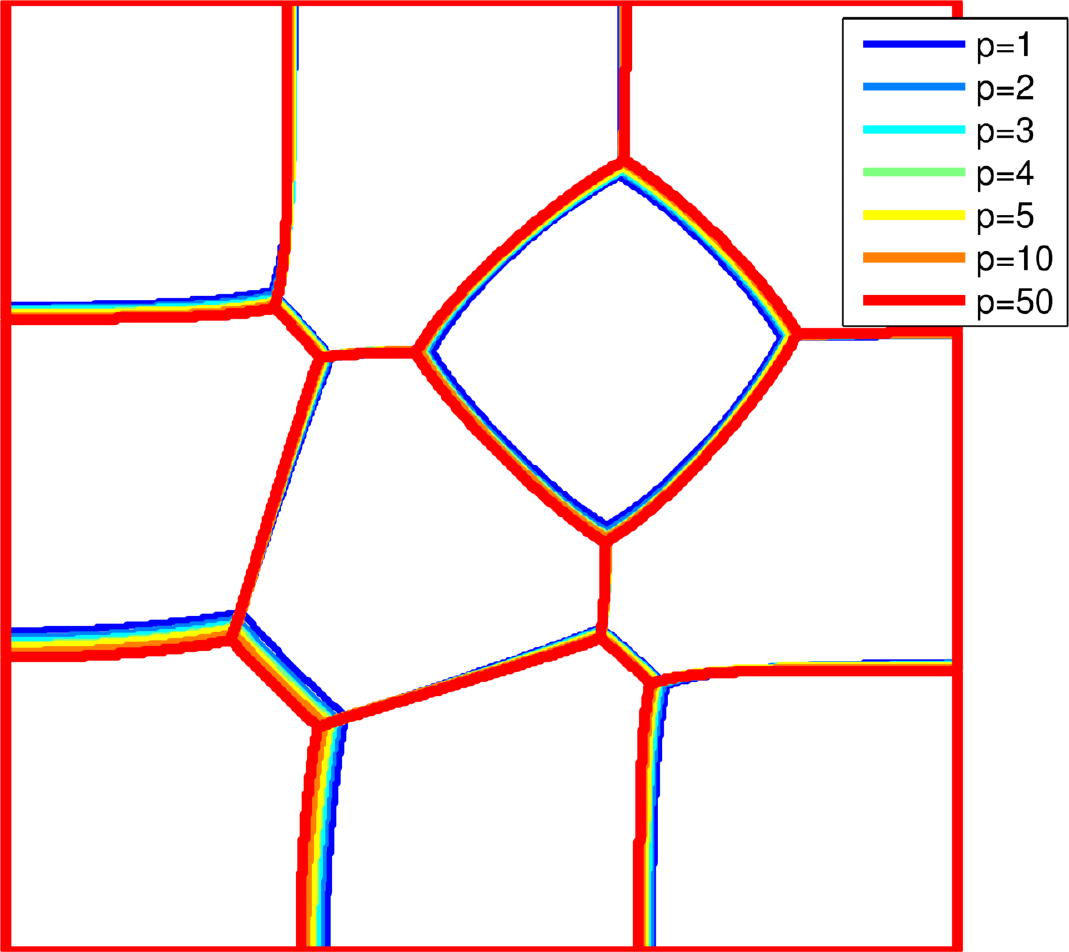}
\caption{$p$-minimal $k$-partitions of the square {\it vs.} $p$, for $k\in\{6,8,10\}$.}
\label{evol.square6-10}
\end{center}
\end{figure}

We mention that for $k=9$ the largest eigenvalue of the partition we obtain for $p=50$ is not smaller than the partition into $9$ equal squares. On the other hand, we know that the partition into $9$ equal squares is not optimal for $p=\infty$ since it is a nodal partition which is not Courant sharp (see Proposition \ref{courant_sharp} and \cite{MR3445517} for more details). 
For reference, the maximal eigenvalue obtained for $p=50$ is $179.21$ (and $178.08$ with the penalized method) and the first eigenvalue of a square of side $1/3$ is $L_{9}(\square)=177.65$. The evolution of the energies and the numerical $9$-partition can be viewed in Figure~\ref{evol.square9}. {\Gn The analysis of the splitting of critical points of odd order in \cite{BonLen16} lets us thinking that a partition with two close triple points can be a rather good candidates.}
\begin{figure}[h!t]
\begin{center}
\subfigure[$\Bl\Lambda_{9,p}(\cD^{9,p})$, $\Rd\Lambda_{9,\infty}(\cD^{9,p})$ and $\Mg L_{9}(\square)$ vs. $p$\label{fig.sq9nrj}]{\qquad\includegraphics[width=0.3\textwidth]{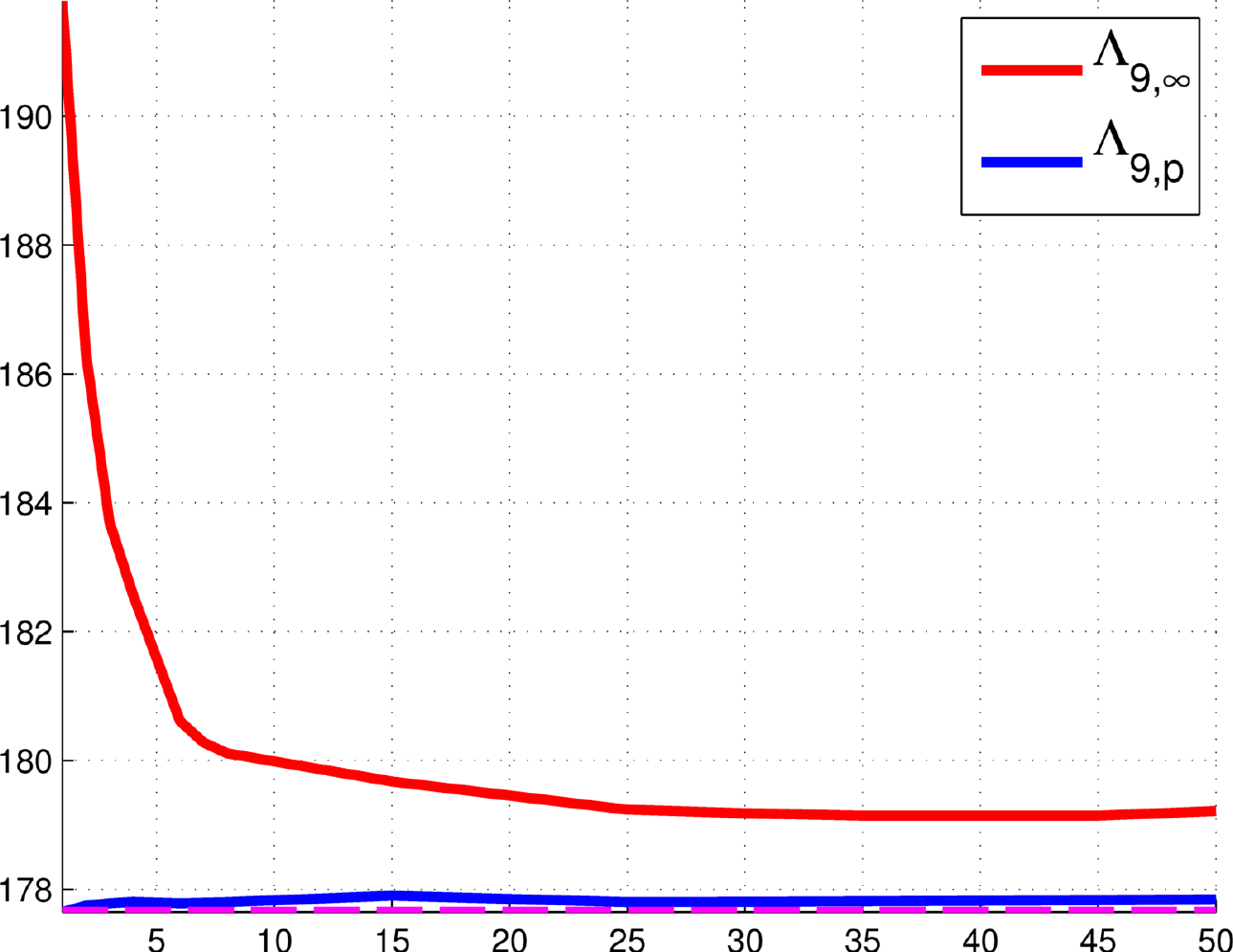}\qquad}
\subfigure[$\cD^{9,p}$ vs. $p$]{\qquad\includegraphics[width=0.32\textwidth]{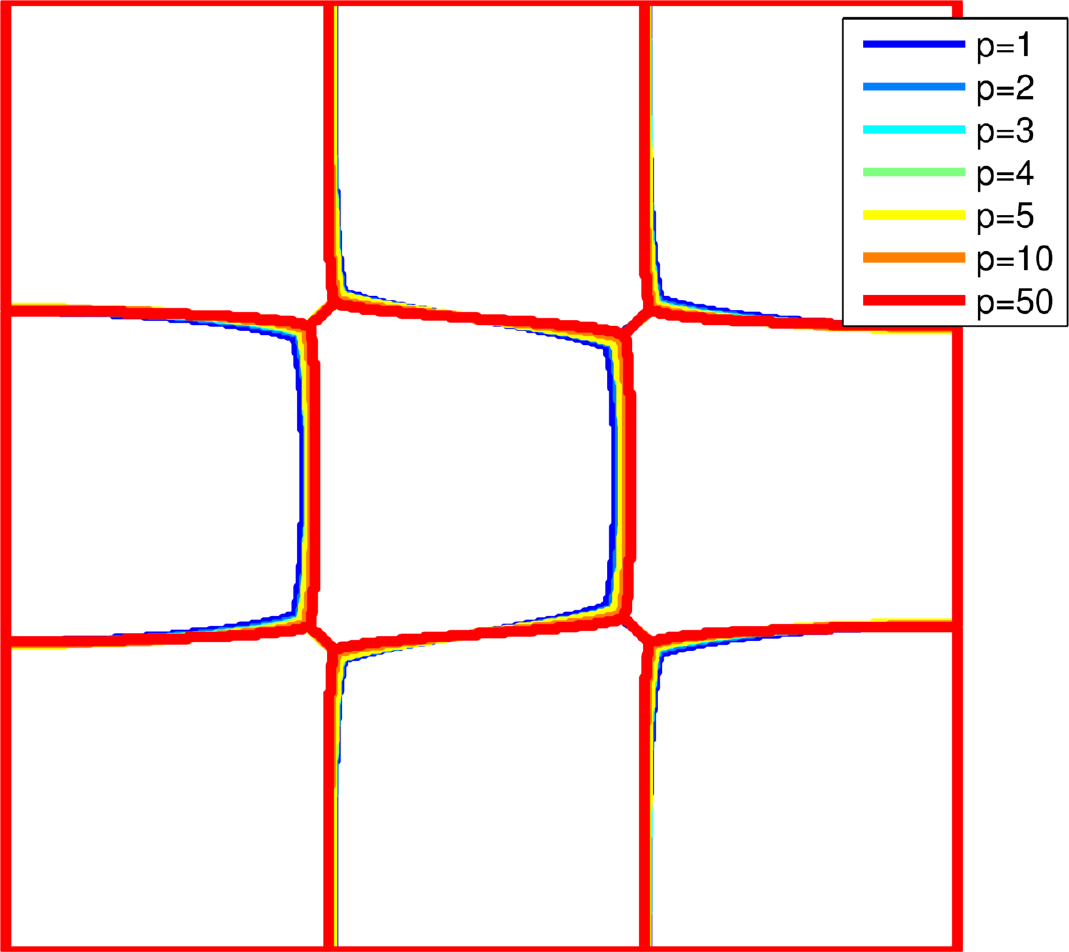}\qquad}
\caption{$p$-minimal $9$-partitions of the square {\it vs.} $p$.}
\label{evol.square9}
\end{center}
\end{figure}

Something different happens for $k=7$ where we have two configurations which have close energies at optimum. We represent the partitions of the two configurations in Figure \ref{compar_sq7} along with a comparison of the maximal eigenvalues and $p$-norms. We can see that while the first configuration has a lower maximal eigenvalue for large $p$, the first configuration always has a lower $p$-norm. We note that when we use the penalization method we find the first configuration which is consistent with the results obtained with the $p$-norm. We remark that these configurations we obtain are similar to the ones presented in \cite{cybhol}. Still, the small differences we observe for the maximal eigenvalues and the $p$-norms may be due to our limited numerical precision. In order to conclude which of these partition is better than the other we would need to use some more refined methods which do not use relaxations.
\begin{figure}[h!t]
\centering 
\subfigure[$\Lambda_{k,p/\infty}(\cD^{5,p}_{j})$ vs. $p$ for $ j={\Bl 1},\ \Mg 2$ ]{\qquad\includegraphics[width=0.28\textwidth]{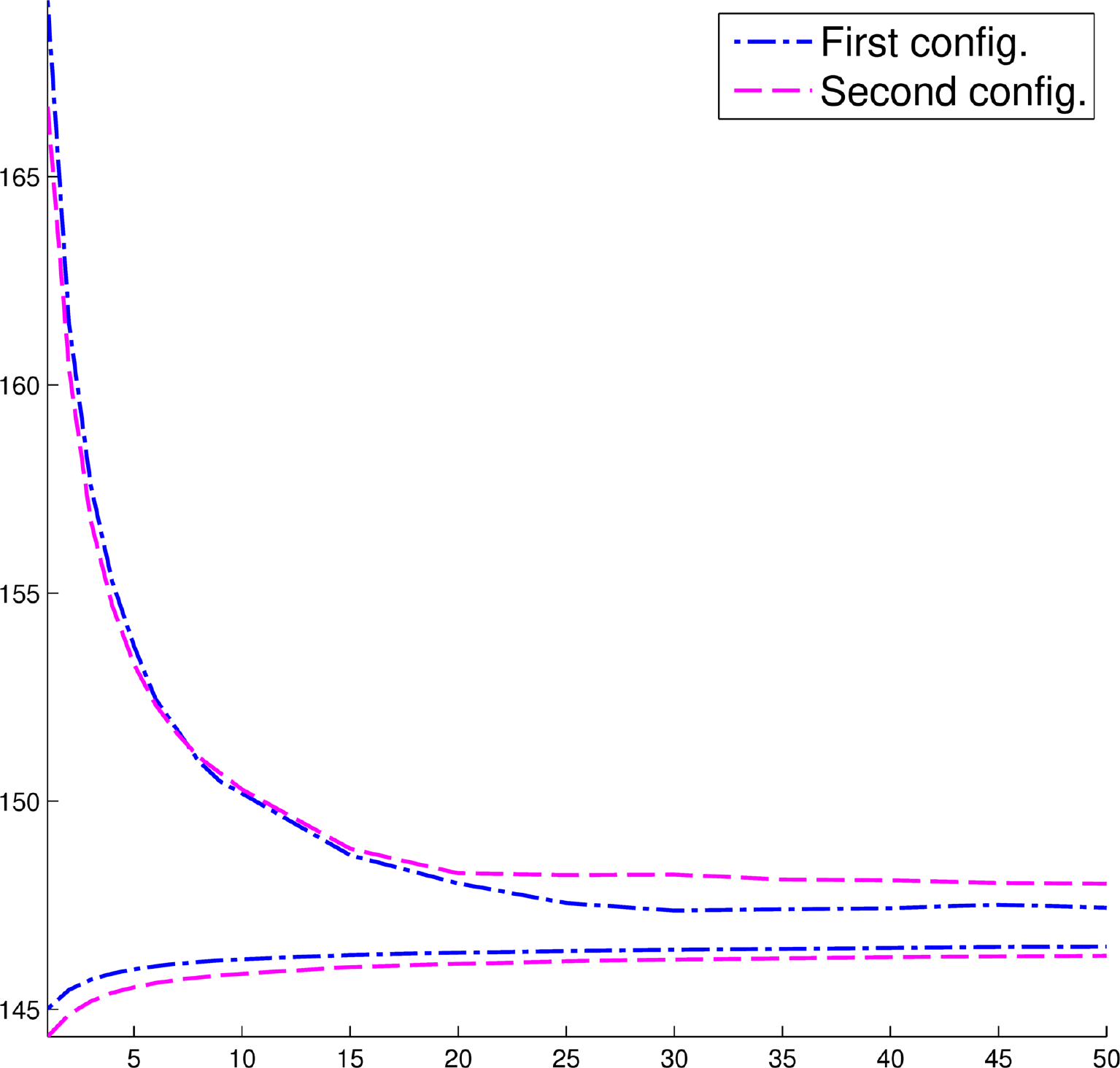}\qquad}
\subfigure[$\cD^{5,p}_{1}(\square)$ vs. $p$\label{compar_sq7b}]{\includegraphics[width=0.28\textwidth]{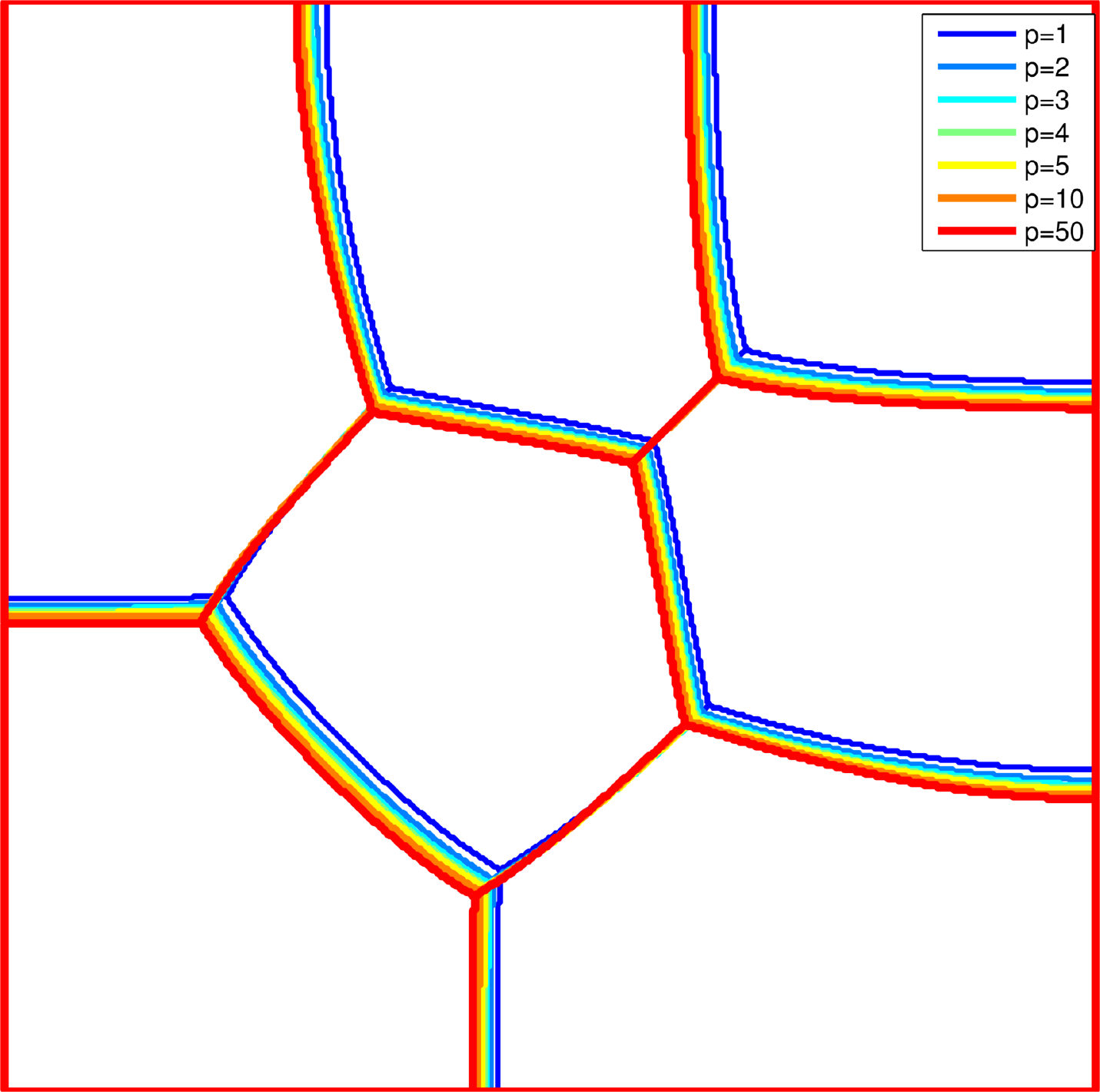}}\quad
\subfigure[$\cD^{7,p}_{2}(\square)$ vs. $p$\label{compar_sq7c}]{\includegraphics[width=0.28\textwidth]{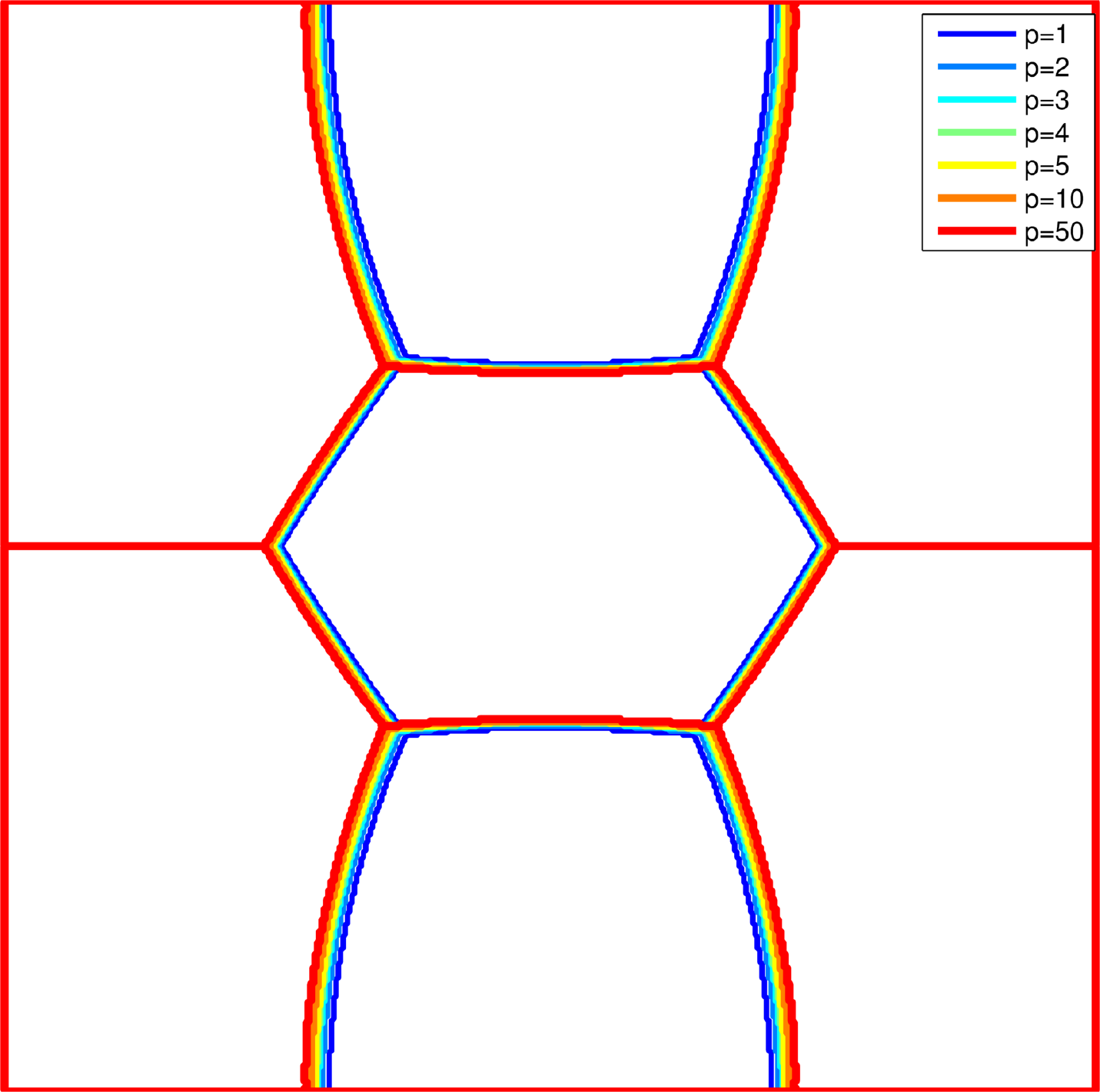}}
\caption{Comparison of two candidates for $7$-partition of the square.}
\label{compar_sq7}
\end{figure}

\subsection{The disk.} 
In this case for $k \in \{2,3,4,5\}$ we obtain numerically that $\mathfrak{L}_{k,p}(\Circle)$ is minimized by $k$ equal sectors starting from $p=1$. In such cases, where we obtain an equipartition when optimizing the sum, the optimal partition is the same for all $p$ and the $p$-norm does not vary with $p$. The partitions can be visualized in Figure \ref{fig.disk}.

In cases $k \in \{6,8,9\}$ we obtain for every $p$ partitions consisting of a rounded regular polygon with $k-1$ sides surrounded by $k-1$ equal subsets of a sector of angle $2\pi/(k-1)$. In these cases we may see clearly how the optimal partition evolves with $p$. For $k \in \{6,8,9\}$ we have seen in the end of the previous section that there seem to be different optimal partitions for the sum and for the max. The evolution of the partitions is represented in Figure \ref{evol.disk6-9}. For $k=10$ the best candidate is obtained with the iterative method. The evolution of the $p$-norm of eigenvalues and of the maximal eigenvalue with respect to $p$ is presented in Figure \ref{disk.evol}. We may see that the candidate found for the sum is not optimal for the max since the maximal eigenvalue strictly decreases with respect to $p$.
\begin{figure}[h!t]
\centering 
\subfigure[$k=6$]{\begin{tabular}{c}\includegraphics[width = 0.2\textwidth]{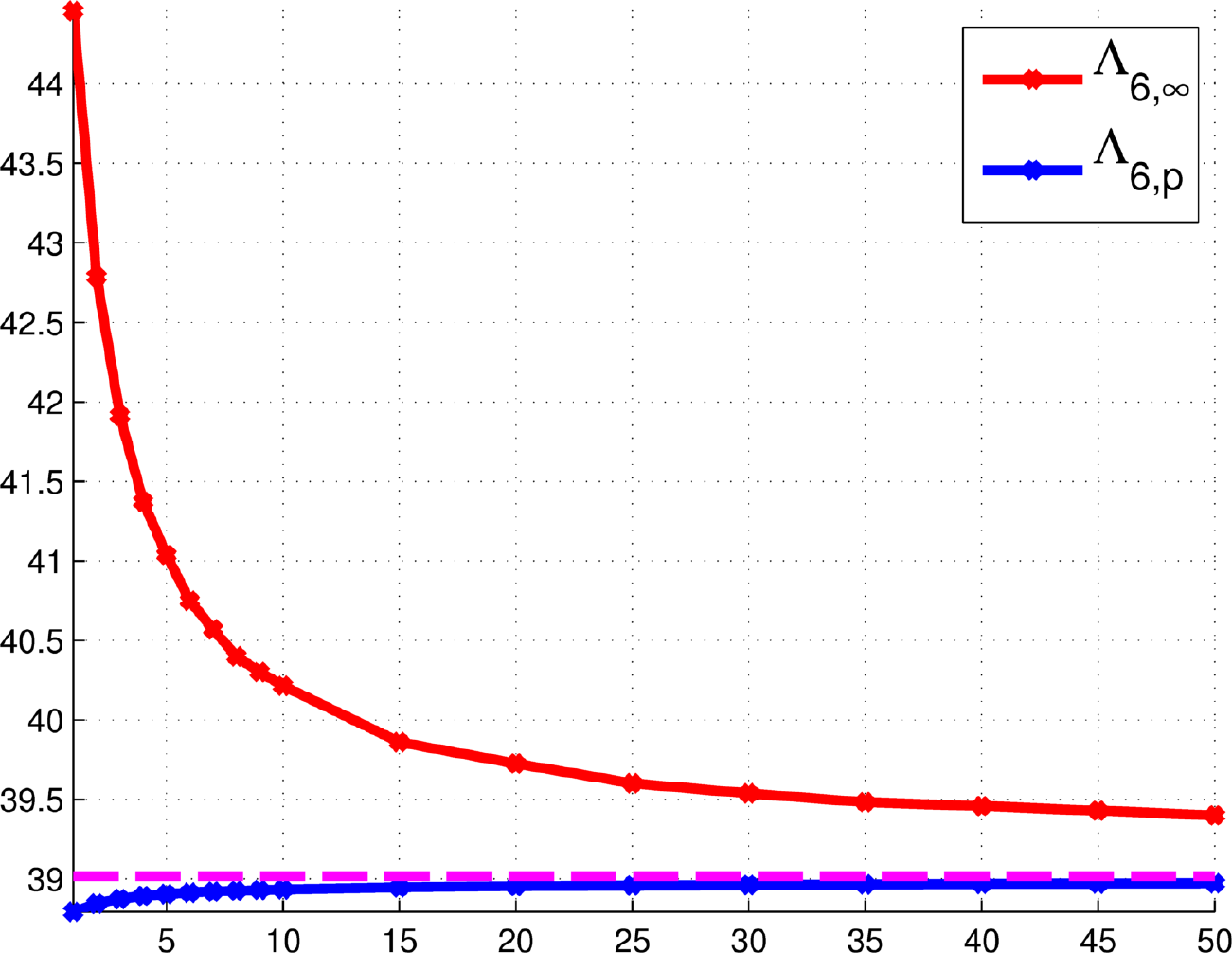}\\
\includegraphics[width=0.2\textwidth]{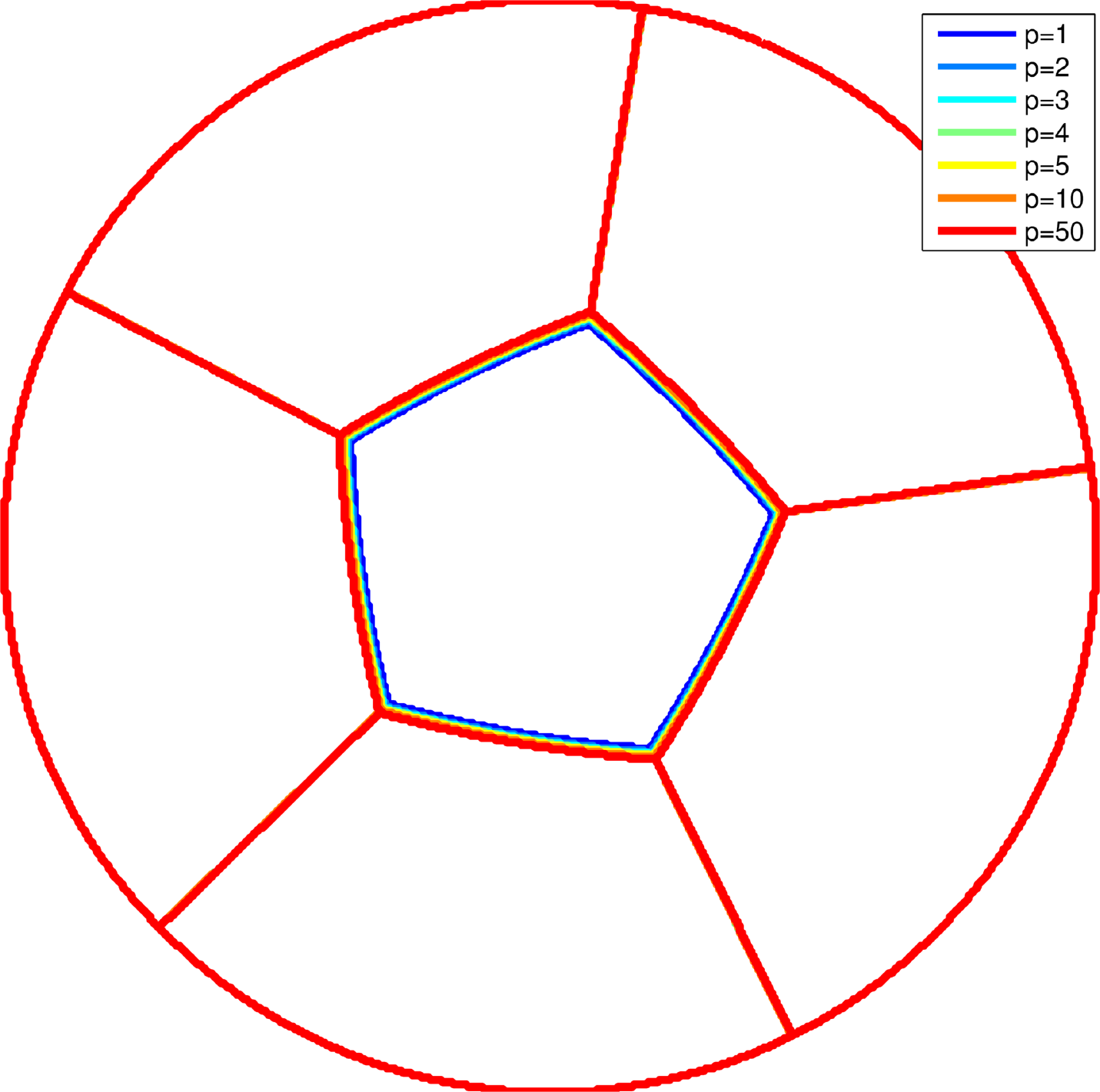}\end{tabular}}
\subfigure[$k=7$\label{evol.disk7}]{\begin{tabular}{c}\includegraphics[width = 0.16\textwidth]{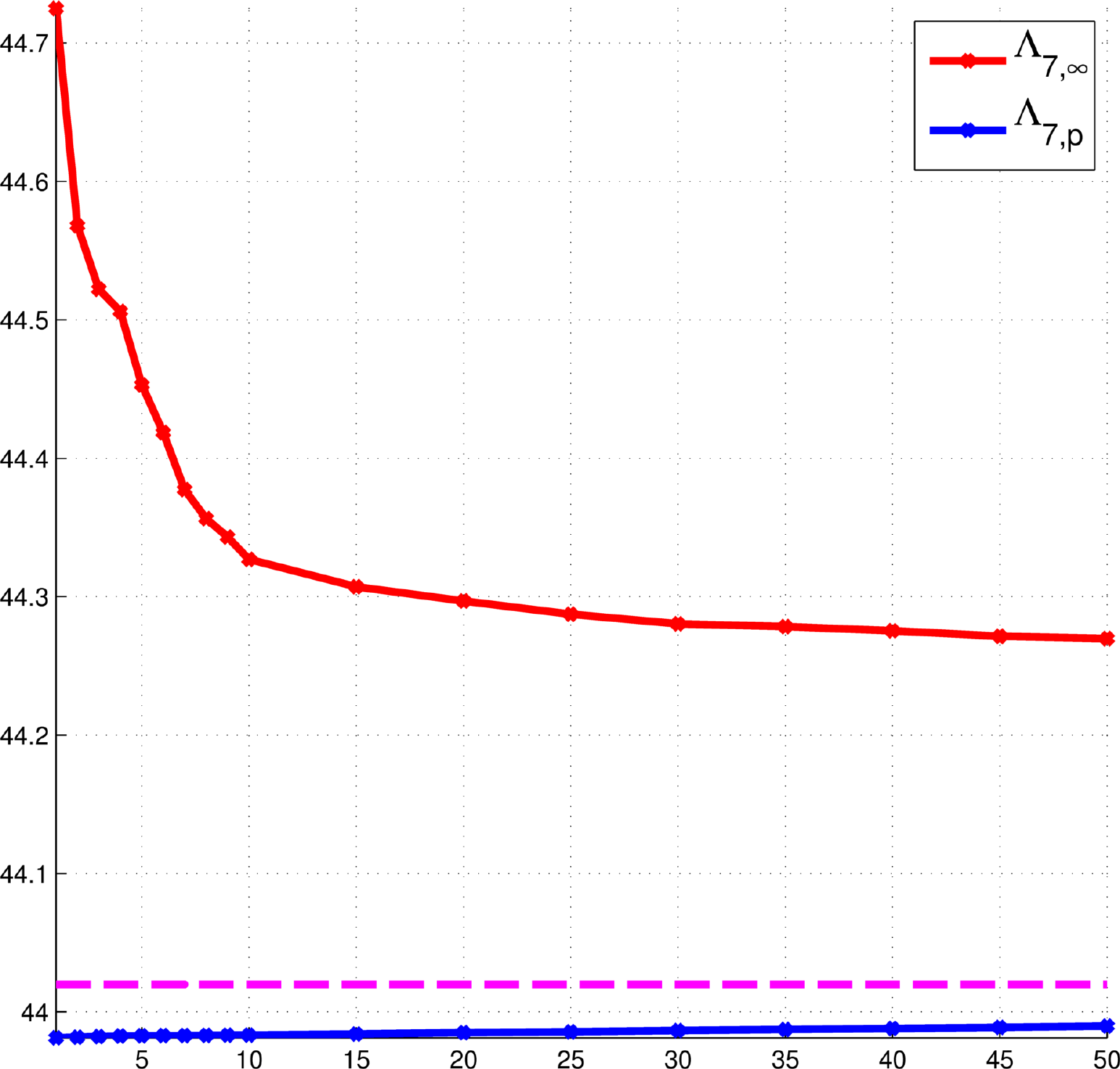}\\
\includegraphics[width=0.2\textwidth]{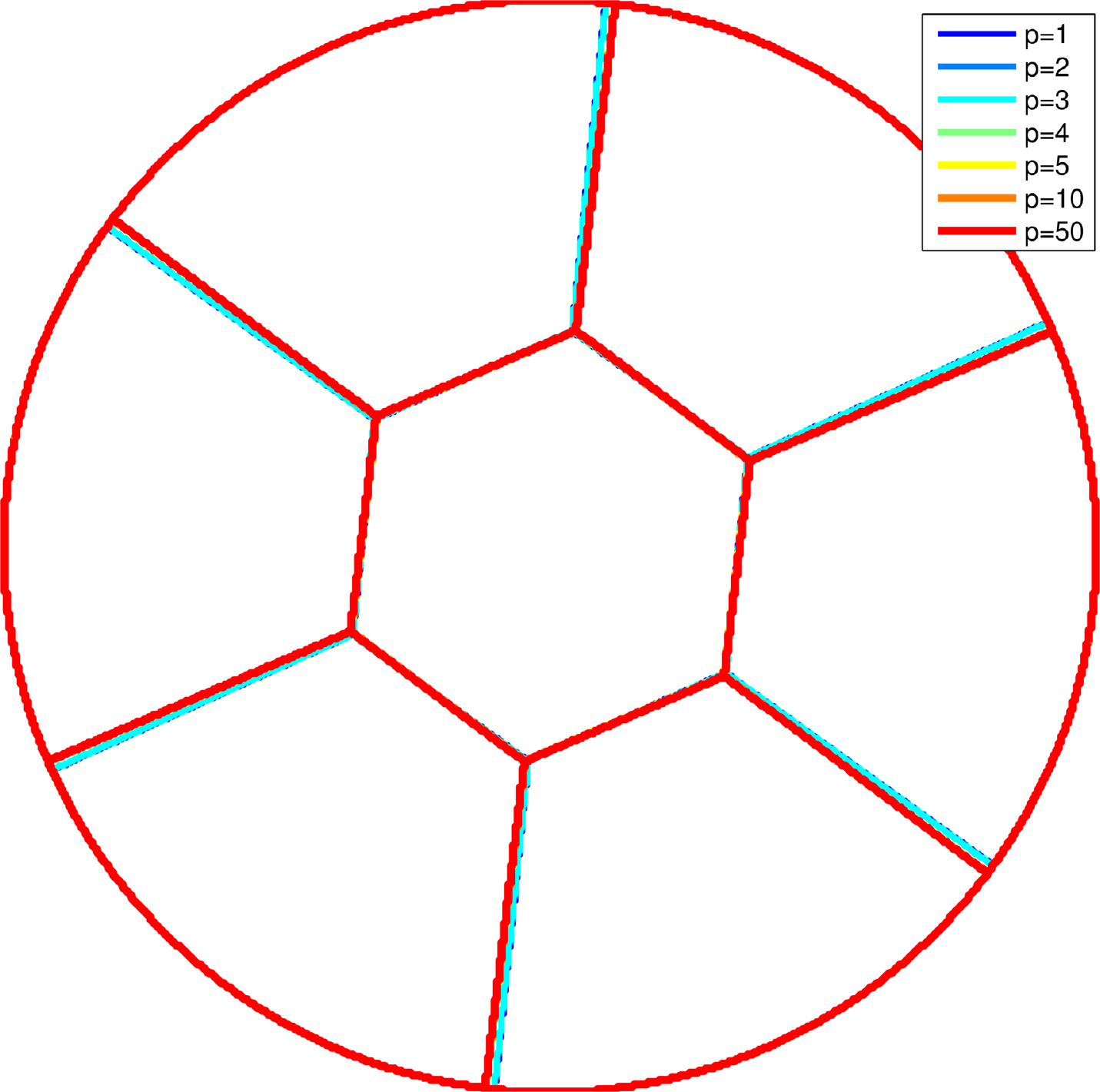}\end{tabular}}
\subfigure[$k=8$]{\begin{tabular}{c}\includegraphics[width = 0.2\textwidth]{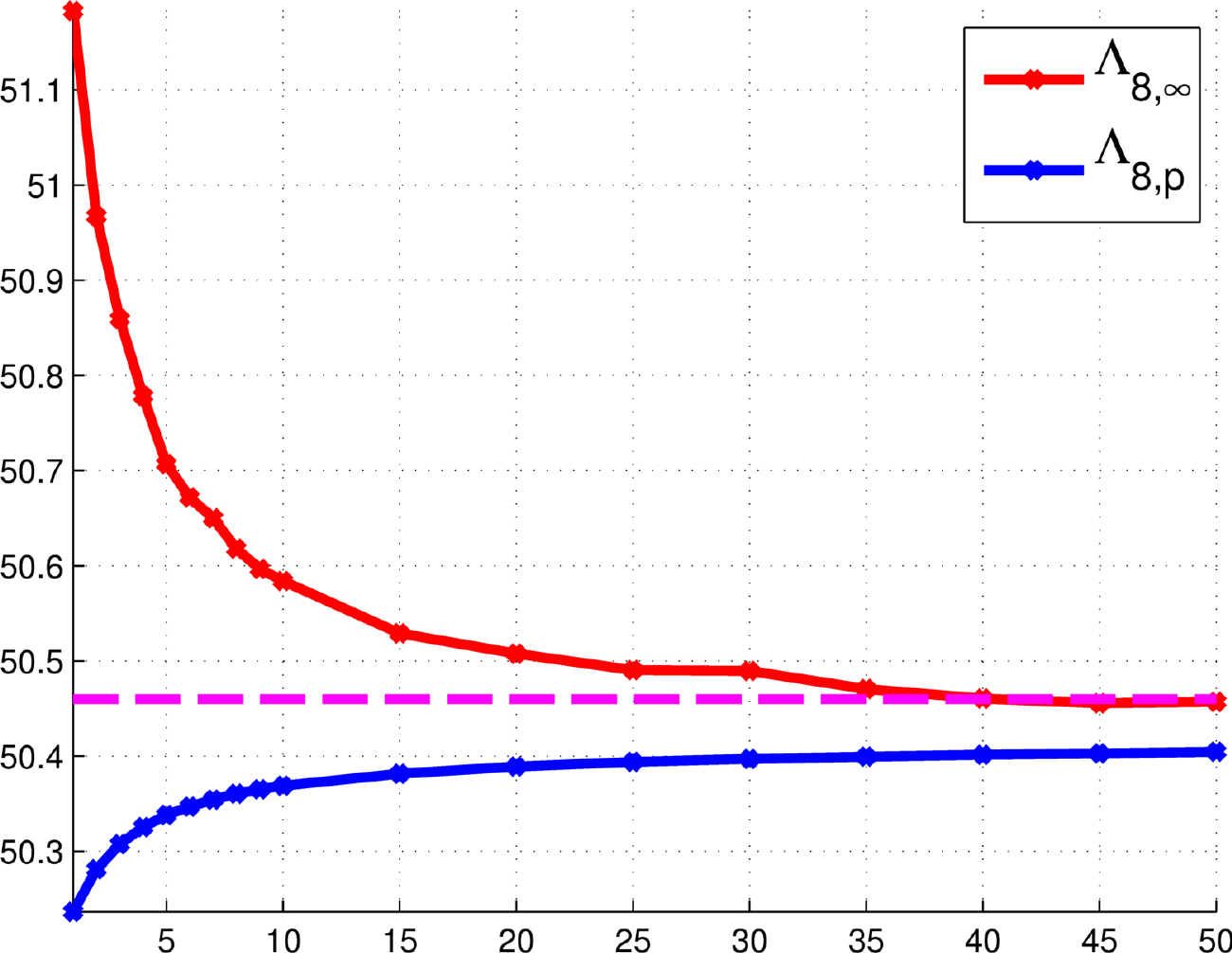}\\
\includegraphics[width=0.2\textwidth]{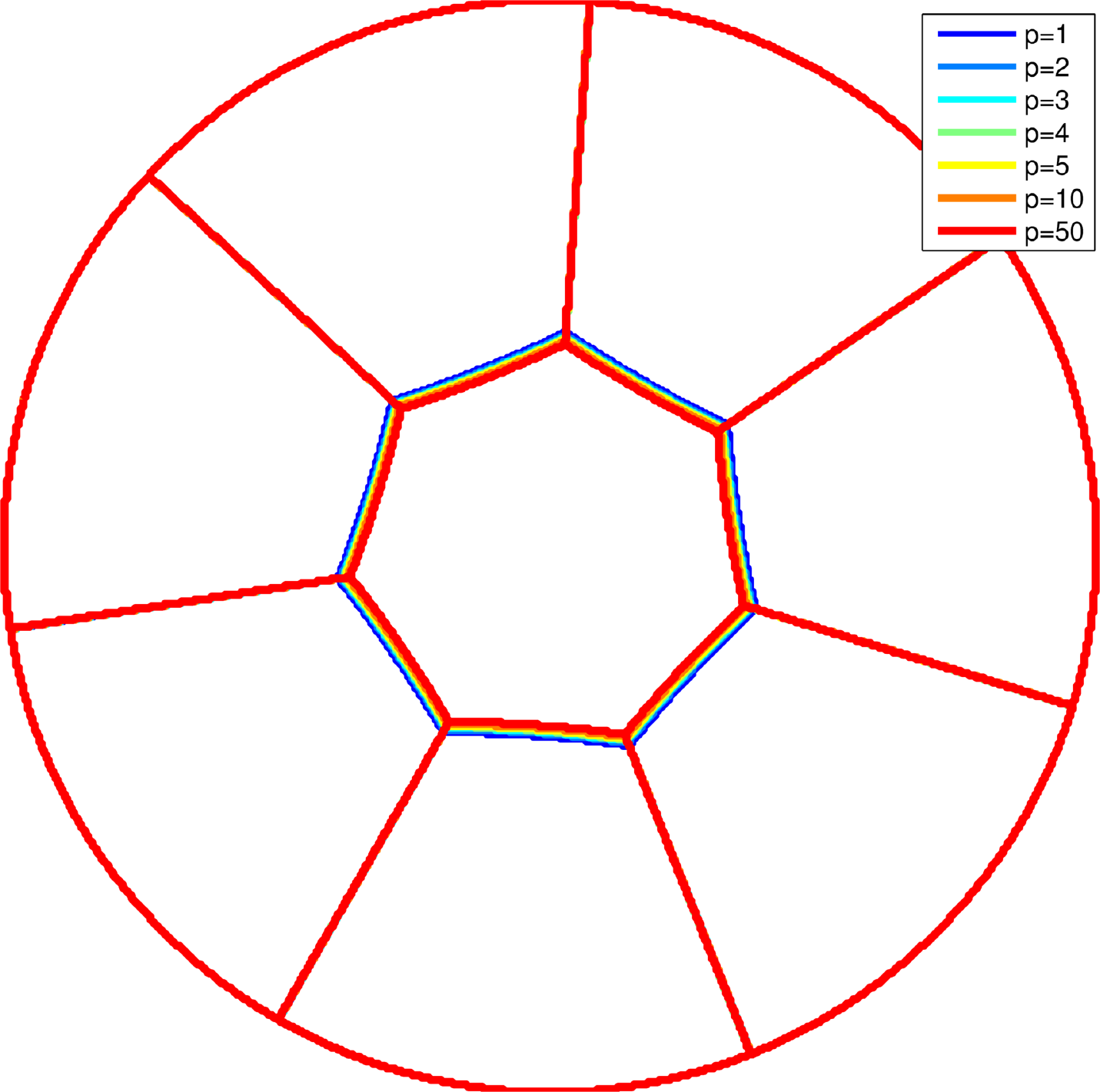}\end{tabular}}
\subfigure[$k=9$]{\begin{tabular}{c}\includegraphics[width = 0.2\textwidth]{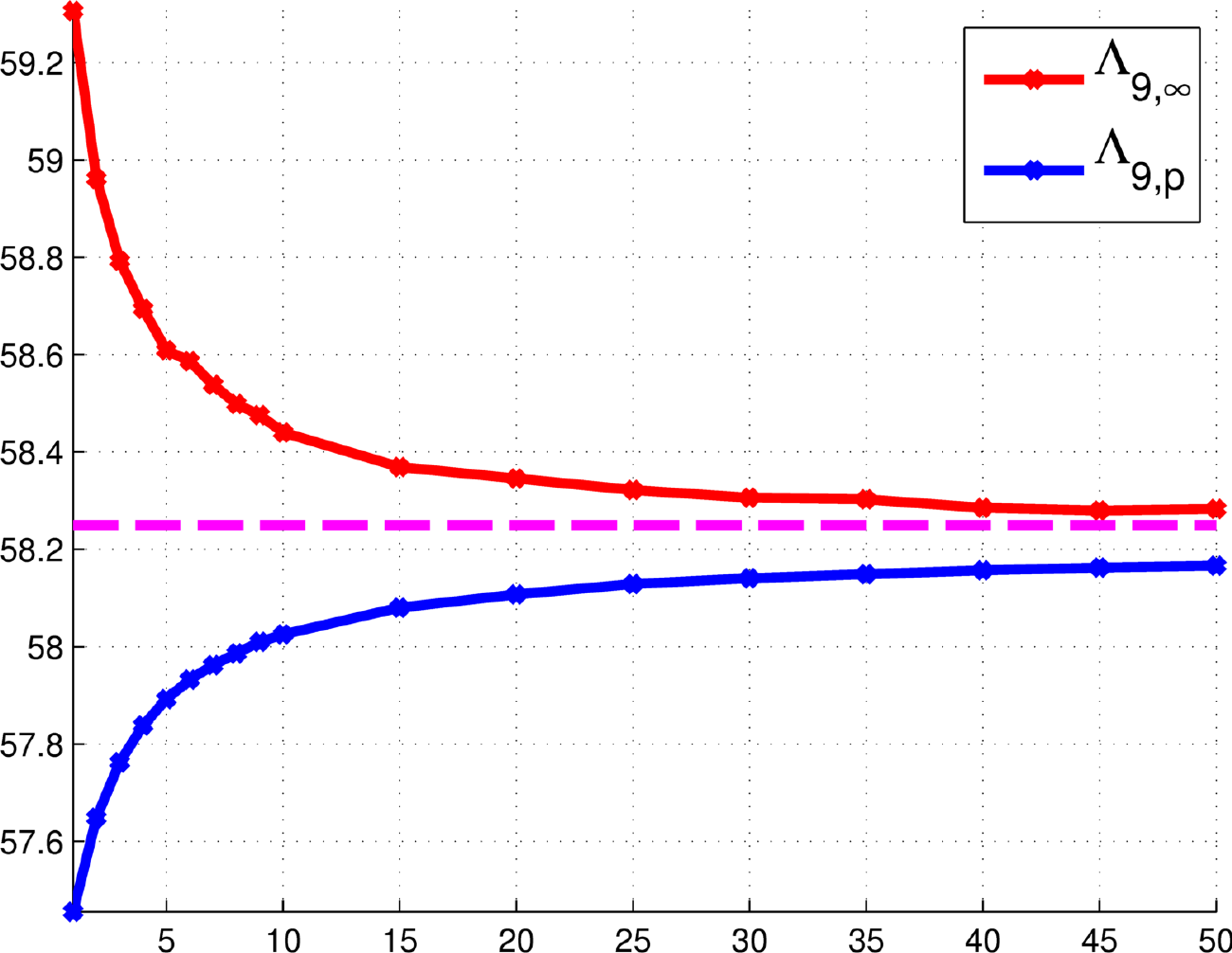}\\
\includegraphics[width=0.2\textwidth]{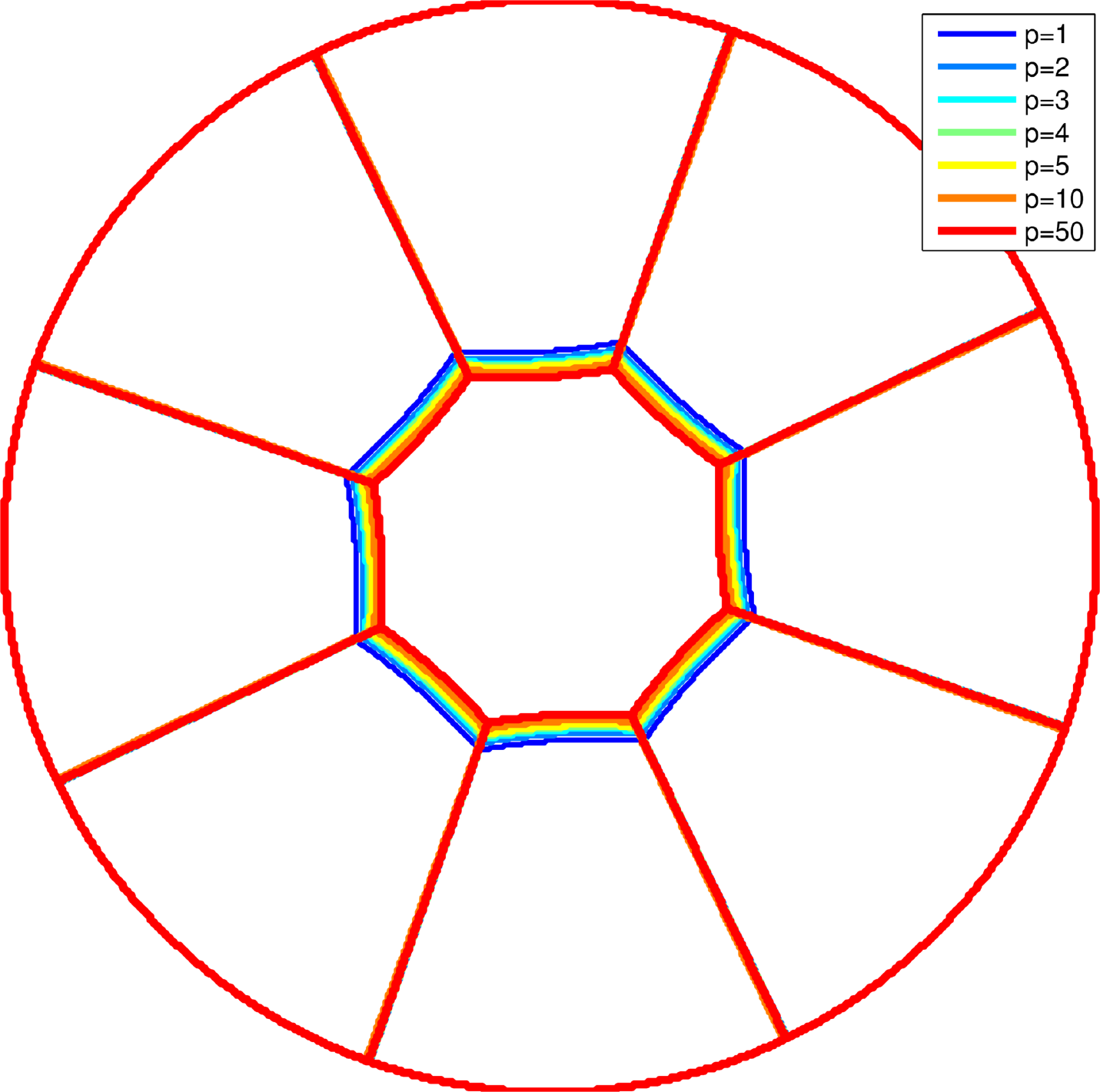}\end{tabular}}
\caption{$p$-minimal $k$-partitions of the disk {\it vs.} $p$, for $k\in\{6,7,8,9\}$.}
\label{evol.disk6-9}
\end{figure}

\begin{figure}[h!t]
\centering 
\includegraphics[width = 0.3\textwidth]{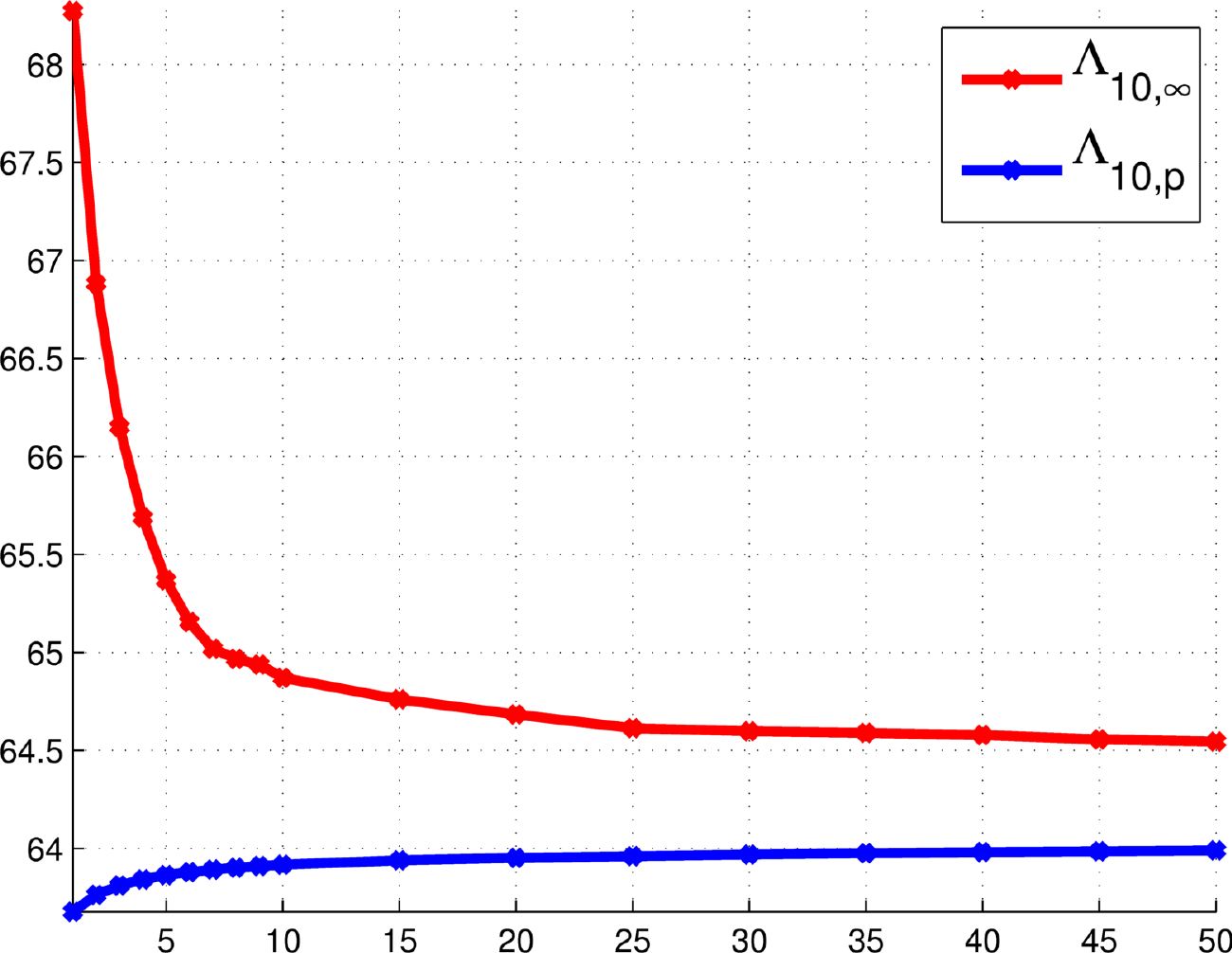}\qquad
\includegraphics[width = 0.27\textwidth]{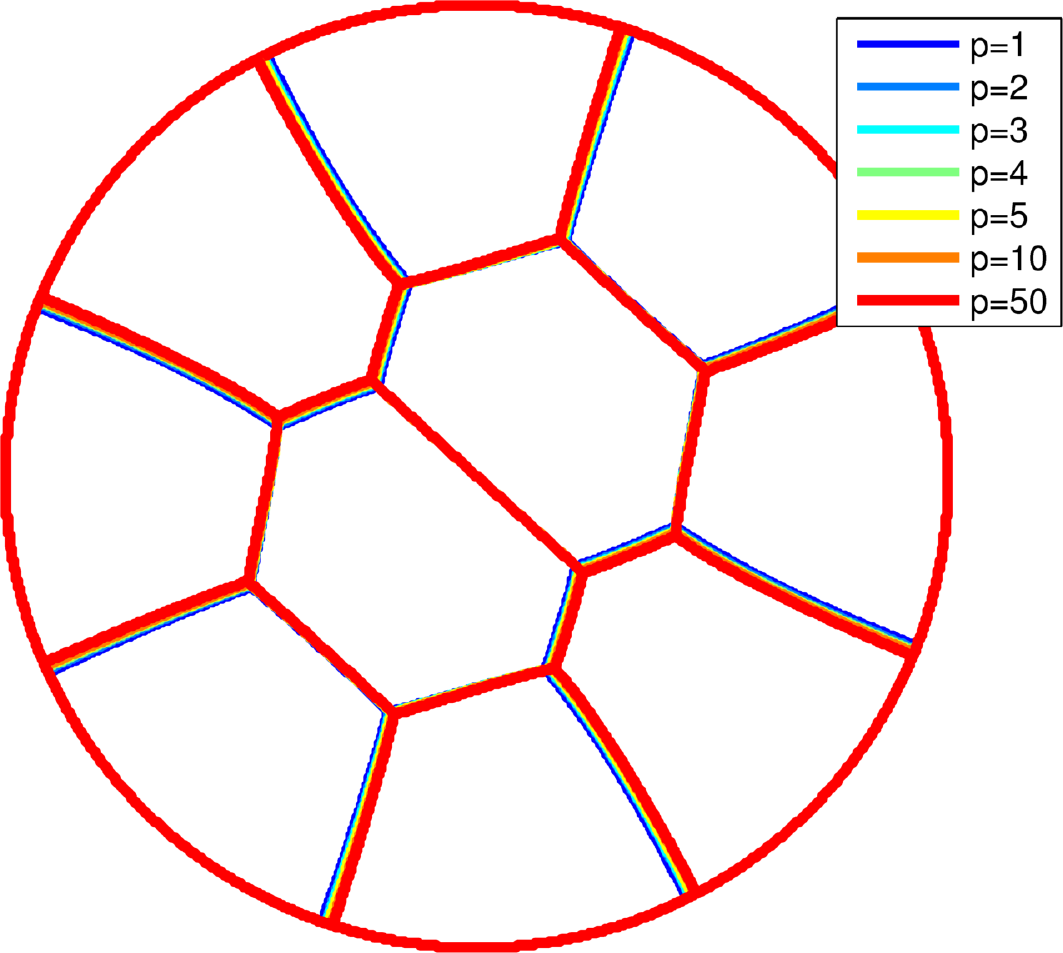}
\caption{$p$-minimal $10$-partitions of the disk {\it vs.} $p$.}
\label{disk.evol}
\end{figure}

For $k=7$ the optimal partition seems to be made out of a regular hexagon and $6$ equal sector portions. The partitions and the evolution of the energies is depicted in Figure \ref{evol.disk7}. Even if the evolution of the energies and the partitions is not as evident as in the other cases we see that the maximal eigenvalue decreases with $p$ and this seems to indicate, like in the analysis performed at the end of the previous section, that the partitions minimizing the sum and the max are not the same.

\section{Conclusion}
\label{section.conclusion}

{\Gn
We constructed three numerical methods in order to analyze the behavior of the minimal spectral partitions as $p$ varies from $1$ to $\infty$. We apply these algorithms to the study of three particular geometries, the square, the equilateral triangle and the disk. We underline, however, that methods described here could be applied on other general geometries.

Our computations allowed us to observe several conjectures, regrouped below.
\begin{conjecture}\label{conj}
\begin{enumerate}[leftmargin=*]
\item When $\Omega$ is a disk and $k \in \{2,3,4,5\}$ the optimal energy $\fL_{k,p}(\Omega)$ is constant with respect to $p$ and the optimal partition consists of $k$ angular sectors of opening $2\pi/k$ (see Figure~\ref{fig.disk}).
\item When $\Omega$ is a square and $k \in \{2,4\}$ the optimal energy $\fL_{k,p}(\Omega)$ is constant with respect to $p$. {\Gn There is a family of minimal $2$-partition for the square and among them the partition} given by two equal rectangles or two equal right-isosceles triangles (see Figure \ref{fig4.tri2}). For $k=4$ the minimal partition is composed of 4 squares (see Figure \ref{fig4.tri4}). 
\item\label{trieq} When $\Omega$ is an equilateral triangle and $k$ is a triangular number, that is to say of the form $k=n(n+1)/2$ with $n\geq2$, we observe again that $\fL_{k,p}(\Omega)$ is constant with respect to $p$. In this case, the minimal $k$-partition consists of 3 equal quadrilaterals, $3(n-2)$ pentagons and $(n-2)(n-3)/2$ regular hexagons (see Figures~\ref{fig.candtrik3} and \ref{evol.equi.6-10}). 
\end{enumerate}
\end{conjecture}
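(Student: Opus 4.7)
The plan is to combine Proposition~\ref{prop.equip} with the monotonicity \eqref{eq.monop}. Since $p\mapsto \fL_{k,p}(\Omega)$ is non-decreasing, constancy on $[1,\infty]$ is equivalent to the single equality $\fL_{k,1}(\Omega)=\fL_{k,\infty}(\Omega)$. In each case the conjectured partition $\cD^{*}$ is by construction an equipartition with common first eigenvalue $E$, so $\Lambda_{k,p}(\cD^{*})=E$ for every $p\in[1,\infty]$; this immediately gives the upper bound $\fL_{k,p}(\Omega)\leq E$ for all $p$. The argument then reduces to producing the matching lower bound $\fL_{k,1}(\Omega)\geq E$: combined with the upper bound and the monotonicity, this collapses all $\fL_{k,p}(\Omega)$ to $E$, after which Proposition~\ref{prop.equip} identifies $\cD^{*}$ as a $p$-minimal $k$-partition for every $p\geq 1$.

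For the disk with $k\in\{2,4\}$ and the square with $k\in\{2,4\}$, Proposition~\ref{prop.k24} yields $E=\lambda_{k}(\Omega)$, and the conjectured partitions are the Courant-sharp nodal partitions, so the upper bound is immediate. The lower bound $\fL_{k,1}(\Omega)\geq \lambda_{k}(\Omega)$ is however strictly stronger than the estimate \eqref{ineq.HHOTnormp} with $p=1$ and must be proved separately. For $k=2$ I would attempt a sharp two-cell Friedlander-type inequality $\lambda_{1}(D_{1})+\lambda_{1}(D_{2})\geq 2\lambda_{2}(\Omega)$ for every $2$-partition, exploiting the dihedral symmetry of $\Omega$ together with a Steiner-type symmetrization of the interface so as to reduce to a one-parameter minimization along an axis of symmetry where the equality case is precisely the median/diameter nodal line. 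The case $k=4$ on the square would then follow by cutting along both medians and applying the $k=2$ inequality on each resulting sub-rectangle.

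For the equilateral triangle with $k=n(n+1)/2$, the first step is to verify that the prescribed configuration (three corner quadrilaterals, $3(n-2)$ congruent pentagons with two right angles and three angles of $2\pi/3$, and $(n-2)(n-3)/2$ regular hexagons) is genuinely an equipartition at the scale forced by sidelength one. I would first check this numerically to high precision and then seek a rigorous proof by realising the pentagon and the corner quadrilateral as exact symmetry-reduced fundamental domains of the regular hexagon under appropriate Dirichlet conditions, so that the three cell types inherit a common first Dirichlet eigenvalue. Once the equipartition is established, the lower bound would be approached by representing the candidate as a nodal partition of an Aharonov--Bohm operator with singularities placed at the hexagon centers, in the spirit of Remark~\ref{arhanov-bohn}, and using the magnetic characterization of minimal spectral partitions to compare against arbitrary competitors in $\fP_{k}(\triangle)$.

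The main obstacle is the lower bound $\fL_{k,1}(\Omega)\geq E$ in the cases where Courant sharpness fails: $k\in\{3,5\}$ for the disk and $k\geq 6$ of triangular form for the equilateral triangle. The disk case with $k=3$ is essentially the still-open Mercedes-partition conjecture discussed in \cite{HelHof10,BoHe13}, and the large triangular-$k$ case is closely connected to the spectral honeycomb conjecture of Caffarelli--Lin. In each setting one would need a rigidity statement asserting that any $1$-minimal $k$-partition of a domain with a prescribed discrete symmetry inherits that symmetry, thereby reducing the infinite-dimensional minimization to a finite-dimensional one in which the conjectured configuration can be checked directly; such rigidity results appear to be out of reach of current techniques, which is why the statements are left as conjectures.
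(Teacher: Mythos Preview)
The statement you are addressing is labelled \emph{Conjecture} in the paper, and the paper offers no proof of it; it is formulated as a summary of numerical observations (Sections~\ref{ssec.num}, \ref{section.pnorm} and Remark~\ref{rem.nbtri}). So there is nothing to compare your argument against: the authors do not claim these assertions are established, and your closing paragraph is right that the disk case $k=3$ (the Mercedes partition) and the triangular-number case for the equilateral triangle are tied to well-known open problems.

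That said, parts of your sketch overreach. The proposed inequality $\lambda_{1}(D_{1})+\lambda_{1}(D_{2})\geq 2\lambda_{2}(\Omega)$ for an arbitrary $2$-partition cannot hold merely under ``dihedral symmetry of $\Omega$'': the equilateral triangle has full dihedral symmetry, yet the paper (following \cite{HHO10}) records $\fL_{2,1}(\triangle)<\fL_{2,\infty}(\triangle)=\lambda_{2}(\triangle)$, which directly violates that inequality. Any symmetrization route would therefore have to exploit something specific to the square and the disk beyond their symmetry group, and no such mechanism is indicated. Similarly, the reduction ``$k=4$ on the square follows by cutting along both medians and applying the $k=2$ inequality on each sub-rectangle'' is circular: an arbitrary $4$-partition need not be compatible with the median cuts, so this only bounds a restricted class of competitors, not $\fL_{4,1}(\square)$ itself.

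Your general framework (equipartition plus monotonicity reduces everything to the single lower bound $\fL_{k,1}(\Omega)\geq E$) is correct and is exactly the logic behind Proposition~\ref{prop.equip}; but supplying that lower bound is the whole content of the conjecture, and none of the proposed devices closes the gap.
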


{\Gn We notice that the third point of this conjecture is in accord with the honeycomb conjecture, since for $n$ large we obtain that all cells inside the equilateral triangle are regular hexagons. Several computations for $k=15,21,28,36$ show a similar behavior (see Figure \ref{triangular}).

Concerning the evolution of the minimal $p$-norm energy as $p$ grows, we have seen different behaviors according to $\Omega$, $k$ or $p$. 
It seems that either the energy $\fL_{k,p}(\Omega)$ is constant with $p$ and there exists a $k$-partition which is  $p$-minimal for any $p\geq1$ (see the cases recalled above), or the energy $\fL_{k,p}(\Omega)$ is strictly increasing with $p$. { Recalling the notation introduced in} \eqref{eq.p}, this writes}\begin{conjecture}\label{conj2}
\begin{enumerate}[leftmargin=*]
\item $p_{\infty}(\Omega,k)\in\{1,\infty\}$ and either the energy $\fL_{k,p}(\Omega)$ is constant with respect to $p$ or it is strictly increasing with respect to $p$.
\item Numerical simulations suggest that $p_{\infty}(\Omega,k)=1$ if 
\begin{itemize}
\item $\Omega$ is a disk and $k\in\{2,3,4,5\}$,
\item $\Omega$ is a square and $k\in\{2,4\}$,
\item $\Omega$ is an equilateral triangle and $k=n(n+1)/2$ with $n\geq1$.
\end{itemize}
\end{enumerate}
\end{conjecture}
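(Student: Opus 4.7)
The conjecture has two parts: the dichotomy $p_{\infty}(\Omega,k)\in\{1,\infty\}$ with its monotonicity alternative, and the explicit identifications in part~(2). I would address them in that order.

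For part~(1), a clean reformulation comes for free from Proposition~\ref{prop.equip}: if $\fL_{k,p}(\Omega)=\fL_{k,q}(\Omega)$ for some $1\leq p<q<\infty$, then picking a $q$-minimal $\cD^*$ one has $\Lambda_{k,p}(\cD^*)\leq \Lambda_{k,q}(\cD^*)=\fL_{k,q}(\Omega)=\fL_{k,p}(\Omega)$, and the strict H\"older inequality forces $\cD^*$ to be an equipartition, hence by Proposition~\ref{prop.equip} it is $r$-minimal for all $r\geq p$. Thus $p\mapsto \fL_{k,p}(\Omega)$ is automatically strictly increasing on $[1,p_{\infty}(\Omega,k)]$ and constant on $[p_{\infty}(\Omega,k),\infty]$, so the monotonicity alternative is a direct consequence of the dichotomy. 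What remains is the rigidity statement: whenever some $p$-minimizer with finite $p>1$ is an equipartition, the 1-minimizer is already one. I would attempt this via shape-derivative arguments, using the Hadamard formula for $\lambda_1$: on a regular interior arc of the partition, the stationarity of $\Lambda_{k,p}$ reads as a jump condition on $|\nabla u_j|^2$ weighted by $\lambda_1(D_j)^{p-1}$; at an equipartition these weights coincide, collapsing the $p$-optimality condition to the $p=1$ one. Promoting this critical-point coincidence to a minimizer statement would require second-order information or a convexity argument.

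For part~(2), the plan is to prove each explicit candidate is 1-minimal, which by part~(1) gives $p_{\infty}(\Omega,k)=1$. The case $k=2$ for all three geometries follows from \eqref{eq.k2} together with the Faber-Krahn inequality applied to the nodal domains of $\lambda_2$. The cases $k=4$ on $\square$ and $k=4$ on $\Circle$ are covered by Proposition~\ref{prop.k24} since the fourth eigenfunction is Courant sharp, and then one checks by direct cell-by-cell Faber-Krahn comparison that the nodal equipartition also minimizes the sum. For $k=3,5$ equal sectors in the disk, this is the spectral Mercedes conjecture and its odd-$k$ analogue; a natural approach is symmetrization in the spirit of \cite{CafLin07} combined with the monotonicity $\alpha\mapsto\lambda_1(\Sigma_\alpha)$ in the opening angle. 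For the triangular-number case $k=n(n+1)/2$ on $\triangle$, the candidate consists of three corner quadrilaterals, $3(n-2)$ pentagons with two right angles and three $2\pi/3$ angles, and $(n-2)(n-3)/2$ regular hexagons; here one would aim for a local Faber-Krahn lower bound on each cell combined with a honeycomb-type global tiling argument in the spirit of Hales, exploiting hexagonal extremality to force equality in both the local and the global estimates.

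The main obstacle is part~(2): the optimality of three equal sectors in the disk and the spectral honeycomb behaviour on the equilateral triangle are themselves long-standing open problems, and a proof of the conjecture would subsume them as special cases. Within part~(1), the technical gap is upgrading the first-order heuristic at equipartitions into a genuine rigidity theorem; the difficulty is that the dependence of $\Lambda_{k,p}$ on $p$ acts only through a nonlinear reweighting of the cells, and there is no obvious convex or monotone structure on the space of partitions that would prevent an intermediate $p^*\in(1,\infty)$ where a new equipartition with strictly lower value than $\fL_{k,1}(\Omega)$ becomes optimal.
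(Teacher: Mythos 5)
This statement is one of the paper's closing \emph{conjectures}: the paper offers no proof of it, only numerical evidence from the three algorithms (iterative $p$-norm, penalization, Dirichlet--Neumann) together with the structural facts of Section~\ref{theory}, so there is no proof of the paper's to compare yours against. Within your part~(1), the reduction is correct and is essentially the observation already implicit in Proposition~\ref{prop.equip} and the definition \eqref{eq.p}: if $\fL_{k,p}(\Omega)=\fL_{k,q}(\Omega)$ for finite $p<q$, a $q$-minimizer realizes equality in the strict power-mean inequality, hence is an equipartition, and Proposition~\ref{prop.equip} then freezes the energy for all larger exponents; thus $p\mapsto\fL_{k,p}(\Omega)$ is strictly increasing up to a threshold and constant afterwards. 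But that threshold being the quantity $p_\infty(\Omega,k)$, the actual content of part~(1) is that it never lies strictly between $1$ and $\infty$, and your shape-derivative sketch does not reach it: at an equipartition the first-order (Hadamard) optimality conditions for all $p$ coincide, so the weighted jump condition gives no discriminating information, and, as you concede, there is no convexity or second-order structure available to upgrade this to a rigidity statement. Part~(1) therefore remains exactly as open as in the paper.

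Part~(2) contains genuine errors beyond the admitted open problems. Equation \eqref{eq.k2} identifies $\fL_{2,\infty}(\Omega)=\lambda_{2}(\Omega)$ but says nothing about the sum, and Faber--Krahn compares a cell only with a disk of equal area, so it cannot certify that two half-disks, four quarter-sectors, four equal squares, or the polygonal cells of the triangular-number configurations minimize $\Lambda_{k,1}$; the $1$-minimality (indeed even the $\infty$-minimality for $k=3,5$ on the disk, cf.\ the Mercedes conjecture cited via \cite{HelHof10,BoHe13}) is open, and Proposition~\ref{prop.k24} only settles the max problem for $k=1,2,4$. There is also a scope error: the conjecture's triangle cases are the triangular numbers $k=n(n+1)/2$ only, so $k=2$ on the equilateral triangle is deliberately excluded --- the paper's own data (Table~\ref{table-equi}, Figure~\ref{evol.equi2}) show the $1$- and $\infty$-minimal $2$-partitions differ there --- hence your claim that ``the case $k=2$ for all three geometries follows'' is both outside the statement and unprovable by the argument given. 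In short, your proposal correctly isolates the two open cores (the dichotomy in part~(1), and the explicit identifications in part~(2)), but it does not close either of them, which is consistent with the paper presenting the statement as a conjecture supported only by computation.
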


}

\subsection*{Acknowledgments}
This work was partially supported by the ANR (Agence Nationale de la Recherche), project OPTIFORM, n$^\circ$ANR-12-BS01-0007-02 { and by the Project ``Analysis and simulation of optimal shapes - application to lifesciences'' of the Paris City Hall.} 
The authors thank Michael Floater for suggesting us to look more carefully at $k$-partitions of the equilateral triangle when $k$ is a triangular number. { The authors wish to thank the reviewers for valuable comments which helped improve the quality of the paper.}

\bibliography{BiblioBB}
\bibliographystyle{abbrv}

\end{document}